\documentclass[12pt]{article}
\usepackage{amssymb,amsmath,amsthm, tikz,multirow}
\usepackage{enumerate} 
\usepackage{hyperref} 
\usepackage [autostyle, english = british]{csquotes}
\usepackage [english]{babel}
\usepackage{mathtools}
\usepackage{bm}
\usepackage{hhline}
\usepackage{algorithm}
\usepackage[noend]{algpseudocode}
\usepackage{caption}
\usepackage{subcaption}

\usepackage[export]{adjustbox}

\usetikzlibrary{calc,arrows, arrows.meta, math} 
\usepackage{array} 

\algnewcommand{\algorithmicand}{\textbf{and }}
\algnewcommand{\algorithmicor}{\textbf{or }}
\algnewcommand{\OR}{\algorithmicor}
\algnewcommand{\AND}{\algorithmicand}
\algnewcommand{\var}{\texttt}
\algdef{SE}[SUBALG]{Indent}{EndIndent}{}{\algorithmicend\ }%
\algtext*{Indent}
\algtext*{EndIndent}

\DeclareCaptionFormat{myformat}{#3}
\captionsetup[algorithm]{format=myformat}

\newcolumntype{M}[1]{>{\centering\arraybackslash}m{#1}}

\newtheorem{theorem}{Theorem}

\newtheorem*{theorem*}{Theorem}
\newtheorem*{proposition*}{Proposition}

\theoremstyle{definition}
\newtheorem*{definition*}{Definition}
\newtheorem*{case*}{Case}
\newtheorem*{subcase*}{Subcase}
\newtheorem*{subsubcase*}{Subsubcase}

\theoremstyle{plain}
\newtheorem{thm}{Theorem}[section]
\newtheorem{lem}[thm]{Lemma}
\newtheorem{prop}[thm]{Proposition}

\theoremstyle{definition}

\theoremstyle{remark}

\numberwithin{equation}{section}

\setcounter{secnumdepth}{5} 
\setcounter{tocdepth}{5}

\newcommand{\bvert}{\vrule width 2pt}

\newcommand{\AVC}{\text{AVC}}
\DeclarePairedDelimiter{\floor}{\lfloor}{\rfloor}

\newcommand{\newpart}{\subsubsection*}

\newcommand{\quotes}[1]{``#1''}

\newcommand{\arcThroughThreePoints}[4][]{
\coordinate (middle1) at ($(#2)!.5!(#3)$);
\coordinate (middle2) at ($(#3)!.5!(#4)$);
\coordinate (aux1) at ($(middle1)!1!90:(#3)$);
\coordinate (aux2) at ($(middle2)!1!90:(#4)$);
\coordinate (center) at ($(intersection of middle1--aux1 and middle2--aux2)$);
\draw[#1] 
 let \p1=($(#2)-(center)$),
      \p2=($(#4)-(center)$),
      \n0={veclen(\p1)},       
      \n1={atan2(\y1,\x1)}, 
      \n2={atan2(\y2,\x2)},
      \n3={\n2>\n1?\n2:\n2+360}
    in (#2) arc(\n1:\n3:\n0);
}

\providecommand{\keywords}[1]{\noindent \textit{Keywords:} #1}

\title{Rational Angles and Tilings of the Sphere by Congruent Quadrilaterals}
\author{Ho Man CHEUNG \\ email: hmcheungae@connect.ust.hk \\[2ex] Hoi Ping LUK \\ email: hoi@connect.ust.hk}

\begin{document}
\maketitle

\begin{abstract} We apply Diophantine analysis to classify edge-to-edge tilings of the sphere by congruent almost equilateral quadrilaterals (i.e., edge combination $a^3b$). Parallel to a complete classification by Cheung, Luk and Yan, the method implemented here is more systematic and applicable to other related tiling problems. We also provide detailed geometric data for the tilings. \\

\keywords{Classification, Diophantine Analysis, Quadrilateral, Rational Angles, Spherical Tilings, Spherical Trigonometry}
\end{abstract}


\section{Introduction}

We study edge-to-edge tilings of the sphere by congruent polygons such that each vertex has degree $\ge3$. It is well known that the polygons in these tilings are triangle, quadrilateral or pentagon. The classification of tilings of the sphere by congruent triangles was started by Sommerville \cite{so} in 1924 and completed by Ueno and Agaoka \cite{ua} in 2002. The classification for tilings of the sphere by congruent pentagons has seen considerable progress \cite{awy, ay, gsy, ly1, ly2, wy, wy2, wy3, yan, yan2} and is expected to be completed in the near future.

Akama, Sakano, van Cleemput \cite{ak, as, avc} started a preliminary classification for tilings of the sphere by congruent quadrilaterals which are equilateral or can be subdivided into two congruent triangles. It remains to classify the tilings by congruent quadrilaterals with exactly three equal edges ($a^3b$, first picture of Figure \ref{StdQuad}) and by congruent quadrilaterals with exactly two equal edges ($a^2bc$, second picture). Ueno and Agaoka \cite{ua2} studied some special cases of the tilings by congruent $a^3b$ quadrilaterals. Their work is indicative of many challenges in the classification. In 2022, Cheung, Luk and Yan \cite{cly} gave a complete classification for tilings of the sphere by congruent quadrilaterals as well as a modernised classification for the tilings by congruent triangles. 

We call a quadrilateral with edge combination $a^3b$ {\em almost equilateral}, where $a$-edge and $b$-edge are assumed to have different lengths. The angles are indicated in the first picture of Figure \ref{StdQuad}, likewise for the $a^2bc$ quadrilateral in the second picture. These standard configurations are implicitly assumed in this paper. We call an angle {\em rational} if its value is a rational multiple of $\pi$. Otherwise we call the angle {\em non-rational}.

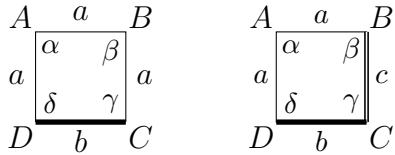
\begin{figure}[htp] 
\centering
\begin{tikzpicture}

\begin{scope}[] 

\draw
	(0,0) -- (0,1.2) -- (1.2,1.2) -- (1.2, 0.0);

\draw[ line width=2]
	(0,0) -- (1.2,0);

\node at (-0.2, -0.2) {$D$};
\node at (-0.2, 1.4) {$A$};
\node at (1.4, 1.4) {$B$};
\node at (1.4, -0.2) {$C$};

\node at (0.2,1) {\small $\alpha$};
\node at (1,0.925) {\small $\beta$};
\node at (1,0.25) {\small $\gamma$};
\node at (0.2,0.25) {\small $\delta$};

\node at (0.6, -0.25) {$b$};
\node at (-0.25, 0.6) {$a$};
\node at (1.45, 0.6) {$a$};
\node at (0.6, 1.45) {$a$};

\end{scope}

\begin{scope}[xshift = 3.2cm]

\draw
	(0,0) -- (1.2,0) -- (1.2,1.2) -- (0,1.2) -- cycle;

\draw[double, line width=0.6]
	(1.2,0) -- (1.2,1.2);

\draw[line width=2]
	(0,0) -- (1.2,0);

\node at (-0.2, -0.2) {$D$};
\node at (-0.2, 1.4) {$A$};
\node at (1.4, 1.4) {$B$};
\node at (1.4, -0.2) {$C$};

\node at (0.2,1) {\small $\alpha$};
\node at (1,0.95) {\small $\beta$};
\node at (1,0.25) {\small $\gamma$};
\node at (0.2,0.25) {\small $\delta$};

\node at (0.6,1.4) {\small $a$};
\node at (0.6,-0.25) {\small $b$};
\node at (-0.2,0.6) {\small $a$};
\node at (1.4,0.6) {\small $c$};

\end{scope}

\end{tikzpicture}
\caption{Quadrilaterals with edge combinations $a^3b, a^2bc$}
\label{StdQuad}
\end{figure}

The main purpose of this paper is to give an alternative classification for tilings of the sphere by congruent almost equilateral quadrilaterals. The key is Diophantine analysis in the following situations,
\begin{enumerate}
\item If all angles are rational, then we determine the angle values by finding all rational solutions to a trigonometric Diophantine equation which all angles must satisfy. 
\item If some angles are non-rational, then we determine all angle combinations at vertices by solving a related system of linear Diophantine equations and inequalities.
\end{enumerate}

Despite the complete classification in \cite{cly}, techniques in this paper have their own independent significance. Coolsaet \cite{co} discovered the trigonometric Diophantine equation relating the angles of convex almost equilateral quadrilateral. Myerson \cite{my} found the rational solutions to the equation. Based on these two works, we made two major advancements. The first is extending the trigonometric Diophantine equation to general (not necessarily convex) almost equilateral quadrilaterals. The second is establishing a technique to determine all angle combinations at vertices using the constraint of non-rational angles. This technique is based on the study in \cite{ly}.

Historically, trigonometric Diophantine equations have been closely connected to many geometric situations. Conway and Jones \cite{cj} have opened doors to the exploration of many interesting geometry problems. Notable work can be seen in \cite{kkpr, my, lac}.

In contrast to \cite{cly}, there are two significant advantages in our approach. Firstly, arguments in this paper are more systematic whereas those in \cite{cly} are often sophisticated and improvised. Secondly, most techniques here can be computerised. In that regard, our approach is apparently more advantageous in exhaustive search and more likely to be applied to other similar problems, such as the study of non-edge-to-edge tilings of the sphere. Promising signs of such proposal can be seen in the families of non-edge-to-edge tilings by congruent triangles obtained in this paper as degenerated cases of the tilings by quadrilaterals, which supplement the discoveries by Dawson \cite{da}.

Another feature of this paper is the extrinsic geometric data of tilings, namely the formulae for the angles and edge lengths. We include the data for tilings by congruent almost equilateral quadrilaterals and the tilings by congruent $a^2bc$ quadrilaterals. Thereby we demonstrate the relation between these tilings via edge reduction. The data are intended for wider audience such as engineers, designers and architects.

The main result of this paper is stated as follows.
\begin{theorem*} Tilings of the sphere by congruent almost equilateral quadrilaterals are earth map tilings $E$ and their flip modifications, $E^{\prime}, E^{\prime\prime}$, and rearrangement $E^{\prime\prime\prime}$, and isolated earth map tilings, $S1, S2, S3, S^{\prime}3, S5$, and special tilings $QP_6, S4,S6$. 
\end{theorem*}

The tilings in the main theorem are presented in Figure \ref{SphericalTilings}. For the earth map tiling $E$ and its flip modifications $E^{\prime}, E^{\prime\prime}$ and rearrangement $E^{\prime\prime\prime}$, we illustrate the ones with $f=10$, i.e., $E_{10}, E^{\prime}_{10}, E^{\prime\prime}_{10}, E^{\prime\prime\prime}_{10}$.  

\begin{figure}
\begin{subfigure}{0.25\textwidth}
\centering
		\adjustbox{trim=\dimexpr.5\Width-15mm\relax{} \dimexpr.5\Height-15mm\relax{}  \dimexpr.5\Width-15mm\relax{} \dimexpr.5\Height-15mm\relax{} ,clip}{\includegraphics[height=6cm]{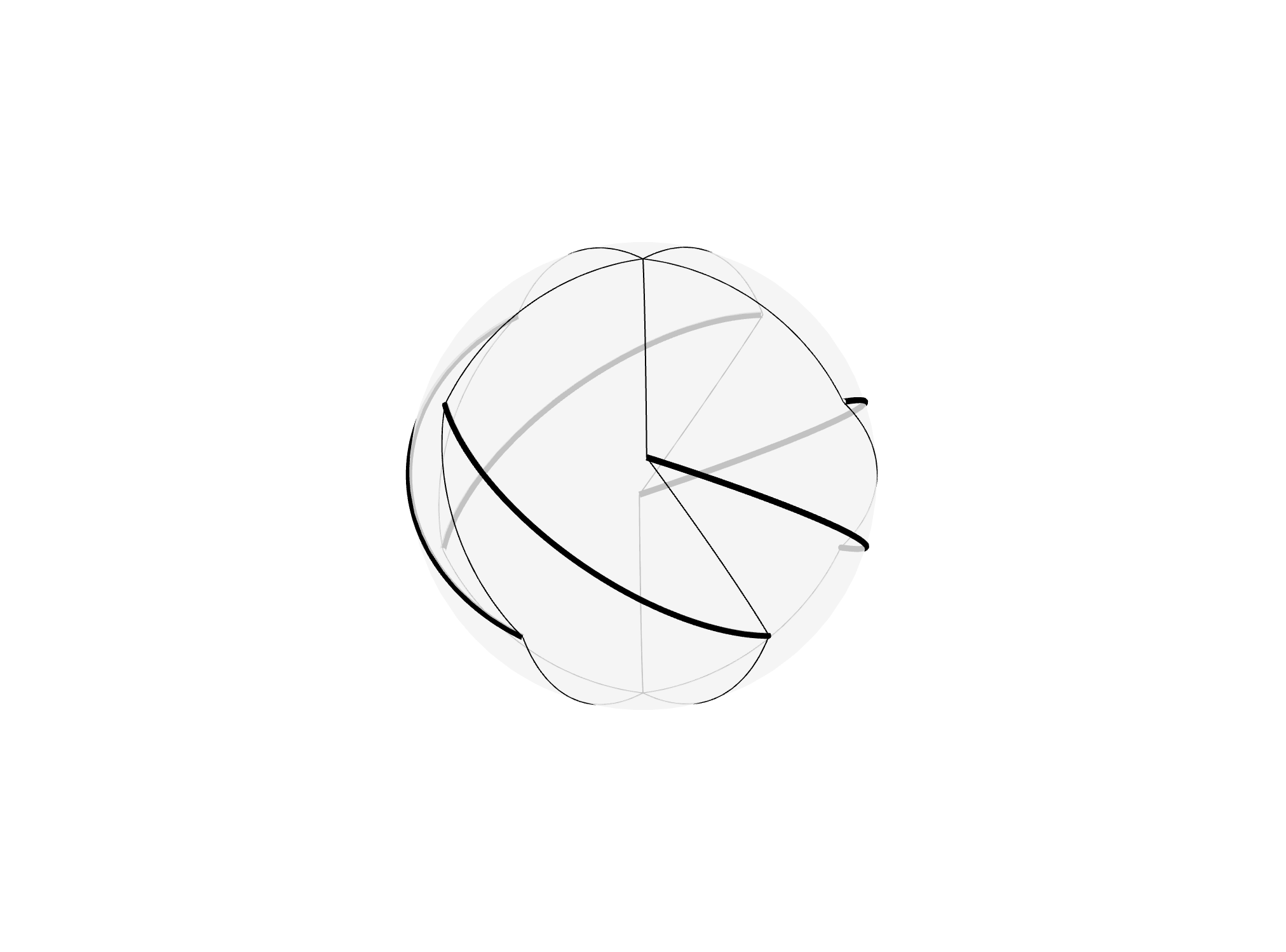}} 
\caption*{$E=E_{10}$} \hfill
 \end{subfigure} 
\begin{subfigure}{0.24\textwidth}
\centering
	\adjustbox{trim=\dimexpr.5\Width-15mm\relax{} \dimexpr.5\Height-15mm\relax{}  \dimexpr.5\Width-15mm\relax{} \dimexpr.5\Height-15mm\relax{},clip}{\includegraphics[height=6cm]{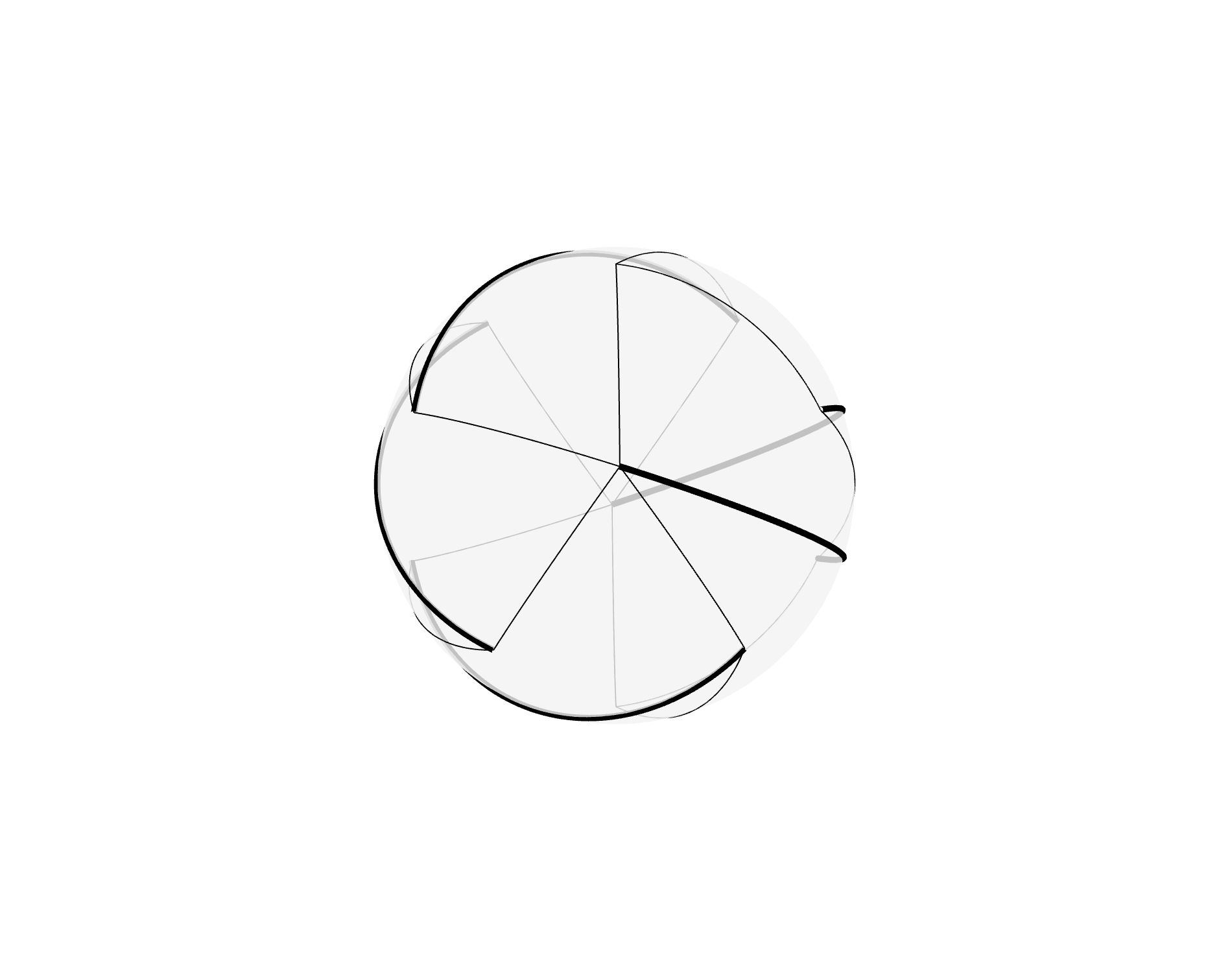}} 
\caption*{$E^{\prime}=E^{\prime}_{10}$} \hfill
 \end{subfigure} 
\begin{subfigure}{0.24\textwidth}
\centering
		\adjustbox{trim=\dimexpr.5\Width-15mm\relax{} \dimexpr.5\Height-15mm\relax{}  \dimexpr.5\Width-15mm\relax{} \dimexpr.5\Height-15mm\relax{} ,clip}{\includegraphics[height=6cm]{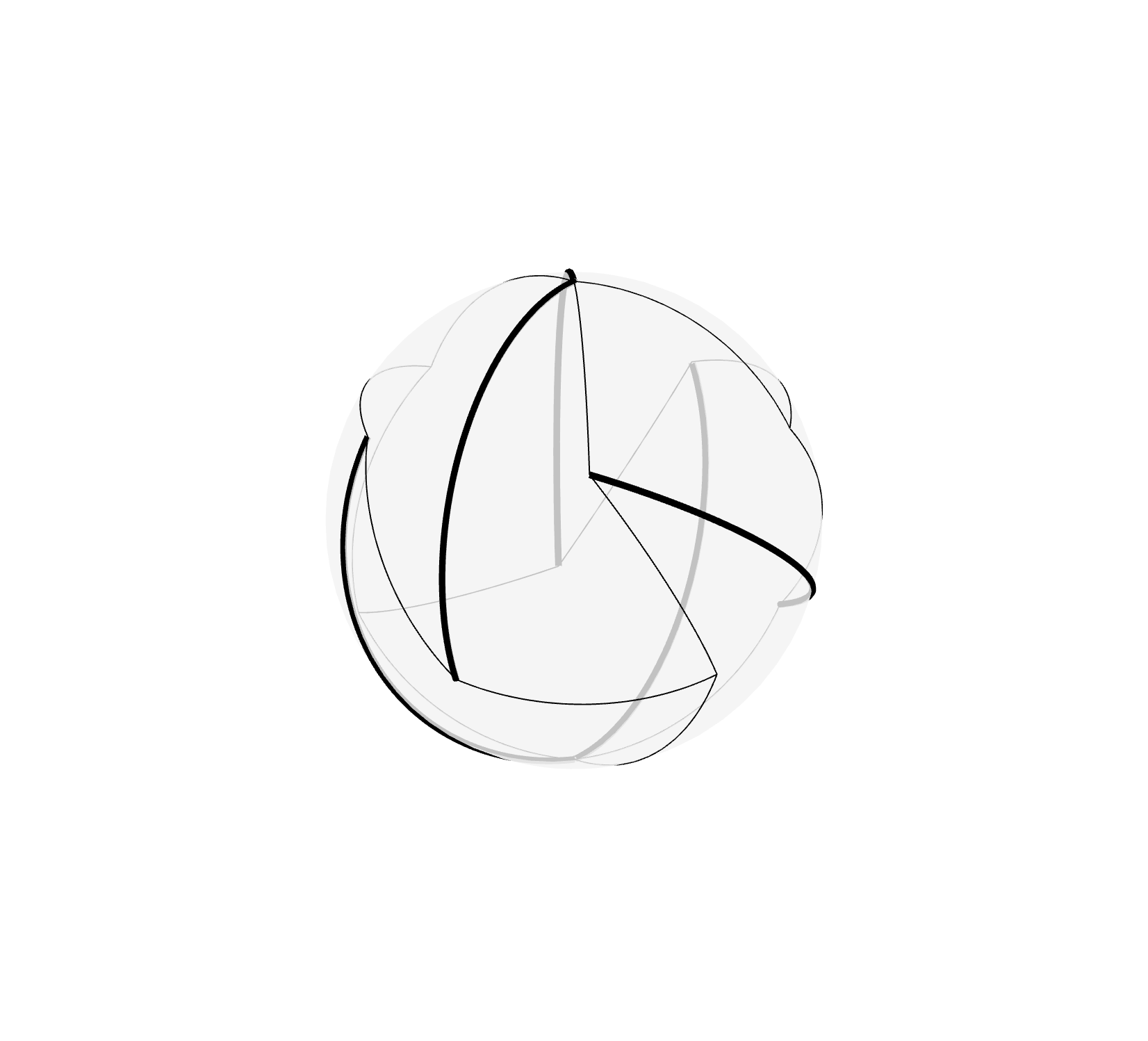}}  
\caption*{$E^{\prime\prime}=E^{\prime\prime}_{10}$} \hfill
 \end{subfigure} 
\begin{subfigure}{0.24\textwidth}
\centering
		\adjustbox{trim=\dimexpr.5\Width-15mm\relax{} \dimexpr.5\Height-15mm\relax{}  \dimexpr.5\Width-15mm\relax{} \dimexpr.5\Height-15mm\relax{} ,clip}{\includegraphics[height=6cm]{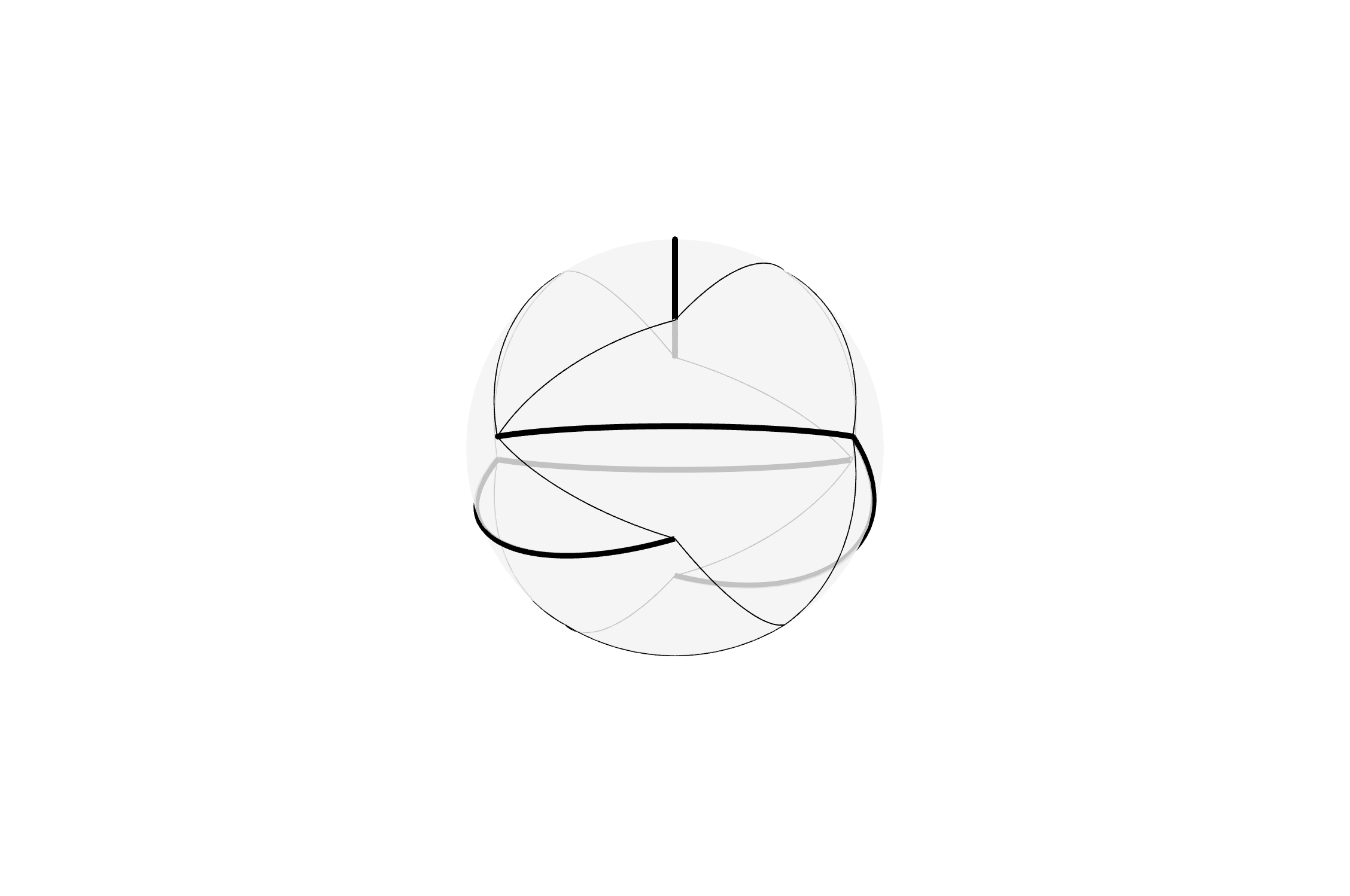}}  
\caption*{$E^{\prime\prime\prime}=E^{\prime\prime\prime}_{10}$} \hfill
\end{subfigure} 

\hfill

\begin{subfigure}[]{0.34\textwidth}
	\centering
		\adjustbox{trim=\dimexpr.5\Width-15mm\relax{} \dimexpr.5\Height-15mm\relax{}  \dimexpr.5\Width-15mm\relax{} \dimexpr.5\Height-15mm\relax{} ,clip}{\includegraphics[height=6cm]{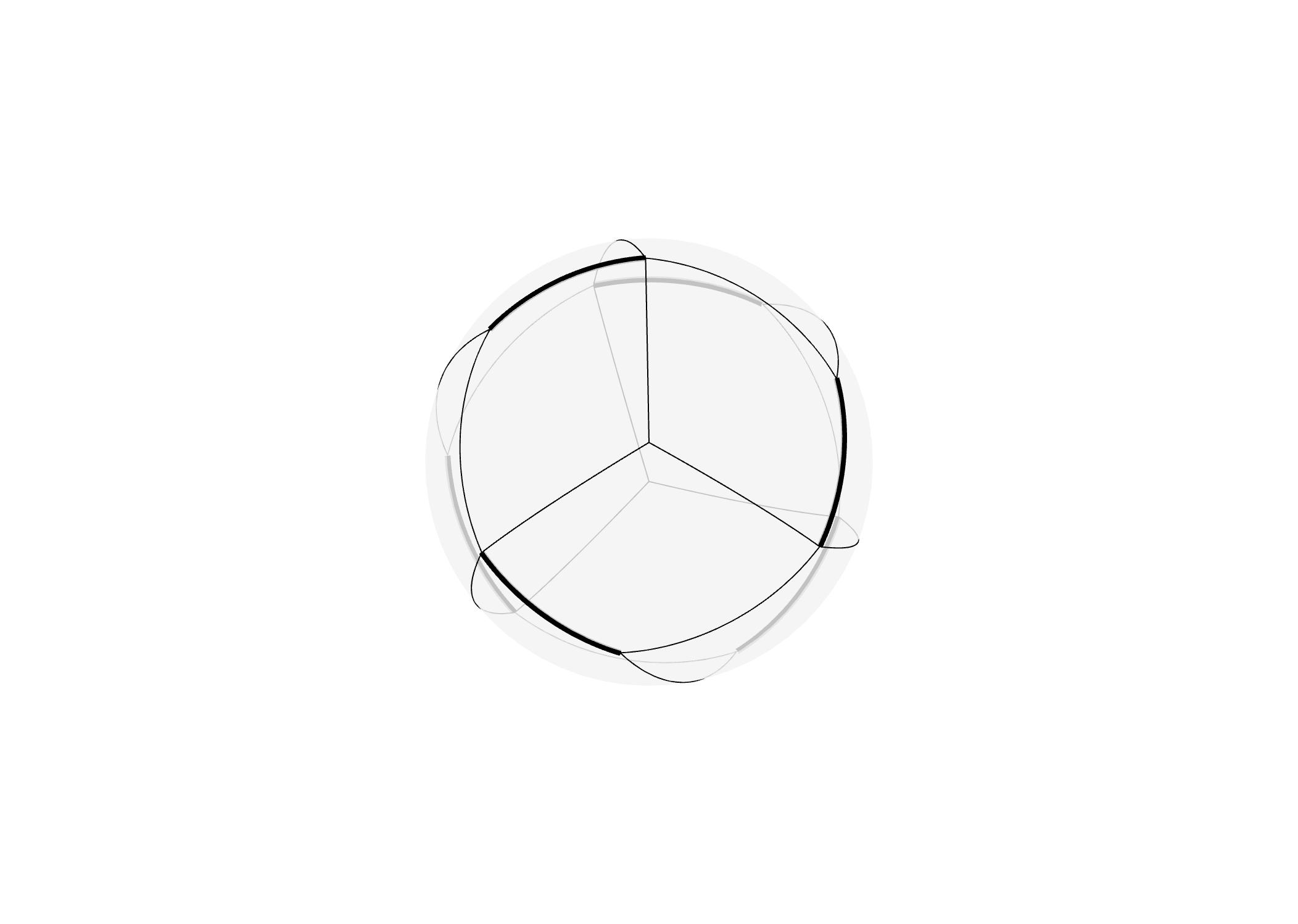}}  
\caption*{$S1=S_{12}1$} \hfill
 \end{subfigure} 
\begin{subfigure}[]{0.26\textwidth}
	\centering
		\adjustbox{trim=\dimexpr.5\Width-15mm\relax{} \dimexpr.5\Height-15mm\relax{}  \dimexpr.5\Width-15mm\relax{} \dimexpr.5\Height-15mm\relax{} ,clip}{\includegraphics[height=6cm]{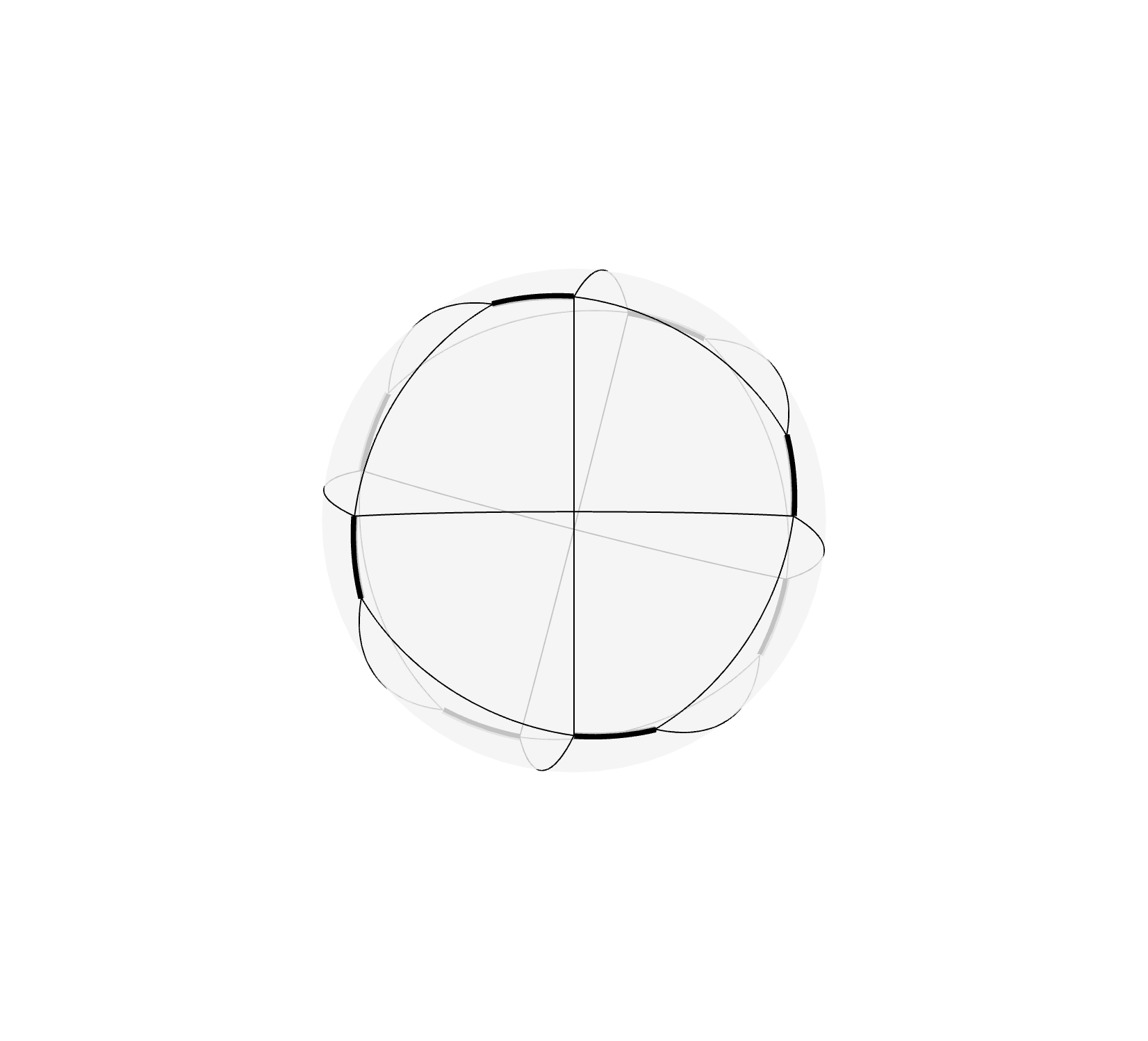}} 
\caption*{$S1=S_{16}1$} \hfill
 \end{subfigure} 
\begin{subfigure}[]{0.34\textwidth}
	\centering
		\adjustbox{trim=\dimexpr.5\Width-15mm\relax{} \dimexpr.5\Height-15mm\relax{}  \dimexpr.5\Width-15mm\relax{} \dimexpr.5\Height-15mm\relax{} ,clip}{\includegraphics[height=6cm]{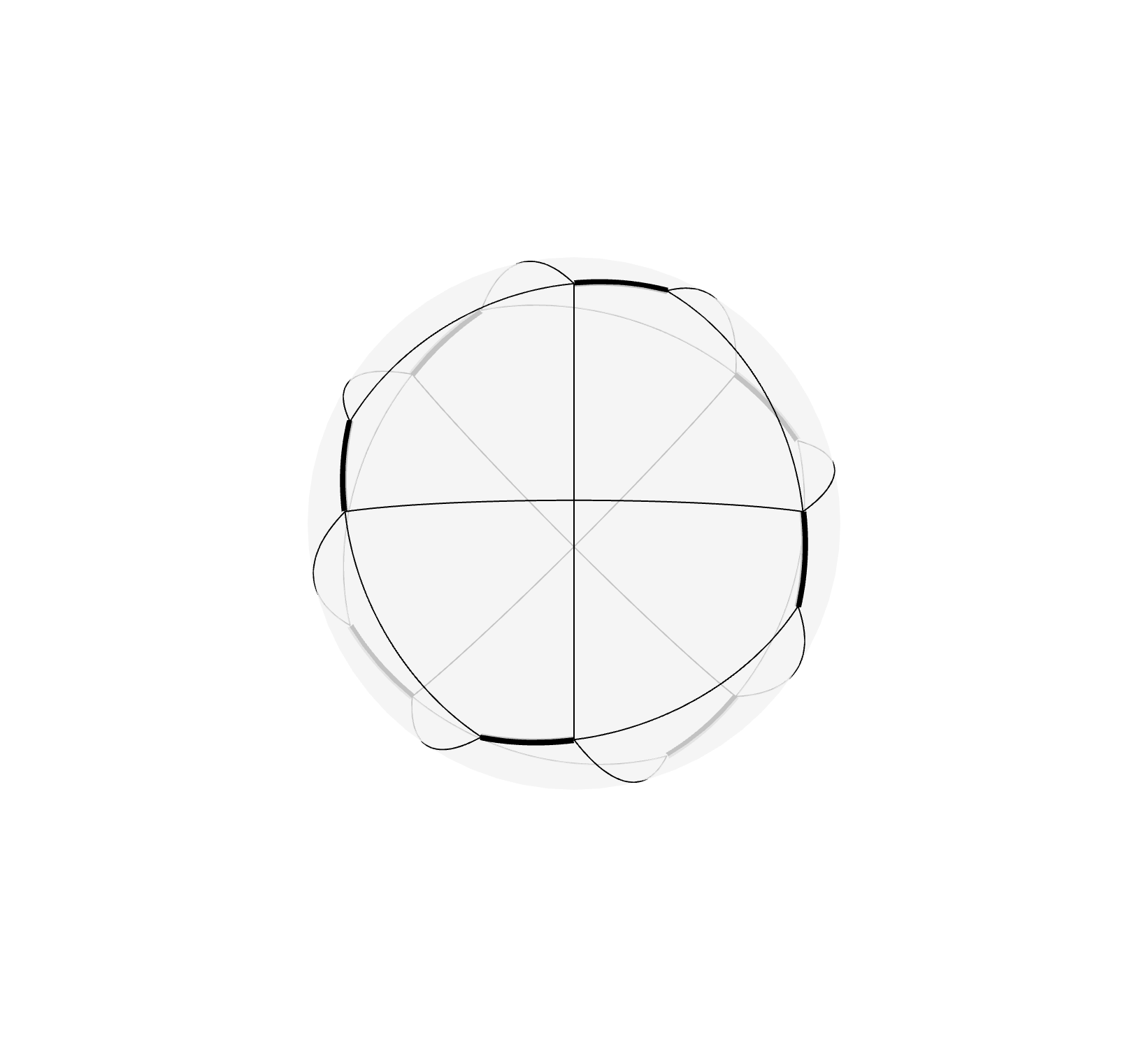}} 
\caption*{$S2$} \hfill
 \end{subfigure} 

\hfill

\begin{subfigure}{0.34\textwidth}
	\centering
		\adjustbox{trim=\dimexpr.5\Width-15mm\relax{} \dimexpr.5\Height-15mm\relax{}  \dimexpr.5\Width-15mm\relax{} \dimexpr.5\Height-15mm\relax{} ,clip}{\includegraphics[height=6cm]{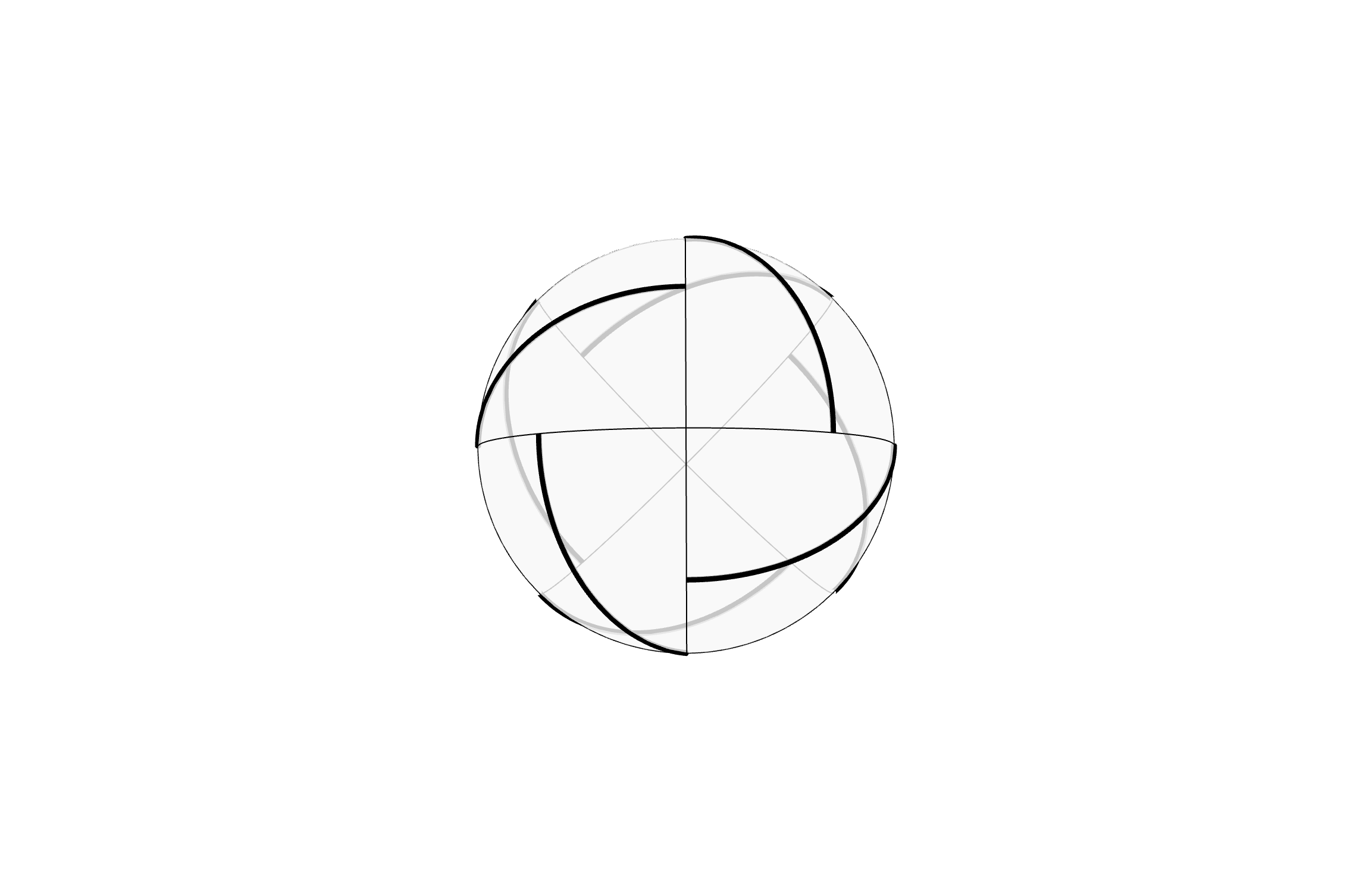}} 
\caption*{$S3$} \hfill
 \end{subfigure} 
\begin{subfigure}{0.26\textwidth}
	\centering
		\adjustbox{trim=\dimexpr.5\Width-15mm\relax{} \dimexpr.5\Height-15mm\relax{}  \dimexpr.5\Width-15mm\relax{} \dimexpr.5\Height-15mm\relax{} ,clip}{\includegraphics[height=6cm]{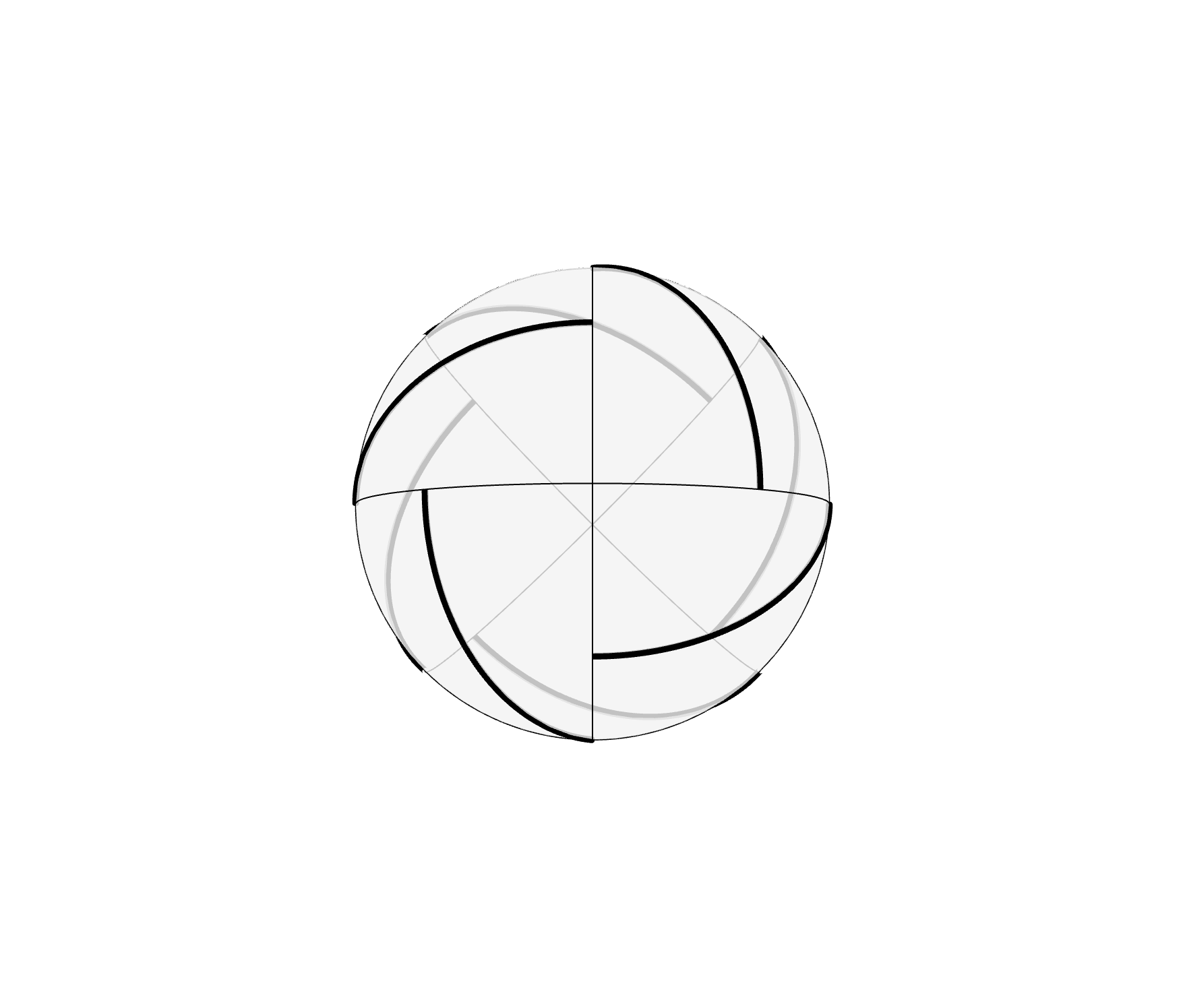}} 
\caption*{$S^{\prime}3$} \hfill
 \end{subfigure} 
\begin{subfigure}{0.34\textwidth}
	\centering
		\adjustbox{trim=\dimexpr.5\Width-15mm\relax{} \dimexpr.5\Height-15mm\relax{}  \dimexpr.5\Width-15mm\relax{} \dimexpr.5\Height-15mm\relax{} ,clip}{\includegraphics[height=6cm]{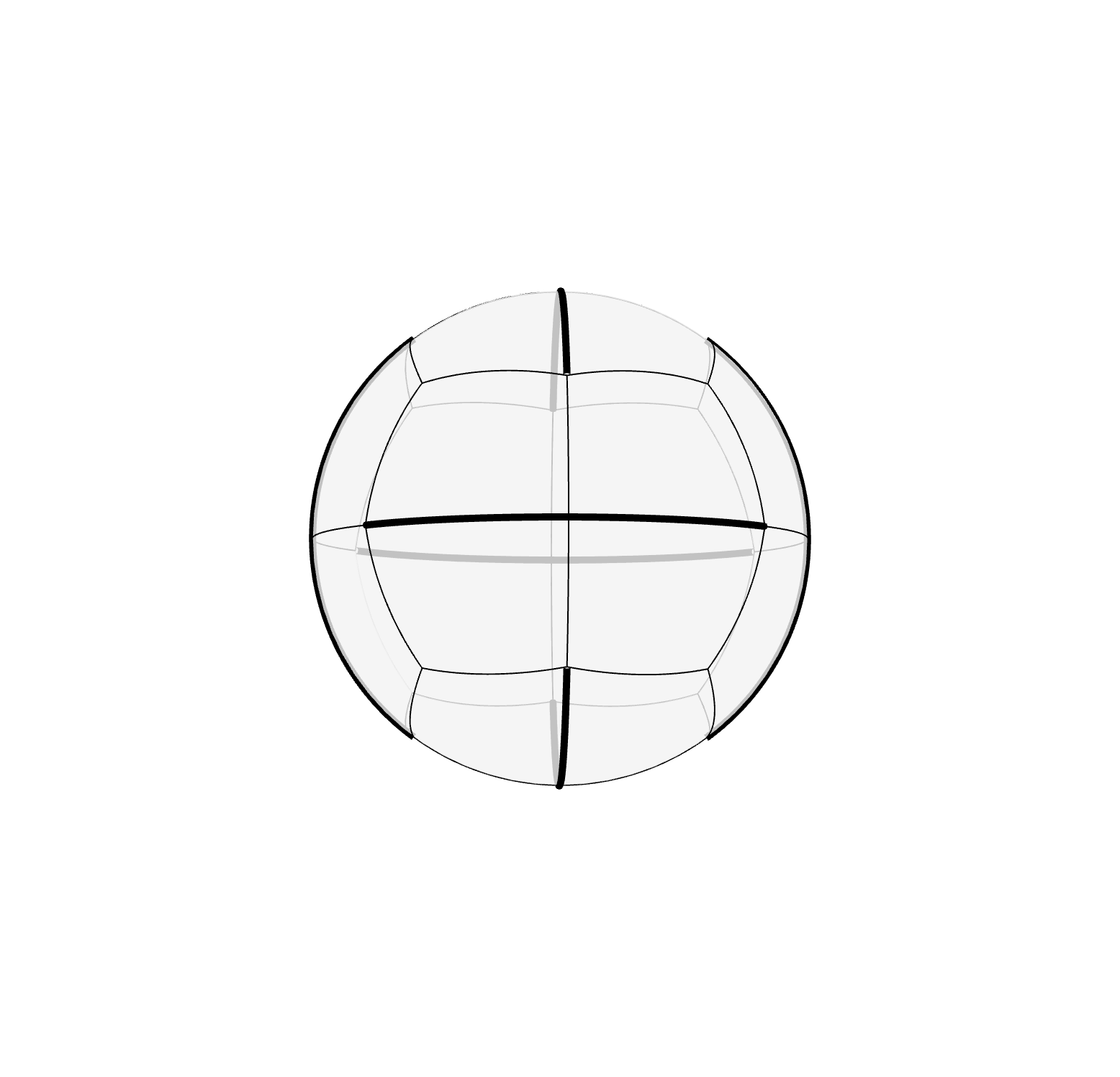}} 
\caption*{$QP_6$} \hfill
 \end{subfigure} 

\hfill

\begin{subfigure}{0.34\textwidth}
	\centering
		\adjustbox{trim=\dimexpr.5\Width-15mm\relax{} \dimexpr.5\Height-15mm\relax{}  \dimexpr.5\Width-15mm\relax{} \dimexpr.5\Height-15mm\relax{} ,clip}{\includegraphics[height=6cm]{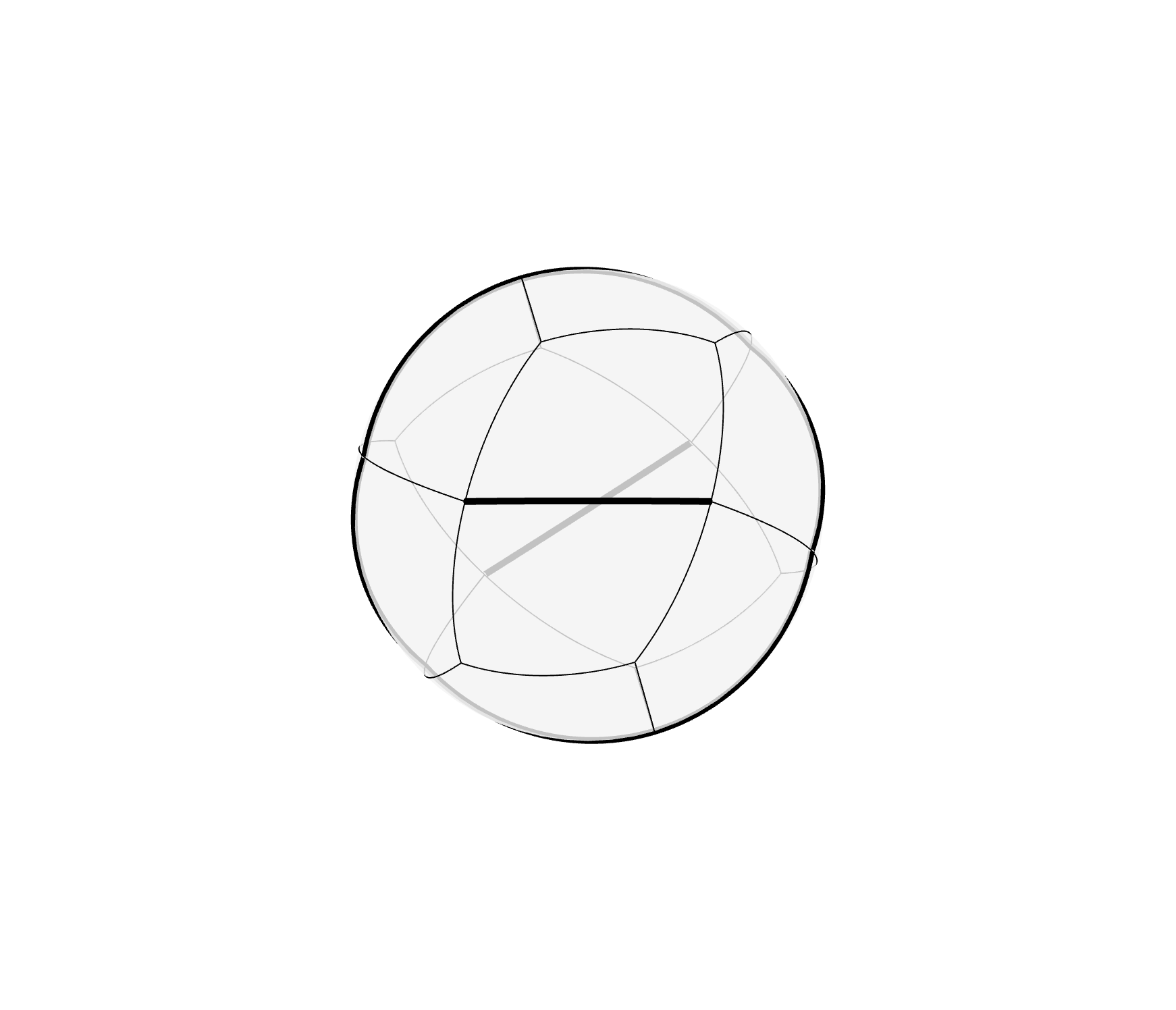}}
\caption*{$S4$} \hfill
 \end{subfigure} 
\begin{subfigure}{0.26\textwidth}
	\centering
		\adjustbox{trim=\dimexpr.5\Width-15mm\relax{} \dimexpr.5\Height-15mm\relax{}  \dimexpr.5\Width-15mm\relax{} \dimexpr.5\Height-15mm\relax{} ,clip}{\includegraphics[height=6cm]{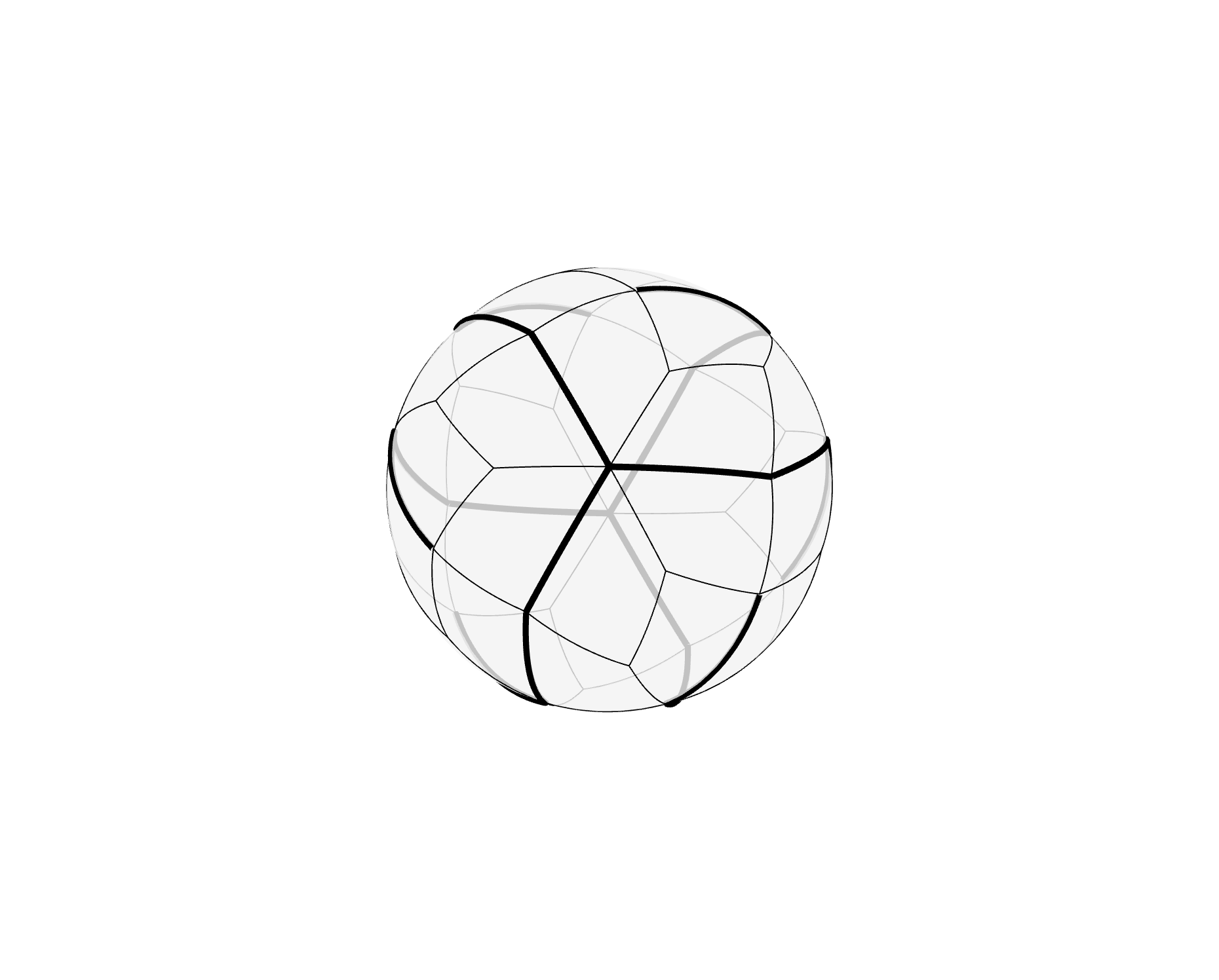}} 
\caption*{$S5$} \hfill
 \end{subfigure} 
\begin{subfigure}{0.34\textwidth}
	\centering
		\adjustbox{trim=\dimexpr.5\Width-15mm\relax{} \dimexpr.5\Height-15mm\relax{}  \dimexpr.5\Width-15mm\relax{} \dimexpr.5\Height-15mm\relax{} ,clip}{\includegraphics[height=6cm]{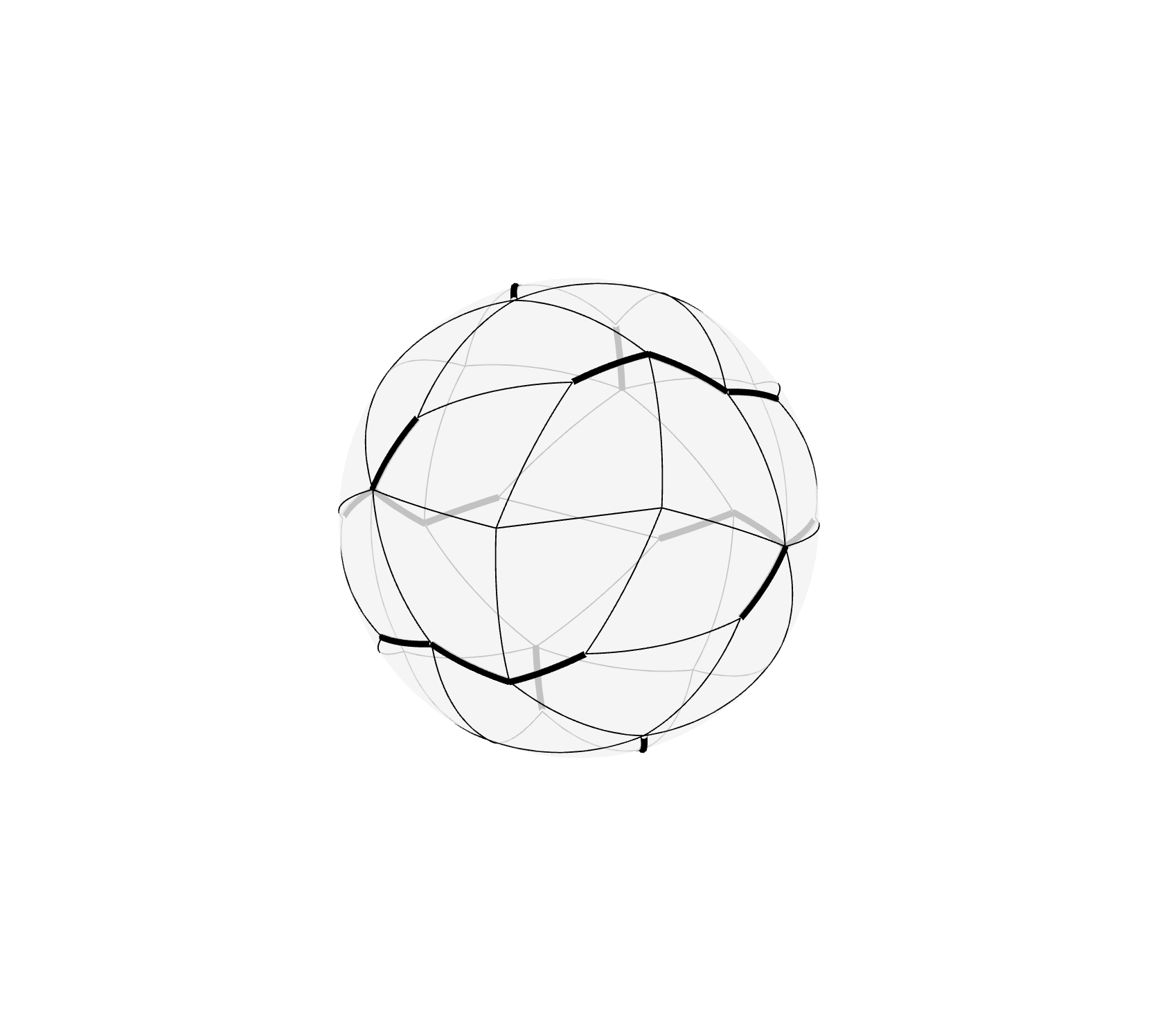}} 
\caption*{$S6$} \hfill
 \end{subfigure} 
	\caption{Tilings of the sphere by almost equilateral quadrilaterals: $E_{10}$, $E^{\prime}_{10}$, $E^{\prime\prime}_{10}$, $E^{\prime\prime\prime}_{10}$, $S_{12}1$, $S_{16}1$, $S2$, $S3$, $S^{\prime}3$, $QP_6$, $S4$, $S5$, $S6$}
	\label{SphericalTilings}
\end{figure}

We explain the structures of these tilings explicitly by their planar representations in first picture of Figure \ref{EMT}, and Figures \ref{EMT-Flips}, \ref{Polar-Tilings}, \ref{IsoEMT-S1S2S3S5}. The angles are implicitly represented according to Figure \ref{QuadOrien}. Tiles with angles arranged in the orientation in the first picture, i.e., $\alpha \rightarrow \beta \rightarrow \gamma \rightarrow \delta$ clockwise, are marked by \quotes{ - }. The other tiles, unmarked, have angles arranged counter-clockwise as in the second picture. 

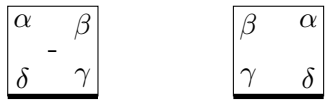
\begin{figure}[htp] 
\centering
\begin{tikzpicture}

\begin{scope}[xshift=0cm] 

\draw
	(0,0) -- (0,1.2) -- (1.2,1.2) -- (1.2, 0.0);

\draw[ line width=2]
	(0,0) -- (1.2,0);

\node at (0.2,1) {\small $\alpha$};
\node at (1,0.925) {\small $\beta$};
\node at (1,0.25) {\small $\gamma$};
\node at (0.2,0.25) {\small $\delta$};

\node at (0.6,0.6) {-};

\end{scope}

\begin{scope}[xshift=3cm] 

\draw
	(0,0) -- (0,1.2) -- (1.2,1.2) -- (1.2, 0.0);

\draw[ line width=2]
	(0,0) -- (1.2,0);

\node at (0.2,0.925) {\small $\beta$};
\node at (1,1) {\small $\alpha$};
\node at (1,0.25) {\small $\delta$};
\node at (0.2,0.25) {\small $\gamma$};

\end{scope}

\end{tikzpicture}
\caption{Orientations of almost equilateral quadrilateral tiles}
\label{QuadOrien}
\end{figure}

Let $f$ denote the {\em number of tiles} in a tiling. The notations for tilings in the theorem are introduced in \cite{cly}. We use $Si$ (or $S_fi$) to denote isolated earth map tilings and special tilings. We use $QP_6$ to denote the quadrilateral subdivision of the cube $P_6$. The notations $E, E^{\prime}, E^{\prime\prime}, E^{\prime\prime\prime}$ correspond to $E^{\prime}_{\square}2, E^{\prime (s,t)}_{\square}2, E^{\prime\prime (s',t)}_{\square}2, E^{\prime\prime\prime}_{\square}2$ in \cite{cly} respectively.

\begin{figure}[htp]
\centering
\begin{tikzpicture}

\tikzmath{ 
\tz=3;
\tzz=\tz-1;
}

\begin{scope}[] 

\begin{scope}[] 

\fill[gray!30]
	(0.8,0.6) -- (0.8,0.15) -- (1.2,-0.15) -- (1.6,0.15) -- (1.6,0.6);

\fill[gray!30, xshift=1.2cm, rotate=-180]
	(0,0.6) -- (0,0.15) -- (0.4,-0.15) -- (0.8,0.15) -- (0.8,0.6);	

\foreach \a in {0,...,\tz}
{
\begin{scope}[xshift=0.8*\a cm]

\draw
	(0.8,0.6) -- (0.8,0.15)
	(0.4,-0.6) -- (0.4,-0.15)
;

\draw[]
	(0.8,0.15) -- (0.4,-0.15);	
	
\end{scope}
}

\foreach \a in {0,...,\tzz}{

\begin{scope}[xshift=0.8*\a cm]

\draw[line width=1.4]	
	(0.8,0.15) -- (1.2,-0.15);

\end{scope}

}

\draw[very thick, dotted]
	(0.2,0) -- ++(-0.3,0)
	(3.4,0) -- ++(0.3,0);

\end{scope}

\end{scope} 

\begin{scope}[xshift=5cm] 

\begin{scope}[] 

\fill[gray!30]
	(0.8,0.6) -- (0.8,0.15) -- (1.2,-0.15) -- (1.6,0.15) -- (1.6,0.6);

\fill[gray!30, xshift=1.2cm, rotate=-180]
	(0,0.6) -- (0,0.15) -- (0.4,-0.15) -- (0.8,0.15) -- (0.8,0.6);	

\foreach \a in {0,...,\tz}
{
\begin{scope}[xshift=0.8*\a cm]

\draw
	(0.8,0.6) -- (0.8,0.15)
	(0.4,-0.6) -- (0.4,-0.15)
;

\draw[double, line width=0.6]
	(0.8,0.15) -- (0.4,-0.15);

\end{scope}
}

\foreach \a in {0,...,\tzz}{

\begin{scope}[xshift=0.8*\a cm]

\draw[line width=1.4]	
	(0.8,0.15) -- (1.2,-0.15);

\end{scope}

}
	
\draw[very thick, dotted]
	(0.2,0) -- ++(-0.3,0)
	(3.4,0) -- ++(0.3,0);

\end{scope}

\end{scope} 

\end{tikzpicture}
\caption{Earth map tilings $E$ by $a^3b$ tiles and by $a^2bc$ tiles}
\label{EMT}
\end{figure}
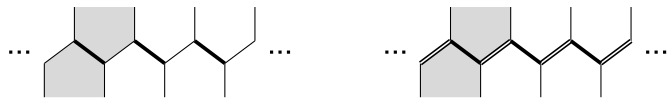

The earth map tilings $E$ is the first picture of Figure \ref{EMT}. The vertical edges in the top row of $E$ converge to a vertex (north pole) and those in the bottom row converge to another (south pole). The shaded tiles form a {\em timezone}. A tiling is a repetition of timezones. The second picture is the earth map tiling by congruent $a^2bc$ quadrilaterals. We may obtain $E$ from this earth map tiling by edge reduction $c=a$ or $b=a$. The earth map tilings with exactly three timezones is the deformed cube. 

For any positive integer $s < \frac{f}{2}$, let $\mathcal{T}_s$ be $s$ consecutive timezones. The first picture of Figure \ref{EMT-Flips} shows the boundary of $\mathcal{T}_s$. If $\alpha=s\beta$, we may flip the $\mathcal{T}_s$ part of $E$ with respect to $L^{\prime}$ to get a new tiling $E^{\prime}$. We call it a {\em flip modification}. In fact, we may simultaneously flip several disjoint copies of $\mathcal{T}_s$. Similarly, if $\gamma+\delta=s\beta$, we may simultaneously flip several disjoint copies $\mathcal{T}_s$ with respect to $L^{\prime\prime}$ to get $E^{\prime\prime}$.

\begin{figure}[htp]
\centering
\begin{tikzpicture}


\begin{scope}[yshift=-3cm] 


\begin{scope}[xshift=0cm]

\draw[gray!50, dashed]
	(-60:1) -- (120:1)
	(60:1) -- (240:1);

\node at (60:1.2) {\small $L'$};
\node at (120:1.2) {\small $L''$};

\foreach \b in {1,-1}
{
\begin{scope}[scale=\b]

\foreach \a in {0,...,5}
\draw[rotate=60*\a]
	(30:1) -- (90:1);

\draw[line width=1.5]
	(150:1) -- (150:0.6);
	
\node at (92:0.7) {\small $\beta^s$};
\node at (30:0.8) {\small $\alpha$};
\node at (-15:0.75) {\small $\delta$};
\node at (-45:0.75) {\small $\gamma$};

\node at (0,0) {\small ${\mathcal{T}}_s$};		

\end{scope}
}

\node at (0,-1.35) {$E^{\prime}, E^{\prime\prime}$};

\end{scope}


\begin{scope}[xshift=4cm]

	
\foreach \a in {0,1,2}
{
\begin{scope}[rotate=120*\a]

\draw
	(60:0.6) -- (0:0.2) -- (-60:0.6)
	(60:0.6) -- (60:1.2) -- (0:1.2) -- (-60:1.2);

\draw[line width=1.5]
	(0.2,0) -- (0.4,0)
	(1.2,0) -- (1,0);

\node at (0.7,0) {\small ${\mathcal{T}}_q$};	
		
\end{scope}
}

\draw[line width=1.5]
	(0.2,0) -- (-0.6,0)
	(1.4,0) -- (1.2,0)
	(-1.4,0) -- (-1.2,0);
	
\node at (0.1,-0.15) {-};
\node at (60:1.4) {-};

\node at (1.7,-1.35) {$E^{\prime\prime\prime}$};

\end{scope}


\begin{scope}[xshift=7.5cm, scale=0.9, xscale=-1]

\draw[line width=1, gray!50]
	(1,0) -- (1.3,0);
	
\foreach \a in {0,1,2}
{
\begin{scope}[rotate=120*\a]

\draw
	(120:0.6) -- (60:0.2) -- (0:0.6) -- (0:1) -- (60:1.6) -- (120:1)
	(-0.2,0) -- (-0.4,0) -- (-0.65,0) -- (-0.9,0) -- (-1.15,0) -- (-1.45,0)
	(120:1) -- (-0.4,0)
	(240:0.6) -- (-0.6,0)
	(120:1) -- (-0.8,0)
	(240:0.6) -- (-1,0)
	(120:1) -- (-1.2,0)
	(240:0.6) -- (-1.4,0);
	
\draw[line width=1.5]
	(-0.2,0) -- (-0.4,0)
	(-0.6,0) -- (-0.8,0)
	(-1,0) -- (-1.2,0)
	(-1.4,0) -- (-1.6,0);
	
\end{scope}
}

\draw[line width=1.5]
	(-0.2,0) -- (0.6,0)
	(-1.6,0) -- (-1.9,0)
	(1,0) -- (1.3,0);
	
\node at (-0.1,-0.15) {-};
\node at (120:1.2) {-};

\end{scope}

\end{scope}

\end{tikzpicture}
\caption{Flip modifications $E^{\prime}, E^{\prime\prime}$ and rearrangement $E^{\prime\prime\prime}$ of $E$}
\label{EMT-Flips}
\end{figure}

For $q=\frac{f-4}{6}$, we may combine three copies of $\mathcal{T}_q$ and four more tiles as in the second picture of Figure \ref{EMT-Flips} to get a {\em rearrangement} $E^{\prime\prime\prime}$ of $E$. The third picture depicts $E^{\prime\prime\prime}$ when $q=4$. Rearrangements are only for specific combination of angle values. 

Further explanations on $E^{\prime},E^{\prime\prime},E^{\prime\prime\prime}$ can be seen in \cite[Section 2]{cly}.

The isolated earth map tilings and the special tilings are presented in Figure \ref{Polar-Tilings}.

\begin{figure}[htp]
\centering
\begin{tikzpicture}

\begin{scope} 


\begin{scope} 

\begin{scope}[scale=0.2, yshift=0.5cm]

\foreach \a in {0,...,2}
\draw[rotate=120*\a]
	(0,0)-- (-30:2.4) -- (30:4.8) -- (90:2.4)
	(-30:1.2) -- (30:2.4) -- (90:1.2)
	(-30:2.4) -- (-30:4.32)
	(30:2.4) -- (30:4.8);
			
\foreach \b in {0,...,2}
	\draw[rotate=120*\b, line width=1.25]
	(-30:1.2) -- (30:2.4)
	(90:2.4) -- (30:4.8);

\foreach \s in {0,...,2} {

\begin{scope}[rotate=120*\s]

\node at (60:2) {-};
\node at (0:2) {-};

\end{scope}
}

\end{scope}

\node at (0,-1.4) {$S1(S_{12}1)$};

\end{scope} 


\begin{scope}[xshift=2.2cm] 

\begin{scope}[scale=0.425] 

\foreach \a in {0,1,2,3}
{
\begin{scope}[rotate=90*\a]

\draw
	(0,0) -- (1.5,0) -- (1.5,1.5)
	(0,0.8) -- (0.8,0.8) -- (1.5,1.5)
	(0,1.5) -- (0,2)
;

\draw[line width=1.25]
	(0,0.8) -- (-0.8,0.8)
	(-1.5,0) -- (-1.5,1.5);

\node at (1.1,0.45) {-};
\node at (0.45,1.1) {-};

\end{scope}
}

\end{scope}

\node at (0,-1.4) {$S1(S_{16}1)$};

\end{scope} 


\begin{scope}[xshift=4.5 cm] 

\begin{scope}[scale=0.425]

\foreach \a in {0,1,2,3}
{
\begin{scope}[rotate=90*\a]

\draw
	(0,0) -- (1.5,0) -- (1.5,-1.5)
	(0.8,0) -- (0.8,0.8) -- (2,2);

\draw[line width=1.25]
	(0,0.8) -- (0.8,0.8)
	(1.5,0) -- (1.5,1.5);

\node at (0.5,1.2) {-};

\node at (2,0) {-};

\end{scope}
}

\end{scope}

\node at (0,-1.4) {$S2$};

\end{scope}


\begin{scope}[xshift=7cm]

\tikzmath { \r = 2; \r1= sqrt( \r^2 + \r^2 - 2*\r*\r*cos(45) ); }

\pgfmathsetmacro{\R}{2};
\pgfmathsetmacro{\rr}{ sqrt( \R*\R + \R*\R - 2*\R*\R*cos(45) ) };

\begin{scope}[xshift=0 cm, scale=0.25]

\foreach \a in {0,...,3}{
\draw[rotate=90*\a, line width = 1.1] 
	([shift=(-22.5:\r1)]90:2) arc (-22.5:90:\r1);

	\begin{scope}[xscale=-1]

	\draw[rotate=90*\a] 
	([shift=(-22.5:\r1)]90:2) arc (-22.5:90:\r1);

	\end{scope}

}

\foreach \a in {0,...,3}{

\begin{scope}[rotate=90*\a]

\coordinate (P1) at (0:1);
\coordinate (P2) at (45:1.2);
\coordinate (P3) at (90:\R);

\arcThroughThreePoints[line width=1.1]{P1}{P2}{P3};

\draw
	(0:0) -- (0:\R)
	(0:\R) arc (0:90:\R)
	(90:2+\rr) -- (90:3+\rr)
;

\node at (45:1.6) {-};
\node at (0:2.6) {-};

\end{scope}

}

\end{scope}

\node at (2.5,-1.4) {$S^{\prime}3$};

\begin{scope}[xshift=2.5 cm, scale=0.25] 

\foreach \a in {0,...,3}{
\draw[rotate=90*\a] 
	([shift=(-22.5:\r1)]90:2) arc (-22.5:90:\r1);

	\begin{scope}[xscale=-1]

	\draw[rotate=90*\a, line width = 1.1] 
	([shift=(-22.5:\r1)]90:2) arc (-22.5:90:\r1);

	\end{scope}

}

\foreach \a in {0,...,3}{

\begin{scope}[rotate=90*\a]

\coordinate (P1) at (0:1);
\coordinate (P2) at (45:1.2);
\coordinate (P3) at (90:\R);

\arcThroughThreePoints[line width=1.1]{P1}{P2}{P3};

\draw
	(0:0) -- (0:\R)
	(0:\R) arc (0:90:\R)
	(90:2+\rr) -- (90:3+\rr)
;

\node at (45:1.6) {-};
\node at (45:3.2) {-};

\end{scope}

}

\end{scope}

\node at (0,-1.4) {$S3$};

\end{scope}

\end{scope} 

\begin{scope}[yshift=-3.0cm] 


\begin{scope}

\begin{scope}[scale=1.25, rotate=90]

\foreach \a in {1,-1}
{
\begin{scope}[scale=\a]

\draw
	(-0.65,0.65) -- (0.65,0.65) -- (0.65,-0.65)
	(-0.25,0.25) -- (0.25,0.25) -- (0.25,-0.25)
	(0.25,0.25) -- (0.65,0.65)
	(-0.25,0.25) -- (-0.65,0.65);

\draw[line width=1.25]
	(0.45,0.45) -- (-0.45,0.45)
	(0,0.25) -- (0,0)
	(0.25,0) -- (0.65,0)
	(0,0.65) -- (0,0.9);
	
\draw[
]
	(0.45,0.45) -- (0.45,-0.45)
	(0.25,0) -- (0,0)
	(0,0.25) -- (0,0.65)
	(0.65,0) -- (0.9,0);

\end{scope}
}

\end{scope}

\foreach \b in {0,2} {

\begin{scope}[rotate=90*\b]

\node at (135:0.2) {-};
\node at (120:0.5) {-};
\node at (160:0.45) {-};

\node at (20:0.75) {-};
\node at (70:0.75) {-};
\node at (45:1.4) {-};

\end{scope}

}

\node at (0,-1.5) {$QP_6$};

\end{scope}


\begin{scope}[xshift=2.75cm]

\foreach \a in {0,...,5}
{
\begin{scope}[rotate=60*\a]

\draw
	(0:0.3) -- (60:0.3) -- (60:0.9) -- (0:0.9);
	
\draw[line width=1.25]
	(0:0.6) -- (60:0.6);

\end{scope}
}

\draw[line width=1.25]
	(-0.3,0) -- (0.3,0)
	(60:0.9) -- (60:1.2)
	(240:0.9) -- (240:1.2);
	
\node at (30:0.38) {-};
\node at (30:0.65) {-};
\node at (30:-0.38) {-};
\node at (30:-0.65) {-};

\node at (0,-1.5) {$S4$};

\end{scope}


\begin{scope}[xshift=5.4cm]

\foreach \a in {0,...,11}
\draw[rotate=30*\a]
	(0:0.4) -- (30:0.4)
	(0:0.7) -- (30:0.7)
	(0:0.95) -- (30:0.95);

\foreach \a in {0,1,2}
{
\begin{scope}[rotate=120*\a]

\draw
	(0,0) -- (60:0.4)
	(30:0.4) -- (60:0.7)
	(-30:0.4) -- (0:0.7)
	(-30:0.7) -- (0:0.95)
	(30:0.7) -- (60:0.95)
	(30:0.95) -- (30:1.2);
	
\draw[line width=1.25]
	(0,0) -- (0:0.4) -- (30:0.7) -- (0:0.95)
	(90:0.4) -- (60:0.7)
	(60:0.7) -- (90:0.95) -- (90:1.2);

\node at (90:0.25) {-};
\node at (90:0.55) {-};
\node at (0:0.8) {-};
\node at (0:1.1) {-};

\end{scope}
}

\node at (0,-1.5) {$S5$};

\end{scope}


\begin{scope}[xshift=8.75cm]

\foreach \a in {1,-1}
{
\begin{scope}[scale=0.333*\a]

\draw
	(0,0) -- (0.3,0.4) -- (2.1,0.4) 
	(-0.3,1.2) -- (0.9,-0.4) -- (1.5,-0.4) -- (2.1,-1.2) -- (3.3,0.4)
	(2.1,-1.2) -- (0.3,-1.2)
	(2.7,-2) -- (3.3,-1.2) -- (1.5,1.2)
	(2.1,0.4) -- (2.7,1.2) -- (2.1,2)
	(-1.5,-1.2) -- (0.3,-2) -- (2.1,-1.2)
	(2.7,-2) -- (0.9,-2.8) -- (0.3,-2) -- (-0.3,-2.8) -- (-2.1,-2)
	(0.3,-3.6) -- (-0.3,-2.8)
	(3.3,-1.2) -- (3.9,-0.4)
	(4.5,-1.2) -- (5.1,-1.2)
	(3.3,0.4) to[out=75,in=25, distance=1.75cm] (0.3,2.8)
	(3.9,-0.4) to[out=75,in=5, distance=2.75cm] (-0.3,3.6)
	(3.3,-1.2) to[out=-90,in=10, distance=1.75cm] (0.3,-3.6)
	(4.5,-1.2) to[out=-100,in=-20, distance=2cm] (0.3,-3.6)
	;

\draw[line width=1.25]
	(2.1,2) -- (1.5,1.2) -- (-0.3,1.2) -- (-0.9,0.4)
	(1.5,-0.4) -- (2.1,0.4) 
	(2.7,-2) -- (2.1,-1.2)
	(0.3,-3.6) -- (0.9,-2.8)
	(2.7,1.2) -- (4.5,-1.2);

\node at (-1.2,0) {-};
\node at (-1.2,0.8) {-};
\node at (-0.3,1.5) {-};
\node at (-1.5,2) {-};
\node at (0.9,0.8) {-};
\node at (1.2,0) {-};
\node at (2.1,1.2) {-};
\node at (2.7,1.8) {-};
\node at (-1.8,2.75) {-};
\node at (-3.9,1.2) {-};

\end{scope}
}

\node at (0,-1.5) {$S6$};

\end{scope}

\end{scope}

\end{tikzpicture}
\caption{Polar view of $S1, S2, S3, S'3, QP_6, S4, S5, S6$}
\label{Polar-Tilings}
\end{figure}
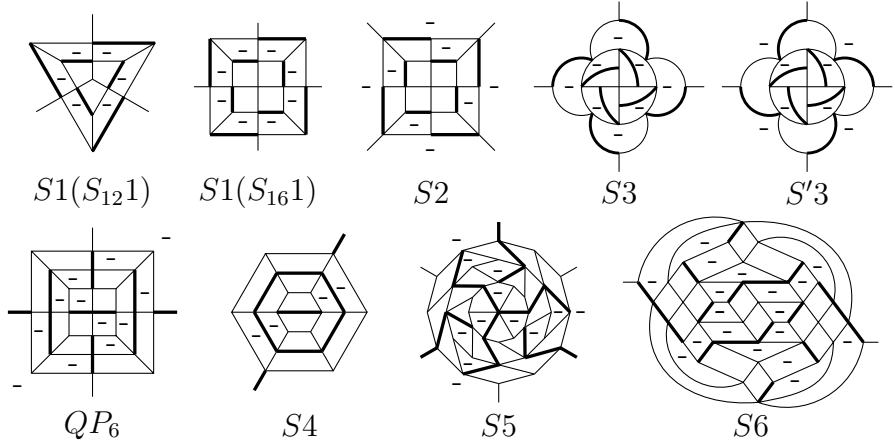

Figure \ref{IsoEMT-S1S2S3S5} presents a different view of $S1, S2, S3, S^{\prime}3, S5$. Comparing with Figure \ref{EMT}, combinatorially, each of them belongs to a family of earth map tilings (with shaded timezones different from that in $E$). However, they can be realised as geometric tilings only for specific numbers of timezones. A combinatorial study on pentagonal earth map tilings was given by Yan \cite{yan}.

\begin{figure}[htp]
\centering

\begin{tikzpicture}

\begin{scope}[]


\begin{scope}[]

\begin{scope}[yscale=1.4]

\fill[gray!30]
	(0,0.6) -- (0.8,0.6) -- (0.8,-0.6) -- (0,-0.6) -- cycle;

\foreach \a in {0,1,2}
{
\begin{scope}[xshift=0.8*\a cm]

\draw
	(0,0.2) -- (0.8,0.2)
	(0,-0.2) -- (0.8,-0.2)
;

\end{scope}
}

\foreach \a in {0,1,2}
{
\begin{scope}[xshift=0.8*\a cm]

\draw
	(0.4,0.2) -- (0.4,-0.2);

\draw[line width=1.5]
	(0,0.2) -- (0.4,0.2)
	(0.4,-0.2) -- (0.8, -0.2)
;
		
\end{scope}
}

\foreach \a in {0,1,2,3}
{
\begin{scope}[xshift=0.8*\a cm]

\draw
	(0,0.2) -- (0,0.6)
	(0,0.2) -- (0,-0.2)
	(0,-0.2) -- (0,-0.6);

\end{scope}
}

\foreach \a in {0,1,2}
	{
	\node at (0.8*\a+0.2,0) {-};	
	\node at (0.8*\a+0.6,-0) {-};
	}

\end{scope}

\node at (1.2,-1.20) {$S1(S_{12}1)$};

\end{scope} 


\begin{scope}[xshift=3.25cm]

\begin{scope}[yscale=1.4]

\fill[gray!30]
	(0,0.6) -- (0.8,0.6) -- (0.8,-0.6) -- (0,-0.6) -- cycle;

\foreach \a in {0,1,2,3}
{
\begin{scope}[xshift=0.8*\a cm]

\draw
	(0,0.2) -- (0.8,0.2)
	(0,-0.2) -- (0.8,-0.2)
;

\end{scope}
}

\foreach \a in {0,1,2,3}
{
\begin{scope}[xshift=0.8*\a cm]

\draw
	(0.4,0.2) -- (0.4,-0.2);

\draw[line width=1.5]
	(0,0.2) -- (0.4,0.2)
	(0.4,-0.2) -- (0.8, -0.2)
;
		
\end{scope}
}

\foreach \a in {0,1,2,3,4}
{
\begin{scope}[xshift=0.8*\a cm]

\draw
	(0,0.2) -- (0,0.6)
	(0,0.2) -- (0,-0.2)
	(0,-0.2) -- (0,-0.6);

\end{scope}
}

\foreach \a in {0,1,2,3}
	{
	\node at (0.8*\a+0.2,0) {-};	
	\node at (0.8*\a+0.6,-0) {-};
	}

\end{scope}

\node at (1.6,-1.2) {$S1(S_{16}1)$};

\end{scope} 


\begin{scope}[xshift = 8.55 cm] 

\begin{scope}[yscale=1.4]

\fill[gray!30]
	(-0.4,0.6) -- (-0.4,0.2) -- (0.0,0.2) -- (0,-0.6) -- (0.8,-0.6) -- (0.8, 0.2) -- (0.4,0.2) -- ( 0.4, 0.6 ) -- cycle;

\foreach \a in {0,1,2,3}
{
\begin{scope}[xshift=0.8*\a cm]

\draw
	(0,0.2) -- (0.8,0.2)
	(0,-0.2) -- (0.8,-0.2)
	(0,-0.2) -- (0,-0.6)
;
	
\draw[
]
	(0,-0.2) -- (0,0.2);
	
\end{scope}
}

\foreach \a in {0,...,3}
{
\begin{scope}[xshift=0.8*\a cm]

\draw
	(0.4,0.2) -- (0.4,-0.2);

\draw[line width=1.5]
	(0,0.2) -- (0.4,0.2)
	(0.4,-0.2) -- (0.8, -0.2)
;
		
\end{scope}
}

\foreach \a in {0,...,4}
{
\begin{scope}[xshift=0.8*\a cm]

\draw
	(-0.4,0.2) -- (-0.4,0.6)
	(-0.4,0.2) -- (0,0.2)
	(0,0.2) -- (0,-0.6)
	;

\end{scope}
}

\foreach \a in {0,1,2,3}
	{
		\node at (0.8*\a+0.2,0) {-};
		\node at (0.8*\a+0.4,-0.4) {-};	
	}

\end{scope}

\node at (1.6,-1.2) {$S2$};

\end{scope} 

\end{scope}

\begin{scope}[yshift=-2.6cm] 


\begin{scope}[]

\begin{scope}[yscale=1.4]


\fill[gray!30]
(0,0.6) -- (0,0.3)  -- (0.4,0.1) -- (0,-0.1) -- (0.4,-0.3) -- (0.4,-0.6) -- (1.2,-0.6) -- (1.2,-0.3) -- (0.8,-0.1) -- (1.2,0.1) -- (0.8, 0.3) -- (0.8,0.6) -- cycle;

\foreach \a in {0,...,3}
{
\begin{scope}[xshift=0.8*\a cm]

\draw
	(0,0.6) -- (0,0.3) -- (0.4,0.1) -- (0.8,0.3) -- (0.8,0.6)
	(1.2,-0.6) -- (1.2,-0.3) -- (0.8,-0.1) -- (1.2,0.1) -- (0.8, 0.3)
	(0,-0.1) -- (0.4,-0.3) -- (0.8,-0.1) -- (0.4,0.1) -- (0,-0.1)
	(0.4,-0.3) -- (0.4,-0.6);


\end{scope}
}

\foreach \b in {0,...,4} {
\begin{scope}[xshift=0.8*\b cm]
\draw[line width=1.5]
	(0.4,0.1) -- (0.0,0.3)
	(0.0,-0.1) -- (0.4,-0.3);
\end{scope}
}

\foreach \a in {1,2,3,4}
	{
	\node at (0.8*\a,0.1) {-};		
	\node at (0.8*\a-0.4,-0.1) {-};
}

\end{scope}

\node at (2.0,-1.2) {$S3$};
		
\end{scope}


\begin{scope}[xshift=4.1cm]

\begin{scope}[yscale=1.4]

\fill[gray!30]
(0,0.6) -- (0,0.3)  -- (0.4,0.1) -- (0,-0.1) -- (0.4,-0.3) -- (0.4,-0.6) -- (1.2,-0.6) -- (1.2,-0.3) -- (0.8,-0.1) -- (1.2,0.1) -- (0.8, 0.3) -- (0.8,0.6) -- cycle;


\foreach \a in {0,...,3}
{
\begin{scope}[xshift=0.8*\a cm]

\draw
	(0,0.6) -- (0,0.3) -- (0.4,0.1) -- (0.8,0.3) -- (0.8,0.6)
	(1.2,-0.6) -- (1.2,-0.3) -- (0.8,-0.1) -- (1.2,0.1) -- (0.8, 0.3)
	(0,-0.1) -- (0.4,-0.3) -- (0.8,-0.1) -- (0.4,0.1) -- (0,-0.1)
	(0.4,-0.3) -- (0.4,-0.6);
	
\draw[line width=1.5]
	(0.4,-0.3) -- (0.8,-0.1);

\end{scope}
}

\foreach \b in {0,...,4}{
\begin{scope}[xshift=0.8*\b cm]
\draw[line width=1.5]
	(0.4,0.1) -- (0.0,0.3);
\end{scope}
}

\foreach \a in {1,2,3,4}
	{
		\node at (0.8*\a,0.1) {-};
		\node at (0.8*\a,-0.4) {-};	
	}

\end{scope}

\node at (2.0,-1.2) {$S^{\prime}3$};

\end{scope}


\begin{scope}[xshift=9cm] 

\begin{scope}[yscale=1.05]

\fill[gray!25]
	(-0.8,0.8) -- (-0.8,0) -- (-0.4,0) -- (-0.4,-0.8) -- (1.2,-0.8) -- (1.2,0) -- (0.8, 0) -- (0.8,0.8) -- cycle;

\foreach \a in {0,1,2}
{
\begin{scope}[xshift=1.6*\a cm]

\draw
	(0,0.4) -- (0,0.8)
	(-0.8,0) -- (-0.8, 0.8)
	(-0.8,0.4) -- (0.8,0.4)
	(-0.8,0) -- (0,0) -- (0.8,0)
	(-0.4,-0.8) -- (-0.4,0) -- (-0.4, 0.4)
	(0.4,0) -- (0.4,0.4) 
	(0, 0) -- (0,-0.4)
	(0.8, 0) -- (0.8,-0.4)
	(-0.4,-0.4) -- (0.0,-0.4) -- (0.4,-0.4) -- (0.8,-0.4) -- (1.2,-0.4)
	(0.4,-0.4) -- (0.4,-0.8)
;

\draw[line width=1.5]
	(-0.4,0) -- (0.4,0.4)
	(0,-0.4) -- (0.8,0);

\end{scope}
}

\foreach \b in {0,1,2,3}
{
\begin{scope}[xshift=1.6*\b cm]

\draw
	(-0.8,0) -- (-0.4,0);

\draw[line width=1.5]
	(-0.8,0) -- (-0.8, 0.8)
	(-0.4,-0.8) -- (-0.4,0)
;

\end{scope}
}

\foreach \a in {0,1,2}
	{
		
		\node at (0.4+1.6*\a,0.6) {-};
		\node at (1.6*\a,-0.6) {-};
		\node at (0.2+1.6*\a,0.12) {-};
		\node at (0.2+1.6*\a,-0.12) {-};
}

\end{scope}

\node at (2.0,-1.2) {$S5$};

\end{scope}

\end{scope}

\end{tikzpicture}

\caption{Isolated earth map tilings $S1, S2, S3, S^{\prime}3, S5$}
\label{IsoEMT-S1S2S3S5}
\end{figure}
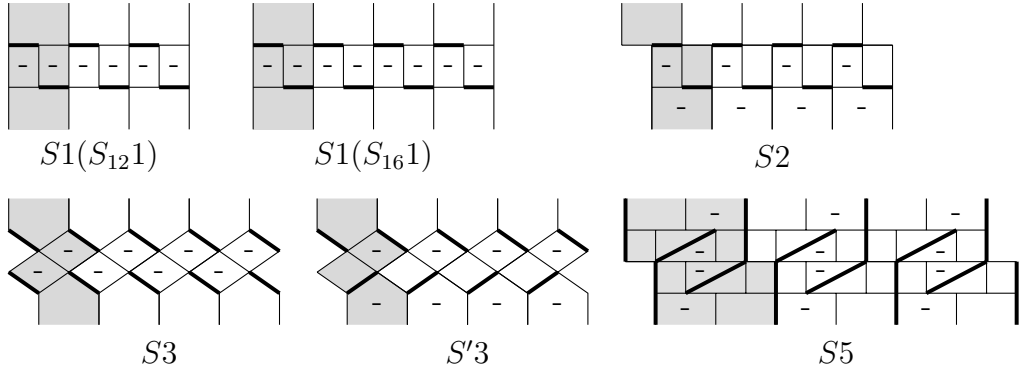

Tables \ref{SpecialTilingData1}, \ref{SpecialTilingData2}, \ref{EMTData1}, \ref{EMTData2} give the geometric and combinatoric data of the tilings.

The paper is organised as follows. Section \ref{SecBasic} explains the basic tools and the strategy. Section \ref{SecRat} studies the tilings where all angles are rational, Section \ref{SecNonRat} studies the tilings where some angles are non-rational. Section \ref{SecGeom} verifies the existence of tilings and explains the data.

\begin{table}[htp]
\begin{center}
\bgroup
\def\arraystretch{1.5}
    \begin{tabular}[]{ | c | c | c | M{1.3cm} | }
	\hline
	Tilings & $f$ & Edges and Angles & Vertices \\ \hhline{|====|}
	$P_6$ & $6$ & \parbox[c][2.15cm][c]{9cm}{$a = \cos^{-1}  \frac{\cos \alpha}{\cos \alpha - 1} $, \\ $b=\cos^{-1} \frac{ (2\cos \alpha - 1)\cos (\alpha + \frac{2}{3}\pi) - \cos^2 \alpha  }{ ( 1 - \cos \alpha )^2 }$, \\ $\alpha+ \gamma + \delta =2\pi$, $\beta =\frac{2}{3}\pi$ } & \parbox[c][2.15cm][c]{1cm}{$ 6\alpha\gamma\delta, \\ 2\beta^3 $}\\
	\hline
	$S1$ & $12$ & \parbox[c][3.5cm][c]{9cm}{$a=\cos^{-1} ( \frac{2}{3}\sqrt{5} - 1 ) \approx 0.34\pi$, \\ $b=\cos^{-1} ( 3 \sqrt{5} - 6 ) \approx 0.25\pi$, \\ $\alpha =  2\cos^{-1}  \frac{1}{4}\sqrt{10} \approx 0.42\pi$, \\ $\beta =\frac{2}{3}\pi$, \\ $\gamma = \frac{2}{3}\pi - \cos^{-1} \frac{1}{4}\sqrt{10} \approx 0.46\pi$, \\ $\delta = \pi - \cos^{-1} \frac{1}{4}\sqrt{10} \approx 0.80\pi$} & \parbox[c][2.15cm][c]{1cm}{$ 6\alpha\delta^2, \\ 6\alpha\beta\gamma^2, \\ 2\beta^3 $} \\
	\hline
	$S1$ & $16$ &  \parbox[c][4.0cm][c]{9cm}{$a=\cos^{-1}  \frac{ 1 }{2} ( -3 - \sqrt{2} + \sqrt{5} + \sqrt{10} ) \approx 0.34\pi$, \\ $b=\cos^{-1} (  -9 - 6\sqrt{2} + 4\sqrt{5} + 3\sqrt{10} ) \approx 0.11\pi$, \\ $\alpha= 2\cos^{-1}  \frac{1}{\sqrt{12}}  \sqrt{ 7 + \sqrt{2} + \sqrt{5} - \sqrt{10} } \approx 0.42\pi$, \\[0.2ex] $\beta =\frac{1}{2}\pi$, \\ $\gamma = \frac{3}{4}\pi - \cos^{-1} \frac{1}{\sqrt{12}}  \sqrt{  7 + \sqrt{2} + \sqrt{5} - \sqrt{10} } \approx 0.54\pi$, \\ $\delta = \pi - \cos^{-1}  \frac{1}{\sqrt{12}} \sqrt{ 7 + \sqrt{2} + \sqrt{5} - \sqrt{10} } \approx 0.79\pi$ } & \parbox[c][2.15cm][c]{1cm}{$ 8\alpha\delta^2, \\ 8\alpha\beta\gamma^2, \\ 2\beta^4$} \\
	\hline
	$S2$ & $16$ &  \parbox[c][4.0cm][c]{9cm}{$a=\cos^{-1}   \frac{ 1 }{ \sqrt{7} } \sqrt{   2\sqrt{2} - 1  } \approx 0.33\pi$, \\ $b=\cos^{-1} \frac{ 1}{ \sqrt{7} }  \sqrt{ 22\sqrt{2} - 25 } \approx 0.12\pi$, \\ $\alpha = \frac{1}{2}\pi$, \\ $\beta = \cos^{-1}   \frac{ 1 }{ 2 } ( \sqrt{2} - 1 ) \approx 0.43\pi$, \\[0.5ex] $\gamma =  \frac{3}{4}\pi$, \\ $\delta = \cos^{-1}  \frac{ 1 }{ 2 } ( 1 - \sqrt{2} ) \approx 0.57\pi $} & \parbox[c][2.15cm][c]{1cm}{$ 8\alpha\gamma^2, \\[0.5ex] 8\beta^2\delta^2, \\ 2\alpha^4 $} \\
	\hline
	$S3$ & \multirow{2}{*}{$16$} & \multirow{2}{*}{  \parbox[m][1.5cm][c]{9cm}{$a=\frac{1}{4}\pi$, $b=\frac{1}{2}\pi$, \\[0.75ex] $\alpha=\pi$, $\beta =\frac{1}{2}\pi$, $\gamma = \frac{1}{2}\pi$, $\delta = \frac{1}{4}\pi$ } } & \multirow{2}{*}{\parbox[c][1.55cm][c]{1cm}{$ 8\alpha\gamma^2, \\ 8\alpha\beta\delta^2, \\ 2\beta^4 $}} \\
	\cline{1-1}
	 $S^{\prime}3$ &  & &  \\
	\hline
	\end{tabular}
\egroup
\end{center}
\caption{Data of isolated tilings 1}
\label{SpecialTilingData1}
\end{table}

\begin{table}[htp]
\begin{center}
\bgroup
\def\arraystretch{1.5}
    \begin{tabular}[]{ | c | c | c | M{1.4cm} | }
	\hline
	Tilings & $f$ & Edges and Angles & Vertices \\ \hhline{|====|}
	$S4$ & $16$ & \parbox[c][3.5cm][c]{7.0cm}{$a=\frac{1}{4}\pi$, \\ $b=\cos^{-1} \frac{1}{4} (2\sqrt{2} - 1) \approx 0.35\pi$, \\[0.5ex] $\alpha=\frac{1}{2}\pi$, $\beta = \frac{3}{4}\pi$, \\ $\gamma =  \cos^{-1} \frac{1}{ \sqrt{17} }  \sqrt{  7 - 4 \sqrt{2} } \approx 0.41\pi$, \\ $\delta = \pi - \cos^{-1}  \frac{1}{ \sqrt{17} }   \sqrt{ 7 - 4 \sqrt{2} } \approx 0.59\pi$} & \parbox[c][1.5cm][c]{1.15cm}{$ 8\alpha\beta^2, \\ 4\alpha^2\gamma\delta, \\ 6\gamma^2\delta^2  $} \\
	\hline
	$QP_6$ & $24$ &  \parbox[c][4.05cm][c]{7.0cm}{$a = \cos^{-1} \frac{1}{\sqrt{13}} \sqrt{ 5 + 2\sqrt{3} }  \approx 0.20\pi$, \\ $b=\cos^{-1} \frac{1}{\sqrt{13}} \sqrt{  2(4 - \sqrt{3} ) } \approx 0.30\pi$, \\ $\alpha=\frac{2}{3}\pi$, \\ $\beta = \pi - \sin^{-1} \frac{1}{\sqrt{6}}  \sqrt{4+\sqrt{3}} \approx 0.57\pi$, \\ $\gamma = \frac{1}{2}\pi$, \\ $\delta = \sin^{-1}  \frac{1}{\sqrt{6}}  \sqrt{4+\sqrt{3}}  \approx 0.43\pi $ } & \parbox[c][1.5cm][c]{1.15cm}{$ 8\alpha^3, \\ 12\beta^2\delta^2,\\ 6\gamma^4 $} \\
	\hline	
	$S5$ & $36$ & \parbox[c][3.25cm][c]{7.0cm}{$a=\cos^{-1}  \frac{ \sin \frac{2}{9}\pi + 2 \sin \frac{4}{9}\pi } { \sqrt{3}  (1 + \cos \frac{2}{9}\pi ) } \approx 0.17\pi$, \\ $b=\cos^{-1}  \frac{1}{3} (4 \sin^2\frac{1}{9}\pi - \sqrt{3} \cot \frac{4}{9}\pi \\ + 2\sqrt{3} \cos \frac{2}{9}\pi \cot \frac{4}{9}\pi + 4 \sin \frac{4}{9}\pi \tan\frac{1}{9}\pi ) $ \\[0.75ex] $\approx 0.26\pi$, \\ $\alpha=\frac{4}{9}\pi$, $\beta = \frac{7}{9}\pi$, $\gamma=\frac{1}{3}\pi$, $\delta = \frac{5}{9}\pi$ } & \parbox[c][2.75cm][c]{1.15cm}{$ 18\alpha\beta^2, \\ 6\alpha^2\delta^2, \\ 6\gamma\delta^3, \\ 6\alpha\gamma^3\delta, \\ 2\gamma^6$} \\
	\hline
	$S6$ & $36$ & \parbox[c][2.5cm][c]{7.0cm}{$a = \cos^{-1} ( 4 \cos \frac{1}{9}\pi - 3 ) \approx 0.23\pi$, \\ $ b = \cos^{-1} (  6 \cos \frac{1}{9}\pi + 2 \sqrt{3} \sin \frac{1}{9}\pi$ \\ $- 3 \sqrt{3} \tan \frac{1}{9}\pi  - 4 ) \approx 0.12\pi$, \\[0.75ex] $\alpha = \frac{1}{3}\pi$, $\beta = \frac{5}{9}\pi$, $\gamma = \frac{7}{18}\pi$, $\delta = \frac{5}{6}\pi$.} & \parbox[c][2.5cm][c]{1.15cm}{$14 \alpha\delta^2, \\ 10\alpha\beta^3, \\ 8\gamma^3\delta, \\ 6\alpha^2\beta\gamma^2 $} \\
	\hline
	\end{tabular}
\egroup
\end{center}
\caption{Data of isolated tilings 2}
\label{SpecialTilingData2}
\end{table}

\begin{table}[htp]
\begin{center}
\bgroup
\def\arraystretch{1.5}
    \begin{tabular}[]{ | c | c | c | M{2.05cm} | }
	\hline
	Tilings & $f$ & Edges and Angles & Vertices \\ \hhline{|====|}
	$E, E^{\prime}$ & $\ge6$ & \parbox[c][1.5cm][c]{5.75 cm}{$a=\cos^{-1}  \frac{\cos \alpha}{\cos \alpha - 1} $, \\ $b=\cos^{-1} \frac{ (2\cos \alpha - 1)\cos (\alpha + \beta) - \cos^2 \alpha  }{ ( 1 - \cos \alpha )^2 } $} & \\
	 \hhline{|====|}
	$E$ & $\ge6$ &  \parbox[c][1.25cm][c]{5.75 cm}{$\alpha + \gamma + \delta = 2\pi $, $\beta = \frac{4}{f}\pi$} & \parbox[c][1.25cm][c]{4cm}{$f \alpha\gamma\delta, \\[0.5ex] 2\beta^{\frac{f}{2}} $} \\
	\hline
	\multirow{6}{*}{$E^{\prime}$} & \multirow{6}{*}{$\ge8$} & \parbox[c][1.9cm][c]{5.75 cm}{ $\alpha = \tfrac{2}{3}\pi$, $\beta =\frac{4}{f}\pi$, \\ $\gamma + \delta = \tfrac{4}{3}\pi$ } & \parbox[c][1.9cm][c]{4 cm}{$(f-6)\alpha\gamma\delta, \\[0.25ex] 2\alpha^3, \\[0.25ex] 6\beta^{\frac{f}{6}}\gamma\delta $}  \\
	\cline{3-4}
	& & \parbox[c][1.8 cm][c]{5.75 cm}{$\alpha = (1-\frac{4}{f})\pi$, $\beta =\frac{4}{f}\pi$, \\[0.5ex] $\gamma + \delta = (1 + \frac{4}{f} )\pi$ } & \parbox[c][1.8 cm][c]{4cm}{$(f-4)\alpha\gamma\delta, \\[0.25 ex] 2\alpha^2\beta^{2}, \\ 4\beta^{\frac{f}{4}-1}\gamma\delta $} \\
	\cline{3-4}
	 &  & \parbox[c][2.1cm][c]{5.75 cm}{$n \in ( \tfrac{f}{8}, \tfrac{f}{6} - \tfrac{1}{3} ]$, \\[0.5ex]  $\alpha = \frac{4n}{f}\pi$, $\beta = \frac{4}{f}\pi$, \\[0.5ex] $\gamma+\delta = (2 - \frac{4n}{f} )\pi$, $\gamma>\pi$ 
 }  & \parbox[c][2.1cm][c]{4cm}{$(f-6)\alpha\gamma\delta, \\[0.5ex] 2\alpha^3\beta^{\frac{f}{2} - 3n}, \\ 6\beta^{n}\gamma\delta$ } \\
	\cline{3-4}
	 &  & \parbox[c][2.15cm][c]{5.75 cm}{$n \in ( \tfrac{f}{8},  \tfrac{f}{4} - 1 )$, \\[0.5ex] $\alpha = \frac{4n}{f}\pi$, $\beta = \frac{4}{f}\pi$, \\[0.5ex]  $\gamma+\delta = (2 - \frac{4n}{f} )\pi$, $\gamma>\pi$ }  & \parbox[c][2.15cm][c]{4cm}{$(f-4)\alpha\gamma\delta, \\[0.5ex] 2\alpha^2\beta^{\frac{f}{2} - 2n}, \\ 4\beta^{n}\gamma\delta$ } \\
	\cline{3-4}
	& & \parbox[c][1.85cm][c]{5.75 cm}{$\alpha=\pi$, $\beta=\frac{4}{f}\pi$, \\ $\gamma+\delta=\pi$} & \parbox[c][1.85cm][c]{4cm}{$(f-2)\alpha\gamma\delta, \\[0.2ex]  2\alpha\beta^{\frac{f}{4}}, \\[0.25ex] 2\beta^{\frac{f}{4}}\gamma\delta$} \\
	\cline{3-4}
	& & \parbox[c][1.9cm][c]{5.75 cm}{$\alpha= (1 - \frac{4}{f} ) \pi$, $\beta=\frac{4}{f}\pi$, \\[0.5ex] $\gamma+\delta= (1 + \frac{4}{f} ) \pi$} & \parbox[c][1.9cm][c]{4cm}{$(f-2)\alpha\gamma\delta, \\[0.4ex]  2\alpha\beta^{\frac{f}{4}+1}, \\[0.2 ex] 2\beta^{\frac{f}{4}-1}\gamma\delta$} \\
	\cline{3-4}
	& & \parbox[c][2.75cm][c]{5.75 cm}{$n \in \begin{cases}  ( \frac{f}{4}, \frac{3f}{8} ), \ \text{if } \alpha > \pi, \\ ( \frac{f}{8}, \frac{f}{4} - 1 ), \ \text{if } \gamma > \pi; \end{cases}$ \\[0.5ex] $\alpha =  \frac{4n}{f}\pi$, $\beta = \frac{4}{f}\pi$, \\ $\gamma+\delta = ( 2 - \tfrac{4n}{f} )\pi $ } & \parbox[c][2.2cm][c]{4cm}{$ (f-2)\alpha\gamma\delta, \\[0.25ex]  2\alpha\beta^{\frac{f}{2} - n}, \\ 2\beta^{n}\gamma\delta $} \\
	\hline
	\end{tabular}
\egroup
\end{center}
\caption{Data of earth map tilings 1}
\label{EMTData1}
\end{table}

\begin{table}[htp]
\begin{center}
\bgroup
\def\arraystretch{1.5}
    \begin{tabular}[]{ | c | c | c | @{}c@{} | }
	\hline
	Tilings & $f$ & Edges and Angles & Vertices \\ \hhline{|====|}
	$E^{\prime\prime}, E^{\prime\prime\prime}$ & $\ge8$ & \parbox[c][1.5cm][c]{5.75 cm}{$a=\cos^{-1}  \frac{\cos \alpha}{\cos \alpha - 1} $, \\ $b=\cos^{-1} \frac{ (2\cos \alpha - 1)\cos (\alpha + \beta) - \cos^2 \alpha  }{ ( 1 - \cos \alpha )^2 } $} & \parbox[c][1cm][c]{2.4 cm}{\quad} \\
	 \hhline{|====|}
	\multirow{4}{*}{$E^{\prime\prime}$} & \multirow{4}{*}{$\ge8$} & \parbox[c][2cm][c]{5.75 cm}{$\alpha=\pi$, $\beta = \frac{4}{f}\pi$, \\ $\gamma+\delta = \pi$ } &  \parbox[c][2cm][c]{2 cm}{$(f-4)\alpha\gamma\delta, \\[0.25ex] 4\alpha\beta^{\frac{f}{4}}, \\[0.4ex] 2\gamma^2\delta^2$} \\
	\cline{3-4}
	 &  &   \parbox[c][2.1cm][c]{5.75 cm}{$n \in (  \tfrac{f}{8},  \tfrac{f}{4} )$, \\[0.5ex] $\alpha = (2 - \frac{4n}{f} )\pi$, $\beta = \frac{4}{f}\pi$, \\ $\gamma+\delta = \frac{4n}{f}\pi$} &  \parbox[c][2cm][c]{2 cm}{$(f-4)\alpha\gamma\delta, \\[0.5ex]  4\alpha\beta^{n}, \\[0.5ex] 2\beta^{\frac{f}{2}-2n}\gamma^2\delta^2$}  \\
	\cline{3-4}
	 &  &   \parbox[c][2.1cm][c]{5.75 cm}{$n \in (  \tfrac{f}{8},  \tfrac{f}{6} )$, \\[0.5ex] $\alpha = (2 - \frac{4n}{f} )\pi$, $\beta = \frac{4}{f}\pi$, \\ $\gamma+\delta = \frac{4n}{f}\pi$} & \parbox[c][2cm][c]{2 cm}{ $(f-6)\alpha\gamma\delta, \\[0.5ex] 6\alpha\beta^{n}, \\[0.5ex] 2\beta^{\frac{f}{2}-3n}\gamma^3\delta^3$ } \\
	\cline{3-4}
	 &  &   \parbox[c][2 cm][c]{5.75 cm}{$\alpha =\tfrac{4}{3}\pi$, $\beta = \frac{4}{f}\pi$, \\ $\gamma+\delta = \tfrac{2}{3}\pi$} & \parbox[c][2cm][c]{2 cm}{ $(f-6)\alpha\gamma\delta, \\[0.5ex] 6\alpha\beta^{\frac{f}{6}}, \\[0.5ex] 2\gamma^3\delta^3$ } \\
	\cline{4-4}
	\hline
$E^{\prime\prime\prime}$ &  $\ge8$ &  \parbox[c][2.6cm][c]{5.75 cm}{ $\alpha = ( \frac{4}{3} - \frac{4}{3f} )\pi$, $\beta = \frac{4}{f}\pi$, \\[0.75ex] $\gamma = ( \frac{2}{3} - \frac{2}{3f} )\pi$, $\delta = \frac{2}{f}\pi$ } & \parbox[c][2cm][c]{2 cm}{$(f-6)\alpha\gamma\delta,  \\[0.5ex]  2\gamma^3\delta, \\[0.5ex] 4\alpha\beta^{\frac{f+2}{6}}, \\ 2\alpha\beta^{\frac{f-4}{6}}\delta^2$} \\
	\hline
	\end{tabular}
\egroup
\end{center}
\caption{Data of earth map tilings 2}
\label{EMTData2}
\end{table}

\section{Basic Tools} \label{SecBasic}

\subsection{Concepts and Notations} \label{SubsecConcept}

\subsubsection*{Quadrilateral}

A polygon is {\em simple} if the boundary is simple. A polygon is {\em convex} if it is simple and every angle $\le \pi$. By \cite[Lemma 1]{gsy}, at least one tile in a tiling of the sphere is simple. If all the tiles are congruent, then all tiles are simple. 

The polygons in Sections \ref{SecBasic}, \ref{SecRat}, \ref{SecNonRat} are assumed to be almost equilateral quadrilaterals. In Section \ref{SecGeom}, we also calculate geometric data of $a^2bc$ quadrilaterals in tilings. 

For quadrilaterals in tilings, we assume the angles and the edges have values in $(0,2\pi)$. The simple tile implies $a < \pi$ for both quadrilaterals in Figure \ref{StdQuad}. 

Since the area of the quadrilateral is the surface area of the unit sphere divided by $f$, the {\em quadrilateral angle sum} is 
\begin{align}\label{QuadSum}
 \alpha +  \beta +  \gamma + \delta = (2 + \tfrac{4}{f} )\pi.
\end{align}

\subsubsection*{Vertex}

The {\em vertex angle sum} of a vertex $\alpha^m\beta^n\gamma^k\delta^l$, consisting of $m$ copies of $\alpha$ and $n$ copies of $\beta$ and $k$ copies of $\gamma$ and $l$ copies of $\delta$, is
\begin{align}\label{VertexAngSum}
m \alpha + n \beta + k \gamma + l \delta = 2\pi.
\end{align} 
By \eqref{QuadSum}, at least one of the non-negative integers $m,n,k,l$ is zero. At a vertex, these are generic notations reserved for the numbers of $\alpha, \beta, \gamma, \delta$ respectively. For example, $\alpha\beta^2$ is a vertex with $m=1, n=2$ and $k=l=0$. 

In our practice, $m,n,k,l$ in a vertex notation are assumed to be $>0$ unless otherwise specified. That is, we only express the angles appearing at a vertex whenever possible. For example, $\alpha^m\beta^n$ does not include $\alpha^m, \beta^n$. Such practice is one subtle difference from \cite{cly}. To streamline the discussion, we give a shorthand argument: when $\alpha\beta^2$ is a vertex, we simply say \quotes{by $\alpha\beta^2$} to mean by \quotes{$\alpha\beta^2$ being a vertex} or \quotes{by the angle sum $\alpha + 2\beta = 2\pi$ of $\alpha\beta^2$}. We use $\alpha=\beta$ to mean $\alpha,\beta$ having the same value. We use $\alpha\neq\beta$ to mean $\alpha, \beta$ having distinct values.

The notation $\alpha\beta^2\cdots$ means a vertex with at least one $\alpha$ and two $\beta$'s, i.e., $m\ge1$ and $n\ge2$. The angle combination in $\cdots$ is called the {\em remainder} of the vertex. A {\em $b$-vertex} is a vertex with a $b$-edge (i.e., with $\gamma,\delta$) and a {\em $\hat{b}$-vertex} is a vertex without (i.e., without $\gamma,\delta$).

The critical step in classifying tilings is to find all the possible vertices. There are various constraints on the possible angle combinations at vertices in a tiling. We call the combinations satisfying the constraints {\em admissible}. An {\em anglewise vertex combination} ($\AVC$) is a collection of all admissible vertices in a tiling. Examples of such constraints are the vertex angle sum and the quadrilateral angle sum. The following is an example of a set of admissible vertices, $\AVC$ \eqref{GenAVC-algade-ga3de} from Proposition \ref{RatAlGaDeProp},
\begin{align*} 
\AVC = \{ \alpha\gamma\delta,  \gamma^3\delta,  \beta^n,   \alpha\beta^n,   \alpha\beta^n\delta^2,  \beta^n\gamma\delta,  \beta^n\gamma^2\delta^2  \}.
\end{align*}
The generic $n$ may take different values at different vertex. We remark that some vertices in an $\AVC$ may not appear in a tiling. For example, the $\AVC$ of the earth map tiling $E$ below has only two vertices,
\begin{align*}
\AVC \equiv \{  \alpha\gamma\delta, \beta^{\frac{f}{2}} \}.
\end{align*}
Here we use \quotes{$\equiv$} instead of \quotes{$=$} to denote the set of all vertices which actually appear in a tiling.

An {\em angle sum system} is a linear system consisting of the quadrilateral angle sum and vertex angle sums. For example, for vertices $\alpha^{m_1}\beta^{n_1}\gamma^{k_1}\delta^{l_1}$, $\alpha^{m_2}\beta^{n_2}\gamma^{k_2}\delta^{l_2}, \alpha^m\beta^n\gamma^k\delta^l$ in a tiling, where $m_i, n_i, k_i, l_i, m, n, k, l \ge 0$ and $1 \le i \le 2$, the angles satisfy the angle sum system below,
\begin{align*}
\begin{cases}
\alpha + \beta + \gamma  + \delta = (2+\tfrac{4}{f} )\pi, \\
m_1 \alpha + n_1 \beta + k_1 \gamma + l_1 \delta = 2\pi, \\
m_2 \alpha + n_2 \beta + k_2 \gamma + l_2 \delta = 2\pi, \\
m \alpha + n \beta + k \gamma + l \delta = 2\pi.
\end{cases}
\end{align*}
If the four equations are linearly independent, then the unique solution implies that all four angles are rational. If some angle is non-rational, then this angle sum system has rank $\le 3$, which we call the {\em non-rationality condition}. 

There is an exception if $\alpha\gamma\delta$ is a vertex. The system is a bit different and the non-rationality condition is rank $=2$. The earth map tiling is an example of tiling having only two vertices. Therefore the angle sum system consists of three equations. It is indeed not always possible to obtain enough linearly independent equations to effectively determine the angles. 

In fact, as seen in \cite{cly}, technical and mostly ad hoc combinatorial arguments are required to derive three vertices in the majority of cases. By dividing into rational angle and non-rational angle analysis albeit artificial, we can systematically determine all the vertices. Our strategy is outlined in Section \ref{SubsecStrategy}, and implementation is in Sections \ref{SecRat}, \ref{SecNonRat}.

The notations $\# \alpha, \# \beta$,... denote the total number of $\alpha$, the total number of $\beta$, etc., in a tiling. If each angle appears exactly once at the quadrilateral, then 
\begin{align*}
f=\# \alpha = \# \beta = \# \gamma = \# \delta.
\end{align*}
We also, for example, denote by $\# \alpha\delta^2$ the total number of vertex $\alpha\delta^2$ in a tiling.
In $\AVC$ \eqref{ad2ab3AVC}, we have $\AVC=\{ \alpha\delta^2, \alpha\beta^3, \gamma^3\delta, \alpha^2\beta\gamma^2 \}$. Then we have
\begin{align*}
f &= \# \alpha = \# \alpha\delta^2 + \# \alpha\beta^3 + 2\# \alpha^2\beta\gamma^2,  \\
f &= \# \beta = 3\# \alpha\beta^3 + \# \alpha^2\beta\gamma^2, \\
f &= \# \gamma = 3\# \gamma^3\delta + 2\# \alpha^2\beta\gamma^2, \\
f &= \# \delta =2 \# \alpha\delta^2 + \# \gamma^3\delta.
\end{align*}

\subsubsection*{Adjacent Angle Deduction}

Angles at a vertex can be arranged in various ways. An {\em adjacent angle deduction} (AAD) is a convenient notation representing the angle arrangement and the tile arrangement at a vertex. Symbolically, \quotes{ $\vert$ } denotes an $a$-edge and \quotes{ $\bvert$ } denotes a $b$-edge. For example, all three pictures in Figure \ref{AADEg} are AADs of $\beta^2\gamma^2$ for the almost equilateral quadrilateral. The AADs of $\bvert \, \gamma \vert \beta \vert \beta \vert \gamma \, \bvert$ in the pictures can be further represented by $\bvert^{\,\delta}  \gamma^{\beta} \vert^{\gamma} \beta^{\alpha} \vert^{\alpha} \beta^{\gamma} \vert^{\beta} \gamma^{\delta} \, \bvert$, $\bvert^{\,\delta} \gamma^{\beta} \vert^{\gamma} \beta^{\alpha} \vert^{\gamma} \beta^{\alpha} \vert^{\beta} \gamma^{\delta} \, \bvert$ and $\bvert^{\,\delta} \gamma^{\beta} \vert^{\alpha} \beta^{\gamma} \vert^{\gamma} \beta^{\alpha} \vert^{\beta} \gamma^{\delta} \, \bvert$ respectively.

\begin{figure}[htp]
\centering
\begin{tikzpicture}[>=latex,scale=1]

\begin{scope}
\draw
	(1.2,0) -- (1.2, 1.2)-- (0, 1.2) -- (-1.2, 1.2) -- (-1.2, 0)
	(-1.2,0) -- (0,0) -- (0, 1.2)
	(0,0) -- (1.2,0)
	(0, -1.2) -- (1.2,-1.2)
	(1.2,0) -- (1.2, -1.2)
	(-1.2,0) -- (-1.2, -1.2);

\draw[line width=2]
	(0,0) -- (0,-1.2)
	(-1.2,0) -- (-1.2, 1.2)
	(1.2,0) -- (1.2,1.2);

\draw[
]
	(0,1.2) -- (1.2, 1.2)
	(0,1.2) -- (-1.2,1.2)
	(-1.2,-1.2) -- (0, -1.2)
	(0, -1.2) -- (1.2,-1.2);

\node at (0.2,1) {\small $\alpha$}; 
\node at (0.2,0.2) {\small $\beta$}; 
\node at (1,0.2) {\small $\gamma$}; 
\node at (1,0.95) {\small $\delta$}; 

\node at (-0.2,0.2) {\small $\beta$}; 
\node at (-0.2,1) {\small $\alpha$}; 
\node at (-1,0.95) {\small $\delta$}; 
\node at (-1,0.2) {\small $\gamma$}; 

\node at (-0.2,-0.25) {\small $\gamma$}; 
\node at (-0.2,-1) {\small $\delta$}; 
\node at (-1,-1) {\small $\alpha$}; 
\node at (-1,-0.25) {\small $\beta$}; 

\node at (0.2,-0.25) {\small $\gamma$}; 
\node at (0.2,-1) {\small $\delta$}; 
\node at (1,-1) {\small $\alpha$}; 
\node at (1,-0.25) {\small $\beta$}; 
\end{scope}

\begin{scope}[xshift = 4 cm]
\draw
	(1.2,0) -- (1.2, 1.2)-- (0, 1.2) -- (-1.2, 1.2) -- (-1.2, 0)
	(-1.2,0) -- (0,0) -- (0, 1.2)
	(0,0) -- (1.2,0)
	(0, -1.2) -- (1.2,-1.2)
	(1.2,0) -- (1.2, -1.2)
	(-1.2,0) -- (-1.2, -1.2);

\draw[line width=2]
	(0,0) -- (0,-1.2)
	(-1.2,0) -- (-1.2, 1.2)
	(0,1.2) -- (1.2, 1.2);

\draw[
]
	(1.2,0) -- (1.2,1.2)
	(0,1.2) -- (-1.2,1.2)
	(-1.2,-1.2) -- (0, -1.2)
	(0, -1.2) -- (1.2,-1.2);

\node at (0.2,0.2) {\small $\beta$}; 
\node at (1,0.2) {\small $\alpha$}; 
\node at (1,0.95) {\small $\delta$}; 
\node at (0.15,0.95) {\small $\gamma$}; 

\node at (-0.2,0.2) {\small $\beta$}; 
\node at (-0.2,1) {\small $\alpha$}; 
\node at (-1,0.95) {\small $\delta$}; 
\node at (-1,0.2) {\small $\gamma$}; 

\node at (-0.2,-0.25) {\small $\gamma$}; 
\node at (-0.2,-1) {\small $\delta$}; 
\node at (-1,-1) {\small $\alpha$}; 
\node at (-1,-0.25) {\small $\beta$}; 

\node at (0.2,-0.25) {\small $\gamma$}; 
\node at (0.2,-1) {\small $\delta$}; 
\node at (1,-1) {\small $\alpha$}; 
\node at (1,-0.25) {\small $\beta$}; 
\end{scope}

\begin{scope}[xshift = 8 cm]
\draw
	(1.2,0) -- (1.2, 1.2)-- (0, 1.2) -- (-1.2, 1.2) -- (-1.2, 0)
	(-1.2,0) -- (0,0) -- (0, 1.2)
	(0,0) -- (1.2,0)
	(0, -1.2) -- (1.2,-1.2)
	(1.2,0) -- (1.2, -1.2)
	(-1.2,0) -- (-1.2, -1.2);

\draw[line width=2]
	(0,0) -- (0,-1.2)
	(0,1.2) -- (-1.2,1.2)
	(0,1.2) -- (1.2, 1.2);

\draw[
]
	(1.2,0) -- (1.2,1.2)
	(-1.2,0) -- (-1.2, 1.2)
	(-1.2,-1.2) -- (0, -1.2)
	(0, -1.2) -- (1.2,-1.2);

\node at (0.15,0.95) {\small $\gamma$}; 
\node at (0.2,0.2) {\small $\beta$}; 
\node at (1,0.2) {\small $\alpha$}; 
\node at (1,0.95) {\small $\delta$}; 

\node at (-0.2,0.2) {\small $\beta$}; 
\node at (-0.15,0.95) {\small $\gamma$}; 
\node at (-1,0.95) {\small $\delta$}; 
\node at (-1,0.2) {\small $\alpha$}; 

\node at (-0.2,-0.25) {\small $\gamma$}; 
\node at (-0.2,-1) {\small $\delta$}; 
\node at (-1,-1) {\small $\beta$}; 
\node at (-1,-0.2) {\small $\alpha$}; 

\node at (0.2,-0.25) {\small $\gamma$}; 
\node at (0.2,-1) {\small $\delta$}; 
\node at (1,-1) {\small $\beta$}; 
\node at (1,-0.2) {\small $\alpha$}; 
\end{scope}

\end{tikzpicture}
\caption{Adjacent angle deduction (AAD)}
\label{AADEg}
\end{figure}
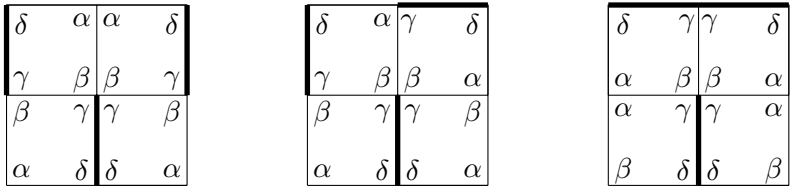

As seen above, the AAD notations can be regarded as mini pictures. Similar to their pictorial counterparts, the notations can be rotated and reversed. For example, the AAD of the  second picture can also be written as $\vert^{\gamma} \beta^{\alpha} \vert^{\gamma} \beta^{\alpha} \vert^{\beta} \gamma^{\delta} \, \bvert^{\,\delta} \gamma^{\beta} \vert$ (rotation) and $\bvert^{\,\delta} \gamma^{\beta} \vert^{\alpha} \beta^{\gamma} \vert^{\alpha} \beta^{\gamma} \vert^{\beta} \gamma^{\delta} \, \bvert$ (reversion).

The AAD notation can be flexible. For example, we write $\beta^{\alpha}\vert^{\alpha}\beta$ (the first picture of Figure \ref{AADEg}) if it is our focus on $\beta^2\gamma^2$. We use $\beta^{\alpha} \vert^{\alpha} \beta \cdots $ to denote a vertex with such angle arrangement.

The AAD has {\em reciprocity property}: an AAD $\lambda^{\theta} \vert ^{\rho} \mu$ at $\lambda\mu \cdots$ implies an AAD at $\theta^{\lambda} \vert^{\mu}\rho$ at $\theta\rho\cdots$ and vice versa.

We give an example of proof by AAD. Up to rotation and reversion, the possible AADs for $\alpha\vert\alpha$ are $\alpha^{\beta} \vert^{\beta} \alpha, \alpha^{\beta} \vert^{\delta} \alpha, \alpha^{\delta} \vert^{\delta} \alpha$. If $\beta^2\cdots, \delta^2\cdots$ are not vertices, then $\alpha \vert \alpha$ has {\em unique} AAD $\alpha ^{\beta} \vert^{\delta} \alpha$. Moreover, a vertex $\alpha^3$ has two possible AADs $\vert^{\delta}\alpha^{\beta}\vert^{\delta}\alpha^{\beta}\vert^{\delta}\alpha^{\beta}\vert, \vert^{\delta}\alpha^{\beta}\vert^{\delta}\alpha^{\beta}\vert^{\beta}\alpha^{\delta}\vert$, depicted in Figure \ref{AAD-al3}. This implies that $\beta\vert\delta\cdots$ is always a vertex. 

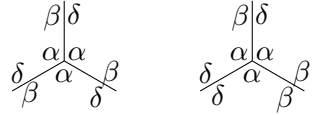
\begin{figure}[htp] 
\centering
\begin{tikzpicture}

\tikzmath{\r=0.8;}

\begin{scope}

\foreach \a in {0,...,2} {
\draw[rotate=120*\a]
	(0:0) -- (90:\r);	
}

\node at (30:0.25*\r) {\small $\alpha$};
\node at (150:0.25*\r) {\small $\alpha$};
\node at (270:0.25*\r) {\small $\alpha$};

\node at (78:0.8*\r) {\small $\delta$};
\node at (105:0.75*\r) {\small $\beta$};

\node at (195:0.8*\r) {\small $\delta$};
\node at (225:0.8*\r) {\small $\beta$};

\node at (312:0.8*\r) {\small $\delta$};
\node at (345:0.8*\r) {\small $\beta$};

\end{scope}

\begin{scope}[xshift=2.5cm]

\foreach \a in {0,...,2} {
\draw[rotate=120*\a]
	(0:0) -- (90:\r);	
}

\node at (30:0.25*\r) {\small $\alpha$};
\node at (150:0.25*\r) {\small $\alpha$};
\node at (270:0.25*\r) {\small $\alpha$};

\node at (78:0.8*\r) {\small $\delta$};
\node at (105:0.75*\r) {\small $\beta$};

\node at (195:0.8*\r) {\small $\delta$};
\node at (225:0.8*\r) {\small $\delta$};

\node at (310:0.85*\r) {\small $\beta$};
\node at (345:0.85*\r) {\small $\beta$};

\end{scope}

\end{tikzpicture}
\caption{The two possible AADs of $\alpha^3$}
\label{AAD-al3}
\end{figure}

Some typical applications of AAD are listed below:
\begin{itemize}
\item If $\beta\vert\delta\cdots$ is not a vertex, then $m$ is even in $\alpha^m$.
\item If $\delta \vert \delta \cdots$ is not a vertex, then $\alpha^{\delta}\vert^{\delta}\alpha\cdots$ is also not a vertex.
\item If $\beta \vert \beta \cdots, \delta \vert \delta \cdots$ are not vertices, then $\alpha\vert \alpha $ has the unique AAD $\alpha^{\beta} \vert ^{\delta} \alpha$.
\item If $\beta\vert \beta \cdots, \beta \vert \delta \cdots$ are not vertices, then $\alpha\alpha\alpha$ cannot be a vertex. In other words, there are no three consecutive $\alpha$'s at a vertex.
\end{itemize}
The application of AAD depends on the information available. In principle, the AAD argument can be programmed in decision algorithms.  

These notations have a significant advantage. The discussion of angle and tile arrangements can be more efficiently and concisely conducted in place of drawing pictures. Luk and Yan invented this system of notations to substitute tens of pictures in the studies of pentagonal tilings.

\subsection{Technique} 

In the almost equilateral quadrilateral in Figure \ref{StdQuad}, we can see a symmetry of $\alpha \leftrightarrow \beta$ and $\gamma \leftrightarrow \delta$. We use \quotes{up to symmetry} to refer to this.

\subsection*{Combinatorics}

Let $v_i$ be the number of vertices of degree $i \ge 3$. From \cite{cly}, the basic formulae about edge-to-edge tilings of the sphere by quadrilaterals are
\begin{align}
\label{FCount}
f &= 6 + \sum_{h\ge4} (h-3)v_h, \\
\label{VCount}
v_3 &= 8 + \sum_{h\ge4} (h - 4) v_h.
\end{align}
Equation \eqref{FCount} implies $f\ge6$, and $f=6$ if and only if all vertices have degree $3$. Equation \eqref{VCount} implies $v_3 \ge 8$, which further implies that degree $3$ vertices always exist.

In \cite{awy, gsy, ua, wy, wy2}, a crucial step in classifying tilings is to find all admissible vertices. This means, we need to find various constraints that angle combinations at vertices must satisfy. Here we list some combinatorial constraints.

\begin{lem}[Counting Lemma, {\cite[Lemma 4]{cly}}] In a tiling of the sphere by congruent polygons, suppose two different angles $\theta,\varphi$ appear the same number of times in the polygon. If the number of times $\theta$ appears at every vertex is no more than the number of times $\varphi$ appears, then at every vertex the number of times $\theta$ appears is the same as the number of times $\varphi$ appears.
\end{lem}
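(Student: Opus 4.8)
The plan is a double-counting argument comparing the two global quantities $\#\theta$ and $\#\varphi$. First I would record that, because all tiles are congruent, each tile contributes the angle $\theta$ the same number of times, say $c$ times, and by hypothesis it contributes $\varphi$ exactly $c$ times as well. Summing over all $f$ tiles therefore gives $\#\theta = cf = \#\varphi$. This is the only place where the assumption that $\theta$ and $\varphi$ appear equally often in the polygon is used.

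Next I would count these same two quantities by grouping the angle occurrences according to the vertex at which they sit. For each vertex $V$ of the tiling, let $a_V$ and $b_V$ denote the number of times $\theta$ and $\varphi$ respectively appear at $V$. Since every occurrence of $\theta$ in the tiling sits at exactly one vertex, we have $\#\theta = \sum_V a_V$, and likewise $\#\varphi = \sum_V b_V$. Combining this with the first step yields
\[
\sum_V (b_V - a_V) = \#\varphi - \#\theta = 0.
\]

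Finally I would invoke the hypothesis that $\theta$ appears at each vertex no more often than $\varphi$, that is $a_V \le b_V$, so every summand $b_V - a_V$ is a non-negative integer. A sum of non-negative integers that vanishes forces each term to vanish, whence $a_V = b_V$ at every vertex $V$, which is exactly the desired conclusion.

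There is no serious obstacle here; the whole content lies in setting up the two ways of counting correctly. The only point needing care is the bookkeeping: one must check that each occurrence of an angle within a tile corresponds to exactly one angle slot at exactly one vertex, so that the per-tile count and the per-vertex count enumerate the same collection of occurrences with no omission and no double counting.
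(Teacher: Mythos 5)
Your proof is correct and is exactly the standard double-counting argument underlying this lemma (the paper itself only cites \cite[Lemma 4]{cly} rather than reproving it): equate the per-tile count $\#\theta=cf=\#\varphi$ with the per-vertex count, and use that a vanishing sum of the non-negative integers $b_V-a_V$ forces every term to be zero. Nothing is missing.
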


The assumption is that every $\theta^k\varphi^l\cdots$ has $l \ge k \ge 0$ and no $\theta, \varphi$ in the remainder. The conclusion is that every such vertex is $\theta^k\varphi^k\cdots$, with no $\theta, \varphi$ in the remainder.

\begin{lem}[Parity Lemma, {\cite[Lemma 2]{cly}}]  \label{PaLem} The total number of $\gamma$ and $\delta$ at any vertex is even.
\end{lem}

\begin{lem}[Balance Lemma, {\cite[Lemma 6]{cly}}] \label{BaLem} In a tiling of the sphere by congruent almost equilateral quadrilaterals, $\gamma^2\cdots$ is not a vertex if and only if $\delta^2\cdots$ is not a vertex. If $\gamma^2\cdots, \delta^2\cdots$ are not vertices, then every $b$-vertex has exactly one $\gamma$ and one $\delta$.
\end{lem}

\begin{lem}[{\cite[Lemma 9]{cly}}] \label{2HDAngLem} In a tiling of the sphere by congruent quadrilaterals, if two angles $\theta_1,\theta_2$ do not appear at any degree $3$ vertex, then there is a degree $4$ vertex $\theta_i^3\cdots$ ($i=1$ or $2$) or $\theta_i^2\theta_j\cdots$ ($i,j=1,2$), or a degree $5$ vertex $\theta_1^p\theta_2^q$ ($p+q=5$).
\end{lem}

\begin{lem}[{\cite[Lemma 10]{cly}}] \label{a3Lem} In a tiling of the sphere by congruent quadrilaterals, if $\theta^3$ is the unique degree $3$ vertex, then $f \ge 24$ and there is a degree $4$ vertex without $\theta$. 
\end{lem}

\begin{lem}[{\cite[Lemma 11]{cly}}] \label{ab2Lem} In a tiling of the sphere by congruent quadrilaterals, if $\theta^2\varphi$ is the unique degree $3$ vertex, then $f \ge 16$ and there is a degree $4$ vertex without $\theta$.
\end{lem}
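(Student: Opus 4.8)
The plan is to argue purely combinatorially from the two counting identities \eqref{FCount} and \eqref{VCount}, by double counting the occurrences of the angle $\theta$ in the whole tiling. The one external fact I need is that $\theta$ occurs exactly once in each tile, so that $\#\theta = f$.

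First I would exploit the hypothesis that $\theta^2\varphi$ is the \emph{unique} degree $3$ vertex: this means every degree $3$ vertex carries exactly two copies of $\theta$, so the degree $3$ vertices account for precisely $2v_3$ of the $\theta$'s. Since $v_3 \ge 8$ by \eqref{VCount}, I immediately get $f = \#\theta \ge 2v_3 \ge 16$, which is the first assertion.

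For the second assertion I would isolate the number of $\theta$'s sitting at vertices of degree $\ge 4$, namely $f - 2v_3$. Substituting \eqref{FCount} and \eqref{VCount} and simplifying gives $f - 2v_3 = -10 + \sum_{h\ge4}(5-h)v_h = v_4 - 10 - \sum_{h\ge6}(h-5)v_h$, which is at most $v_4 - 10$ since the last sum is non-negative. Now I would suppose toward a contradiction that every degree $4$ vertex contains at least one $\theta$; then the degree $4$ vertices alone consume at least $v_4$ copies of $\theta$, so $f - 2v_3 \ge v_4$. Comparing this with the upper bound $f - 2v_3 \le v_4 - 10$ forces $0 \le -10$, a contradiction, and hence some degree $4$ vertex is free of $\theta$.

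The computation itself is short, so the only genuine care is in the bookkeeping. Two points are essential: that the degree $3$ contribution to $\#\theta$ is \emph{exactly} $2v_3$ (this is precisely where the uniqueness of $\theta^2\varphi$ as the degree $3$ vertex is used), and that the two identities telescope cleanly into the bound $f - 2v_3 \le v_4 - 10$. The main obstacle, such as it is, will be recognizing that this single inequality simultaneously encodes the lower bound on $f$ (through $v_3 \ge 8$) and the existence of a $\theta$-free degree $4$ vertex (through the contradiction with $v_4$); once the substitution is carried out, everything else is forced.
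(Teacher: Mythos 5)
Your argument is correct: the double count $\#\theta=f$, the exact contribution $2v_3$ of $\theta$'s at degree~$3$ vertices coming from the uniqueness of $\theta^2\varphi$, and the identity $f-2v_3=v_4-10-\sum_{h\ge6}(h-5)v_h$ obtained from \eqref{FCount} and \eqref{VCount} together give both $f\ge 2v_3\ge16$ and the contradiction $v_4\le v_4-10$ under the assumption that every degree~$4$ vertex carries a $\theta$. The paper only cites this lemma from \cite{cly} without reproducing the proof, but your counting argument is exactly the standard one used there, so there is nothing to add.
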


In the last three lemmas, the technique of counting angles is involved. Whenever counting is applied, implicitly there is a criterion for distinguishing angles which is often clear in the context.

\subsection*{Geometry}

More constraints on angle combinations at vertices can be derived from the conditions on angle values due to geometric reasons. Here are some of these geometric constraints.

\begin{lem}[{\cite[Lemma 7]{cly}}] \label{AlConvexLem} In a tiling of the sphere by congruent quadrilaterals, there is at most one angle $\ge\pi$ in the quadrilateral. 
\end{lem}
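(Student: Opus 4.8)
The plan is to argue combinatorially, using the global structure of the tiling rather than the metric geometry of a single tile. This is the right instinct because an isolated spherical quadrilateral can perfectly well carry two reflex angles (one can build a simple spherical quadrilateral of small area with two opposite angles just above $\pi$), so the statement must exploit that the tile is repeated $f$ times to cover the sphere. Accordingly, suppose for contradiction that two of the four corner angles, say $\theta_1$ and $\theta_2$, both have value $\ge\pi$ (the two corners are distinct, though their values may coincide).

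The first step is a local bound: at every vertex at most one angle $\ge\pi$ can occur. Every vertex has degree $\ge 3$ and satisfies the vertex angle sum \eqref{VertexAngSum}, namely $\sum(\text{angles})=2\pi$. If two angles, each $\ge\pi$, met at a single vertex, then together with at least one further strictly positive angle (forced by degree $\ge 3$) the sum would exceed $2\pi$, contradicting \eqref{VertexAngSum}. Hence no vertex carries two angles $\ge\pi$.

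The second step is a global count. Since each tile contributes exactly one corner of each angle, $\#\theta_1=\#\theta_2=f$, so the tiling contains $2f$ instances of a corner-angle whose value is $\ge\pi$. By the local bound each vertex absorbs at most one such instance, so the tiling must have at least $2f$ vertices. On the other hand, combining \eqref{FCount} and \eqref{VCount} (equivalently, by Euler's formula) the total number of vertices is $\sum_{h\ge3}v_h = 8 + \sum_{h\ge4}(h-3)v_h = f+2$. Since $f\ge 6$ forces $2f>f+2$, this is impossible, so the assumption of two angles $\ge\pi$ is untenable and the lemma follows.

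The main obstacle here is conceptual rather than computational: one must resist trying to prove a purely geometric single-tile statement, which is in fact false, and instead locate the genuine constraint in the scarcity of vertices ($V=f+2$) relative to the $2f$ reflex instances a second angle $\ge\pi$ would generate. Once the per-vertex bound and the vertex count are in place the contradiction is immediate; the only points requiring care are that two offending corners contribute $2f$ (not $f$) instances even when their values coincide, and that the degree $\ge 3$ hypothesis is exactly what excludes a degenerate two-angle vertex.
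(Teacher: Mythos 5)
Your proof is correct: the per-vertex bound (two angles $\ge\pi$ plus a third positive angle would exceed the vertex sum $2\pi$ at a degree $\ge3$ vertex) combined with the vertex count $\sum_h v_h = f+2 < 2f$ from \eqref{FCount} and \eqref{VCount} is exactly the standard counting argument used for \cite[Lemma 7]{cly}, which this paper only cites rather than reproves. No gaps; the observation that the two offending corners contribute $2f$ instances even if their values coincide is the right point of care.
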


\begin{lem}[{\cite[Lemma 3]{wy2}}] \label{ExchLem} In a simple almost equilateral quadrilateral, $\alpha \ge \beta$ if and only if $\gamma \ge \delta$.
\end{lem}

\begin{lem}[{\cite[Lemma 14]{cly}}, {\cite[Lemma 2.1]{avc}}]\label{LunEstLem} In a simple almost equilateral quadrilateral, 
\begin{itemize}
\item if $\alpha, \beta, \gamma < \pi$, then $\beta + \pi > \gamma + \delta$ and $\delta + \pi > \beta + \gamma$; 
\item if $\alpha, \beta, \delta < \pi$, then $\alpha + \pi > \gamma + \delta$ and $\gamma + \pi > \alpha  + \delta$.
\end{itemize}
\end{lem}

\begin{lem}[{\cite[Lemma 15]{cly}}] \label{ATriLem} In a simple almost equilateral quadrilateral, 
\begin{itemize}
\item if $\gamma, \delta < \pi$, then $\alpha > \gamma$ if and only if $\beta > \delta$; 
\item if $\gamma < \pi$, then $\beta = \delta$ if and only if $a = b$;
\item if $\delta < \pi$, then $\alpha = \gamma$ if and only if $a = b$. 
\end{itemize}
\end{lem}

In fact, the proof of \cite[Lemma 15]{cly} shows that, if $\gamma<\pi$, then $\beta>\delta$ if and only if $a<b$, and $\beta=\delta$ if and only if $a=b$.

\begin{lem}\label{Edges<pi} In a simple quadrilateral, if three angles $<\pi$ and the two edges between these angles are $<\pi$, then the other two edges are also $<\pi$.
\end{lem}

\begin{proof} We call a triangle {\em standard} when each edge and each angle $<\pi$. A standard triangle is simple and convex.

\begin{figure}[htp] 
\centering
\begin{tikzpicture}[>=latex,scale=0.75]

\begin{scope}[] 


\tikzmath{
\r=2; \R=sqrt(3);
\a=120; \aa=\a/2;
\xP=0; \yP=\R;
\aOP=36; 
\aPOne=64;
\aPThree = 160;
\aPX = 125;
\mPOne=tan(90+\aPOne);
\aPTwo=\aOP;
\mPTwo=tan(-90-\aPTwo);
\mPThree=tan(-90-\aPThree);
\mPX=tan(-90-\aPX);
\xCR=-3; \yCR=0;
\xCOne = 1; \yCOne=0;
\xCTwo = -1; \yCTwo=0;
\xCThree=\xCTwo; \yCThree = \yCTwo;
\xCX=\xCTwo; \yCX= \yCTwo;
}

\coordinate (O) at (0,0);
\coordinate (C1) at (1,0);
\coordinate (C2) at (-1,0);


\draw[dotted]
	(O) circle ({sqrt(3)});
	
\draw[gray!50]
	(C2) circle (\r)
	(C1) circle (\r);

\draw[gray!50]
	([shift={(-60:2)}]-1,0) arc (-60:60:2);

\draw[gray!50]
	([shift={(-60+180:2)}]1,0) arc (-60+180:60+180:2);

\pgfmathsetmacro{\xPOne}{ ( 2*((\mPOne)*(\yCOne)+(\xCOne)) - sqrt( ( 2*( (\mPOne)*(\yCOne)+\xCOne )  )^2 - 4*( (\mPOne)^2 + 1 )*( (\xCOne)^2 + (\yCOne)^2 - \r^2 ) ) )/( 2*( (\mPOne)^2+1 ) ) };
\pgfmathsetmacro{\yPOne}{ \mPOne*\xPOne };

\pgfmathsetmacro{\xPTwo}{ ( 2*((\mPTwo)*(\yCTwo)+(\xCTwo) ) + sqrt( ( 2*( (\mPTwo)*\yCTwo+\xCTwo )  )^2 - 4*( (\mPTwo)^2 + 1 )*( (\xCTwo)^2 + (\yCTwo)^2 - \r^2 ) ) )/( 2*( (\mPTwo)^2+1 ) ) };
\pgfmathsetmacro{\yPTwo}{ \mPTwo*\xPTwo };

\pgfmathsetmacro{\xPThree}{ ( 2*((\mPThree)*(\yCThree)+(\xCThree) ) + sqrt( ( 2*( (\mPThree)*\yCThree+\xCThree )  )^2 - 4*( (\mPThree)^2 + 1 )*( (\xCThree)^2 + (\yCThree)^2 - \r^2 ) ) )/( 2*( (\mPThree)^2+1 ) ) };
\pgfmathsetmacro{\yPThree}{ \mPThree*\xPThree };

\pgfmathsetmacro{\xPX}{ ( 2*((\mPX)*(\yCX)+(\xCX) ) + sqrt( ( 2*( (\mPX)*\yCX+\xCX )  )^2 - 4*( (\mPX)^2 + 1 )*( (\xCX)^2 + (\yCX)^2 - \r^2 ) ) )/( 2*( (\mPX)^2+1 ) ) };
\pgfmathsetmacro{\yPX}{ \mPX*\xPX };

\pgfmathsetmacro{\xPPOne}{ ( 2*((\mPOne)*(\yCOne)+(\xCOne)) + sqrt( ( 2*( (\mPOne)*(\yCOne)+\xCOne )  )^2 - 4*( (\mPOne)^2 + 1 )*( (\xCOne)^2 + (\yCOne)^2 - \r^2 ) ) )/( 2*( (\mPOne)^2+1 ) ) };
\pgfmathsetmacro{\yPPOne}{ \mPOne*\xPPOne };

\pgfmathsetmacro{\xPPTwo}{ ( 2*((\mPTwo)*(\yCTwo)+(\xCTwo) ) - sqrt( ( 2*( (\mPTwo)*\yCTwo+\xCTwo )  )^2 - 4*( (\mPTwo)^2 + 1 )*( (\xCTwo)^2 + (\yCTwo)^2 - \r^2 ) ) )/( 2*( (\mPTwo)^2+1 ) ) };
\pgfmathsetmacro{\yPPTwo}{ \mPTwo*\xPPTwo };

\pgfmathsetmacro{\xPPThree}{ ( 2*((\mPThree)*(\yCThree)+(\xCThree) ) - sqrt( ( 2*( (\mPThree)*\yCThree+\xCThree )  )^2 - 4*( (\mPThree)^2 + 1 )*( (\xCThree)^2 + (\yCThree)^2 - \r^2 ) ) )/( 2*( (\mPThree)^2+1 ) ) };
\pgfmathsetmacro{\yPPThree}{ \mPThree*\xPPThree };

\pgfmathsetmacro{\dPOneP}{ sqrt( (\xPOne - \xP)^2 + (\yPOne - \yP)^2 ) };
\pgfmathsetmacro{\aCPOne}{ acos( (2*\r^2 - \dPOneP^2 )/(2*\r^2) ) };
\pgfmathsetmacro{\l}{ \aCPOne/\a };
\pgfmathsetmacro{\rR}{ sqrt( \R^2 + (\xCR)^2  )  };
\pgfmathsetmacro{\aPCR}{ acos( (\xCR)/(\rR) ) }
\pgfmathsetmacro{\aR}{  -( 360 - 2*\aPCR )*(1-\l) };

\coordinate (CR) at (\xCR, \yCR);

\coordinate (P) at (0,{sqrt(3)});

\coordinate[rotate around={\aR:(CR)}] (R) at (P);

\coordinate (PP) at (0,{-sqrt(3)});

\coordinate (P1) at (\xPOne, \yPOne); 
\coordinate (P2) at (\xPTwo, \yPTwo);
\coordinate (P3) at (\xPThree, \yPThree);

\coordinate (PX) at (\xPX, \yPX);

\coordinate (PP1) at (\xPPOne,\yPPOne);
\coordinate (PP2) at (\xPPTwo,\yPPTwo);
\coordinate (PP3) at (\xPPThree, \yPPThree);

\draw[gray!20, dashed]
	(P1) -- (PP1)
	(P2) -- (PP2)
;


\arcThroughThreePoints[gray!50, dashed, <-]{PP2}{R}{P2};

\arcThroughThreePoints[]{P}{PP}{P1};
\arcThroughThreePoints[]{P2}{PP}{P};


\arcThroughThreePoints[gray!50, dashed, ->]{P1}{PP1}{PX};

\arcThroughThreePoints[]{P1}{PP1}{R};


\arcThroughThreePoints[]{R}{PP2}{P2};


\draw[->]
	(P2) -- ([shift={(\xPTwo, \yPTwo)}]270:0.35);

\draw[->]
	(P1) -- ([shift={(\xPOne, \yPOne)}]-35:0.35);


\node[circle,fill=black,inner sep=0pt,minimum size=2pt] at (P) {};
\node[circle,fill=black,inner sep=0pt,minimum size=2pt] at (P1) {};
\node[circle,fill=black,inner sep=0pt,minimum size=2pt] at (P2) {};
\node[circle,fill=black,inner sep=0pt,minimum size=2pt] at (R) {};

\node at (90: 2.0) {\small $P$};
\node at (270: 2.1) {\small $P^{\ast}$};
\node at (160: 1.35) {\small $Q$};
\node at (335: 3.25) {\small $Q^{\ast}$};
\node at (50: 1.5) {\small $S$};
\node at (-30: 1.4) {\small $Q'$};
\node at (285:0.55) {\small $R$};
\node at (236:2.75) {\small \textcolor{gray!50}{$S^{\ast}$}};


\end{scope}

\begin{scope}[xshift=7cm] 


\tikzmath{
\r=2; \R=sqrt(3);
\a=120; \aa=\a/2;
\xP=0; \yP=\R;
\aOP=36; 
\aPOne=64;
\aPThree = 160;
\aPX = 125;
\mPOne=tan(90+\aPOne);
\aPTwo=\aOP;
\mPTwo=tan(-90-\aPTwo);
\mPThree=tan(-90-\aPThree);
\mPX=tan(-90-\aPX);
\xCR=-3; \yCR=0;
\xCOne = 1; \yCOne=0;
\xCTwo = -1; \yCTwo=0;
\xCThree=\xCTwo; \yCThree = \yCTwo;
\xCX=\xCTwo; \yCX= \yCTwo;
}

\coordinate (O) at (0,0);
\coordinate (C1) at (1,0);
\coordinate (C2) at (-1,0);


\draw[dotted]
	(O) circle ({sqrt(3)});
	
\draw[gray!50]
	(C2) circle (\r)
	(C1) circle (\r);

\draw[gray!50]
	([shift={(-60:2)}]-1,0) arc (-60:60:2);

\draw[gray!50]
	([shift={(-60+180:2)}]1,0) arc (-60+180:60+180:2);

\pgfmathsetmacro{\xPOne}{ ( 2*((\mPOne)*(\yCOne)+(\xCOne)) - sqrt( ( 2*( (\mPOne)*(\yCOne)+\xCOne )  )^2 - 4*( (\mPOne)^2 + 1 )*( (\xCOne)^2 + (\yCOne)^2 - \r^2 ) ) )/( 2*( (\mPOne)^2+1 ) ) };
\pgfmathsetmacro{\yPOne}{ \mPOne*\xPOne };

\pgfmathsetmacro{\xPTwo}{ ( 2*((\mPTwo)*(\yCTwo)+(\xCTwo) ) + sqrt( ( 2*( (\mPTwo)*\yCTwo+\xCTwo )  )^2 - 4*( (\mPTwo)^2 + 1 )*( (\xCTwo)^2 + (\yCTwo)^2 - \r^2 ) ) )/( 2*( (\mPTwo)^2+1 ) ) };
\pgfmathsetmacro{\yPTwo}{ \mPTwo*\xPTwo };

\pgfmathsetmacro{\xPThree}{ ( 2*((\mPThree)*(\yCThree)+(\xCThree) ) + sqrt( ( 2*( (\mPThree)*\yCThree+\xCThree )  )^2 - 4*( (\mPThree)^2 + 1 )*( (\xCThree)^2 + (\yCThree)^2 - \r^2 ) ) )/( 2*( (\mPThree)^2+1 ) ) };
\pgfmathsetmacro{\yPThree}{ \mPThree*\xPThree };

\pgfmathsetmacro{\xPX}{ ( 2*((\mPX)*(\yCX)+(\xCX) ) + sqrt( ( 2*( (\mPX)*\yCX+\xCX )  )^2 - 4*( (\mPX)^2 + 1 )*( (\xCX)^2 + (\yCX)^2 - \r^2 ) ) )/( 2*( (\mPX)^2+1 ) ) };
\pgfmathsetmacro{\yPX}{ \mPX*\xPX };

\pgfmathsetmacro{\xPPOne}{ ( 2*((\mPOne)*(\yCOne)+(\xCOne)) + sqrt( ( 2*( (\mPOne)*(\yCOne)+\xCOne )  )^2 - 4*( (\mPOne)^2 + 1 )*( (\xCOne)^2 + (\yCOne)^2 - \r^2 ) ) )/( 2*( (\mPOne)^2+1 ) ) };
\pgfmathsetmacro{\yPPOne}{ \mPOne*\xPPOne };

\pgfmathsetmacro{\xPPTwo}{ ( 2*((\mPTwo)*(\yCTwo)+(\xCTwo) ) - sqrt( ( 2*( (\mPTwo)*\yCTwo+\xCTwo )  )^2 - 4*( (\mPTwo)^2 + 1 )*( (\xCTwo)^2 + (\yCTwo)^2 - \r^2 ) ) )/( 2*( (\mPTwo)^2+1 ) ) };
\pgfmathsetmacro{\yPPTwo}{ \mPTwo*\xPPTwo };

\pgfmathsetmacro{\xPPThree}{ ( 2*((\mPThree)*(\yCThree)+(\xCThree) ) - sqrt( ( 2*( (\mPThree)*\yCThree+\xCThree )  )^2 - 4*( (\mPThree)^2 + 1 )*( (\xCThree)^2 + (\yCThree)^2 - \r^2 ) ) )/( 2*( (\mPThree)^2+1 ) ) };
\pgfmathsetmacro{\yPPThree}{ \mPThree*\xPPThree };

\pgfmathsetmacro{\dPOneP}{ sqrt( (\xPOne - \xP)^2 + (\yPOne - \yP)^2 ) };
\pgfmathsetmacro{\aCPOne}{ acos( (2*\r^2 - \dPOneP^2 )/(2*\r^2) ) };
\pgfmathsetmacro{\l}{ \aCPOne/\a };
\pgfmathsetmacro{\rR}{ sqrt( \R^2 + (\xCR)^2  )  };
\pgfmathsetmacro{\aPCR}{ acos( (\xCR)/(\rR) ) }
\pgfmathsetmacro{\aR}{  -( 360 - 2*\aPCR )*(1-\l) };

\coordinate (CR) at (\xCR, \yCR);

\coordinate (P) at (0,{sqrt(3)});

\coordinate[rotate around={\aR:(CR)}] (R) at (P);

\coordinate (PP) at (0,{-sqrt(3)});

\coordinate (P1) at (\xPOne, \yPOne); 
\coordinate (P2) at (\xPTwo, \yPTwo);
\coordinate (P3) at (\xPThree, \yPThree);

\coordinate (PX) at (\xPX, \yPX);

\coordinate (PP1) at (\xPPOne,\yPPOne);
\coordinate (PP2) at (\xPPTwo,\yPPTwo);
\coordinate (PP3) at (\xPPThree, \yPPThree);

\draw[gray!20, dashed]
	(P1) -- (PP1)
	(P3) -- (PP3)
;

\arcThroughThreePoints[gray!50, dashed, ->]{P3}{R}{PP3};


\arcThroughThreePoints[]{P}{PP}{P1};
\arcThroughThreePoints[]{P2}{PP}{P};


\arcThroughThreePoints[gray!50, dashed, ->]{P1}{PP1}{PX};

\arcThroughThreePoints[]{P1}{PP1}{R};

\arcThroughThreePoints[]{P3}{PP3}{R};

\arcThroughThreePoints[]{P3}{P2}{P};



\draw[->]
	(P3) -- ([shift={(\xPThree, \yPThree)}]90:0.35);

\draw[->]
	(P1) -- ([shift={(\xPOne, \yPOne)}]-35:0.35);

\node[circle,fill=black,inner sep=0pt,minimum size=2pt] at (P) {};
\node[circle,fill=black,inner sep=0pt,minimum size=2pt] at (P1) {};
\node[circle,fill=black,inner sep=0pt,minimum size=2pt] at (P3) {};
\node[circle,fill=black,inner sep=0pt,minimum size=2pt] at (R) {};

\node at (90: 2.0) {\small $P$};
\node at (270: 2.1) {\small $P^{\ast}$};
\node at (160: 1.35) {\small $Q$};
\node at (335: 3.25) {\small $Q^{\ast}$};
\node at (-30: 1.4) {\small $Q'$};
\node at (-64: 1.65) {\small $S$};
\node at (285:0.55) {\small $R$};
\node at (110:2.4) {\small \textcolor{gray!50}{$S^{\ast}$}};


\end{scope}

\end{tikzpicture}
\caption{Quadrilateral $\square PQRS$ with $\angle P, \angle Q, \angle S, PQ, PS <\pi$} 
\label{AllEdges<pi}
\end{figure}
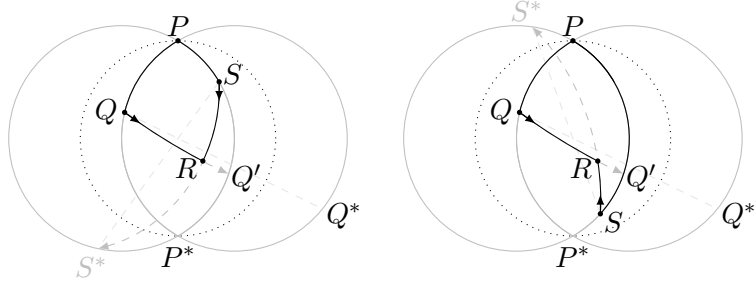

Suppose $\square PQRS$ is such quadrilateral in Figure \ref{AllEdges<pi} where $\angle P, \angle Q, \angle S$, $PQ, PS <\pi$. Then $PQ, PS$ are respectively contained in the left part and right part of the boundary of the lune (the intersection of two hemispheres) defined by antipodal points $P, P^{\ast}$, and $\angle P$. As $\angle  Q, \angle S < \pi$, the rays from $Q$ and $S$, which respectively coincide with $QR$ and $SR$, point towards the interior of the lune. Extending the ray from $Q$ until it meets at $Q'$ on the other side of the boundary, we get a standard triangle $\triangle PQQ'$ where $QQ' < \pi$.

If $S$ is contained in $PQ'$ in the first picture, then $\square PQRS$ being simple and $\angle S < \pi$ imply that the ray from $S$ will eventually intersect at $R$ where $R$ lies between $QQ'$. If $S$ is outside $PQ'$, then it is contained in $Q'P^{\ast}$ in the second picture. So $\square PQRS$ being simple and $\angle S < \pi$ also imply that the ray from $S$ will eventually intersect at $R$ where $R$ lies between $QQ'$. In either case, $QR,RS$ are contained in the lune and hence $QR,RS<\pi$.
\end{proof}

\begin{lem}\label{CTriLem} In a simple almost equilateral quadrilateral, if $\alpha, \beta, \delta < \pi$, then $\gamma > \pi$ implies $\beta > \delta$.
\end{lem}

\begin{proof} By $a, \alpha, \beta, \delta < \pi$, Lemma \ref{Edges<pi} implies $b<\pi$. Moreover, $AC, BD$ in Figure \ref{Ga>pi} is contained in the lune defined by $A, A^{\ast}, \alpha$. So $AC,BD<\pi$ and every triangle contained in $\triangle ABD$ is a standard triangle. We also know that $\triangle ABD$ contains $\square ABCD$ and $\triangle BCD$. Let $\beta', \delta'$ be the base angles of $\triangle BCD$ adjacent to $\beta, \delta$ respectively.

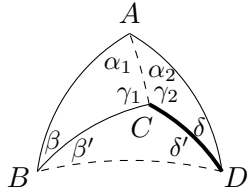
\begin{figure}[htp] 
\centering
\begin{tikzpicture}[>=latex,scale=1.25]

\begin{scope}[xshift=0cm] 

\tikzmath{
\r=2; \R=sqrt(3);
\a=120; \aa=\a/2;
\xP=0; \yP=\R;
\aOP=75; 
\aPOne=\aOP;
\mPOne=tan(90+\aPOne);
\aPTwo=\aOP;
\mPTwo=tan(-90-\aPTwo);
\xCQ=-1.0; \yCQ=0.0;
\xCOne = 1; \yCOne=0;
\xCTwo = -1; \yCTwo=0;
}

\coordinate (O) at (0,0);
\coordinate (C1) at (1,0);
\coordinate (C2) at (-1,0);

\pgfmathsetmacro{\xPOne}{ ( 2*((\mPOne)*(\yCOne)+(\xCOne)) - sqrt( ( 2*( (\mPOne)*(\yCOne)+\xCOne )  )^2 - 4*( (\mPOne)^2 + 1 )*( (\xCOne)^2 + (\yCOne)^2 - \r^2 ) ) )/( 2*( (\mPOne)^2+1 ) ) };
\pgfmathsetmacro{\yPOne}{ \mPOne*\xPOne };

\pgfmathsetmacro{\xPTwo}{ ( 2*((\mPTwo)*(\yCTwo)+(\xCTwo) ) + sqrt( ( 2*( (\mPTwo)*\yCTwo+\xCTwo )  )^2 - 4*( (\mPTwo)^2 + 1 )*( (\xCTwo)^2 + (\yCTwo)^2 - \r^2 ) ) )/( 2*( (\mPTwo)^2+1 ) ) };
\pgfmathsetmacro{\yPTwo}{ \mPTwo*\xPTwo };

\pgfmathsetmacro{\xPPOne}{ ( 2*((\mPOne)*(\yCOne)+(\xCOne)) + sqrt( ( 2*( (\mPOne)*(\yCOne)+\xCOne )  )^2 - 4*( (\mPOne)^2 + 1 )*( (\xCOne)^2 + (\yCOne)^2 - \r^2 ) ) )/( 2*( (\mPOne)^2+1 ) ) };
\pgfmathsetmacro{\yPPOne}{ \mPOne*\xPPOne };

\pgfmathsetmacro{\xPPTwo}{ ( 2*((\mPTwo)*(\yCTwo)+(\xCTwo) ) - sqrt( ( 2*( (\mPTwo)*\yCTwo+\xCTwo )  )^2 - 4*( (\mPTwo)^2 + 1 )*( (\xCTwo)^2 + (\yCTwo)^2 - \r^2 ) ) )/( 2*( (\mPTwo)^2+1 ) ) };
\pgfmathsetmacro{\yPPTwo}{ \mPTwo*\xPPTwo };

\pgfmathsetmacro{\xPQRef}{\xPTwo};
\pgfmathsetmacro{\yPQRef}{\yPTwo};

\coordinate (P) at (0,{sqrt(3)});

\coordinate(Q) at (78:1);

\coordinate (PP) at (0,{-sqrt(3)});

\coordinate (P1) at (\xPOne, \yPOne);
\coordinate (P2) at (\xPTwo, \yPTwo);

\coordinate (PP1) at (\xPPOne,\yPPOne);

\coordinate (PP2) at (\xPPTwo,\yPPTwo);

\arcThroughThreePoints[]{P}{PP}{P1};
\arcThroughThreePoints[]{P2}{PP}{P};

\arcThroughThreePoints[]{Q}{PP1}{P1};
\arcThroughThreePoints[line width=1.5]{P2}{PP2}{Q};
\arcThroughThreePoints[dashed]{P2}{PP1}{P1};

\arcThroughThreePoints[dashed]{Q}{PP}{P};

\node at (90: 1.95) {\small $A$};
\node at (170: 1.2) {\small $B$};
\node at (10: 1.15) {\small $D$};
\node at (80: 0.75) {\small $C$};

\node at (95: 1.4) {\footnotesize $\alpha_1$};
\node at (75: 1.35) {\footnotesize $\alpha_2$};
\node at (145: 1.0) {\footnotesize $\beta$};
\node at (135: 0.7) {\footnotesize $\beta'$};
\node at (90: 1.07) {\footnotesize $\gamma_1$};
\node at (70: 1.15) {\footnotesize $\gamma_2$};
\node at (45: 0.75) {\footnotesize $\delta'$};
\node at (45: 1.05) {\footnotesize $\delta$};

\end{scope}

\end{tikzpicture}
\caption{$\square ABCD$ with $\alpha,\beta, \delta < \pi$ and $\gamma>\pi$}
\label{Ga>pi}
\end{figure}

Since $AB=AD=a$, we know that $\triangle ABD, \triangle ABC$ are isosceles triangles. Then $\beta + \beta' = \delta + \delta'$ and $\alpha_1 =\gamma_1$. So $\gamma > \alpha$ implies $\gamma_2>\alpha_2$. This means $CD < AD= BC$. Then in $\triangle BCD$, we get $\beta' < \delta'$. Hence $\beta > \delta$.
\end{proof}

\begin{lem}\label{TriQuadLem} In a tiling of the sphere by congruent almost equilateral quadrilaterals, we have
\begin{itemize}
\item $\beta = \delta$ if and only if $\gamma = \pi$. 
\item $\alpha = \gamma$ if and only if $\delta = \pi$.
\end{itemize}
\end{lem}

\begin{proof} If $\beta = \delta$, then Lemma \ref{AlConvexLem} implies $\beta, \delta < \pi$. By $b\neq a$, Lemma \ref{ATriLem} implies $\gamma \ge \pi$. Then by Lemma \ref{AlConvexLem}, we get $\alpha<\pi$. If $\gamma > \pi$, then $ \gamma > \alpha $ and Lemma \ref{CTriLem} imply $\beta>\delta$, a contradiction. Hence $\gamma = \pi$.

\begin{figure}[htp] 
\centering
\begin{tikzpicture}

\begin{scope}[scale=1.1]

\draw
	(0,0) -- (1.2,1.2) -- (0,1.2) -- cycle;

\draw[line width=2]
	(0,0) -- (0.5,0.5);

\node at (-0.2, -0.2) {$D$};
\node at (-0.2, 1.4) {$A$};
\node at (1.4, 1.4) {$B$};
\node at (0.8, 0.4) {$C$};

\node at (0.2,1) {\small $\alpha$};
\node at (0.75,0.95) {\small $\beta$};
\node at (0.15,0.45) {\small $\delta$};

\end{scope}

\end{tikzpicture}
\caption{$\triangle ABD$ with $\angle C =\gamma= \pi$}
\label{TriABD}
\end{figure}
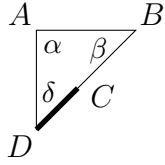
If $\gamma = \pi$, then the quadrilateral is in fact an isosceles triangle $\triangle ABD$ in Figure \ref{TriABD} with edges $AB=AD=a$ and $BD=a+b$. Hence $\beta = \delta$.
\end{proof}

The four angles of the almost equilateral quadrilateral should be related by one single equation. To explain the equation, we need to expand our definition of polygons. 

A {\em general polygon} is a closed path of piecewise geodesic arcs together with a choice of a side. A geodesic arc is a part of a great circle on the sphere. The {\em edges} of a general polygon are geodesic arcs and the {\em vertices} are where edges meet. There are two complementary angles at each vertex. A {\em side} is fixed by a choice of one angle. Figure \ref{PolygonSide} demonstrates how a side of a general quadrilateral is fixed by the choice of angle $\ast$.

\begin{figure}[htp] 
\centering
\begin{tikzpicture}

\begin{scope}[] 

\foreach \a in {0,...,3} {
\draw[rotate=90*\a]
	(45:1) -- (135:1)
	([shift=(270:0.2)]135:1) arc (270:360:0.2);
}

\node at (135:0.6) {$\ast$};

\end{scope}

\begin{scope}[xshift=3cm] 

\foreach \a in {0,2} {
\draw[rotate=90*\a]
	(45:1) -- (135:1);
}

\foreach \b in {0,...,3} {
\draw[rotate=90*\b]
	(0:0) -- (45:1);
}

\draw[]
	([shift=(180:0.2)]45:1) arc (180:225:0.2)
	([shift=(315:0.2)]135:1) arc (315:360:0.2)
	([shift=(45:0.2)]225:1) arc (45:360:0.2)
	([shift=(180:0.2)]315:1) arc (180:495:0.2)
;

\node at (120:0.6) {$\ast$};

\end{scope}

\end{tikzpicture}
\caption{General quadrilaterals as closed paths with chosen sides}
\label{PolygonSide}
\end{figure}
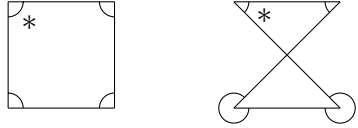

Coolsaet \cite[(2.3), Theorem 2.1]{co} proved the following identity for convex almost equilateral quadrilateral. Cheung \cite{ch,cly} proved the identity without convexity assumption.

\begin{lem}[{\cite[Theorem 21]{ch}, \cite[Lemma 18]{cly}}]\label{CoolsaetLem} The four angles of an almost equilateral quadrilateral satisfy
\begin{align}\label{Coolsaet-Id}
\sin\tfrac{1}{2}\alpha
\sin(\delta-\tfrac{1}{2}\beta)
=\sin\tfrac{1}{2}\beta
\sin(\gamma-\tfrac{1}{2}\alpha).
\end{align}
\end{lem}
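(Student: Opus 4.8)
The plan is to reduce this four-angle identity to spherical triangle trigonometry by cutting the quadrilateral $\square ABCD$ along a diagonal, exploiting the two isosceles triangles that the edge combination $a^3b$ forces. First I would draw the diagonal $BD$, splitting $\square ABCD$ into the isosceles triangle $\triangle ABD$ (with $AB=AD=a$ and apex angle $\alpha$ at $A$) and the triangle $\triangle BCD$ (with $BC=a$, $CD=b$). Writing $d=BD$ and letting $\theta=\angle ABD=\angle ADB$ be the common base angle, one has $\angle DBC=\beta-\theta$ and $\angle BDC=\delta-\theta$. Dropping the perpendicular from $A$ to the midpoint of $BD$ bisects both $\alpha$ and $d$, and Napier's rules on the resulting right triangle give the isosceles relations $\sin\tfrac12 d=\sin a\,\sin\tfrac12\alpha$ and $\cot\theta=\cos a\,\tan\tfrac12\alpha$. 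Applying the spherical sine rule across the shared side $d$ in the two triangles yields $\sin\alpha\,\sin(\delta-\theta)=\sin\gamma\,\sin\theta$, hence $\cot\theta=(\sin\gamma+\sin\alpha\cos\delta)/(\sin\alpha\sin\delta)$. Eliminating $\theta$ between this and the isosceles relation expresses $\cos a$ purely in terms of $\alpha,\gamma,\delta$:
\[ \cos a=\frac{\sin\gamma+\sin\alpha\cos\delta}{2\sin^2\tfrac12\alpha\,\sin\delta}. \]

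Next I would repeat the argument verbatim along the other diagonal $AC$, whose isosceles triangle $\triangle ABC$ has apex $\beta$; this is exactly the $\alpha\leftrightarrow\beta$, $\gamma\leftrightarrow\delta$ symmetry of the configuration, so it produces the mirror expression $\cos a=(\sin\delta+\sin\beta\cos\gamma)/(2\sin^2\tfrac12\beta\,\sin\gamma)$. Equating the two values of the common edge $a$ gives a relation among $\alpha,\beta,\gamma,\delta$ alone,
\[ \sin^2\tfrac12\beta\,\sin\gamma\,(\sin\gamma+\sin\alpha\cos\delta)=\sin^2\tfrac12\alpha\,\sin\delta\,(\sin\delta+\sin\beta\cos\gamma), \]
and the remaining task is to show that this is equivalent to \eqref{Coolsaet-Id}. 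The plan here is to substitute $2\sin^2\tfrac12\alpha=1-\cos\alpha$ (and likewise for $\beta$), expand, and regroup by sum-to-product so that the factor corresponding to \eqref{Coolsaet-Id} can be extracted.

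I expect this last step to be the main obstacle. Eliminating the edge introduces a squared half-angle, so the displayed relation is of higher trigonometric degree than \eqref{Coolsaet-Id}: it must be factored, and the spurious cofactor shown to be nonzero for a genuine quadrilateral, before one lands on precisely the stated half-angle form. The second difficulty, which is exactly what separates this from Coolsaet's convex result, is that for a non-convex tile a diagonal may leave the quadrilateral and an angle may exceed $\pi$, so the naive splittings $\beta=\theta+(\beta-\theta)$ and $\delta=\theta+(\delta-\theta)$, together with the sign conventions in the sine and cosine rules, break down.

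To handle the non-convex case I would pass to the general polygon framework introduced in the excerpt, interpreting every arc and every angle as an oriented quantity with a fixed choice of side. The trigonometric relations above are insensitive to orientation once the signs are tracked consistently, so the derivation of the two expressions for $\cos a$ survives verbatim; this is what upgrades the identity from Coolsaet's convex setting to arbitrary almost equilateral quadrilaterals, as claimed in \cite{ch}. As a consistency check on the final form I would also verify \eqref{Coolsaet-Id} against the degenerate case $\gamma=\pi$, where Lemma \ref{TriQuadLem} forces $\beta=\delta$ and both sides collapse to $\sin\tfrac12\alpha\,\sin\tfrac12\beta$, confirming that the extracted factor is the correct one.
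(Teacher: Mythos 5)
You should first note that the paper does not actually prove this lemma: it is imported from \cite{ch} and \cite{cly}, where it is extracted from the closure condition \eqref{AEMatrixEq} --- a product of rotation matrices equal to $I_3$ --- by comparing matrix entries. That algebraic route never refers to diagonals or to the interior of the tile, which is exactly why the paper can remark afterwards that \eqref{Coolsaet-Id} holds even for non-simple quadrilaterals. Your route is essentially Coolsaet's original diagonal decomposition, and for a convex tile your two expressions for $\cos a$ are correct (they are \eqref{ca3Ang}).

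However, the two steps you defer are genuine gaps, not finishing touches. First, equating the two expressions for $\cos a$ does not recover \eqref{Coolsaet-Id}. Setting $L=\sin\delta\cot\tfrac12\beta+\cos\gamma$ and $R=\sin\gamma\cot\tfrac12\alpha+\cos\delta$, identity \eqref{Coolsaet-Id} is $L=R$, whereas equality of the two $\cos a$ expressions is equivalent to $L^2=R^2$; this is precisely the squaring step in the proof of Lemma \ref{AEQuadExists}, and it is why the paper claims only the implication from \eqref{Coolsaet-Id} to \eqref{ca3Ang}, never the converse. The spurious factor $L+R$ is not identically nonzero in the four angles (for instance it vanishes at $(\alpha,\beta,\gamma,\delta)=(\tfrac32\pi,\pi,\tfrac14\pi,\tfrac12\pi)$, where $L=-R\neq0$), so you cannot dismiss it by angle algebra alone: you must argue geometrically that the actual tile sits on the branch $L=R$, say by relating the base angles of the two isosceles triangles directly instead of eliminating through $\cos a$. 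Second, and more seriously, the passage to non-convex and non-simple quadrilaterals is the entire content of the generalisation beyond \cite{co}, and \quotes{the derivation survives verbatim once signs are tracked} is an assertion standing where the proof should be: when $\gamma>\pi$ or the boundary self-intersects, the diagonal need not split the tile into two triangles, the decompositions $\beta=\theta+(\beta-\theta)$ and $\delta=\theta+(\delta-\theta)$ need not be decompositions of actual triangle angles, and the sine and cotangent rules acquire branch ambiguities. The matrix formulation in \cite{ch,cly} exists precisely to bypass this case analysis; if you insist on triangle decompositions you would have to enumerate the possible shapes of the tile and verify the identity in each case.
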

We remark that \eqref{Coolsaet-Id} is also true if the quadrilateral is non-simple. It matches the trigonometric Diophantine equation in \cite[Equation (4)]{my}. In Section \ref{SecRat}, we generalise Coolsaet's method \cite[Theorem 3.2]{co} to determine rational angles. In Section \ref{SecGeom}, we shall see in Lemma \ref{AEQuadExists} that \eqref{Coolsaet-Id} serves as one of the criteria for verifying the geometric existence of tiles.

\subsection{Preliminary Cases} 

There are up to four distinct angle values among $\alpha, \beta, \gamma, \delta$. If all angles share the same value, then $a=b$. So a genuine almost equilateral quadrilateral has at least two distinct angle values. 

\begin{prop}\label{TwoAngProp} There is no tiling by congruent almost equilateral quadrilaterals with two distinct angle values.
\end{prop}

\begin{proof} Suppose there are two distinct angle values. Lemma \ref{ExchLem} implies no three angles in the tile sharing the same value. Then we have three possibilities, (1) $\alpha=\beta$ and $\gamma=\delta$, (2) $\alpha=\gamma$ and $\beta = \delta$, (3) $\alpha=\delta$ and $\beta = \gamma$.

Suppose $\alpha=\gamma$ and $\beta = \delta$. Lemma \ref{AlConvexLem} implies $\alpha, \beta, \gamma, \delta < \pi$. By $b\neq a$ and Lemma \ref{TriQuadLem}, $\alpha=\gamma$ if and only if $\delta = \pi$, contradicting $\delta < \pi$.

Suppose $\alpha=\delta$ and $\beta = \gamma$. Up to symmetry, Lemma \ref{ExchLem} implies $\alpha \ge \beta = \gamma \ge \delta = \alpha$. This implies $\alpha = \beta = \gamma = \delta$, a contradiction.

Suppose $\alpha=\beta$ and $\gamma=\delta$, we know $\alpha\neq\gamma$. Lemma \ref{AlConvexLem} implies every angles $< \pi$ so the tile is convex. The quadrilateral angle sum becomes
\begin{align*}
2\alpha + 2\gamma = (2 + \tfrac{4}{f})\pi.
\end{align*}
By \eqref{VCount}, we get $v_3 > 0$. Then Parity Lemma implies that $\alpha\gamma^2$ or $\alpha^3$ is a vertex. 

If $\alpha\gamma^2$ is a vertex, the angle sum system implies $\alpha = \frac{4}{f}\pi$ and $\gamma = (1 - \frac{2}{f})\pi$. By convexity, Lemma \ref{LunEstLem} implies $\alpha + \pi > 2\gamma$ and hence $f<8$, or $f=6$. Then $\alpha = \gamma = \frac{2}{3}\pi$, contradicting $\alpha\neq\gamma$. 

Now $\alpha^3$ must be a vertex. Then $\gamma$ appears at some degree $\ge 4$ vertex. The angle sum system gives $\alpha = \frac{2}{3}\pi$ and $\gamma = (\frac{1}{3} + \frac{2}{f})\pi$. Then $2\alpha + 2\gamma, \alpha + 4\gamma, 6\gamma >2\pi$, which implies $\alpha^2\gamma^{k\ge2}\cdots,  \alpha\gamma^{k\ge4}\cdots, \gamma^{k\ge6}\cdots$ are not vertices. So $\gamma$ only appears at $\gamma^2\cdots = \gamma^4$ and $\gamma = \frac{1}{2}\pi$. By $\gamma = (\frac{1}{3} + \frac{2}{f})\pi$, we get $f=12$. By $\alpha = \frac{2}{3}\pi$ and $\gamma = \frac{1}{2}\pi$, there are no other vertices, notably no $\alpha\gamma\cdots$.

The AAD $\bvert \, \gamma^{\alpha} \vert ^{\alpha} \gamma \, \bvert$ at $\gamma^4$ implies $\alpha^2\cdots$, which is $\alpha^3$. By $\beta=\alpha$ and $\delta=\gamma$, the two possible AADs of $\alpha^3$ in Figure \ref{AAD-al3} are $\vert^{\gamma}\alpha^{\alpha}\vert^{\gamma}\alpha^{\alpha}\vert^{\gamma}\alpha^{\alpha}\vert$ or $\vert^{\gamma}\alpha^{\alpha}\vert^{\gamma}\alpha^{\alpha}\vert^{\alpha}\alpha^{\gamma}\vert$. Both imply $\alpha\gamma\cdots$, a contradiction. So $\gamma^4$ is not a vertex and there is no tiling. 
\end{proof}

\begin{lem}\label{PairsLem} In a tiling of the sphere by congruent almost equilateral quadrilaterals with at least three distinct angle values, up to symmetry, either $\alpha\gamma\delta$ is a vertex, or one of the pairs below are vertices. 
\begin{itemize}
\item $\alpha^3$ and one of $\alpha\gamma^2, \alpha\delta^2, \beta\gamma^2, \beta\delta^2$,
\item $\alpha^2\beta$ and one of $\alpha\gamma^2, \alpha\delta^2, \beta\delta^2$,
\item $\alpha\delta^2$ and $\beta\gamma^2$,
\item $\alpha^3$ and one of $\gamma^4, \delta^4, \gamma^3\delta, \gamma\delta^3, \gamma^2\delta^2$,
\item $\alpha\beta^2$ and one of $\gamma^4, \delta^4, \gamma^3\delta, \gamma\delta^3, \gamma^2\delta^2$,
\item $\alpha\gamma^2$ and one of $\alpha^4, \beta^4, \delta^4, \alpha^3\beta, \alpha\beta^3, \alpha^2\beta^2, \alpha^2\delta^2, \beta^2\delta^2, \alpha\beta\delta^2$,
\item $\alpha\delta^2$ and one of $\alpha^4, \beta^4, \gamma^4, \alpha^3\beta, \alpha\beta^3, \alpha^2\beta^2, \alpha^2\gamma^2, \beta^2\gamma^2, \alpha\beta\gamma^2$.
\end{itemize}
In each of the last four items, the tiling has a unique degree $3$ vertex.
\end{lem}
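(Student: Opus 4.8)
The plan is to locate the degree $3$ vertices first and then pair each with a suitable second vertex. By \eqref{VCount} we have $v_3\ge 8>0$, so degree $3$ vertices exist, and Parity Lemma \ref{PaLem} (the number of $\gamma,\delta$ at a vertex is even) restricts a degree $3$ vertex to one of $\alpha^3,\alpha^2\beta,\alpha\beta^2,\beta^3$ (the $\hat b$-vertices) or $\alpha\gamma^2,\alpha\gamma\delta,\alpha\delta^2,\beta\gamma^2,\beta\gamma\delta,\beta\delta^2$ (the $b$-vertices). If $\alpha\gamma\delta$ or $\beta\gamma\delta$ is a vertex we are in the first alternative (up to the symmetry $(\alpha\beta)(\gamma\delta)$ it is $\alpha\gamma\delta$), so I assume neither occurs; the remaining eight types fall into the four symmetry orbits $\{\alpha^3,\beta^3\}$, $\{\alpha^2\beta,\alpha\beta^2\}$, $\{\alpha\gamma^2,\beta\delta^2\}$, $\{\alpha\delta^2,\beta\gamma^2\}$.

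Before splitting into cases I would record the exclusions that drive everything. Because the tile has at least three distinct angle values and, by Lemma \ref{ExchLem} applied to both the tile and its mirror image, $\alpha=\beta$ holds if and only if $\gamma=\delta$, we have $\alpha\neq\beta$ and $\gamma\neq\delta$ (either equality would leave only two values). Subtracting angle sums then shows that no two distinct degree $3$ $\hat b$-vertices coexist, since every such pair yields $\alpha=\beta$; hence there is at most one degree $3$ $\hat b$-vertex. For degree $3$ $b$-vertices, the pairs $\{\alpha\gamma^2,\alpha\delta^2\},\{\beta\gamma^2,\beta\delta^2\}$ force $\gamma=\delta$ and the pairs $\{\alpha\gamma^2,\beta\gamma^2\},\{\alpha\delta^2,\beta\delta^2\}$ force $\alpha=\beta$; for $\{\alpha\gamma^2,\beta\delta^2\}$ one has $\gamma=\pi-\tfrac12\alpha$ and $\delta=\pi-\tfrac12\beta$, so $\alpha\neq\beta$ and Lemma \ref{ExchLem} are contradictory. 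Thus the unique compatible pair of distinct degree $3$ $b$-vertices is $\{\alpha\delta^2,\beta\gamma^2\}$, which is exactly item 3.

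With these in hand the argument splits on whether a degree $3$ $b$-vertex exists. If one exists together with a degree $3$ $\hat b$-vertex, I pair the unique $\hat b$-vertex (up to symmetry $\alpha^3$ or $\alpha^2\beta$) with a degree $3$ $b$-vertex, giving item 1 (all four $b$-vertices being a priori admissible) or item 2; in item 2 the partner $\beta\gamma^2$ is removed because $\alpha^2\beta$ and $\beta\gamma^2$ give $\alpha=\gamma$, hence $\delta=\pi$ by Lemma \ref{TriQuadLem}, and the quadrilateral angle sum then forces $f=4$, contradicting $f\ge6$. If a degree $3$ $b$-vertex exists but no degree $3$ $\hat b$-vertex does, then either two $b$-vertices occur, necessarily $\{\alpha\delta^2,\beta\gamma^2\}$ (item 3), or there is exactly one, which is then the unique degree $3$ vertex, up to symmetry $\alpha\gamma^2$ or $\alpha\delta^2$; writing it as $\gamma^2\alpha$ (resp. $\delta^2\alpha$), Lemma \ref{ab2Lem} supplies a degree $4$ vertex without $\gamma$ (resp. without $\delta$), and enumerating degree $4$ vertices in $\{\alpha,\beta,\delta\}$ (resp. $\{\alpha,\beta,\gamma\}$) subject to Parity Lemma \ref{PaLem} reproduces the lists of items 6 and 7. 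Finally, if no degree $3$ $b$-vertex exists, the unique degree $3$ vertex is the sole $\hat b$-vertex (up to symmetry $\alpha^3$ or $\alpha\beta^2$); here $\gamma$ and $\delta$ appear at no degree $3$ vertex, so Lemma \ref{2HDAngLem} produces a degree $4$ vertex carrying at least three of $\gamma,\delta$ or a degree $5$ vertex $\gamma^p\delta^q$, and Parity discards the degree $5$ case and forces the degree $4$ vertex to consist only of $\gamma,\delta$, giving one of $\gamma^4,\delta^4,\gamma^3\delta,\gamma\delta^3,\gamma^2\delta^2$ as in items 4 and 5.

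I expect the main obstacle to be bookkeeping rather than any single hard step: one must track the single involution $(\alpha\beta)(\gamma\delta)$ so that each orbit is normalised exactly once and each partner list is stated in the correct labelling, and one must check that the vertices returned by Lemmas \ref{2HDAngLem} and \ref{ab2Lem}, after imposing Parity, match the stated lists with no omissions or extras. The uniqueness assertion for the last four items then follows at once, because items 4 and 5 arise precisely when the only degree $3$ vertex is the sole $\hat b$-vertex, and items 6 and 7 precisely when it is the sole $b$-vertex.
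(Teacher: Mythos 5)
Your proposal is correct and follows essentially the same route as the paper: the Parity Lemma to enumerate the possible degree $3$ vertices, Lemmas \ref{ExchLem} and \ref{TriQuadLem} to prune incompatible pairs, and Lemmas \ref{2HDAngLem} and \ref{ab2Lem} to supply the second vertex in the unique-degree-$3$-vertex cases. The only local deviation is that you eliminate the pair $\alpha^2\beta,\beta\gamma^2$ by deriving $f=4$ from the angle sums once $\delta=\pi$, whereas the paper invokes the Balance Lemma to rule out $\beta\gamma^2$ after $\delta^2\cdots$ is seen to be impossible; both arguments are valid.
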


If a tiling has a unique degree $3$ vertex, then $f\ge 16$ (or $f \ge 24$) due to Lemma  \ref{ab2Lem} (or Lemma \ref{a3Lem}).

\begin{proof} By \eqref{VCount}, $v_3 > 0$ means there exists some degree $3$ vertex. By Parity Lemma, the degree $3$ $b$-vertices are $\alpha\gamma\delta, \beta\gamma\delta, \alpha\gamma^2, \alpha\delta^2, \beta\gamma^2,\beta\delta^2$, and the degree $3$ $\hat{b}$-vertices are $\alpha^3, \beta^3, \alpha^2\beta, \alpha\beta^2$. The degree $4$ vertices are
\begin{align*}
&\alpha^4, \beta^4, \gamma^4, \delta^4, \alpha^3\beta, \alpha^2\beta^2, \alpha\beta^3,  \alpha^2\gamma^2, \alpha^2\delta^2, \alpha^2\gamma\delta, \\
& \alpha\beta\gamma^2, \alpha\beta\delta^2, \beta^2\gamma^2, \beta^2\gamma\delta, \beta^2\delta^2, \gamma^3\delta, \gamma^2\delta^2, \gamma\delta^3.
\end{align*}

If $\alpha\gamma\delta, \beta\gamma\delta$ are both vertices, Then $\alpha=\beta$ and Lemma \ref{ExchLem} implies $\gamma = \delta$, contradicting at least three distinct angle values. Hence only one of them can be a vertex. Pairs leading to these two equalities are dismissed.  

Suppose $\alpha\gamma\delta, \beta\gamma\delta$ are not vertices. 

If there are two degree $3$ vertices, we then dismiss the pairs contradicting Lemma \ref{ExchLem}. For example, $\alpha\gamma^2, \beta\delta^2$ are dismissed for this reason. Meanwhile, $\alpha^2\beta, \beta\gamma^2$ imply $\alpha = \gamma$ whereby Lemma \ref{TriQuadLem} implies $\delta = \pi$. Then $\delta^2\cdots$ is not a vertex. By Balance Lemma, $\beta\gamma^2$ cannot be a vertex. So $\alpha^2\beta, \beta\gamma^2$ are also dismissed. Up to symmetry, we obtain all degree $3$ pairs.

Suppose there is only one degree $3$ vertex. Up to symmetry, it suffices to discuss $\alpha^3,\alpha\beta^2, \alpha\gamma^2, \alpha\delta^2$. If one of $\alpha^3, \alpha\beta^2$ is the only degree $3$ vertex, then Lemma \ref{2HDAngLem} and Parity Lemma imply that one of $\gamma^4, \delta^4, \gamma^3\delta, \gamma\delta^3, \gamma^2\delta^2$ is a vertex. If $\alpha\gamma^2$ is the only degree $3$ vertex, then Lemma \ref{ab2Lem} assures a degree $4$ vertex without $\gamma$. So one of $\alpha^4, \beta^4,  \delta^4, \alpha^3\beta, \alpha\beta^3, \alpha^2\beta^2, \alpha^2\delta^2, \beta^2\delta^2, \alpha\beta\delta^2$ is a vertex. Same for $\alpha\delta^2$, one of $\alpha^4, \beta^4, \gamma^4,  \alpha^3\beta, \alpha\beta^3, \alpha^2\beta^2, \alpha^2\gamma^2, \beta^2\gamma^2$, $\alpha\beta\gamma^2$ is a vertex. These are the remaining pairs. 
\end{proof}

We remark that, in the proof above, counting is used in Lemmas \ref{2HDAngLem}, \ref{ab2Lem}. Because the four angles are distinguished by three distinct angle values and the $b$-edge, counting angles is made possible. 

It will be explained in Section \ref{SubsecStrategy} that knowing two vertices is sufficient to determine all angle combinations at vertices. By the above lemma, we only need extra discussion for the case where $\alpha\gamma\delta$ is a vertex.

\begin{lem} \label{AlGaDe-Al2} If a tiling of the sphere by congruent almost equilateral quadrilaterals has $\alpha\gamma\delta$, then $\alpha^2\cdots$ does not have $\gamma, \delta$. 
\end{lem}

The conclusion is that $\alpha^2\cdots$ is a $\hat{b}$-vertex. So $\alpha^2\cdots = \alpha^m, \alpha^{m\ge2}\beta^n$. 

\begin{proof} Assume the contrary. By $\alpha\gamma\delta$, Parity Lemma implies that one of $\alpha^2\gamma^2\cdots, \alpha^2\delta^2\cdots$ is a vertex. Up to symmetry of $\gamma \leftrightarrow \delta$, we may assume $\alpha^2\gamma^2\cdots$ is a vertex. Then $\alpha+\gamma\le\pi$ and $\alpha\gamma\delta$ implies $\delta \ge \pi$. This implies that $\delta^2\cdots$ is not a vertex. Then Balance Lemma implies that $\gamma^2\cdots$ is also not a vertex, contradicting $\alpha^2\gamma^2\cdots$.
\end{proof}

\begin{lem}\label{AlGaDeLem} If a tiling with $f\ge8$ has at least three distinct angle values and $\alpha\gamma\delta$ is a vertex, then $\alpha > \beta$ and $\gamma > \delta$. In particular, $\delta<\pi$. Moreover, if $\alpha^2\cdots$ is not a vertex, then the $\hat{b}$-vertices are $\beta^n, \alpha\beta^n$ and the vertices having strictly more $\delta$ than $\gamma$ are $\alpha\delta^2$, $\alpha\beta^n\delta^2$.
\end{lem}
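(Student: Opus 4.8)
The plan is to first read off the value of $\beta$, then settle the two inequalities, and finally classify the vertices by combining the angle-sum bookkeeping with the geometric lemmas. Since $\alpha\gamma\delta$ is a vertex, \eqref{VertexAngSum} gives $\alpha+\gamma+\delta=2\pi$; subtracting this from the quadrilateral angle sum \eqref{QuadSum} yields $\beta=\tfrac{4}{f}\pi$, so $f\ge 8$ forces $\beta\le\tfrac12\pi$. This single identity drives everything that follows.

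For the inequalities I would first exclude $\alpha=\beta$: by Lemma \ref{ExchLem} this would force $\gamma=\delta$, leaving only two distinct angle values, contrary to hypothesis. To exclude $\alpha<\beta$, note that Lemma \ref{ExchLem} then gives $\gamma<\delta$; since at most one angle is $\ge\pi$ (Lemma \ref{AlConvexLem}) and $\gamma<\delta$, we must have $\gamma<\pi$, hence $\alpha,\beta,\gamma<\pi$, and Lemma \ref{LunEstLem} applies to give $\beta+\pi>\gamma+\delta=2\pi-\alpha$, i.e. $\alpha+\beta>\pi$; this contradicts $\alpha<\beta\le\tfrac12\pi$. Therefore $\alpha>\beta$, and Lemma \ref{ExchLem} gives $\gamma>\delta$. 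Finally $\delta<\pi$, for otherwise $\gamma>\delta\ge\pi$ would put two angles $\ge\pi$, against Lemma \ref{AlConvexLem}.

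For the ``moreover'' part, assume $\alpha^2\cdots$ is not a vertex, so every vertex carries at most one $\alpha$. A $\hat b$-vertex involves only $\alpha,\beta$, hence is $\alpha^m\beta^n$ with $m\le 1$, i.e. $\beta^n$ or $\alpha\beta^n$, as claimed. Now let $\alpha^m\beta^n\gamma^k\delta^l$ be a vertex with $l>k$; then $m\le1$ and, by Parity Lemma \ref{PaLem}, $k+l$ is even, so in fact $k+l\ge 4$ once $k\ge1$. Subtracting $\alpha+\gamma+\delta=2\pi$ from the vertex angle sum gives
\[(m-1)\alpha+n\beta+(k-1)\gamma+(l-1)\delta=0.\]
When $m=1$ this reads $(1-k)\gamma+(1-l)\delta=n\beta\ge0$; using $\gamma>\delta>0$, $l\ge k+1$ and $k+l\ge4$, the left side is $\le(2-k-l)\delta\le-2\delta<0$ whenever $k\ge1$, a contradiction. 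Hence $k=0$, and the vertex is $\alpha\beta^n\delta^l$ with $\gamma=n\beta+(l-1)\delta$, matching the listed forms $\alpha\delta^2,\alpha\beta^n\delta^2$ once $l=2$ (the boundary $n=0,l=2$ even collapses to $\gamma=\delta$ and is vacuous).

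It remains to force $l=2$ and to exclude the $\alpha$-free possibilities ($m=0$, e.g. $\beta^n\delta^l$ or $\beta^n\gamma\delta^3$), and this is where I expect the real difficulty. Since our vertex already exhibits $\delta^2\cdots$, the Balance Lemma \ref{BaLem} guarantees a vertex $\gamma^2\cdots$, which (having degree $\ge3$) yields $\gamma<\pi$; together with $\delta<\pi$ this makes the tile convex except possibly when $\alpha\ge\pi$. In the convex range Lemma \ref{LunEstLem} supplies $\alpha+\beta>\pi$ and $\delta>\beta+\gamma-\pi$, whence (for $n\ge1$) $l\delta<\pi$. The obstacle is that these linear estimates bound $l$ only by order $f$, not by $2$, and they do not by themselves kill the $\alpha$-free unbalanced vertices. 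I therefore expect the decisive input to be the trigonometric identity \eqref{Coolsaet-Id} --- equivalently the systematic Diophantine analysis promised in the introduction, which pins down the actual angle values --- to eliminate the surplus $\delta$'s and the $\alpha$-free cases; the reflex sub-case $\alpha\ge\pi$ should be dispatched separately using $\gamma+\delta=2\pi-\alpha\le\pi$ to bound $\delta$ directly.
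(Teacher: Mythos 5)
Your first paragraph and your treatment of the two inequalities are correct and essentially the paper's own argument (the paper assumes $\delta>\gamma$, derives $\alpha<\beta$ and $\alpha+\beta>\pi$ from Lemma \ref{LunEstLem}, and contradicts $2\beta=\tfrac{8}{f}\pi\le\pi$); your explicit check of the hypothesis of Lemma \ref{LunEstLem} is a welcome extra care, and the identification of the $\hat b$-vertices is fine.

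The ``moreover'' classification, however, has exactly the gap you flag, and your guess about how to close it is wrong: no trigonometric identity or Diophantine analysis is needed. The missing idea is the adjacent angle deduction. At any vertex the $b$-edges cut the cyclic sequence of angles into runs $\bvert\, x\vert\cdots\vert y\,\bvert$ whose two endpoints $x,y$ lie in $\{\gamma,\delta\}$ and whose interior angles lie in $\{\alpha,\beta\}$, because each $\gamma$ or $\delta$ has its $b$-edge on one side. A vertex with strictly more $\delta$ than $\gamma$ must contain a run $\bvert\,\delta\vert\cdots\vert\delta\,\bvert$. If $\alpha^2\cdots$ is not a vertex, reciprocity forbids $\delta^{\alpha}\vert^{\alpha}\delta$, $\delta^{\alpha}\vert^{\alpha}\beta$ and $\beta^{\alpha}\vert^{\alpha}\beta$, so the interior of such a run can be neither empty nor consist only of $\beta$'s (an all-$\beta$ interior forces $\beta^{\alpha}\vert^{\alpha}\delta$ at its far end): it must contain an $\alpha$. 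Since the vertex carries at most one $\alpha$, there is at most one such run; since the vertex then contains $\alpha$ together with two $\delta$'s, the relation $\alpha+\gamma+\delta=2\pi$ forbids any $\gamma$ at it; with no $\gamma$ present every run is of type $\delta\cdots\delta$, so there is exactly one run, hence exactly two $b$-edges, and the vertex is $\alpha\beta^n\delta^2$. This single combinatorial step simultaneously forces $l=2$ and eliminates your $m=0$ candidates such as $\beta^n\delta^l$ and $\beta^n\gamma\delta^3$, which is precisely what your linear angle-sum estimates could not do.
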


\begin{proof} Assume $\delta > \gamma$. Lemma \ref{ExchLem} implies $\alpha<\beta$. Then Lemma \ref{LunEstLem} and $\alpha\gamma\delta$ imply $\beta +\pi > \gamma+\delta =2\pi - \alpha$, which gives $\alpha+\beta>\pi$. The angle sum system implies $\beta=\frac{4}{f}\pi$. So $\frac{8}{f}\pi = 2\beta > \alpha+\beta>\pi$ gives $f<8$, a contradiction. Lemma \ref{ExchLem} implies $\alpha > \beta$ and $\delta < \gamma$. Then Lemma \ref{AlConvexLem} implies $\delta < \pi$.

By $\alpha\gamma\delta$, we get $\bvert \, \gamma \vert \cdots \vert \delta \, \bvert=$ $\bvert \, \gamma \vert \delta \, \bvert$, $\bvert \, \gamma \vert  \alpha \vert \delta \, \bvert$, $\bvert \, \gamma \vert \beta\cdots\beta \vert \delta \, \bvert$.  

If $\alpha^2\cdots$ is not a vertex, then $\beta^{\alpha}\vert^{\alpha}\beta\cdots$, $\beta^{\alpha}\vert^{\alpha}\delta\cdots$, $\delta^{\alpha}\vert^{\alpha}\delta\cdots$ are not vertices. By no $\beta^{\alpha}\vert^{\alpha}\beta\cdots$, we get $\beta^{\alpha} \beta\cdots \beta = \beta^{\alpha} \beta^{\alpha}\cdots \beta^{\alpha}$ and $\delta^{\alpha} \beta \cdots \beta = \delta^{\alpha}\beta^{\alpha}\cdots \beta^{\alpha}$. Then $\bvert \, \delta \vert \beta \cdots \beta \vert \delta \, \bvert =$ $\bvert \, \delta^{\alpha} \vert \beta^{\alpha} \cdots \beta^{\alpha} \vert^{\alpha} \delta \, \bvert$ implies $\alpha^2\cdots$ being a vertex, a contradiction. So $\bvert \, \delta \vert \beta \cdots \beta \vert \delta \, \bvert$ cannot happen. Hence $\bvert \, \delta \vert \cdots \vert \delta \, \bvert =$ $\bvert \, \delta \vert \alpha \vert \delta \, \bvert$, $\bvert \, \delta \vert \alpha \vert \beta \cdots \beta \vert \delta \, \bvert$, $\bvert \, \delta \vert \beta \cdots \beta \vert \alpha \vert \beta \cdots \beta \vert \delta \, \bvert$.

A vertex with strictly more $\delta$ than $\gamma$ contains $\bvert \, \delta \vert \cdots \vert \delta \, \bvert$. Since $\bvert \, \delta \vert \cdots \vert \delta \, \bvert$ has $\alpha$, by $\alpha\gamma\delta$, we know that the vertex has no $\gamma$. Moreover, by no $\alpha^2\cdots$, the vertex has only one $\bvert \, \delta \vert \cdots \vert \delta \, \bvert$. Meanwhile, $\alpha\gamma\delta$ implies that $\bvert \, \delta \vert \alpha \vert \delta \, \bvert$ is not a vertex. Therefore the vertex is $\bvert \, \delta \vert \alpha \vert \beta \cdots \beta \vert \delta \, \bvert$ or $\bvert \, \delta \vert \beta \cdots \beta \vert \alpha \vert \beta \cdots \beta \vert \delta \, \bvert$, which is $\alpha\beta^n\delta^2$. 
\end{proof}

\begin{prop} \label{MinProp} If $f=6$, then the tiling is uniquely given by the earth map tiling $E (P_6)$ in Figure \ref{P6} with the set of admissible vertices $\AVC \equiv \{ \alpha\gamma\delta, \beta^3 \}$.

\begin{figure}
	\centering
		\adjustbox{trim=\dimexpr.5\Width-15mm\relax{} \dimexpr.5\Height-15mm\relax{}  \dimexpr.5\Width-15mm\relax{} \dimexpr.5\Height-15mm\relax{} ,clip}{\includegraphics[height=5.5cm]{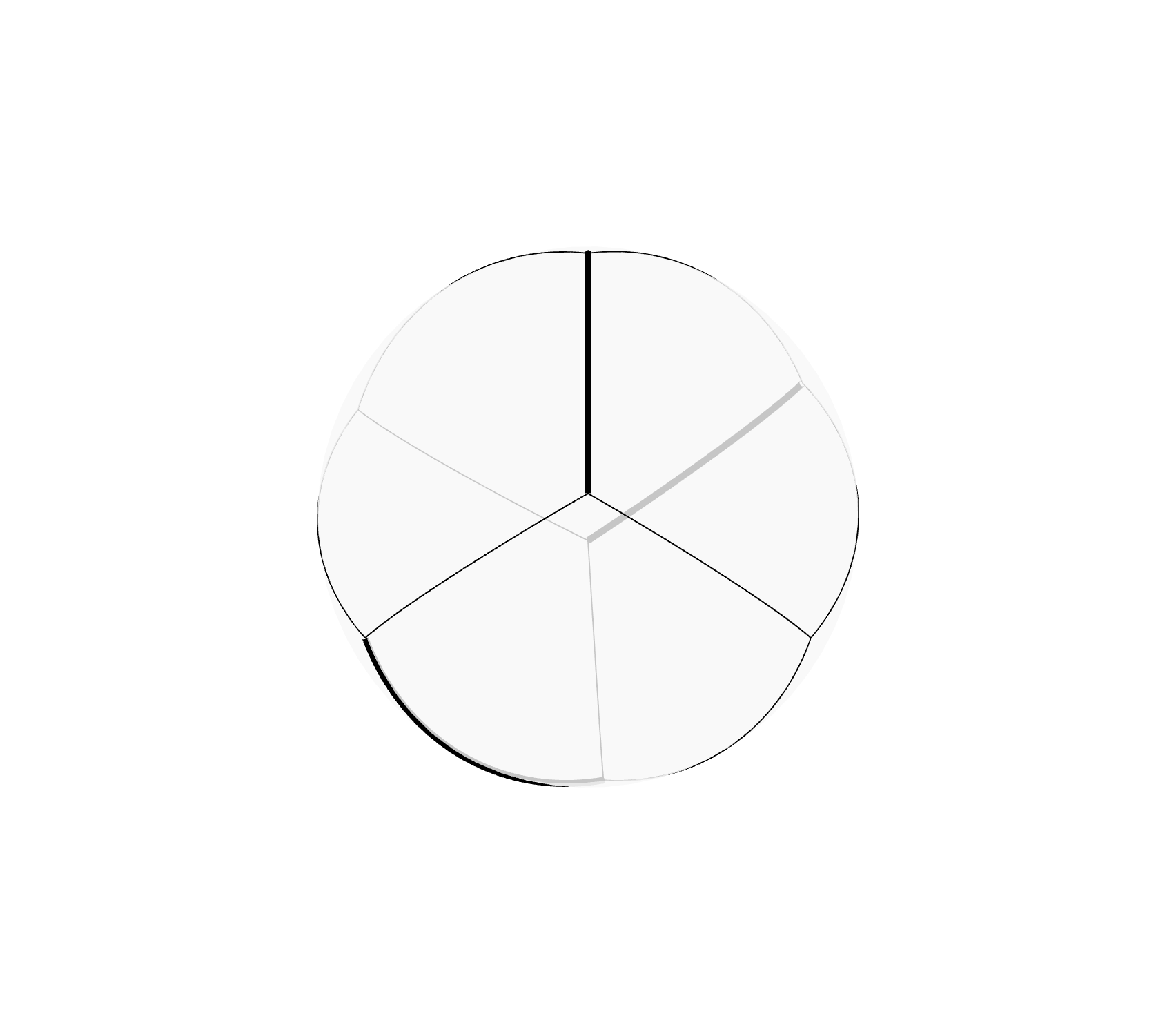}} \quad
	\caption{Tiling: $E=P_6$}
	\label{P6}
\end{figure}

\end{prop}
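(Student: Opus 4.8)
The plan is to read off the global combinatorics from \eqref{FCount}--\eqref{VCount}, then force the anglewise vertex combination with the Balance Lemma and a counting argument. First I would note that $f=6$ makes \eqref{FCount} read $\sum_{h\ge4}(h-3)v_h=0$; since every summand is non-negative, $v_h=0$ for all $h\ge4$, so \emph{every} vertex has degree $3$ and \eqref{VCount} gives $v_3=8$. As each tile carries $\alpha,\beta,\gamma,\delta$ once, $\#\alpha=\#\beta=\#\gamma=\#\delta=6$. By Proposition \ref{TwoAngProp} the tile has at least three distinct angle values, and with Lemma \ref{ExchLem} this gives $\gamma\neq\delta$ (otherwise $\gamma=\delta$ would force $\alpha=\beta$, leaving at most two values). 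The Parity Lemma \ref{PaLem} then limits the degree $3$ vertices to the $\hat b$-vertices $\alpha^3,\alpha^2\beta,\alpha\beta^2,\beta^3$ and the $b$-vertices $\alpha\gamma\delta,\beta\gamma\delta,\alpha\gamma^2,\alpha\delta^2,\beta\gamma^2,\beta\delta^2$.

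The organising idea is the dichotomy in the Balance Lemma \ref{BaLem}. In the first branch, where neither $\gamma^2\cdots$ nor $\delta^2\cdots$ is a vertex, every $b$-vertex has exactly one $\gamma$ and one $\delta$, hence is $\alpha\gamma\delta$ or $\beta\gamma\delta$; these cannot both occur, since $\alpha+\gamma+\delta=2\pi=\beta+\gamma+\delta$ gives $\alpha=\beta$ and then $\gamma=\delta$. Up to the $\alpha\leftrightarrow\beta$ symmetry I take $\alpha\gamma\delta$ as the only $b$-vertex. Counting the six $\gamma$'s forces exactly six copies of $\alpha\gamma\delta$, which simultaneously exhausts all six $\alpha$'s and all six $\delta$'s; the two remaining degree $3$ vertices can use only the six leftover $\beta$'s, so each is $\beta^3$. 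This yields $\AVC\equiv\{\alpha\gamma\delta,\beta^3\}$.

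The main work, and the step I expect to be delicate, is eliminating the second branch, where both $\gamma^2\cdots$ and $\delta^2\cdots$ are vertices. Here $\alpha\gamma^2,\alpha\delta^2$ cannot coexist (nor can $\beta\gamma^2,\beta\delta^2$), as each pair forces $\gamma=\delta$; likewise two $\gamma^2$-type vertices $\alpha\gamma^2,\beta\gamma^2$ force $\alpha=\beta$. The mixed pair $\alpha\gamma^2,\beta\delta^2$ is excluded by Lemma \ref{ExchLem}, since its angle sums give $\alpha\ge\beta\iff\gamma\le\delta$, contradicting $\alpha\ge\beta\iff\gamma\ge\delta$ unless $\gamma=\delta$. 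This leaves only $\alpha\delta^2$ and $\beta\gamma^2$; their sums with \eqref{QuadSum} give $\gamma+\delta=\tfrac43\pi$, from which one checks $\alpha\gamma\delta$ and $\beta\gamma\delta$ are not vertices, so $\alpha\delta^2,\beta\gamma^2$ are the only $b$-vertices. Counting the six $\gamma$'s and six $\delta$'s then consumes three $\alpha$'s and three $\beta$'s, leaving two $\hat b$-vertices that must total exactly $3\alpha+3\beta$. The only ways to split this into two degree $3$ $\hat b$-vertices are $\{\alpha^3,\beta^3\}$ and $\{\alpha^2\beta,\alpha\beta^2\}$, and both force $\alpha=\beta$, hence $\gamma=\delta$, a contradiction; so this branch is impossible.

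Having forced $\AVC\equiv\{\alpha\gamma\delta,\beta^3\}$, the angle sums give $\beta=\tfrac23\pi$ and $\alpha+\gamma+\delta=2\pi$, matching the $P_6$ row of Table \ref{SpecialTilingData1}. Since all eight vertices have degree $3$, the tiling is combinatorially the cube, and the adjacencies forced by the vertices $\alpha\gamma\delta$ and $\beta^3$ (via the AAD) assemble the six tiles into the earth map tiling $E(P_6)$, whose geometric realisability is recorded in Section \ref{SecGeom}. Everything outside the second Balance-Lemma branch is routine bookkeeping with the counting identities; that branch is where the real case analysis lives.
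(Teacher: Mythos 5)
Your argument is correct, and it reaches the AVC by a genuinely different route from the paper. The paper never computes the AVC globally: it first pins down the $b$-edge distribution on the cube using the cube's symmetries (Figure \ref{Edge-f6}), then splits according to whether some $b$-edge carries a twisted pair $\gamma\,\bvert\,\delta$ or all $b$-edges are matched, and in each case reads off the angles by a local configuration analysis around a chosen $b$-edge (Figure \ref{Angles-f6}), deriving a forbidden vertex such as $\beta\delta\cdots$ or $\alpha\gamma\cdots$ in the bad cases. You instead run the Balance Lemma dichotomy (no $\gamma^2\cdots$ and no $\delta^2\cdots$, versus both), eliminate vertex pairs by angle sums and Lemma \ref{ExchLem} exactly as in the proof of Lemma \ref{PairsLem}, and then close both branches by counting $\#\alpha=\#\beta=\#\gamma=\#\delta=6$ over the eight degree-$3$ vertices; in particular your disposal of the matched branch (the two leftover $\hat b$-vertices must partition $\alpha^3\beta^3$, forcing $\alpha=\beta$) replaces the paper's picture-chase with a two-line count. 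Your route is arguably more systematic and closer to the AVC-first methodology the paper uses everywhere else, and all the individual steps check out, including the consistency check $\beta=\tfrac23\pi$, $\alpha+\gamma+\delta=2\pi$. What you buy in cleanliness you pay for at the end: the paper's proof \emph{is} the construction, whereas you still owe the assembly of the six tiles from $\AVC\equiv\{\alpha\gamma\delta,\beta^3\}$ — that the two $\beta^3$ vertices must be antipodal on the cube (they cannot share a face, since each face has only one $\beta$), that the six remaining vertices are $\alpha\gamma\delta$, and that the $b$-edges then fall into the unique earth-map pattern. Your closing sentence asserts this via the AAD; it is true and routine, but it is precisely the content carried by the paper's two figures, so it should be written out rather than waved at.
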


Although it can be easily checked by computer, we give the details here.

\begin{proof} Suppose $f=6$. The tiling is given by the cube. To determine the $b$-edge distribution, we assign $b$-edge in a way that each quadrilateral has exactly one $b$-edge. By the symmetries of the cube in Figure \ref{Edge-f6}, we may take $e_1$ as a $b$-edge. Then the other edges of the quadrilaterals sharing $e_1$ are $a$-edges. By mirror symmetry, choosing $e_2$ as a $b$-edge implies that $e_3$ must also be a $b$-edge. Therefore we obtain the $b$-edge distribution.

\begin{figure}[htp] 
\centering
\begin{tikzpicture}

\begin{scope}[scale=0.6] 

\tikzmath{\x=1;}

\draw[line width=2, gray!50]
	(0:0) -- (90:1)
	(30:2) -- (30:3);

\draw[rotate=120, line width=2, gray!50]	
	(90:1) -- (150:2);

\foreach \a in {0,1,2}
\draw[rotate=120*\a]
	(0:0) -- (90:1)
	(90:1) -- (30:2)
	(90:1) -- (150:2)
	(30:2) -- (30:3)
;

\node at (45:0.55*\x) {$e_1$};
\node at (260:1*\x) {$e_2$};
\node at (20:2.5*\x) {$e_3$};

\end{scope}

\end{tikzpicture}
\caption{Edge distribution for $f=6$}
\label{Edge-f6}
\end{figure}
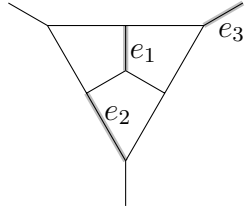

By the $b$-edge distribution in Figure \ref{Edge-f6}, we next determine the angles. By the remark after \eqref{FCount}, every vertex has degree $3$. Along the $b$-edge, the adjacent tiles either have matched angles $\gamma \, \bvert \, \gamma, \delta \, \bvert \, \delta$, or a twisted pair of $\gamma \, \bvert \, \delta$.  

Suppose a tiling has a twisted pair $\gamma \, \bvert \, \delta$. Up to symmetry, we assume $\gamma\delta\cdots = \alpha\gamma\delta$. As $\alpha\neq\beta$, we know that $\beta\gamma\delta$ is not a vertex. Vertex angle sums of $\alpha\gamma\delta, \beta\delta^2$ give $\alpha+\gamma = \beta + \delta$, contradicting Lemma \ref{ExchLem}. So $\beta\delta^2$ is not vertex. This means that $\beta\delta\cdots$ is not a vertex. If the central $b$-edge is configured in the first picture of Figure \ref{Angles-f6}, we get $\beta\delta\cdots$, a contradiction. So it must be configured the way as in the second picture. We uniquely determine angles and hence the set of admissible vertices $\AVC \equiv \{ \alpha\gamma\delta, \beta^3 \}$.

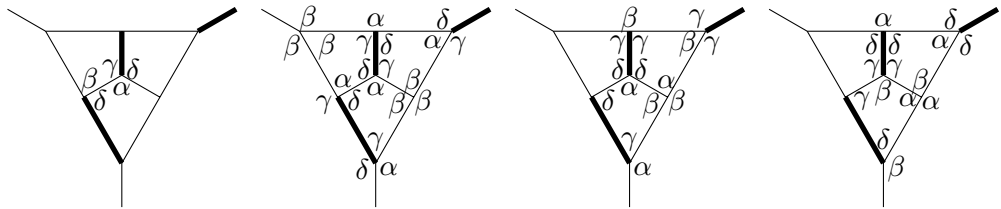
\begin{figure}[htp] 
\centering
\begin{tikzpicture}[scale=0.9]

\begin{scope}[scale=0.65] 

\foreach \a in {0,1,2}
\draw[rotate=120*\a]
	(0:0) -- (90:1)
	(90:1) -- (30:2)
	(90:1) -- (150:2)
	(30:2) -- (30:3)
;

\draw[line width=2]
	(0:0) -- (90:1)
	(30:2) -- (30:3);

\draw[rotate=120, line width=2]	
	(90:1) -- (150:2);

\node at (270:0.3) {\small $\alpha$};
\node at (30:0.3) {\small $\delta$};
\node at (150:0.3) {\small $\gamma$};

\node at (190:0.75) {\small $\beta$};
\node at (230:0.75) {\small $\delta$};

\end{scope}

\begin{scope}[xshift=3.75cm, scale=0.65] 

\foreach \a in {0,1,2}
\draw[rotate=120*\a]
	(0:0) -- (90:1)
	(90:1) -- (30:2)
	(90:1) -- (150:2)
	(30:2) -- (30:3)
;

\draw[line width=2]
	(0:0) -- (90:1)
	(30:2) -- (30:3);

\draw[rotate=120, line width=2]	
	(90:1) -- (150:2);

\node at (270:0.3) {\small $\alpha$}; 
\node at (30:0.3) {\small $\gamma$};
\node at (150:0.3) {\small $\delta$};

\node at (70:0.75) {\small $\delta$}; 
\node at (110:0.75) {\small $\gamma$};  
\node at (90:1.25) {\small $\alpha$};  

\node at (190:0.75) {\small $\alpha$}; 
\node at (230:0.75) {\small $\delta$};
\node at (210:1.35) {\small $\gamma$};

\node at (350:0.85) {\small $\beta$}; 
\node at (305:0.85) {\small $\beta$};  
\node at (330:1.25) {\small $\beta$}; 

\node at (30:1.5) {\small $\alpha$}; 
\node at (20:2) {\small $\gamma$}; 
\node at (40:2) {\small $\delta$}; 

\node at (150:1.275) {\small $\beta$}; 
\node at (140:2) {\small $\beta$}; 
\node at (162:2) {\small $\beta$}; 

\node at (270:1.5) {\small $\gamma$}; 
\node at (262:2.15) {\small $\delta$};
\node at (278:2.15) {\small $\alpha$};

\end{scope}

\begin{scope}[xshift=7.5cm, scale=0.65] 

\foreach \a in {0,1,2}
\draw[rotate=120*\a]
	(0:0) -- (90:1)
	(90:1) -- (30:2)
	(90:1) -- (150:2)
	(30:2) -- (30:3)
;

\draw[line width=2]
	(0:0) -- (90:1)
	(30:2) -- (30:3);

\draw[rotate=120, line width=2]	
	(90:1) -- (150:2);

\node at (270:0.3) {\small $\alpha$}; 
\node at (30:0.3) {\small $\delta$};
\node at (150:0.3) {\small $\delta$};

\node at (70:0.75) {\small $\gamma$}; 
\node at (110:0.75) {\small $\gamma$};  
\node at (90:1.25) {\small $\beta$};  

\node at (230:0.75) {\small $\delta$};

\node at (350:0.85) {\small $\alpha$}; 
\node at (305:0.85) {\small $\beta$};  
\node at (330:1.25) {\small $\beta$}; 

\node at (28:1.5) {\small $\beta$}; 
\node at (20:2) {\small $\gamma$}; 
\node at (40:2) {\small $\gamma$};

\node at (270:1.5) {\small $\gamma$}; 
\node at (278:2.15) {\small $\alpha$};

\end{scope}

\begin{scope}[xshift=11.25cm, scale=0.65] 

\foreach \a in {0,1,2}
\draw[rotate=120*\a]
	(0:0) -- (90:1)
	(90:1) -- (30:2)
	(90:1) -- (150:2)
	(30:2) -- (30:3)
;

\draw[line width=2]
	(0:0) -- (90:1)
	(30:2) -- (30:3);

\draw[rotate=120, line width=2]	
	(90:1) -- (150:2);

\node at (270:0.4) {\small $\beta$}; 
\node at (30:0.3) {\small $\gamma$};
\node at (150:0.3) {\small $\gamma$};

\node at (70:0.75) {\small $\delta$}; 
\node at (110:0.75) {\small $\delta$};  
\node at (90:1.25) {\small $\alpha$};  

\node at (230:0.75) {\small $\gamma$};

\node at (350:0.85) {\small $\beta$}; 
\node at (315:0.8) {\small $\alpha$};  
\node at (330:1.25) {\small $\alpha$}; 

\node at (30:1.5) {\small $\alpha$}; 
\node at (20:2) {\small $\delta$}; 
\node at (40:2) {\small $\delta$}; 


\node at (270:1.45) {\small $\delta$}; 
\node at (278:2.15) {\small $\beta$};

\end{scope}

\end{tikzpicture}
\caption{Angle distributions for $f=6$}
\label{Angles-f6}
\end{figure}

If the tiles are all matched with respect to some $b$-edge, then the angles at the $b$-edges are $\gamma \, \bvert \, \gamma, \delta \, \bvert \, \delta$. The corresponding vertices are $\alpha\gamma^2, \alpha\delta^2, \beta\gamma^2, \beta\delta^2$. By Lemma \ref{PairsLem}, the $b$-vertices are $\alpha\delta^2, \beta\gamma^2$. So $\alpha\gamma\cdots, \beta\delta\cdots$ are not vertices. If the central $b$-edge is configured the way in the third picture of Figure \ref{Angles-f6}, we get $\alpha\gamma\cdots$, a contradiction. Then the central $b$-edge must be configured the way in the fourth picture. Similar argument arrives at $\beta\delta\cdots$, a contradiction. So there is no tiling. This completes the proof.
\end{proof}

\subsection{Strategy} \label{SubsecStrategy}

With groundwork in place, we assume at least three distinct angles and $f\ge8$. Notably, in the remaining part of this paper, we assume  
\begin{align*}
\alpha\neq\beta, \quad
\gamma\neq\delta.
\end{align*}

Among $\alpha, \beta, \gamma, \delta$, there are two possibilities: every angle is rational or some is non-rational. 

If every angle is rational (Section \ref{SecRat}), then we analyse \eqref{Coolsaet-Id} using \cite[Theorem 4]{my} and determine the angles. This is Coolsaet's method \cite[Theorem 3.2]{co} and we extend it to general almost equilateral quadrilaterals via Lemmas \ref{CoolsaetLem}, \ref{PairsLem}. For the case where $\alpha\gamma\delta$ is a vertex, we use Lemmas \ref{AlGaDe-Al2}, \ref{AlGaDeLem} to find all the vertices. Once all vertices are determined, it is only a matter of constructing the tiling.

If some angle is non-rational (Section \ref{SecNonRat}), with the exception of $\alpha\gamma\delta$ being a vertex, then we apply Lemma \ref{PairsLem} and the non-rationality condition to determine the vertices. For the case where $\alpha\gamma\delta$ is a vertex, we apply Lemmas \ref{AlGaDe-Al2}, \ref{AlGaDeLem} to find all the vertices.

In Section \ref{SecGeom}, we discuss whether the tilings are geometrically possible. This means the expected angle values are realised by a simple almost equilateral quadrilateral. We say the tile {\em exists} in this case. 

\section{Rational Angles} \label{SecRat}

In this section, we assume $\alpha, \beta, \gamma, \delta \in (0, 2\pi)$ are rational, i.e., their values are rational multiples of $\pi$. Myerson \cite{my} provided rational solutions to \eqref{Coolsaet-Id}. 

\begin{theorem}[{\cite[Theorem 4]{my}}] \label{MyThm} The rational angle solutions $(x_1, x_2, x_3, x_4)$ where $x_i \in [0, \frac{1}{2}\pi]$ for $1\le i \le 4$ to 
\begin{align}\label{MyTrigEq}
\sin x_1 \sin x_2 = \sin x_3 \sin x_4
\end{align}
are given by
\begin{enumerate}[I.]
\item one of the following, where every $x_i \in [0, \frac{1}{2}\pi]$, 
\begin{itemize}
\item $x_1 = x_3 = 0$ and any rational angles $ x_2, x_4$;
\item $x_1 = x_3$ and $x_2 =  x_4$;
\end{itemize}
and their permutations.
\item $(\tfrac{1}{6}\pi, \theta, \tfrac{1}{2}\theta, \tfrac{1}{2}\pi - \tfrac{1}{2}\theta)$ for any rational angle $\theta \in [0,\frac{1}{2}\pi]$ and its permutations;
\item the fifteen rational angle solutions listed in Table \ref{MySpSol} and the permutations of each row.

\begin{table}[h]
\begin{center}
\begin{minipage}[t]{.45\linewidth}
\bgroup
\def\arraystretch{1.5}
    \begin{tabular}[t]{ | c | c | c | c | c | }
	\hline
	 &  $x_1$ & $x_2$ & $x_3$ & $x_4$ \\ \hhline{|=====|}
	1. & $\frac{1}{21}\pi$  &  $\frac{8}{21}\pi$ & $\frac{1}{14}\pi$ & $\frac{3}{14}\pi$ \\
       	\hline 
	2. & $\frac{1}{14}\pi$  &  $\frac{5}{14}\pi$ & $\frac{2}{21}\pi$ & $\frac{5}{21}\pi$ \\
       	\hline 
	3. & $\frac{4}{21}\pi$  &  $\frac{10}{21}\pi$ & $\frac{3}{14}\pi$ & $\frac{5}{14}\pi$ \\
       	\hline 
	4. & $\frac{1}{20}\pi$  &  $\frac{9}{20}\pi$ & $\frac{1}{15}\pi$ & $\frac{4}{15}\pi$ \\
       	\hline 
	5. & $\frac{2}{15}\pi$  &  $\frac{7}{15}\pi$ & $\frac{3}{20}\pi$ & $\frac{7}{20}\pi$ \\
       	\hline 
	6. & $\frac{1}{30}\pi$  &  $\frac{3}{10}\pi$ & $\frac{1}{15}\pi$ & $\frac{2}{15}\pi$ \\
       	\hline 
	7. & $\frac{1}{15}\pi$  &  $\frac{7}{15}\pi$ & $\frac{1}{10}\pi$ & $\frac{7}{30}\pi$ \\
       	\hline 
	8. & $\frac{1}{10}\pi$  &  $\frac{13}{30}\pi$ & $\frac{2}{15}\pi$ & $\frac{4}{15}\pi$  \\  
\hline
	\end{tabular}
\egroup
 \end{minipage} 
\begin{minipage}[t]{.45\linewidth}
\bgroup
\def\arraystretch{1.5}
    \begin{tabular}[t]{ | c | c | c | c | c | }
	\hline
	 &  $x_1$ & $x_2$ & $x_3$ & $x_4$ \\ \hhline{|=====|}
	9. & $\frac{4}{15}\pi$  &  $\frac{7}{15}\pi$ & $\frac{3}{10}\pi$ & $\frac{11}{30}\pi$ \\
       	\hline 
	10. & $\frac{1}{30}\pi$  &  $\frac{11}{30}\pi$ & $\frac{1}{10}\pi$ & $\frac{1}{10}\pi$ \\
       	\hline 
	11. & $\frac{7}{30}\pi$  &  $\frac{13}{30}\pi$ & $\frac{3}{10}\pi$ & $\frac{3}{10}\pi$ \\
       	\hline 
	12. & $\frac{1}{15}\pi$  &  $\frac{4}{15}\pi$ & $\frac{1}{10}\pi$ & $\frac{1}{6}\pi$ \\
       	\hline 
	13. & $\frac{2}{15}\pi$  &  $\frac{7}{15}\pi$ & $\frac{1}{6}\pi$ & $\frac{3}{10}\pi$ \\
       	\hline 
	14. & $\frac{1}{12}\pi$  &  $\frac{5}{12}\pi$ & $\frac{1}{10}\pi$ & $\frac{3}{10}\pi$ \\
       	\hline 
	15. & $\frac{1}{10}\pi$  &  $\frac{3}{10}\pi$ & $\frac{1}{6}\pi$ & $\frac{1}{6}\pi$ \\
       	\hline 
	\end{tabular}
\egroup
 \end{minipage} 
\end{center}
\caption{Rational solutions to \eqref{MyTrigEq} in $[0, \frac{1}{2}\pi]$}
\label{MySpSol}
\end{table}
\end{enumerate}

The permutations for Type I, II, III solutions are the following symmetries of \eqref{MyTrigEq},
\begin{align}\label{MyPerm}
(x_1, x_2, x_3, x_4), (x_1, x_2, x_4, x_3), (x_2, x_1, x_3, x_4), (x_2, x_1, x_4, x_3), \\ \notag
(x_3, x_4, x_1, x_2), (x_4, x_3, x_1, x_2), (x_3, x_4, x_2, x_1), (x_4, x_3, x_2, x_1).
\end{align}
\end{theorem}

We remark that Type I solutions are not included in the original theorem as they are solutions to $\{ \sin x_1 = 0,  \sin x_3 = 0 \}$ or $\{ \sin x_1 = \sin x_3, \ \sin x_2 = \sin x_4 \}$, which may have been deemed \quotes{trivial}. 

We also note that Type II solutions can be summarised by the identity,
\begin{align*}
\sin\tfrac{1}{6}\pi\sin \theta = \sin \tfrac{1}{2}\theta \sin ( \tfrac{1}{2}\pi - \tfrac{1}{2}\theta ).
\end{align*} 

For Type III solutions in each row of Table \ref{MySpSol}, applying \eqref{MyPerm} also gives rational solutions to \eqref{MyTrigEq}. We remark a misprint in the previous literatures where $x_2$ of thirteenth row should be $\frac{7}{15}$ instead of $\frac{8}{15}$. With the correct value, the conclusion in \cite[Theorem 3.2]{co} is valid. 

By \eqref{Coolsaet-Id}, we know that $x_1 = \frac{1}{2}\alpha, x_2 = \delta - \frac{1}{2}\beta, x_3 = \frac{1}{2}\beta, x_4 = \gamma - \frac{1}{2}\alpha$ satisfy \eqref{MyTrigEq}. If all $x_i \in [0, \frac{1}{2}\pi]$, then we can apply Theorem \ref{MyThm} to determine the angles. We know $\tfrac{1}{2}\alpha,\tfrac{1}{2}\beta \in (0, \pi)$ and the ranges of $\delta - \tfrac{1}{2}\beta, \gamma - \tfrac{1}{2}\alpha$ can be wider. To apply Theorem \ref{MyThm}, we therefore need to \quotes{recalibrate}: for example, if $x_i \in (\frac{1}{2}\pi, \pi)$, then it should be changed to $\pi - x_i \in (0, \frac{1}{2}\pi)$. By similar modification of switching signs and/or adding an integer multiple of $\pi$ and using $\sin (\pi - x) = \sin x$ and $\sin(-x)=-\sin x$, we may reduce all angle values to $[0, \frac{1}{2}\pi]$ and \eqref{MyTrigEq} still holds. Then we can apply Theorem \ref{MyThm}.  

To apply Theorem \ref{MyThm} with Type I solutions, we may bypass the calibration with the angle relations given by the next lemma. Modifying the discussion of \cite[Theorem 3.2]{co} and Type I solutions to \eqref{Coolsaet-Id}, we have one of the following,
\begin{align}
\label{TrigEq1} &\sin ( \gamma - \tfrac{1}{2}\alpha ) =0,& &\sin ( \delta - \tfrac{1}{2}\beta )=0;& \\
\label{TrigEq2} &\sin ( \gamma - \tfrac{1}{2}\alpha ) = \sin \tfrac{1}{2}\alpha,& &\sin ( \delta - \tfrac{1}{2}\beta )=\sin\tfrac{1}{2}\beta;& \\
\label{TrigEq3} &\sin ( \gamma - \tfrac{1}{2}\alpha ) =  \sin ( \delta - \tfrac{1}{2}\beta ),& &\sin\tfrac{1}{2}\beta  =  \sin \tfrac{1}{2}\alpha.& 
\end{align} 
They correspond to the following relations between the angles,
\begin{align}
&\begin{cases}
\label{CoAngRel1}
2\gamma = \alpha + 2N_1\pi, \\
2\delta = \beta + 2N_2\pi;
\end{cases} \\
\label{CoAngRel2}
&\begin{cases}
2\gamma =(1 + (-1)^{N_1})\alpha +  2N_1\pi, \\
2\delta =(1 + (-1)^{N_2})\beta +  2N_2\pi; \\
\end{cases}\\
\label{CoAngRel3}
&\begin{cases}
2\gamma =(-1)^{N_1}(2\delta - \beta) + \alpha +  2N_1\pi, \\
\alpha = (-1)^{N_2}\beta +  2N_2\pi.
\end{cases}
\end{align} 
After further simplification, the result is summarised below.

\begin{lem}\label{ReducedTrigAng} In an almost equilateral quadrilateral tile with at least three distinct angles, if the angles satisfy one of \eqref{TrigEq1}, \eqref{TrigEq2}, \eqref{TrigEq3}, then we have one of the following, 
\begin{enumerate}
\item if $\alpha, \beta, \gamma, \delta < \pi$, then $\alpha=2\gamma$ and $\beta = 2\delta$ hold.
\item if either one of $\alpha, \beta \ge \pi$ and all other angles $<\pi$, then one of the following is true,
\begin{enumerate}[i.]
\item $\alpha=2\gamma$ and $\beta = 2\delta$,
\item $\alpha+\beta=2\pi$ and $\alpha+2\delta=\beta + 2\gamma$;
\end{enumerate}
\item  if $\gamma \ge \pi$ and all other angles $<\pi$, then one of the following is true, 
\begin{enumerate}[i.]
\item $\gamma=\pi$ and $\beta = \delta$,
\item $\alpha+2\pi = 2\gamma$ and $\beta = 2\delta$;
\end{enumerate}
\item  if $\delta \ge \pi$ and all other angles $<\pi$, then one of the following is true, 
\begin{enumerate}[i.]
\item $\delta = \pi$ and $\alpha=\gamma$,
\item $\alpha = 2\gamma$ and $\beta +2\pi  = 2\delta$.
\end{enumerate}
\end{enumerate}
\end{lem}

For Type II, III solutions, for the purpose of tiling, Lemma \ref{AlConvexLem} implies that we only need to consider the calibrations in Table \ref{Recali}. As per Lemma \ref{AlConvexLem}, \quotes{case $\alpha\ge\pi$} in the table means $\alpha\ge\pi$ and the other three angles $<\pi$, and etc. 

\begin{table}[htp]
\begin{center}
\bgroup
\def\arraystretch{1.4}
    \begin{tabular}[]{ | c | c | c | c | c | }
	\hline
	Case & $x_1$ & $x_2$ & $x_3$ & $x_4$ \\ \hhline{|=====|}
	\multirow{5}{*}{$\alpha, \beta, \gamma, \delta<\pi$} & $\gamma - \tfrac{1}{2}\alpha$ & $\tfrac{1}{2}\beta$ & $\tfrac{1}{2}\alpha$ & $\delta - \tfrac{1}{2}\beta$  \\
	\cline{2-5}
	& $\tfrac{1}{2}\alpha - \gamma$ & $\tfrac{1}{2}\beta$ & $\tfrac{1}{2}\alpha$ & $\tfrac{1}{2}\beta - \delta$ \\
	\cline{2-5}
	& $\gamma - \tfrac{1}{2}\alpha$ & $\tfrac{1}{2}\beta$ & $\tfrac{1}{2}\alpha$ & $\pi  + \tfrac{1}{2}\beta - \delta$ \\
	\cline{2-5}
	& $\pi + \tfrac{1}{2}\alpha - \gamma$ & $\tfrac{1}{2}\beta$ & $\tfrac{1}{2}\alpha$ & $\delta - \tfrac{1}{2}\beta$ \\
	\cline{2-5}
	& $\pi + \tfrac{1}{2}\alpha - \gamma$ & $\tfrac{1}{2}\beta$ & $\tfrac{1}{2}\alpha$ & $\pi + \tfrac{1}{2}\beta - \delta$ \\
	\hline
	\multirow{4}{*}{$\alpha \ge \pi$} & $\gamma - \tfrac{1}{2}\alpha$ & $\tfrac{1}{2}\beta$ & $\pi - \tfrac{1}{2}\alpha$ & $\delta - \tfrac{1}{2}\beta$ \\
	\cline{2-5}
	& $\gamma - \tfrac{1}{2}\alpha$ & $\tfrac{1}{2}\beta$ & $\pi - \tfrac{1}{2}\alpha$ & $\pi + \tfrac{1}{2}\beta - \delta$ \\ 
	\cline{2-5}
	& $\pi - \tfrac{1}{2}\alpha + \gamma$ & $\tfrac{1}{2}\beta$ & $\pi - \tfrac{1}{2}\alpha$ & $\tfrac{1}{2}\beta - \delta$ \\
	\cline{2-5}
	& $\tfrac{1}{2}\alpha - \gamma$ & $\tfrac{1}{2}\beta$ & $\pi - \tfrac{1}{2}\alpha$ & $\tfrac{1}{2}\beta - \delta$  \\
	\hline
	\multirow{4}{*}{$\beta \ge \pi$} & $\gamma - \tfrac{1}{2}\alpha$ & $\pi - \tfrac{1}{2}\beta$ &  $\tfrac{1}{2}\alpha$ & $\delta - \tfrac{1}{2}\beta$ \\
	\cline{2-5}
	& $\pi + \tfrac{1}{2}\alpha - \gamma$ & $\pi - \tfrac{1}{2}\beta$ & $\tfrac{1}{2}\alpha$ & $\delta - \tfrac{1}{2}\beta$ \\  
	\cline{2-5}
	& $\tfrac{1}{2}\alpha - \gamma$ & $\pi - \tfrac{1}{2}\beta$ & $\tfrac{1}{2}\alpha$ & $\pi - \tfrac{1}{2}\beta + \delta$ \\
	\cline{2-5}
	& $\tfrac{1}{2}\alpha - \gamma$ & $\pi - \tfrac{1}{2}\beta$ & $\tfrac{1}{2}\alpha$ & $\tfrac{1}{2}\beta - \delta$ \\
	\hline
	\multirow{4}{*}{$\gamma \ge \pi$} & $\pi + \tfrac{1}{2}\alpha - \gamma$ & $\tfrac{1}{2}\beta$ & $\tfrac{1}{2}\alpha$ & $\delta - \tfrac{1}{2}\beta$ \\
	\cline{2-5}
	& $\pi + \tfrac{1}{2}\alpha - \gamma$ & $\tfrac{1}{2}\beta$ & $\tfrac{1}{2}\alpha$ & $\pi  + \tfrac{1}{2}\beta - \delta$  \\ 
	\cline{2-5}
	& $\gamma - \tfrac{1}{2}\alpha - \pi$ & $\tfrac{1}{2}\beta$ & $\tfrac{1}{2}\alpha$ & $\tfrac{1}{2}\beta - \delta$\\
	\cline{2-5}
	& $2\pi + \tfrac{1}{2}\alpha - \gamma$ & $\tfrac{1}{2}\beta$ & $\tfrac{1}{2}\alpha$ & $\tfrac{1}{2}\beta - \delta$ \\
	\hline
	\multirow{4}{*}{$\delta \ge \pi$} & $\gamma - \tfrac{1}{2}\alpha$ & $\tfrac{1}{2}\beta$ & $\tfrac{1}{2}\alpha$ & $\pi  + \tfrac{1}{2}\beta - \delta$ \\
	\cline{2-5}
	& $\pi + \tfrac{1}{2}\alpha - \gamma$ & $\tfrac{1}{2}\beta$ & $\tfrac{1}{2}\alpha$ & $\pi + \tfrac{1}{2}\beta -\delta$ \\ 
	\cline{2-5}
	& $\tfrac{1}{2}\alpha - \gamma$ & $\tfrac{1}{2}\beta$ & $\tfrac{1}{2}\alpha$ & $\delta - \tfrac{1}{2}\beta - \pi$\\
	\cline{2-5}
	& $\tfrac{1}{2}\alpha - \gamma$ & $\tfrac{1}{2}\beta$ & $\tfrac{1}{2}\alpha$ & $2\pi + \tfrac{1}{2}\beta - \delta$ \\
	\hline
	\end{tabular}
\egroup
\end{center}
\caption{Angle value recalibrations}
\label{Recali}
\end{table}

In general, there are more calibrations for angles with wider ranges. However, those ranges are not needed for tiling classification.

We generalise the scheme in \cite[Theorem 3.2]{co} in the following steps.
\begin{enumerate}[Step 1.]
\item Determine the angles via
\begin{itemize}
\item Type I solutions to angle relations in Lemma \ref{ReducedTrigAng},
\item Type II solutions and recalibrations in Table \ref{Recali},
\item Type III solutions and recalibrations in Table \ref{Recali}.
\end{itemize}
\item Dismiss the angle values not satisfying any of the following
\begin{itemize}
\item $0 < \alpha, \beta, \gamma, \delta < 2\pi$;
\item at least three distinct angle and at most one of them $\ge \pi$;
\item the angles are consistent with Lemmas \ref{ExchLem}, \ref{LunEstLem}, \ref{ATriLem}, \ref{CTriLem}.
\end{itemize}
\item For each set of the angle values, apply each vertex in Lemma \ref{PairsLem} (except $\alpha\gamma\delta$) to determine $f$. Select only the pairs in the lemma producing consistent even integer $f \ge 8$. If one of $\alpha\beta^2, \alpha\gamma^2, \alpha\delta^2$ is the unique degree $3$ vertex, then we further require $f\ge16$; and if $\alpha^3$ is the unique degree $3$ vertex, then we further require $f\ge24$.
\item We call the selected angle values {\em valid} and use them to determine their sets of admissible vertices ($\AVC$s).
\end{enumerate}
For $\alpha\gamma\delta$, we carry out Steps 1,2 and the first part of Step 3 to select the angle values. We then use Lemmas \ref{AlGaDe-Al2}, \ref{AlGaDeLem}, \ref{Ga=Pi} to complete Step 4 in Proposition \ref{RatAlGaDeProp}.

Lastly, we construct the tilings from the $\AVC$s.

\begin{prop}\label{RatProp} If $f\ge8$ and $\alpha\gamma\delta, \beta\gamma\delta$ are not vertices, then the tilings of the sphere by congruent almost equilateral quadrilaterals, where every angle is rational, are isolated earth map tilings $S3,S^{\prime}3$ and special tilings $S5, S6$. 
\end{prop}

\begin{proof} By Lemma \ref{AlConvexLem}, the discussion is divided into five cases: every angle $<\pi$, or exactly one angle $\ge \pi$. In each case, we compute the angle values via Type I, II, III solutions in Theorem \ref{MyThm} and select the valid ones. We give an example of this routine and leave out the details for the others. The routine can be more efficiently executed in computer.

\begin{case*}[$\alpha, \beta, \gamma, \delta<\pi$] Type I: By Lemma \ref{ReducedTrigAng}, we combine $\alpha=2\gamma$ and $\beta=2\delta$ with the vertex angle sums of the pairs in Lemma \ref{PairsLem}. There are no valid angle values. The conclusion is consistent with \cite[Theorem 3.2]{co}.  

Type II: Matching $( \gamma - \frac{1}{2}\alpha, \frac{1}{2}\beta, \frac{1}{2}\alpha, \delta - \frac{1}{2}\beta )$ and its permutations in \eqref{MyPerm} with each row of the first case of Table \ref{Recali}, we determine the angles. For example, if $(\frac{1}{6}\pi, \theta, \frac{1}{2}\pi - \frac{1}{2}\theta, \frac{1}{2}\theta )$ is matched with $( \gamma - \frac{1}{2}\alpha, \frac{1}{2}\beta, \frac{1}{2}\alpha, \delta - \frac{1}{2}\beta )$, then we have
\begin{align*}
&\tfrac{1}{6}\pi=\gamma - \tfrac{1}{2}\alpha, \quad
\theta = \tfrac{1}{2}\beta, \quad
\tfrac{1}{2}\pi - \tfrac{1}{2}\theta = \tfrac{1}{2}\alpha, \quad
\tfrac{1}{2}\theta = \delta - \tfrac{1}{2}\beta.
\end{align*} 
Combining the above with the quadrilateral angle sum, we solve for the angles and get
\begin{align*}
\alpha=(\tfrac{5}{6}-\tfrac{2}{f})\pi, \quad 
\beta=(\tfrac{1}{3}+\tfrac{4}{f})\pi,\quad 
\gamma=(\tfrac{7}{12}-\tfrac{1}{f})\pi, \quad 
\delta=(\tfrac{1}{4}+\tfrac{3}{f})\pi.
\end{align*} 

Substituting the above into $\alpha\gamma^2, \alpha\delta^2, \beta\gamma^2, \beta\delta^2$, we get $f=\infty, 6, 4, \frac{60}{7}$ respectively. These vertices fail the filter on $f$. Such vertices are dismissed. The vertices yield meaningful $f$ are $\alpha^3 (f=12), \alpha\beta^2 (f=12), \alpha^2\beta (\text{any } f), \beta^4 (f=24), \alpha\beta^3 (f=60), \gamma^4(f=12), \delta^4 (f=12), \gamma^3\delta (\text{any } f)$, $\gamma\delta^3(f=12), \gamma^2\delta^2(f=12), \beta^2\gamma^2(f=36), \alpha\beta\delta^2(f=24)$. Then we select the pairs in Lemma \ref{PairsLem} with consistent $f$. For example, $\alpha^2\beta$ is dismissed because it is paired with only $\alpha\gamma^2, \alpha\delta^2, \beta\delta^2$ in Lemma \ref{PairsLem}, none of which has meaningful $f$. The remaining pairs have unique degree $3$ vertex $\alpha^3$ or $\alpha\beta^2$. However, $\alpha^3$ or $\alpha\beta^2$ implies $f=12$, contradicting $f\ge24$ or $f\ge16$. Hence these angle values are dismissed. 

Other invalid angle values are dismissed by similar or simpler arguments. The valid ones are listed in Table \ref{ConvexType-II}. 

\begin{table}[h]
\begin{center}
\bgroup
\def\arraystretch{1.4}
    \begin{tabular}[]{ | c | c | c | c | c | c  | }
	\hline
	Pairs & $f$ & $\alpha$ & $\beta$ & $\gamma$ & $\delta$ \\ \hhline{|======|}
	$\{ \alpha\beta^2,\gamma\delta^3 \}$ & $\ge16$ & $(\frac{1}{3}+\frac{4}{f})\pi$ & $(\frac{5}{6}-\frac{2}{f})\pi$ & $(\frac{1}{4}+\frac{3}{f})\pi$ & $(\frac{7}{12}-\frac{1}{f})\pi$  \\
	\hline
	$\{ \alpha\delta^2, \alpha\beta^3 \}$ & $36$ & $\frac{1}{3}\pi$ & $\frac{5}{9}\pi$ & $\frac{7}{18}\pi$ & $\frac{5}{6}\pi$  \\
	\hline
	\end{tabular}
\egroup
\end{center}
\caption{Type II and convex: vertex pairs and angle values}
\label{ConvexType-II}
\end{table}

In $\{ \alpha\beta^2,\gamma\delta^3 \}$, the angle values and $f\ge16$ give the lower bounds, $\alpha > \tfrac{1}{3}\pi, \beta \ge \tfrac{17}{24}\pi, \gamma > \tfrac{1}{4}\pi, \delta \ge \tfrac{25}{48}\pi$. This implies $m<6$ and $n<3$ and $k<8$ and $l<4$ in \eqref{VertexAngSum}. We substitute finitely many non-negative integers $m,n,k,l$ such that \eqref{VertexAngSum} holds and calculate the corresponding $f$. We select only those with $f\ge16$. The admissible vertices are listed below with their corresponding $f$ values,
\begin{align*}
f=20, \quad &\{ \alpha\beta^2,\gamma\delta^3, \alpha^2\gamma\delta \}; \\
f=24,\quad &\{ \alpha\beta^2, \alpha^4, \gamma\delta^3, \alpha\beta\gamma^2, \alpha\gamma^4 \}; \\
f=36, \quad &\{ \alpha\beta^2, \alpha^2\delta^2, \gamma\delta^3, \alpha^3\gamma^2, \alpha\gamma^3\delta,  \gamma^6 \}; \\
f=60,\quad &\{ \alpha\beta^2,\gamma\delta^3, \alpha^3\beta, \alpha^5, \beta\gamma^4, \alpha^2\gamma^4 \}; \\
f=84,\quad &\{ \alpha\beta^2,\gamma\delta^3, \alpha^3\gamma\delta, \gamma^5\delta \}; \\
f=132,\,\,\, &\{ \alpha\beta^2,\gamma\delta^3, \alpha^4\gamma^2, \alpha\gamma^6 \}.
\end{align*}

In $\{ \alpha\delta^2, \alpha\beta^3 \}$, the angle values imply that the other vertices are $\gamma^3\delta, \alpha^2\beta\gamma^2$, $\alpha^6$. Therefore we get the set of admissible vertices,
\begin{align*}
f=36, \quad \{ \alpha\delta^2, \alpha\beta^3, \gamma^3\delta, \alpha^2\beta\gamma^2, \alpha^6 \}.
\end{align*}

Type III: Matching the permutations (in \eqref{MyPerm}) of each row of Table \ref{MySpSol} and each row of the first case of Table \ref{Recali}, the process yields no valid angle values. 
\end{case*}

\begin{case*}[$\beta, \gamma, \delta<\pi$ and $\alpha \ge \pi$] By $\alpha\ge\pi$, we know that $\alpha^2\cdots$ is not a vertex. It suffices to study those in the list of Lemma \ref{PairsLem} without $\alpha^2\cdots$.  

Type I: By Lemma \ref{ReducedTrigAng}, we have $\alpha=2\gamma$ and $\beta = 2\delta$, or $\alpha+\beta=2\pi$ and $\alpha+2\delta=\beta + 2\gamma$. By the same argument in the previous case, namely solving for $0<\beta, \gamma, \delta<\pi$ and $\alpha \ge \pi$ with even integer $f\ge8$, we get the valid angle values in Table \ref{AlphaType-I}.

\begin{table}[h]
\begin{center}
\bgroup
\def\arraystretch{1.4}
    \begin{tabular}[]{ | c | c | c | c | c | c  | }
	\hline
	Pairs & $f$ & $\alpha$ & $\beta$ & $\gamma$ & $\delta$ \\ \hhline{|======|}
	 $\{ \alpha\beta^2,\gamma^4 \}, \{ \alpha\gamma^2,\beta^4 \}, \{ \alpha\gamma^2,\alpha\beta\delta^2 \}$ & $16$ & $\pi$ & $\frac{1}{2}\pi$ & $\frac{1}{2}\pi$ & $\frac{1}{4}\pi$  \\
	\hline
	\end{tabular}
\egroup
\end{center}
\caption{Type I and $\alpha \ge \pi$: vertex pairs and angle values}
\label{AlphaType-I}
\end{table}

We combine the results in Table \ref{AlphaType-I} and therefore,
\begin{align*} 
f=16, \, \AVC = \{ \alpha\beta^2, \alpha\gamma^2, \alpha\beta\delta^2, \beta^4, \beta^2\gamma^2, \gamma^4, \alpha\delta^4, \beta^3\delta^2,  \beta\gamma^2\delta^2, \beta^2\delta^4, \gamma^2\delta^4, \beta\delta^6, \delta^8 \}.
\end{align*}

Type II: By the same argument (using \eqref{MyPerm} and the second case of Table \ref{Recali}), the process yields no valid angle values.

Type III: By the same argument (using \eqref{MyPerm} on Table \ref{MySpSol} and the second case of Table \ref{Recali}), the process yields no valid angle values.   
\end{case*}

\begin{case*}[$\alpha, \gamma, \delta<\pi$ and $\beta \ge \pi$] By $\beta\ge\pi$, we know that $\beta^2\cdots$ is not a vertex. 

Type I: The same argument gives valid angle values in Table \ref{BetaType-I}. 

\begin{table}[h]
\begin{center}
\bgroup
\def\arraystretch{1.5}
    \begin{tabular}[]{ | c | c | c | c | c | c  | }
	\hline
	Pairs & $f$ & $\alpha$ & $\beta$ & $\gamma$ & $\delta$ \\ \hhline{|======|}
	$\{ \alpha^3, \beta\delta^2 \}, \{ \alpha^3, \delta^4 \}$ & $8$ & $\frac{2}{3}\pi$ & $\pi$ & $\frac{1}{3}\pi$ & $\frac{1}{2}\pi$  \\
	\hline
	$\{ \alpha^2\beta, \beta\delta^2 \}$ & $16$ & $\frac{1}{2}\pi$ & $\pi$ & $\frac{1}{4}\pi$ & $\frac{1}{2}\pi$ \\
	\hline
	\end{tabular}
\egroup
\end{center}
\caption{Type I and $\beta \ge \pi$: vertex pairs and angle values}
\label{BetaType-I}
\end{table}

In $\{ \alpha^3, \beta\delta^2 \}$, by \eqref{VCount}, $f=8$ implies $v_{\ge 6}=0$. So the other vertices are $\delta^4, \alpha^2\gamma^2, \alpha\gamma^4$. Therefore,
\begin{align*}
f=8, \quad \AVC = \{ \alpha^3, \beta\delta^2, \delta^4, \alpha^2\gamma^2, \alpha\gamma^4 \}. 
\end{align*}

In $\{ \alpha^2\beta, \beta\delta^2 \}$, exchanging $\alpha \leftrightarrow \beta$ and $\gamma \leftrightarrow \delta$, we recover the $\AVC$ derived by Table \ref{AlphaType-I}.  

Type II, III: By the same argument and $\beta \ge \pi$ and $\alpha, \gamma, \delta<\pi$, there are no valid angle values.
\end{case*}

For the case $\alpha, \beta, \delta<\pi$ and $\gamma \ge \pi$ and the case $\alpha, \beta, \gamma<\pi$ and $\delta \ge \pi$, the same arguments imply no valid angle values. 

All the $\AVC$s are summarised in Table \ref{RatAVC}. Next we explain why some of the $\AVC$s do not constitute tilings and how tilings can be constructed from the others.

\begin{table}[h]
\begin{center}
\bgroup
\def\arraystretch{1.5}
    \begin{tabular}[]{ | c | c | }
	\hline
	$f$ & $\AVC$ \\ \hhline{|==|}
	$8$ & $\{ \alpha^3, \beta\delta^2, \delta^4, \alpha^2\gamma^2, \alpha\gamma^4 \}$ \\ 
	\hline
	$16$ & $ \{ \alpha\beta^2, \alpha\gamma^2, \alpha\beta\delta^2, \beta^4, \beta^2\gamma^2, \gamma^4, \alpha\delta^4, \beta^3\delta^2,  \beta\gamma^2\delta^2, \beta^2\delta^4, \gamma^2\delta^4, \beta\delta^6, \delta^8 \}$ \\
	\hline
	$20$ & $\{ \alpha\beta^2,\gamma\delta^3, \alpha^2\gamma\delta \}$  \\
	\hline
	$24$ & $\{ \alpha\beta^2, \alpha^4, \gamma\delta^3, \alpha\beta\gamma^2, \alpha\gamma^4 \}$  \\
	\hline
	$36$ & $\{ \alpha\beta^2, \alpha^2\delta^2, \gamma\delta^3, \alpha^3\gamma^2, \alpha\gamma^3\delta,  \gamma^6 \}$ \\
	\hline
	$36$ & $\{ \alpha\delta^2, \alpha\beta^3, \gamma^3\delta, \alpha^2\beta\gamma^2, \alpha^6 \}$ \\
	\hline
	$60$ & $\{ \alpha\beta^2,\gamma\delta^3, \alpha^3\beta, \alpha^5, \beta\gamma^4, \alpha^2\gamma^4 \}$ \\
	\hline
	$84$ & $\{ \alpha\beta^2,\gamma\delta^3, \alpha^3\gamma\delta, \gamma^5\delta \}$ \\
	\hline
	$132$ & $\{ \alpha\beta^2,\gamma\delta^3, \alpha^4\gamma^2, \alpha\gamma^6 \}$ \\
	\hline
	\end{tabular}
\egroup
\end{center}
\caption{$\AVC$s of rational angles without $\alpha\gamma\delta$}
\label{RatAVC}
\end{table}

\newpart{$\AVC$s without tiling}

In $f=8, \AVC = \{ \alpha^3, \beta\delta^2, \delta^4, \alpha^2\gamma^2, \alpha\gamma^4 \}$, by no $\beta^2\cdots$, we know that $\alpha^{\beta}\vert^{\beta}\alpha\cdots$, $\alpha^{\beta}\vert^{\beta}\gamma \cdots, \gamma^{\beta}\vert^{\beta}\gamma \cdots$ are not vertices. Hence $\alpha^2\gamma^2, \alpha\gamma^4$ are not vertices. The $\AVC$ is reduced to $\{ \alpha^3, \beta\delta^2, \delta^4 \}$ in which $\gamma$ does not appear, a contradiction.

In $f=20, \AVC = \{ \alpha\beta^2,\gamma\delta^3, \alpha^2\gamma\delta \}$, applying Counting Lemma to $\gamma, \delta$, we know that $\gamma\delta^3$ is not a vertex, which contradicts Lemma \ref{2HDAngLem}.

The $\AVC$ for $f=24$ implies $\beta^2\cdots = \alpha\beta^2$ and $\gamma\delta\cdots = \gamma\delta^3$ and $\alpha^2\gamma\cdots$, $\alpha\delta\cdots, \beta\delta\cdots$ are not vertices. By no $\alpha\delta\cdots, \beta\delta\cdots$, the vertex $\alpha\beta\gamma^2$ has unique AAD $\bvert  \, \gamma^{\beta} \vert^{\beta} \alpha^{\delta} \vert ^{\gamma} \beta^{\alpha} \vert^{\beta} \gamma \, \bvert$. The AAD of $\gamma^{\beta}\vert^{\beta}\alpha$ in the first picture of Figure \ref{AAD-alal-bebe-gaga-bebe} determines tiles $T_1, T_2$. By $\beta^2\cdots = \alpha\beta^2$ and no $\alpha\delta\cdots$, we get $T_3$. By $\gamma\delta\cdots =\gamma\delta^3$, we determine $T_4$ and then $T_5$. This implies $\gamma^{\beta}\vert^{\beta}\alpha\cdots = \alpha^2\gamma\cdots$, a contradiction. Then $\beta\gamma\cdots=\alpha\beta\gamma^2$ is not a vertex. Then the AAD $\alpha^{\beta}\vert^{\beta}\alpha$ in the second picture determines $T_1, T_2$. As $\beta^2\cdots = \alpha\beta^2$, by mirror symmetry we also know $T_3$, which implies $\beta_3\gamma_2\cdots$, contradicting no $\beta\gamma\cdots$. So there is no $\alpha^{\beta}\vert^{\beta}\alpha$. Then no $\alpha^{\beta}\vert^{\delta}\alpha,\alpha^{\beta}\vert^{\beta}\alpha$ implies no $\alpha\alpha\alpha$. So $\alpha^4$ is not a vertex. The AAD $\gamma^{\beta} \vert^{\beta} \gamma$ in the third picture implies $\alpha\delta\cdots$, a contradiction. So $\alpha\gamma^4$ is not a vertex. The $\AVC$ is reduced to $\{ \alpha\beta^2, \gamma\delta^3 \}$. Applying Counting Lemma to $\alpha,\beta$, we get a contradiction.

\begin{figure}[htp]
\centering
\begin{tikzpicture}[>=latex,scale=1]

\begin{scope}

\foreach \a in {1,...,2}
\draw[rotate=90*\a]
	(0,0) -- (0,-1.2) -- (1.2,-1.2) -- (1.2,0);

\draw[]
	(-1.2,1.2) -- (-1.2,0) 
	(1.2,1.2) -- (1.2,0) -- (1.8,0)
	(1.2,1.2) -- (1.8,1.2);

\draw[line width=2]
	(1.2,0) -- (1.2,1.2) -- (1.2, 1.8)
	(0,0) -- (-1.2,0)
	(1.2,-0.6) -- (1.2,0);

\node at (0.2,0.2) {\small $\alpha$}; 
\node at (0.2,0.90) {\small $\beta$};
\node at (0.98,0.98) {\small $\gamma$}; 
\node at (0.98,0.2) {\small $\delta$};

\node at (-0.2,0.2) {\small $\gamma$}; 
\node at (-0.2,0.90) {\small $\beta$};
\node at (-0.98,0.98) {\small $\alpha$};
\node at (-0.98,0.2) {\small $\delta$};

\node at (0.0,1.45) {\small $\alpha$}; 
\node at (1.0, 1.45) {\small $\delta$};
\node at (-1.0, 1.45) {\small $\beta$};

\node at (1.4, 0.98) {\small $\delta$}; 
\node at (1.4,0.2) {\small $\gamma$};

\node at (0.98,-0.25) {\small $\delta$}; 
\node at (0.2,-0.2) {\small $\alpha$}; 

\node[inner sep=1,draw,shape=circle] at (0.58,0.55) {\small $1$};
\node[inner sep=1,draw,shape=circle] at (-0.60,0.6) {\small $2$};
\node[inner sep=1,draw,shape=circle] at (0.0,1.95) {\small $3$};
\node[inner sep=1,draw,shape=circle] at (1.58,0.55) {\small $4$};
\node[inner sep=1,draw,shape=circle] at (0.58,-0.55) {\small $5$};

\end{scope}

\begin{scope}[xshift=4.5cm]

\foreach \a in {1,...,2}
\draw[rotate=90*\a]
	(0,0) -- (0,-1.2) -- (1.2,-1.2) -- (1.2,0);

\draw[]
	(0,0) -- (1.2,0)
	(0,0) -- (-1.2,0);

\draw[line width=2]
	(-1.2,1.2) -- (-1.2,0) 
	(1.2,1.2) -- (1.2,0);

\node at (0.2,0.2) {\small $\alpha$}; 
\node at (0.2,0.90) {\small $\beta$};
\node at (0.98,0.98) {\small $\gamma$}; 
\node at (0.98,0.2) {\small $\delta$};

\node at (-0.2,0.2) {\small $\alpha$}; 
\node at (-0.2,0.90) {\small $\beta$};
\node at (-0.98,0.98) {\small $\gamma$};
\node at (-0.98,0.2) {\small $\delta$};

\node at (0.0,1.45) {\small $\alpha$}; 
\node at (1.0, 1.45) {\small $\delta$};
\node at (-1.0, 1.45) {\small $\beta$};

\node[inner sep=1,draw,shape=circle] at (0.58,0.55) {\small $1$};
\node[inner sep=1,draw,shape=circle] at (-0.60,0.6) {\small $2$};
\node[inner sep=1,draw,shape=circle] at (0.0,1.95) {\small $3$};

\end{scope}

\begin{scope}[xshift=9cm]

\foreach \a in {1,...,2}
\draw[rotate=90*\a]
	(0,0) -- (0,-1.2) -- (1.2,-1.2) -- (1.2,0);

\draw[]
	(-1.2,1.2) -- (-1.2,0) 
	(1.2,1.2) -- (1.2,0);

\draw[line width=2]
	(0,0) -- (1.2,0)
	(0,0) -- (-1.2,0);

\node at (0.2,0.2) {\small $\gamma$}; 
\node at (0.2,0.90) {\small $\beta$};
\node at (0.98,0.98) {\small $\alpha$}; 
\node at (0.98,0.2) {\small $\delta$};

\node at (-0.2,0.2) {\small $\gamma$}; 
\node at (-0.2,0.90) {\small $\beta$};
\node at (-0.98,0.98) {\small $\alpha$};
\node at (-0.98,0.2) {\small $\delta$};

\node at (0.0,1.45) {\small $\alpha$}; 
\node at (1.0, 1.45) {\small $\delta$};
\node at (-1.0, 1.45) {\small $\beta$};


\node[inner sep=1,draw,shape=circle] at (0.58,0.55) {\small $1$};
\node[inner sep=1,draw,shape=circle] at (-0.60,0.6) {\small $2$};
\node[inner sep=1,draw,shape=circle] at (0.0,1.95) {\small $3$};

\end{scope}

\end{tikzpicture}
\caption{The AADs of $\alpha^{\beta}\vert^{\beta}\alpha$ and $\gamma^{\beta}\vert^{\beta}\gamma$}
\label{AAD-alal-bebe-gaga-bebe}
\end{figure}
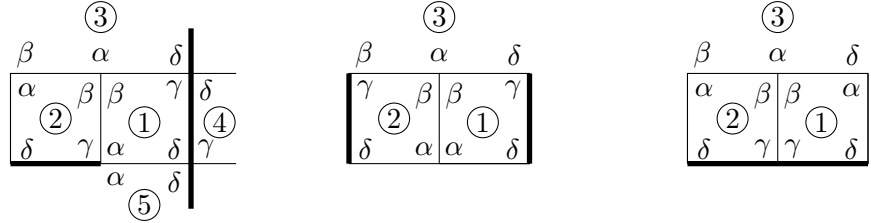

In $f=60, \AVC=\{ \alpha\beta^2,\gamma\delta^3, \alpha^3\beta, \alpha^5, \beta\gamma^4, \alpha^2\gamma^4 \}$, by no $\alpha\delta\cdots$, the AAD in the third picture of Figure \ref{AAD-alal-bebe-gaga-bebe} arrives at a contradiction. It shows that $\gamma^{\beta} \vert^{\beta} \gamma \cdots$ is not a vertex. So $\alpha^2\gamma^4$ has AAD $\bvert \, \gamma^{\beta} \vert \alpha \vert^{\beta} \gamma \, \bvert \, \gamma^{\beta} \vert \alpha \vert^{\beta} \gamma \, \bvert$ and $\beta\gamma^4$ is not a vertex. By no $\beta\gamma\cdots, \beta\delta\cdots$, there is no $\alpha\alpha\alpha, \gamma\alpha\gamma$ and hence $\alpha^3\beta, \alpha^5, \alpha^2\gamma^4$ are not vertices. Then $\alpha^2\cdots$ is not a vertex and hence the same for $\delta^{\alpha}\vert^{\alpha}\delta \cdots$. So $\gamma\delta^3$ is not a vertex, a contradiction.

Among the $f=36, \AVC=\{ \alpha\beta^2, \alpha^2\delta^2, \gamma\delta^3, \alpha^3\gamma^2, \alpha\gamma^3\delta,  \gamma^6 \}$ and $f=84, \AVC=\{ \alpha\beta^2,\gamma\delta^3, \alpha^3\gamma\delta, \gamma^5\delta \}$ and $f=132, \AVC=\{ \alpha\beta^2,\gamma\delta^3, \alpha^4\gamma^2, \alpha\gamma^6 \}$, we have $\beta^2\cdots=\alpha\beta^2$ and no $\beta\delta\cdots$. By no $\beta\delta\cdots$, the AAD $\alpha^{\beta}\vert^{\delta}\alpha$ is dismissed. If $\beta\gamma\cdots$ is not a vertex, then the AAD in the second picture of Figure \ref{AAD-alal-bebe-gaga-bebe} implies no $\alpha^{\beta}\vert^{\beta}\alpha$. Then no $\alpha^{\beta}\vert^{\delta}\alpha$ and $\alpha^{\beta}\vert^{\beta}\alpha$ imply no $\alpha\alpha\alpha$. So $ \alpha^3\gamma^2, \alpha^3\gamma\delta, \alpha^4\gamma^2$ cannot be vertices when $f=36, 84, 132$ and $\alpha\beta^2$ is a vertex. Then there is no $\alpha^2\cdots$ in the $\AVC$s of $f=84, 132$ which implies that $\delta^{\alpha}\vert^{\alpha}\delta\cdots$ is not a vertex. So $\gamma\delta^3$ is not a vertex, a contradiction. The $\AVC$s for $f=84, 132$ are dismissed and the one for $f=36$ constitutes to a tiling which remains to be discussed below.

\newpart{$\AVC$s with tilings}

In the $\AVC$ for $f=16$, by no $\alpha^2\cdots$, the AADs $\delta^{\alpha}\vert^{\alpha}\delta, \beta^{\alpha}\vert^{\alpha}\beta, \beta^{\alpha}\vert^{\alpha} \delta$ are dismissed and $\beta\cdots\beta$ has unique AAD $\beta^{\alpha} \vert \cdots \vert \beta^{\alpha}$. Then there is no $\delta\delta\delta$. This means $\alpha\delta^4,\beta^3\delta^2, \beta^2\delta^4$, $\gamma^2\delta^4, \beta\delta^6, \delta^8$ are not vertices. The $\AVC$ is reduced to
\begin{align*}
f=16, \quad &\AVC = \{ \alpha\beta^2, \alpha\gamma^2, \alpha\beta\delta^2, \beta^4, \beta^2\gamma^2, \gamma^4,  \beta\gamma^2\delta^2 \}.
\end{align*}
By the proof of \cite[Proposition 39]{cly}, $\alpha\beta^2, \beta\gamma^2\delta^2, \beta^2\gamma^2$ are not vertices. The $\AVC$ is further reduced to
\begin{align*}
f=16, \quad \AVC = \{ \alpha\gamma^2, \alpha\beta\delta^2, \beta^4,  \gamma^4 \}.
\end{align*}
By no $\alpha^2\cdots$, the vertex $\beta^4$ has unique AAD $\vert^{\gamma} \beta^{\alpha}\vert^{\gamma} \beta^{\alpha}\vert^{\gamma} \beta^{\alpha}\vert^{\gamma} \beta^{\alpha}\vert$. Then the AAD $\gamma^{\beta}\vert^{\beta}\gamma$ implies $\beta^2\cdots = \beta^4$ which contradicts its unique AAD. So $\gamma^4$ is not a vertex. The $\AVC$ is reduced to
\begin{align}
\label{ab2ag2AVC}
f=16, \quad \AVC = \{ \alpha\gamma^2, \alpha\beta\delta^2, \beta^4 \}.
\end{align}
By $\AVC$ \eqref{ab2ag2AVC}, we construct $S3, S^{\prime}3$ in Figure \ref{Tiling-S3}. By Proposition \ref{Geom-AlGa2-AlBeDe2}, the tiles are actually triangles and hence the pictures in Figure \ref{Polar-Tilings}. 

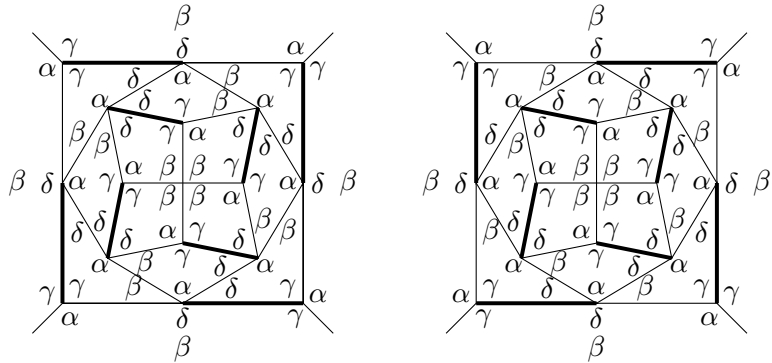
\begin{figure}[htp]
\centering
\begin{tikzpicture}[scale=1]

\foreach \a in {0,...,3}
{

\begin{scope}[rotate=90*\a]

\draw
	(0,0) -- (0.8,0) -- (1,1) -- (0,0.8)
	(1.6,0) -- (1,1) -- (0,1.6) -- (1.6,1.6) -- (-1.6,1.6)
	(1.6,1.6) -- (2,2);

\draw[line width=1.5]
	(0.8,0) -- (1,1)
	(1.6,1.6) -- (1.6,0);

\node at (0.2,0.65) {\small $\alpha$};
\node at (0.2,0.2) {\small $\beta$};
\node at (0.65,0.2) {\small $\gamma$};
\node at (0.75,0.75) {\small $\delta$};

\node at (1.4,0) {\small $\alpha$};
\node at (1.07,-0.5) {\small $\beta$};
\node at (1,0) {\small $\gamma$};
\node at (1.1,0.5) {\small $\delta$};

\node at (1.1,1.1) {\small $\alpha$};
\node at (0.65,1.4) {\small $\beta$};
\node at (1.4,0.65) {\small $\delta$};
\node at (1.4,1.4) {\small $\gamma$};

\node at (1.8,-1.5) {\small $\alpha$};
\node at (2.2,0) {\small $\beta$};
\node at (1.8,1.5) {\small $\gamma$};
\node at (1.8,0) {\small $\delta$};

\end{scope}

\begin{scope}[xshift=5.5cm, rotate=90*\a]

\draw
	(0,0) -- (0.8,0) -- (1,1) -- (0,0.8)
	(1.6,0) -- (1,1) -- (0,1.6) -- (1.6,1.6) -- (-1.6,1.6)
	(1.6,1.6) -- (2,2);

\draw[line width=1.5]
	(0.8,0) -- (1,1)
	(1.6,1.6) -- (0,1.6);

\node at (0.2,0.65) {\small $\alpha$};
\node at (0.2,0.2) {\small $\beta$};
\node at (0.65,0.2) {\small $\gamma$};
\node at (0.75,0.75) {\small $\delta$};

\node at (1.4,0) {\small $\alpha$};
\node at (1.07,-0.5) {\small $\beta$};
\node at (1,0) {\small $\gamma$};
\node at (1.1,0.5) {\small $\delta$};

\node at (1.1,1.1) {\small $\alpha$};
\node at (1.4,0.65) {\small $\beta$};
\node at (1.4,1.4) {\small $\gamma$};
\node at (0.65,1.4) {\small $\delta$};

\node at (1.8,1.5) {\small $\alpha$};
\node at (2.2,0) {\small $\beta$};
\node at (1.8,-1.5) {\small $\gamma$};
\node at (1.8,0) {\small $\delta$};

\end{scope}

}

\end{tikzpicture}
\caption{Tilings $S3,S^{\prime}3$}
\label{Tiling-S3}
\end{figure}

In $f=36, \AVC=\{ \alpha\beta^2, \alpha^2\delta^2, \gamma\delta^3, \alpha^3\gamma^2, \alpha\gamma^3\delta,  \gamma^6 \}$, the earlier discussion already shows that $\alpha^3\gamma^2$ is not a vertex. The $\AVC$ is reduced to 
\begin{align}\label{ab2a2d2AVC}
f=36, \quad \{ \alpha\beta^2, \alpha^2\delta^2, \gamma\delta^3, \alpha\gamma^3\delta,  \gamma^6 \}.
\end{align}
By $\AVC$ \eqref{ab2a2d2AVC}, we construct $S5$ in Figure \ref{Tiling-S5}.

\begin{figure}[htp]
\centering
\begin{tikzpicture}[xscale=-1]

\foreach \a in {0,1,2}
\foreach \b in {1,-1}
{
\begin{scope}[xshift=3.2*\a cm + 0.4*\b cm, scale=\b]

\draw
	(-1.6,0) -- (1.6,0)
	(-1.6,1.2) -- (1.6,1.2)
	(-0.8,0) -- (-0.8,1.2)
	(0.8,0) -- (0.8,1.2)
	(0,1.2) -- (0,2);

\draw[line width=1.5]
	(-1.6,2) -- (-1.6,0)
	(1.6,2) -- (1.6,0)
	(-0.8,1.2) -- (0.8,0);

\node at (0.8,1.35) {\small $\alpha$};
\node at (0.2,1.4) {\small $\beta$};
\node at (1.4,1.4) {\small $\delta$};
\node at (0.8,1.9) {\small $\gamma$};

\node at (-0.8,1.35) {\small $\alpha$};
\node at (-0.2,1.4) {\small $\beta$};
\node at (-1.4,1.4) {\small $\delta$};
\node at (-0.8,1.9) {\small $\gamma$};

\node at (0,1.05) {\small $\alpha$};
\node at (0.65,1) {\small $\beta$};
\node at (-0.3,1.05) {\small $\delta$};
\node at (0.65,0.4) {\small $\gamma$};

\node at (-0.65,0.2) {\small $\alpha$};
\node at (0,0.2) {\small $\beta$};
\node at (-0.65,0.8) {\small $\delta$};
\node at (0.25,0.17) {\small $\gamma$};

\node at (1,0.2) {\small $\alpha$};
\node at (1,1) {\small $\beta$};
\node at (1.4,0.2) {\small $\delta$};
\node at (1.4,1) {\small $\gamma$};

\node at (-1,1) {\small $\alpha$};
\node at (-1,0.2) {\small $\beta$};
\node at (-1.4,1) {\small $\delta$};
\node at (-1.4,0.2) {\small $\gamma$};

\end{scope}
}

\end{tikzpicture}
\caption{Tiling $S5$}
\label{Tiling-S5}
\end{figure}
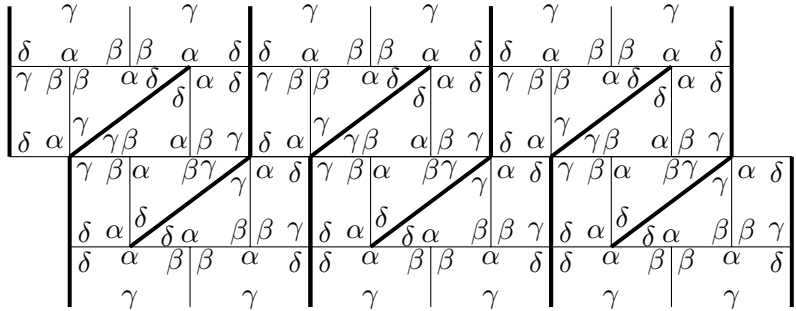

In $f=36, \AVC = \{ \alpha\delta^2, \alpha\beta^3, \gamma^3\delta, \alpha^2\beta\gamma^2, \alpha^6 \}$, by no $\beta\delta\cdots$ and $\delta \vert \delta \cdots$, the AADs $\alpha^{\beta} \vert^{\delta} \alpha, \alpha^{\delta} \vert^{\delta} \alpha$ are dismissed. So there is no $\alpha\alpha\alpha$ and $\alpha^6$ is not a vertex. The $\AVC$ is reduced to
\begin{align}\label{ad2ab3AVC}
f=36, \quad \{ \alpha\delta^2, \alpha\beta^3, \gamma^3\delta, \alpha^2\beta\gamma^2 \}.
\end{align}
By $\AVC$ \eqref{ad2ab3AVC}, we construct $S6$ in Figure \ref{Tiling-S6}.

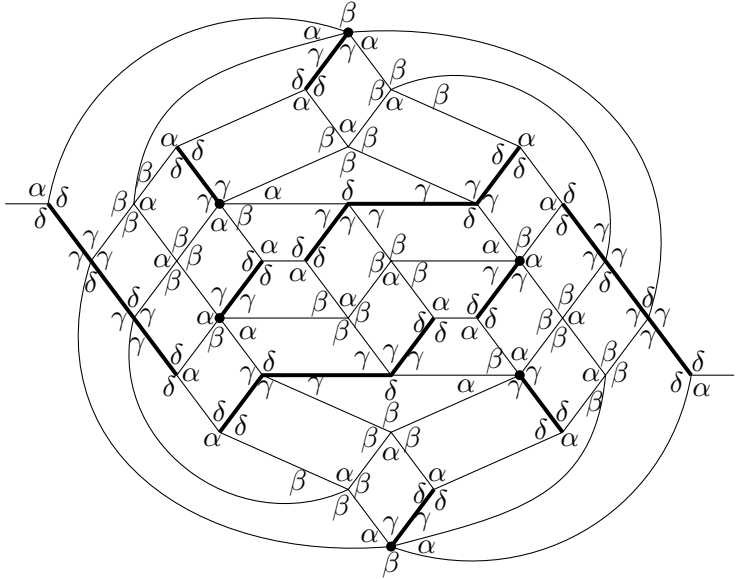
\begin{figure}[htp]
\centering
\begin{tikzpicture}

\begin{scope}[scale=0.95]

\foreach \a in {1,-1}
{
\begin{scope}[scale=\a]

\draw
	(0,0) -- (0.3,0.4) -- (2.1,0.4) 
	(-0.3,1.2) -- (0.9,-0.4) -- (1.5,-0.4) -- (2.1,-1.2) -- (3.3,0.4)
	(2.1,-1.2) -- (0.3,-1.2)
	(2.7,-2) -- (3.3,-1.2) -- (1.5,1.2)
	(2.1,0.4) -- (2.7,1.2) -- (2.1,2)
	(-1.5,-1.2) -- (0.3,-2) -- (2.1,-1.2)
	(2.7,-2) -- (0.9,-2.8) -- (0.3,-2) -- (-0.3,-2.8) -- (-2.1,-2)
	(0.3,-3.6) -- (-0.3,-2.8)
	(3.3,-1.2) -- (3.9,-0.4)
	(4.5,-1.2) -- (5.1,-1.2)
	(3.3,0.4) to[out=75,in=25, distance=1.75cm] (0.3,2.8)
	(3.9,-0.4) to[out=72,in=5, distance=2.75cm] (-0.3,3.6)
	(3.3,-1.2) to[out=-95,in=15, distance=1.75cm] (0.3,-3.6)
	(4.5,-1.2) to[out=-100,in=-20, distance=2cm] (0.3,-3.6)
	;

\draw[line width=1.5]
	(2.1,2) -- (1.5,1.2) -- (-0.3,1.2) -- (-0.9,0.4)
	(1.5,-0.4) -- (2.1,0.4) 
	(2.7,-2) -- (2.1,-1.2)
	(0.3,-3.6) -- (0.9,-2.8)
	(2.7,1.2) -- (4.5,-1.2);

\fill
	(2.1,-1.2) circle (0.07)
	(2.1,0.4) circle (0.07)
	(0.3,-3.6) circle (0.07);

\node at (-0.3,-0.15) {\small $\alpha$};
\node at (0.1,0.35) {\small $\beta$};
\node at (-0.3,0.9) {\small $\gamma$};
\node at (-0.7,0.45) {\small $\delta$};

\node at (-1.4,0.6) {\small $\alpha$};
\node at (-1.75,1) {\small $\beta$};
\node at (-0.65,1) {\small $\gamma$};
\node at (-1,0.6) {\small $\delta$};

\node at (-1,0.2) {\small $\alpha$};
\node at (-0.7,-0.2) {\small $\beta$};
\node at (-1.7,-0.2) {\small $\gamma$};
\node at (-1.45,0.2) {\small $\delta$};

\node at (-1.7,-0.6) {\small $\alpha$};
\node at (-0.4,-0.6) {\small $\beta$};
\node at (-0.1,-1) {\small $\gamma$};
\node at (-1.4,-1) {\small $\delta$};

\node at (-2.1,0.9) {\small $\alpha$};
\node at (-2.45,0.45) {\small $\beta$};
\node at (-2.1,-0.1) {\small $\gamma$};
\node at (-1.7,0.4) {\small $\delta$};

\node at (-2.3,-0.4) {\small $\alpha$};
\node at (-2.75,0.1) {\small $\beta$};
\node at (-3.1,-0.4) {\small $\gamma$};
\node at (-2.7,-0.9) {\small $\delta$};

\node at (-2.5,-1.2) {\small $\alpha$};
\node at (-2.15,-0.7) {\small $\beta$};
\node at (-1.7,-1.2) {\small $\gamma$};
\node at (-2.1,-1.7) {\small $\delta$};

\node at (-1.35,1.35) {\small $\alpha$};
\node at (-0.3,1.75) {\small $\beta$};
\node at (0.75,1.35) {\small $\gamma$};
\node at (-0.3,1.4) {\small $\delta$};

\node at (-3.1,1.2) {\small $\alpha$};
\node at (-2.65,0.7) {\small $\beta$};
\node at (-2.3,1.2) {\small $\gamma$};
\node at (-2.7,1.7) {\small $\delta$};

\node at (-0.95,2.6) {\small $\alpha$};
\node at (-0.6,2.1) {\small $\beta$};
\node at (-2.05,1.4) {\small $\gamma$};
\node at (-2.4,1.95) {\small $\delta$};

\node at (0.35,2.6) {\small $\alpha$};
\node at (-0,2.1) {\small $\beta$};
\node at (1.45,1.4) {\small $\gamma$};
\node at (1.8,1.9) {\small $\delta$};

\node at (-0.3,2.3) {\small $\alpha$};
\node at (0.1,2.75) {\small $\beta$};
\node at (-0.3,3.3) {\small $\gamma$};
\node at (-0.7,2.85) {\small $\delta$};

\node at (-2.9,0.4) {\small $\alpha$};
\node at (-3.35,0.9) {\small $\beta$};
\node at (-3.7,0.4) {\small $\gamma$};
\node at (-3.3,-0.1) {\small $\delta$};

\node at (-2.2,-2.1) {\small $\alpha$};
\node at (-1,-2.7) {\small $\beta$};
\node at (-3.25,-0.8) {\small $\gamma$};
\node at (-2.8,-1.35) {\small $\delta$};

\node at (-0,-3.45) {\small $\alpha$};
\node at (-0.4,-3.05) {\small $\beta$};
\node at (-3.5,-0.45) {\small $\gamma$};
\node at (-3.9,0.1) {\small $\delta$};

\node at (-2.8,2.1) {\small $\alpha$};
\node at (-3.15,1.6) {\small $\beta$};
\node at (-0.75,3.25) {\small $\gamma$};
\node at (-1,2.95) {\small $\delta$};

\node at (-0.8,3.6) {\small $\alpha$};
\node at (-3.5,1.2) {\small $\beta$};
\node at (-3.9,0.7) {\small $\gamma$};
\node at (-4.3,1.3) {\small $\delta$};

\node at (-4.65,1.4) {\small $\alpha$};
\node at (-0.3,3.85) {\small $\beta$};
\node at (-4.1,0.4) {\small $\gamma$};
\node at (-4.6,1) {\small $\delta$};

\end{scope}
}

\end{scope}

\end{tikzpicture}
\caption{Tiling $S6$}
\label{Tiling-S6}
\end{figure}

This completes the proof.
\end{proof}

We provide the pseudocode for Propositions \ref{RatProp}, \ref{RatAlGaDeProp}. In preprocessing, we define the functions, f\_Condition and Angle\_Condition, for executing Step 3 in our scheme. For example, the pseudocode as written, is for the convex case. The other cases can be defined similarly. 

\begin{algorithm}[H]
\begin{algorithmic}[1] \caption*{Preprocessing}
\State \text{Declare Function: f\_Condition($f$):=}
\Indent
\If {$f \neq \emptyset$ \AND consistent \AND even \AND $\ge 8$ }
\Indent 
\Return {true} 
\EndIndent
\Else \quad
\Return {false}
\EndIf 
\EndIndent
\State \text{Declare Function: Angle\_Condition($[\alpha, \beta, \gamma, \delta]$):=}
\Indent
\If {$[\alpha, \beta, \gamma, \delta] \neq \emptyset$ 
\AND $0 < \alpha, \beta, \gamma, \delta < \pi$ \AND valid} 
\Indent 
\Return {true} 
\EndIndent
\Else \quad
\Return {false}
\EndIf 
\EndIndent
\end{algorithmic}
\end{algorithm}

The pseudocode for computing angles and $f$ via Type I solutions is given in Algorithm \ref{Type-I-Algo} and the pseudocode for computing angles and $f$ via Type II, III solutions is given in Algorithm \ref{Type-II-III-Algo}. 

In Algorithm \ref{Type-I-Algo}, we define Vertex\_Eqns by the angle relation(s) in Lemma \ref{ReducedTrigAng}. For example, in the convex case, we define Case\_Eqns by $\alpha=2\gamma$ and $\beta=2\delta$. We define Vertex\_Eqns by the vertex angle sums given by the vertices in Lemma \ref{PairsLem}. Then we execute Step 2 and Step 3.

In Algorithm \ref{Type-II-III-Algo}, we define Vertex\_Eqns in the same way by Lemma \ref{PairsLem}. We define Case\_Recal by the recalibrations in Table \ref{Recali}. We define Myerson\_Sol by Type II or III solutions. The quadrilateral angle sum defines Angle\_Eqns. After solving for $\alpha, \beta, \gamma, \delta$ (and $\theta$) in terms of $f$ in the first procedure (Step 2), we dismiss angle values if they fail the criteria in Step 2 of our scheme. Then we carry out Step 3 in the second procedure. 

\begin{algorithm}[H]
\caption{Algorithm 1: Type I Rational Angle Values} \label{Type-I-Algo}
\begin{algorithmic}[1]
\Procedure{Solve and Select Angle Values}{}
\State \text{Declare Array: Vertex\_Eqns, Case\_Eqns, Angle\_Eqns;}
\State \text{Declare Array: Angle\_Values, Valid\_Angle\_Values;}
\State \text{Declare Rational Number: f\_Soln;}
\For {$i:1$ \ \textbf{through} \ length(Vertex\_Eqns) } 
\State \text{Angle\_Eqns: concatenate(Vertex\_Eqns[$i$], Case\_Eqns),}
\State \text{f\_Soln: solve(Angle\_Eqns, $f$),}
\State \text{Angle\_Soln: solve(Angle\_Eqns, $[\alpha, \beta, \gamma, \delta]$),}
\If {f\_Condition(f\_Soln) \AND Angle\_Condition(Angle\_Values)}
\State \text{append(Valid\_Angle\_Values, Angle\_Values)}
\EndIf
\EndFor 
\EndProcedure
\end{algorithmic}
\end{algorithm}

\begin{algorithm}[]
\caption{Algorithm 2: Type II, III Rational Angle Values} \label{Type-II-III-Algo}
\begin{algorithmic}[1]
\Procedure{Solve Angle Values}{}
\State \text{Declare Array: Myerson\_Sol, Case\_Recal, Angle\_Eqns;}
\State \text{Declare Array: Angle\_Soln, Angle\_Values;}
\State \text{Declare Rational Number: f\_Soln;}
\For {\quad $m:1$ \ \textbf{through} \ length(Myerson\_Sol) \ } 
\For {\quad $c:1$ \ \textbf{through} \ length(Case\_Recal) \ }
\For  {\quad $i:1$ \ \textbf{while} \ $i\le4$ \ }
\State \text{append(Angle\_Eqns, Myerson\_Sol}$[m][i]=$\text{Case\_Recal}$[c][i]$)
\EndFor 
\State \text{f\_Soln: solve(Angle\_Eqns, $f$),}
\State \text{Angle\_Soln: solve(Angle\_Eqns, $[\alpha, \beta, \gamma, \delta]$),}
\If {f\_Condition(f\_Soln) \AND Angle\_Condition(Angle\_Soln)}
\State \text{append(Angle\_Values, Angle\_Soln)}
\EndIf
\EndFor \quad 
\EndFor \quad 
\State 
\EndProcedure
\Procedure{Select Valid Angle Values}{}
\State \text{Declare Array: Vertex\_Eqns;}
\State \text{Declare Array: Sub\_Vertices, Vertex\_Angles, Valid\_Angle\_Values;}
\State \text{Declare Rational Number: f\_Value;}
\For {\quad $a:1$ \ \textbf{through} \ length(Angle\_Values) \ } 
\For {\quad $v:1$ \ \textbf{through} \ length(Vertex\_Eqns) \ }
\State \text{Sub\_Vertices: Substitute(Angle\_Values[a], Vertex\_Eqns[v])}, 
\State \text{f\_Value: solve(Sub\_Vertices, $f$)},
\If { f\_Condition(f\_Value) }
\State \text{Vertex\_Angles: solve(Sub\_Vertices, $[\alpha, \beta, \gamma, \delta, \text{f\_Value}]$),}
\If { Angle\_Condition(Vertex\_Angles) }
\State \text{append(Valid\_Angle\_Values, Vertex\_Angles)}
\EndIf
\EndIf
\EndFor
\EndFor
\EndProcedure
\end{algorithmic}
\end{algorithm}

We now turn our attention to tilings with $\alpha\gamma\delta$ as a vertex. To simplify the discussion, we first establish the following fact.

\begin{lem} \label{Ga=Pi} If $\gamma = \pi$ and $f\ge8$, then the set of admissible vertices is 
\begin{align}\label{avc-ga=pi}
\AVC = \{ \alpha\gamma\delta, \alpha^3, \alpha^2\beta^2, \alpha\beta^n, \beta^n, \beta^n\gamma\delta \}. 
\end{align}
\end{lem}

\begin{proof} Suppose $\gamma = \pi$. Lemma \ref{TriQuadLem} implies that the quadrilateral is in fact an isosceles triangle $\triangle ABD$ in Figure \ref{TriABD} with edges $AB=AD=a$, and $BD=a+b$, and $\beta=\delta$. By Lemma \ref{AlConvexLem}, $\alpha, \beta, \delta < \pi$. Then $\triangle ABD$ is a standard isosceles triangle. Then by $BD>AB=AD$, $\gamma = \pi > \alpha > \beta = \delta$. 

By $\gamma=\pi$ and the quadrilateral angle sum, $\alpha + 2\beta = (1 + \frac{4}{f})\pi$. By $\alpha > \beta$, we get $\alpha > ( \frac{1}{3} + \frac{4}{3f} )\pi > \beta = \delta$. Since $\gamma = \pi$, we know that $\gamma^2\cdots$ is not a vertex. Balance Lemma implies that $\delta^2\cdots$ is also not a vertex and every $b$-vertex has exactly one $\gamma$ and one $\delta$. 

Assume $\alpha\gamma\delta$ is not a vertex. Then the only $b$-vertex is $\gamma\cdots=\delta\cdots = \beta^n\gamma\delta$. Counting Lemma on $\beta, \gamma$ implies $n=1$ in $\beta^n\gamma\delta$. Then $\gamma=\pi$ and $\beta\gamma\delta$ imply $\pi = \beta + \delta < ( \frac{2}{3} + \frac{8}{3f} )\pi$ which implies $f<8$, contradicting $f\ge8$. So $\alpha\gamma\delta$ is a vertex. By $\gamma=\pi$, $\alpha>\delta$ and $\alpha\gamma\delta$, we get $\alpha > \frac{1}{2}\pi$. So $\alpha > \frac{1}{2}\pi$ and $\gamma=\pi$ and $\beta = \delta = \frac{4}{f}\pi$ determine all other vertices. Therefore we obtain $\AVC$ \eqref{avc-ga=pi}.
\end{proof}

\begin{prop}\label{RatAlGaDeProp} If one of $\alpha\gamma\delta, \beta\gamma\delta$ is a vertex and $f\ge8$, then tilings of the sphere by congruent almost equilateral quadrilaterals, where every angle is rational, are earth map tilings $E$, their flip modifications $E^{\prime}, E^{\prime\prime}$, and rearrangement $E^{\prime\prime\prime}$. 
\end{prop}

\begin{proof} Up to symmetry, we may assume $\alpha\gamma\delta$ is a vertex. By $\alpha \neq \beta$, this implies that $\beta\gamma\delta$ is not a vertex. By $f\ge8$ and $\alpha\gamma\delta$, we get $\beta = \frac{4}{f}\pi < \pi$. By $f\ge8$, Lemma \ref{AlGaDeLem} implies $\delta<\pi$. Similar to the previous proposition, the proof is divided into three cases: every angle $<\pi$, or exactly one of $\alpha, \gamma \ge\pi$. The angle values are determined via Type I, II, III solutions to \eqref{MyTrigEq}. Only the valid ones are selected. We give a couple of examples of our routine and leave out the details of the others. The routine again can be executed in computer. 

\begin{case*}[$\alpha, \beta, \gamma,\delta < \pi$] 

Type I: By relations $\alpha=2\gamma$ and $\beta=2\delta$ from Lemma \ref{ReducedTrigAng} and $\alpha\gamma\delta$, we get 
\begin{align*}
&f\ge8,&
&\alpha= ( \tfrac{4}{3}-\tfrac{4}{3f} ) \pi, &
&\beta=\tfrac{4}{f}\pi,&
&\gamma= ( \tfrac{2}{3}-\tfrac{2}{3f} ) \pi, &
&\delta=\tfrac{2}{f}\pi.&
\end{align*}
However, $f\ge8$ implies $\alpha>\pi$.

Type II: By matching $( \gamma - \frac{1}{2}\alpha, \frac{1}{2}\beta, \frac{1}{2}\alpha, \delta - \frac{1}{2}\beta )$ and its permutations in \eqref{MyPerm} with each row of the first case of Table \ref{Recali}, and then by $\alpha\gamma\delta$, we get
\begin{align*}
&f\ge8,& &\alpha =\tfrac{1}{3}\pi,& &\beta = \tfrac{4}{f}\pi,& &\gamma = (\tfrac{2}{3}+\tfrac{2}{f})\pi,& &\delta = (1- \tfrac{2}{f})\pi;& \\
&f=8,& &\alpha = \tfrac{1}{3}\pi,& &\beta = \tfrac{1}{2}\pi,& &\gamma=\tfrac{11}{12}\pi,& &\delta = \tfrac{3}{4}\pi;& \\ 
&f=12,& &\alpha=\tfrac{4}{9}\pi,& &\beta = \tfrac{1}{3}\pi,& &\gamma = \tfrac{2}{3}\pi,& &\delta = \tfrac{8}{9}\pi;& \\
&f=18,& & \alpha=\tfrac{4}{9}\pi,& &\beta = \tfrac{2}{9}\pi,& &\gamma = \tfrac{11}{18}\pi,& &\delta = \tfrac{17}{18}\pi;& \\
&f=24,& &\alpha = \tfrac{1}{3}\pi,& &\beta = \tfrac{1}{6}\pi,& &\gamma=\tfrac{3}{4}\pi,& &\delta=\tfrac{11}{12}\pi.&
\end{align*}
In the first two sets, we have $\alpha - \beta = \delta - \gamma$. The angles contradict Lemma \ref{ExchLem}. In the last three sets, we have $\alpha > \beta$ and $\delta > \gamma$, contradicting Lemma \ref{ExchLem}. So they are dismissed. Meanwhile, the set with $f=8$ is the minimal case of the first set. Then there are no valid angle values. 

Type III: By matching each row of Table \ref{MySpSol} and its permutations in \eqref{MyPerm} with each row of the first case of Table \ref{Recali}, and then by $\alpha\gamma\delta$, we get
\begin{align*}
&f=12,&
&\alpha=\tfrac{14}{15}\pi, &
&\beta=\tfrac{1}{3}\pi, &
&\gamma =\tfrac{23}{30}\pi, &
&\delta = \tfrac{3}{10}\pi. &
\end{align*}
Then we obtain the $\AVC$ below,
\begin{align*}
f=12, \quad \AVC=\{ \alpha\gamma\delta, \beta\gamma\delta^3, \beta^6 \}.
\end{align*}
Applying Counting Lemma to $\gamma,\delta$, we know that $\beta\gamma\delta^3$ is not a vertex and
\begin{align*}
f = 12, \quad
\AVC=\{ \alpha\gamma\delta, \beta^6 \}.
\end{align*}
\end{case*}

\begin{case*}[$\beta, \gamma, \delta<\pi$ and $\alpha \ge \pi$] By $\alpha\ge\pi$, we know that $\alpha^2\cdots$ is not a vertex. By Lemma \ref{AlGaDeLem}, the only vertices with strictly more $\delta$ than $\gamma$ are $\alpha\delta^2, \alpha\beta^n\delta^2$. We incorporate this fact in conjunction with Balance Lemma to filter the vertices. This will be explained in Type II and III solutions. 

Type I: By Lemma \ref{ReducedTrigAng} and $\alpha\gamma\delta$, we get
\begin{align*}
&f\ge8,&
&\alpha= ( \tfrac{4}{3}-\tfrac{4}{3f} ) \pi, &
&\beta=\tfrac{4}{f}\pi, &
& \gamma= ( \tfrac{2}{3}-\tfrac{2}{3f} ) \pi, &
&\delta=\tfrac{2}{f}\pi.&
\end{align*}
Then we obtain the $\AVC$ as follows
\begin{align*}
f \ge8, \quad
\AVC = \{ \alpha\gamma\delta,    \gamma^3\delta,  \beta^n,   \alpha\beta^n,   \alpha\beta^n\delta^2,   \beta^n\gamma^2,   \beta^n\gamma\delta,  \beta^n\gamma^2\delta^2  \}.
\end{align*}

Type II: By the same argument we get
\begin{align*}
&f=12,&  
&\alpha = \tfrac{10}{9}\pi, & &
\beta = \tfrac{1}{3}\pi, & &
\gamma = \tfrac{2}{3}\pi, & &
\delta = \tfrac{2}{9}\pi; \\
&f=18,&  
&\alpha = \tfrac{10}{9}\pi,& &
\beta = \tfrac{2}{9}\pi, & &
\gamma = \tfrac{13}{18}\pi,& &
\delta = \tfrac{1}{6}\pi.
\end{align*}
For more efficient checking, we first find that both sets of angle values fail $\alpha\delta^2, \alpha\beta^n\delta^2$. Then $\delta^2\cdots$ is not a vertex. By Balance Lemma, every vertex has either exactly one pair of $\gamma, \delta$ or none of them. Such vertices can only be $\beta^6$ for the first set and $\alpha\beta^4, \beta\gamma^2\delta^2, \beta^5\gamma\delta,  \beta^{9}$ for the second. Hence we have,
\begin{align*}
f=12, \quad &\AVC = \{ \alpha\gamma\delta,  \beta^6 \}; \\
f=18, \quad &\AVC = \{  \alpha\gamma\delta, \alpha\beta^4, \beta\gamma^2\delta^2, \beta^5\gamma\delta,  \beta^{9} \}.
\end{align*}

Type III: By the same argument, we get
\begin{align*}
&f=12,&
&\alpha = \tfrac{7}{5}\pi, & &
\beta = \tfrac{1}{3}\pi, & &
\gamma = \tfrac{8}{15}\pi, & &
\delta = \tfrac{1}{15}\pi; \\
&f=20,&
&\alpha = \tfrac{16}{15}\pi, & &
\beta = \tfrac{1}{5}\pi, & & 
\gamma = \tfrac{23}{30}\pi, & &
\delta = \tfrac{1}{6}\pi; \\
&f=30,&
&\alpha = \tfrac{7}{5}\pi, & &
\beta = \tfrac{2}{15}\pi, & &
\gamma = \tfrac{17}{30}\pi, & &
\delta = \tfrac{1}{30}\pi.
\end{align*}
The angle values in the first set do not satisfy $\alpha\beta^n\delta^2$ for any $n\in \mathbb{N}$. By Balance Lemma and Counting Lemma, every $b$-vertex has exactly one pair of $\gamma, \delta$. Then the only vertex other than $\alpha\gamma\delta$ is $\beta^6$. Hence we get,
\begin{align*}
f=12, \quad &\AVC = \{ \alpha\gamma\delta, \beta^6 \}.
\end{align*} 
For the second set of angle values, we have $3\gamma+\delta>2\pi$ and the remainder of $\gamma^2\cdots$ has value $\frac{7}{15}\pi$. No angles add up to it. Then $\gamma^2\cdots$ is not a vertex. By Balance Lemma and Counting Lemma, every $b$-vertex has exactly one pair of $\gamma, \delta$. Hence we get, 
\begin{align*}
f=20, \quad &\AVC = \{ \alpha\gamma\delta,  \beta^{10} \}.
\end{align*} 
For the third set, $\alpha\beta^4\delta^2$ is the only vertex with strictly more $\delta$ than $\gamma$. In the other $b$-vertices, the number of $\gamma$ is at least that of $\delta$. So they are $\beta^2\gamma^3\delta, \beta^6\gamma^2\delta^2$. The only remaining vertex is $\beta^{15}$. We obtain the third $\AVC$. 
\begin{align*}
f=30, \quad &\AVC = \{ \alpha\gamma\delta, \beta^2\gamma^3\delta, \beta^6\gamma^2\delta^2, \alpha\beta^4\delta^2, \beta^{15} \}.
\end{align*}
\end{case*}

\begin{case*}[$\alpha, \beta, \delta<\pi$ and $\gamma \ge \pi$] By $\gamma\ge\pi$, we know that $\gamma^2\cdots$ is not a vertex. Then Balance Lemma implies no $\delta^2\cdots$ and every $b$-vertex has only one pair of $\gamma, \delta$. 

Type I: By the same argument, we get
\begin{align*}
&f\ge8,&
&\alpha= (1 - \tfrac{4}{f} ) \pi, & &
\beta=\tfrac{4}{f}\pi, & &
\gamma=\pi, & &
\delta=\tfrac{4}{f}\pi; \\
&f\ge8,&
&\alpha= ( \tfrac{2}{3}-\tfrac{4}{3f} ) \pi, & &
\beta=\tfrac{4}{f}\pi, & &
\gamma= ( \tfrac{4}{3}-\tfrac{2}{3f} )\pi, & &
\delta=\tfrac{2}{f}\pi.
\end{align*}
In the first set of angle values, by Lemma \ref{Ga=Pi}, $\gamma=\pi$ implies $\AVC$ \eqref{avc-ga=pi}.

In the second set of angle values, by $\alpha\gamma\delta$ and no $\gamma^2\cdots, \delta^2\cdots$, the other $b$-vertex can only be $\beta^n\gamma\delta$. Meanwhile, the $\hat{b}$-vertices are $\alpha^m, \beta^n, \alpha^m\beta^n$. By $f\ge8$ and $\alpha= ( \tfrac{2}{3}-\tfrac{4}{3f} ) \pi$, we have $\alpha\ge\tfrac{1}{2}\pi$. Then $m\le 3$ in $\alpha^m, \alpha^m\beta^n$. In particular, $\alpha^m=\alpha^3$. So we get
\begin{align} \label{Rat-Ga-I-general-avc}
f\ge8, \quad
\AVC = \{ \alpha\gamma\delta, \alpha^3, \beta^n, \alpha^m\beta^n, \beta^n\gamma\delta \}.
\end{align}

Type II: By the same argument, we get
\begin{align*}
f=12, \quad
\alpha=  \tfrac{2}{3} \pi, \quad 
\beta=\tfrac{1}{3}\pi, \quad 
\gamma=\pi, \quad 
\delta=\tfrac{1}{3}\pi.
\end{align*}
By Lemma \ref{Ga=Pi}, $\gamma=\pi$ and $f=12$ imply
\begin{align*}
f=12, \quad
\AVC = \{ \alpha\gamma\delta, \alpha^3, \alpha^2\beta^2, \alpha\beta^4, \beta^2\gamma\delta, \beta^6 \}.
\end{align*}
That is all the vertices and it is a special case of $\AVC$ \eqref{avc-ga=pi}.  

Type III: By the same argument, we get
\begin{align*}
&f=12, &
&\alpha = \tfrac{8}{15}\pi, & &
\beta = \tfrac{1}{3}\pi, & &
\gamma = \tfrac{41}{30}\pi, & &
\delta= \tfrac{1}{10}\pi; \\
&f=12, &
&\alpha = \tfrac{3}{5}\pi, & &
\beta = \tfrac{1}{3}\pi, & &
\gamma = \tfrac{17}{15}\pi, & &
\delta= \tfrac{4}{15}\pi; \\
&f=20, &
&\alpha = \tfrac{8}{15}\pi, & &
\beta = \tfrac{1}{5}\pi, & &
\gamma = \tfrac{43}{30}\pi, & &
\delta= \tfrac{1}{30}\pi. 
\end{align*}
By no $\gamma^2\cdots, \delta^2\cdots$ and Parity Lemma, the first two sets of angle values give
\begin{align*}
f=12, \quad &\AVC = \{ \alpha\gamma\delta, \beta^6 \}.
\end{align*}
Similarly, the third set of angle values give
\begin{align*}
f=20, \quad &\AVC = \{ \alpha\gamma\delta, \alpha^3\beta^2, \beta^{10}\}. 
\end{align*}
\end{case*}

All of the above $\AVC$s contain at least one subset which constitutes a tiling. 

\newpart{$\AVC$s with tilings}

In $f=12, \AVC=\{ \alpha\gamma\delta, \beta\gamma\delta^3, \beta^6 \}$, if $\beta\gamma\delta^3$ is a vertex, then $\# \gamma < \# \delta$, a contradiction. So $\beta\gamma\delta^3$ is dismissed and we get
\begin{align*}
f = 12, \quad
\AVC=\{ \alpha\gamma\delta, \beta^6 \}.
\end{align*}
It is easy to see that the above $\AVC$ is a special case of the one below
\begin{align}\label{EMT-avc}
f\ge8, \quad
\AVC=\{ \alpha\gamma\delta, \beta^{\frac{f}{2}} \}.
\end{align}

In $f=20, \AVC = \{ \alpha\gamma\delta, \alpha^3\beta^2, \beta^{10}\}$, we get $\gamma\cdots = \delta\cdots = \alpha\gamma\delta$. Then by Counting Lemma, $\alpha^3\beta^2$ is not a vertex. The $\AVC$ is reduced to 
\begin{align*}
f=20,\quad \AVC = \{ \alpha\gamma\delta,  \beta^{10} \}, 
\end{align*}
which is also a special case of $\AVC$ \eqref{EMT-avc}. 

In $f=30, \AVC = \{ \alpha\gamma\delta, \beta^2\gamma^3\delta, \beta^6\gamma^2\delta^2, \alpha\beta^4\delta^2, \beta^{15} \}$, we have $\alpha\beta\cdots =\alpha\beta^4\delta^2$ and no $\alpha^2\cdots$. By no $\alpha^2\cdots$, we know that $\beta^{\alpha}\vert^{\alpha}\beta\cdots, \beta^{\alpha}\vert^{\alpha}\delta\cdots$ are not vertices and $\beta \cdots \beta$ has unique AAD $\beta^{\alpha} \vert \cdots \vert \beta^{\alpha}$. In $\gamma^{\beta} \vert^{\alpha} \beta$ and $\gamma^{\beta}\vert^{\alpha}\delta$, we get $T_1, T_2$ in both pictures of Figure \ref{AAD-gabe-gade}. By $\alpha_1\beta_2\cdots = \alpha\beta^4\delta^2$ and the unique AAD of $\beta^{\alpha} \vert \cdots \vert \beta^{\alpha}$ imply $\beta^{\alpha}\vert^{\alpha}\delta\cdots$, a contradiction. So $\gamma^{\beta} \vert^{\alpha} \beta \cdots, \gamma^{\beta}\vert^{\alpha}\delta \cdots$ are not vertices. Then inside $\bvert \, \gamma^{\beta} \vert \cdots \vert^{\alpha} \delta \, \bvert$, it cannot be empty nor filled by only $\beta$'s. This means that $\beta^2\gamma^3\delta$ is not a vertex. Counting Lemma on $\gamma, \delta$ implies that $\alpha\beta^4\delta^2$ is not a vertex. So $\alpha\cdots=\alpha\gamma\delta$. Counting Lemma on $\alpha,\gamma$ implies that $\beta^6\gamma^2\delta^2$ is not a vertex. The $\AVC$ is reduced to 
\begin{align*}
f=30, \quad
\AVC = \{ \alpha\gamma\delta,  \beta^{15} \},
\end{align*}
which is again a special case of $\AVC$ \eqref{EMT-avc}. 

\begin{figure}[htp]
\centering
\begin{tikzpicture}[>=latex,scale=1]

\begin{scope}

\foreach \a in {1,...,2}
\draw[rotate=90*\a]
	(0,0) -- (0,-1.2) -- (1.2,-1.2) -- (1.2,0);

\draw[]
	(-1.2,1.2) -- (-1.2,0) 
	(1.2,1.2) -- (1.2,0);

\draw[line width=2]
	(1.2,0) -- (1.2,1.2)
	(0,0) -- (-1.2,0);

\draw[]
	(0, 1.2) -- (-0.4, 2.0);

\draw[line width=2]
	(0, 1.2) -- (0.4, 2.0);

\node at (0.2,0.2) {\small $\beta$}; 
\node at (0.2,0.98) {\small $\alpha$};
\node at (0.98,0.95) {\small $\delta$}; 
\node at (0.98,0.2) {\small $\gamma$};

\node at (-0.2,0.2) {\small $\gamma$}; 
\node at (-0.2,0.90) {\small $\beta$};
\node at (-0.98,0.98) {\small $\alpha$};
\node at (-0.98,0.2) {\small $\delta$};

\node at (-0.5,1.75) {\small $\alpha$};
\node at (-0.2,2) {\small $\alpha$};
\node at (0.0, 1.6) {\small $\delta$};
\node at (0.3, 1.4) {\small $\delta$};
\node at (-0.4, 1.4) {\small $\beta...$};
\node at (-1.0, 1.4) {\small $\gamma$};


\node[inner sep=1,draw,shape=circle] at (0.58,0.55) {\small $1$};
\node[inner sep=1,draw,shape=circle] at (-0.60,0.6) {\small $2$};

\end{scope}

\begin{scope}[xshift=4.5cm]

\foreach \a in {1,...,2}
\draw[rotate=90*\a]
	(0,0) -- (0,-1.2) -- (1.2,-1.2) -- (1.2,0);

\draw[]
	(-1.2,1.2) -- (-1.2,0) 
	(1.2,1.2) -- (1.2,0);

\draw[line width=2]
	(0,0) -- (1.2,0)
	(0,0) -- (-1.2,0);

\draw[]
	(0, 1.2) -- (-0.4, 2.0);

\draw[line width=2]
	(0, 1.2) -- (0.4, 2.0);

\node at (0.2,0.2) {\small $\delta$}; 
\node at (0.2,0.98) {\small $\alpha$};
\node at (0.98,0.90) {\small $\beta$}; 
\node at (0.98,0.2) {\small $\gamma$};

\node at (-0.2,0.2) {\small $\gamma$}; 
\node at (-0.2,0.90) {\small $\beta$};
\node at (-0.98,0.98) {\small $\alpha$};
\node at (-0.98,0.2) {\small $\delta$};

\node at (-0.5,1.75) {\small $\alpha$};
\node at (-0.2,2) {\small $\alpha$};
\node at (0.0, 1.6) {\small $\delta$};
\node at (0.3, 1.4) {\small $\delta$};
\node at (-0.4, 1.4) {\small $\beta...$};
\node at (-1.0, 1.4) {\small $\gamma$};


\node[inner sep=1,draw,shape=circle] at (0.58,0.55) {\small $1$};
\node[inner sep=1,draw,shape=circle] at (-0.60,0.6) {\small $2$};

\end{scope}

\end{tikzpicture}
\caption{The AAD of $\gamma^{\beta} \vert^{\alpha} \beta$ and $\gamma^{\beta}\vert^{\alpha}\delta$}
\label{AAD-gabe-gade}
\end{figure}
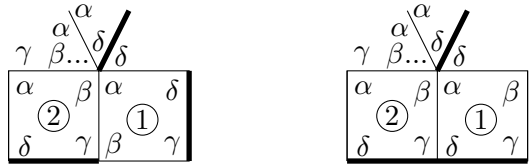 

In $\AVC = \{ \alpha\gamma\delta,    \gamma^3\delta,  \beta^n,   \alpha\beta^n,   \alpha\beta^n\delta^2,   \beta^n\gamma^2,   \beta^n\gamma\delta,  \beta^n\gamma^2\delta^2  \}$, we know $\alpha\gamma\cdots = \alpha\gamma\delta$, and $\alpha\beta\cdots =\alpha\beta^n, \alpha\beta^n\delta^2$, and $\gamma^3\cdots = \gamma^3\delta$, and no $\alpha\gamma^2\cdots$. The vertices with strictly more $\gamma$ than $\delta$ are $\gamma^3\delta, \beta^n\gamma^2$. The vertex with strictly more $\delta$ than $\gamma$ is $\alpha\beta^n\delta^2$.  If $\alpha\beta^n\delta^2$ is a vertex, the AAD determines $T_1, T_2,T_3$ in Figure \ref{AAD-bede2}. Then $\alpha_2\gamma_1\cdots=\alpha\gamma\delta$ and we determine $T_4$. Then $\alpha_4\beta_2\cdots = \alpha\beta^n, \alpha\beta^n\delta^2$. This means $\beta$ or $\delta$ is the angle in $T_5$ just outside $T_2$. By no $\alpha\gamma^2\cdots$, we conclude $\gamma_2\gamma_3\cdots =\gamma^3\cdots = \gamma^3\delta$. This means $\# \alpha\beta^n\delta^2 \le \# \gamma^3\delta$. In each vertex other than $\gamma^3\delta, \alpha\beta^n\delta^2, \beta^n\gamma^2$, the number of $\gamma$ is the same as that of $\delta$. By $\#\gamma=\# \delta$, we have $3\# \gamma^3\delta + 2\# \beta^n \gamma^2 =\# \gamma^3\delta + 2\# \alpha\beta^n\delta^2$. Combining with $\# \alpha\beta^n\delta^2 \le \# \gamma^3\delta$, we get $ \# \gamma^3\delta + \# \beta^n \gamma^2 \le \# \gamma^3\delta$. This implies $\# \beta^n \gamma^2 =0$ and $\# \alpha\beta^n\delta^2 = \# \gamma^3\delta$ and hence $\beta^n\gamma^2$ is not a vertex. The $\AVC$ is reduced to
\begin{align}\label{GenAVC-algade-ga3de}
f\ge 8, \quad
\AVC = \{ \alpha\gamma\delta,   \gamma^3\delta,  \beta^n,   \alpha\beta^n,   \alpha\beta^n\delta^2,  \beta^n\gamma\delta,  \beta^n\gamma^2\delta^2  \}.
\end{align}

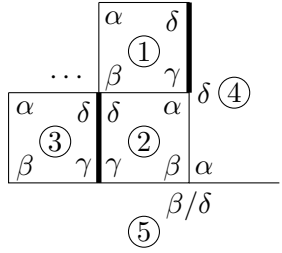
\begin{figure}[htp]
\centering
\begin{tikzpicture}[>=latex,scale=1]

\begin{scope}[xshift=0cm] 
\foreach \a in {0,...,3}
\draw[rotate=90*\a]
	(0,0) -- (0,-1.2); 

\foreach \a in {0,...,2}
\draw[rotate=-90*\a]
	(0,1.2) -- (1.2,1.2) -- (1.2,0); 

\draw[]
	(1.2, -1.2) -- (2.4, -1.2)
;

\draw[line width=2]
	(0,0) -- (0,-1.2)
	(1.2,0) -- (1.2,1.2)
;

\node at (-0.4,0.2) {\small $\cdots$}; 

\node at (0.2,0.2) {\small $\beta$}; 
\node at (0.2,1) {\small $\alpha$};
\node at (0.98,0.95) {\small $\delta$}; 
\node at (0.98,0.2) {\small $\gamma$};

\node at (0.2,-0.25) {\small $\delta$}; 
\node at (0.2,-1) {\small $\gamma$};
\node at (0.98,-0.2) {\small $\alpha$};
\node at (0.98,-1) {\small $\beta$};

\node at (-0.2,-0.25) {\small $\delta$}; 
\node at (-0.2,-1) {\small $\gamma$};
\node at (-0.98,-0.2) {\small $\alpha$};
\node at (-0.98,-1) {\small $\beta$};

\node at (1.4, 0) {\small $\delta$}; 
\node at (1.4, -1) {\small $\alpha$};

\node at (1.2,-1.5) {\small $\beta  / \delta$}; 

\node[inner sep=1,draw,shape=circle] at (0.6,0.55) {\small $1$};
\node[inner sep=1,draw,shape=circle] at (0.6,-0.65) {\small $2$};
\node[inner sep=1,draw,shape=circle] at (-0.58,-0.65) {\small $3$};
\node[inner sep=1,draw,shape=circle] at (1.8, 0) {\small $4$};
\node[inner sep=1,draw,shape=circle] at (0.6,-1.85) {\small $5$};

\end{scope}

\end{tikzpicture}
\caption{The AAD of $\alpha\beta^n\delta^2$}
\label{AAD-bede2}
\end{figure}

We remark that $f=18, \AVC = \{\alpha\gamma\delta, \alpha\beta^4, \beta\gamma^2\delta^2, \beta^5\gamma\delta,  \beta^{9} \}$ as a set may be viewed as a special case of $\AVC$ \eqref{GenAVC-algade-ga3de}. However, the angle values between the two are not compatible. So they are viewed as two different sets.   

If $\gamma=\pi$, then the quadrilateral degenerates into a triangle with $\AVC$ \eqref{avc-ga=pi} which is a special case of $\AVC$ \eqref{Rat-Ga-I-general-avc}.

We summarise the $\AVC$s in their most general forms below
\begin{enumerate}
\item $f\ge8, \AVC = \{ \alpha\gamma\delta, \beta^{\frac{f}{2}} \}$,
\item $f=18, \AVC = \{ \alpha\gamma\delta, \alpha\beta^4, \beta\gamma^2\delta^2, \beta^5\gamma\delta,  \beta^{9} \}$,
\item $f\ge8, \AVC = \{ \alpha\gamma\delta,   \gamma^3\delta,  \beta^n,   \alpha\beta^n,   \alpha\beta^n\delta^2,  \beta^n\gamma\delta,  \beta^n\gamma^2\delta^2  \}$, 
\item $f\ge8, \AVC = \{ \alpha\gamma\delta, \alpha^3, \alpha^m\beta^n, \beta^n, \beta^n\gamma\delta \}$.
\end{enumerate}

By \cite[Proposition 35, 48]{cly}, we get the earth map tilings, their flip modifications and rearrangement, which will be explained below. 

For the first $\AVC$ in the list, for each $f\ge8$ we get the earth map tiling $E$ with $\AVC$ \eqref{EMT-avc}. 

In fact, consecutive $\beta$'s in \cite[Figure 74]{cly} constitute consecutive timezones. Then $\beta^n$ is a vertex in any $\AVC$ in the list means that the tiling is $E$. In the remaining discussion, we may focus on the tilings without $\beta^n$.

The third $\AVC$ without $\beta^n$ is a simplified \cite[$\AVC$ (7.10)]{cly}. Counting Lemma implies that $\gamma^3\delta$ is a vertex if and only if $\alpha\beta^n\delta^2$ is a vertex. If $\gamma^3\delta$ is a vertex, then $\alpha\beta^n\delta^2$ and the angle values imply $f=6q+4$ where $q\in \mathbb{Z}$ and $q\ge1$. For each such $f\ge10$, we get the rearrangement $E^{\prime\prime\prime}$ with 
\begin{align}
E^{\prime\prime\prime}: \AVC \equiv \{ \alpha\gamma\delta, \gamma^3\delta, \alpha\beta^{\frac{f+2}{6}}, \alpha\beta^{\frac{f-4}{6}}\delta^2 \}.
\end{align}
If $\gamma^3\delta$ is not a vertex, then $\alpha\beta^n\delta^2$ is also not a vertex. We get \cite[$\AVC$ (7.9)]{cly} and for each $f\ge8$ we get flip modifications
\begin{align}
\label{Flip-1-avc} &E^{\prime}:  \AVC \equiv \{ \alpha\gamma\delta, \alpha\beta^n, \beta^n\gamma\delta \}, \\ 
\label{Flip-2-avc} &E^{\prime\prime}: \AVC \equiv \{ \alpha\gamma\delta, \alpha\beta^n, \beta^n\gamma^2\delta^2 \}.
\end{align}

The second $\AVC$ without $\beta^n$ is a special case of \cite[$\AVC$ (7.9)]{cly}. If $\alpha\beta^4$ is a vertex, then we get specific flip modifications $E^{\prime}, E^{\prime\prime}$ with $\AVC=\{ \alpha\gamma\delta, \alpha\beta^4, \beta^5\gamma\delta \}$ and $\AVC=\{ \alpha\gamma\delta, \alpha\beta^4, \beta\gamma^2\delta^2 \}$ respectively (which are special cases of $\AVC$s \eqref{Flip-1-avc}, \eqref{Flip-2-avc} respectively). 

The fourth $\AVC$ without $\beta^n$ is \cite[$\AVC$ (7.8)]{cly}. Counting Lemma implies that $\alpha^3$ or $\alpha^m\beta^n$ is a vertex if and only if $\beta^n\gamma\delta$ is a vertex.  If $\alpha^3$ or $\alpha^m\beta^n$ is a vertex, then we get
\begin{align} 
&E^{\prime}: \AVC \equiv \{ \alpha\gamma\delta, \alpha^3, \beta^n\gamma\delta \}, \\ 
&E^{\prime}: \AVC \equiv \{ \alpha\gamma\delta, \alpha^m\beta^n, \beta^n\gamma\delta \}. 
\end{align}

We list the tilings with their $\AVC$s in Table \ref{Rat-AlGaDe-AVCs}. The construction is explained in \cite[Figures 75, 76]{cly}.

\begin{table}[htp]
\begin{center}
\bgroup
\def\arraystretch{1.5}
    \begin{tabular}[]{| c | c | c |}
	\hline
	Tilings & $f$ & $\AVC$  \\ \hhline{|===|}
	$E$ & \multirow{5}{*}{$\ge8$} & $\{ \alpha\gamma\delta, \beta^{\frac{f}{2}} \}$ \\ 
	\cline{1-1} \cline{3-3}
	\multirow{2}{*}{$E^{\prime}$} &  & $\{ \alpha\gamma\delta, \alpha^m\beta^n, \beta^n\gamma\delta \}$ \\
	\cline{3-3}
	 &  & $\{ \alpha\gamma\delta, \alpha^3, \beta^n\gamma\delta \}$ \\
	\cline{1-1} \cline{3-3}
	$E^{\prime\prime}$ &  & $\{ \alpha\gamma\delta, \alpha\beta^n, \beta^n\gamma^2\delta^2 \}$ \\
	\cline{1-1} \cline{3-3}
	$E^{\prime\prime\prime}$ &  & $\{ \alpha\gamma\delta, \gamma^3\delta, \alpha\beta^{\frac{f+2}{6}}, \alpha^{\frac{f-4}{6}}\delta^2 \}$ \\
	\hline
	\end{tabular}
\egroup
\end{center}
\caption{$\AVC$s of rational angles with $\alpha\gamma\delta$}
\label{Rat-AlGaDe-AVCs}
\end{table}

This completes the proof.
\end{proof}

\section{Non-rational Angles} \label{SecNonRat}

In this section, we assume that at least one of the angles $\alpha, \beta,\gamma, \delta$ is non-rational, i.e., its value is not a rational multiple of $\pi$. For integers $m, n, k, l, m_i$, $n_i, k_i, l_i \ge 0$ where $1 \le i \le 2$, the angle sum system of vertices $\alpha^{m_1}\beta^{n_1}\gamma^{k_1}\delta^{l_1}$, $\alpha^{m_2}\beta^{n_2}\gamma^{k_2}\delta^{l_2}, \alpha^m\beta^n\gamma^k\delta^l$ has augmented matrix
\begin{align}\label{AugMat}
[A \vert \vec{b}] =
\left[\begin{array}{cccc|c}  1 & 1 & 1 & 1 & 2+\tfrac{4}{f}\\  
m_1 & n_1 & k_1 & l_1 & 2 \\
m_2 & n_2 & k_2 & l_2 & 2 \\
m & n & k & l & 2
\end{array}\right].
\end{align}
The above system is required to be consistent, namely rank of $[A \vert \vec{b}] =$ rank of $A$. If $A$ is invertible, then the solutions to angle values are rational. Therefore, for some angles being non-rational, we have rank of $A \le 3$. So $\det A = 0$. For $1 \le i \le 2$, when $m_i, n_i, k_i, l_i$ are fixed, we get two equations in terms of $m,n,k,l$ and $f$. We call them the non-rationality condition.

Note that we allow some of $m,n,k,l$ to be $0$ in the discussion involving \eqref{AugMat} and in determining the angle combinations. Only after the angle combinations are determined, we require $m,n,k,l \ge1$ in angle combinations. 

\begin{prop} If $\alpha\gamma\delta, \beta\gamma\delta$ are not vertices and $f\ge8$, then tilings by congruent almost equilateral quadrilaterals with non-rational angles are isolated earth map tilings $S1, S2$, and special tilings $QP_6,S4$.
\end{prop}

\begin{proof} Using each pair of vertices the list of Lemma \ref{PairsLem}, we set up $A$ in \eqref{AugMat} and determine $m,n,k,l$. We demonstrate the argument of solving the related system of linear Diophantine equations and inequalities in two cases. The other cases are determined by the same routine, which can be swiftly implemented in computer.   
 
\begin{case*}[Degree $3$ Pairs] 
Suppose $\alpha\delta^2, \beta\gamma^2$ are vertices. By $f>0$, we do row operations
\begin{align*}
[A \vert \vec{b}]  =
\left[\begin{array}{cccc|c}  
1 & 1 & 1 & 1 & 2+\tfrac{4}{f}\\  
1 & 0 & 0 & 2 & 2 \\
0 & 1 & 2 & 0 & 2 \\
m & n & k & l & 2
\end{array}\right] 
 \rightarrow
\left[\begin{array}{cccc|c}  
1 & 0 & 0 & 2 & 2 \\  
0 & 1 & 2 & 0 & 2 \\
0 & 0 & 1 & 1 & 2(1-\tfrac{2}{f}) \\
0 & 0 & 0 & \lambda & \mu
\end{array}\right],
\end{align*}
where $\lambda = 2m-2n+k-l $ and $\mu = 2(m+k-n-1)+\tfrac{4}{f}(2n-k)$.

The non-rationality condition implies $\lambda=\mu=0$, i.e., $2m - 2n + k  - l  = 0$ and $(n + 1 - m - k)f = 2(2n-k)$. By $f \ge 8$, the latter implies $2n-k\neq0$ if and only if $n + 1 - m - k \neq0$. In this case, we have $f=8 + \frac{2(4m - 2n + 3k-4)}{n + 1 - m - k} \ge8$. So there are three possibilities,
\begin{enumerate}[(1)]
\item $2n-k=0$, $n+1-m-k=0$, $2m-2n+k-l=0$;
\item $2n-k>0$, $n+1-m-k>0$, $4m-2n+3k-4 \ge 0$, $2m-2n + k - l=0$;
\item $2n-k<0$, $n+1-m-k<0$, $4m-2n+3k-4 \le 0$, $2m -2n + k - l=0$.
\end{enumerate}

The non-negative integer solutions to the first possibility are $(m,n,k,l)=(1,0,0,2), (0,1,2,0)$. The vertices are $\alpha\delta^2, \beta\gamma^2$. 

The non-negative integer solution to the second is $(m,n,k,l)=(m,m,0,0)$. The vertex is $\alpha^m\beta^m$. 

There is no non-negative integer solution to the third and hence no vertex. 

Therefore we get the set of admissible vertices
\begin{align*}
\AVC = \{ \alpha\delta^2, \beta\gamma^2, \alpha^m\beta^m \}.
\end{align*}

The arguments for the other pairs are analogous. The $\AVC$s are listed in the first half of Table \ref{NonRatAVCs} where the first two vertices in each row are one of the pairs from the list of Lemma \ref{PairsLem}.
\end{case*}

\begin{case*}[Degree $3,4$ Pairs] In this case, one of $\alpha^3, \alpha\beta^2, \alpha\gamma^2, \alpha\delta^2$ is the unique degree $3$ vertex. If $\alpha^3$ is a vertex, then Lemma \ref{a3Lem} implies $f\ge24$. If one of $\alpha\beta^2, \alpha\gamma^2, \alpha\delta^2$ is a vertex, then Lemma \ref{ab2Lem} implies $f\ge16$. 

Suppose $\alpha\beta^2, \gamma^2\delta^2$ are vertices. We have $f\ge16$. The non-rational condition implies $k-l=0$ and $(2- n - k)f = 4(2m - n)$. The latter and $f\ge16$ imply $2m-n\neq0$ if and only if $2- n - k\neq0$. In this case, we have $f = 16 + \frac{4(2m+3n+4k-8)}{2-n-k} \ge 16$. There are three possibilities,
\begin{enumerate}[(1)]
\item $2m-n=0$, $2-n-k=0$, $k-l=0$;
\item $2m > n$, $2-n-k>0$, $2m+3n+4k-8 \ge 0$, $k-l=0$;
\item $2m < n$, $2-n-k<0$, $2m+3n+4k-8 \le 0$, $k-l=0$.
\end{enumerate}

For each possibility, we obtain the vertices by solving the system of linear Diophantine equations and inequalities for the non-negative integers $m,n,k,l$. Therefore,
\begin{align*}
\AVC = \{ \alpha\beta^2, \gamma^2\delta^2, \alpha^{m}, \alpha^m\beta, \alpha^m\gamma\delta \}.
\end{align*}

The arguments for the other pairs are analogous. The $\AVC$s are listed in the second half of Table \ref{NonRatAVCs}. The first two vertices in each row are assumed to appear since they come from the list of Lemma \ref{PairsLem}. In fact, by Counting Lemma, all vertices in these $\AVC$s must appear, with the exceptions of $f=24, \AVC=\{ \alpha^3, \gamma^2\delta^2, \beta^4, \beta^2\gamma\delta \}$ and $f\ge16, \AVC=\{ \alpha\beta^2, \gamma^2\delta^2, \alpha^m, \alpha^m\beta, \alpha^m\gamma\delta \}$.

We remark that, by exchanging $\alpha \leftrightarrow \beta$ and $\gamma \leftrightarrow \delta$, the $\AVC$s $\{ \alpha^3, \beta\gamma^2, \alpha\beta\delta^2 \}$, $\{ \alpha^3, \beta\delta^2, \alpha\beta\gamma^2 \}$ are equivalent to $\{ \alpha\delta^2, \beta^3, \alpha\beta\gamma^2 \}, \{ \alpha\gamma^2, \beta^3, \alpha\beta\delta^2 \}$, which are special cases of $\{ \alpha\delta^2,  \alpha\beta\gamma^2, \beta^n \}, \{ \alpha\gamma^2,  \alpha\beta\delta^2, \beta^n \}$ respectively.

\begin{table}[h]
\begin{center}
\begin{minipage}[t]{.4\linewidth}
\bgroup
\def\arraystretch{1.5}
     \begin{tabular}[t]{ | c | c | c |}
	\hline
	 $f$ & $\AVC$  \\ \hhline{|==|}
	$12$  & $\{ \alpha^3,  \alpha\gamma^2, \beta^2\delta^2 \}$  \\
          \hline  
	$12$  & $\{ \alpha^3, \alpha\delta^2, \beta^2\gamma^2  \}$  \\
          \hline  
	$24$  & $\{ \alpha^3, \beta\gamma^2, \beta^2\delta^4 \}$  \\
          \hline  
	$24$  & $\{ \alpha^3,  \beta\delta^2, \beta^2\gamma^4 \}$  \\
          \hline   
	$\ge 8$ & $\{ \alpha^2\beta, \beta\delta^2, \gamma^k \}$  \\
          \hline  
	$\ge 8$  & $\{ \alpha\delta^2,  \beta\gamma^2, \alpha^m\beta^m \}$ \\ 
          \hline  
	$24$  & $\{ \alpha^3, \gamma^4, \beta^2\delta^2 \}$  \\
          \hline  
	$24$  & $\{ \alpha^3, \delta^4 , \beta^2\gamma^2 \}$  \\
          \hline  
	\end{tabular}
\egroup
 \end{minipage} 
\begin{minipage}[t]{.48\linewidth}
\bgroup
\def\arraystretch{1.5}
    \begin{tabular}[t]{ | c | c | }
	\hline
	$f$ & $\AVC$  \\ \hhline{|==|}
	$24$  & $\{ \alpha^3, \gamma^2\delta^2, \beta^4, \beta^2\gamma\delta \}$  \\
          \hline  
	$36$  & $\{ \alpha^3, \gamma^2\delta^2, \alpha\beta^3  \}$  \\
          \hline  
	$60$  & $\{ \alpha^3, \gamma^2\delta^2, \beta^5 \}$  \\
          \hline  
	$\ge 16$ & $\{ \alpha\beta^2, \gamma^2\delta^2, \alpha^m, \alpha^m\beta, \alpha^m\gamma\delta \}$  \\
          \hline  
	$\ge16$  & $\{ \alpha\gamma^2, \beta^2\delta^2, \alpha^m \}$  \\
          \hline  
	$\ge16$  & $\{ \alpha\delta^2, \beta^2\gamma^2, \alpha^m \}$  \\
          \hline  
	$\ge16$  & $\{  \alpha\gamma^2,  \alpha\beta\delta^2, \beta^n \}$  \\
          \hline  
	$\ge16$  & $\{  \alpha\delta^2,  \alpha\beta\gamma^2, \beta^n \}$  \\
          \hline  
	\end{tabular}
\egroup
 \end{minipage} 
\end{center}
\caption{Non-rational angles: $\AVC$s without $\alpha\gamma\delta$}
\label{NonRatAVCs}
\end{table}
\end{case*}

\newpart{$\AVC$s without tiling}

We first discuss the $\AVC$s from Table \ref{NonRatAVCs} that do not constitute tilings.

As explained in the AAD discussion in Section \ref{SubsecConcept}, $\alpha^3$ being a vertex implies that $\beta\delta\cdots$ is a vertex. Therefore $\AVC=\{  \alpha^3, \alpha\delta^2, \beta^2\gamma^2  \}, \{ \alpha^3,  \alpha\beta^3, \gamma^2\delta^2  \}$,  $\{ \alpha^3, \beta^2\gamma^2, \delta^4 \}, \{ \alpha^3,  \gamma^2\delta^2, \beta^5 \}$ have no tilings.

In $\AVC=\{ \alpha^3, \alpha\gamma^2, \beta^2\delta^2 \}$, all three vertices appear and the angle sum system implies $\alpha=\gamma$ and $\beta+\delta=\pi$ whereby $\delta \neq \pi$, contradicting Lemma \ref{TriQuadLem}. 

In $\AVC=\{ \alpha^3, \beta\gamma^2, \beta^2\delta^4 \}$, we know that $\beta^2\delta^4$ is a vertex. By no $\alpha\gamma\cdots$, $\gamma\vert\gamma\cdots$, we know that $\beta^{\alpha}\vert^{\gamma}\beta\cdots$, $\beta^{\gamma}\vert^{\gamma}\beta\cdots$ are not vertices. Then $\beta \vert \beta = \beta^{\alpha} \vert^{\alpha} \beta$. By no $\alpha\gamma\cdots$, we know that $\bvert \, \delta^{\alpha} \vert \beta \vert^{\alpha} \delta \, \bvert \cdots$ is not a vertex. By $\beta \vert \beta = \beta^{\alpha} \vert^{\alpha} \beta$, the vertex $\beta^2\delta^4$ has AAD $\bvert \, \delta^{\alpha} \vert^{\gamma} \beta^{\alpha} \vert^{\alpha} \beta^{\gamma}  \vert^{\alpha} \delta \, \bvert$. It implies $\alpha\gamma\cdots$, a contradiction.

In $\AVC=\{ \alpha^3, \beta\delta^2, \beta^2\gamma^4 \}$, the AAD of $\beta\delta^2$ is $\bvert \, \delta^{\alpha} \vert^{\alpha} \beta^{\gamma} \vert^{\alpha} \delta \, \bvert$. It implies $\alpha\gamma\cdots$, a contradiction.

In $\AVC = \{ \alpha^3, \beta^4, \gamma^2\delta^2, \beta^2\gamma\delta \}$, we know $\alpha\cdots= \alpha^3$ is a vertex. Then the AAD of $\alpha^3$ implies that $\beta\delta\cdots=\beta^2\gamma\delta$ is a vertex. By no $\alpha\beta\cdots, \alpha\gamma\cdots$, the AAD of $\beta^2\gamma\delta$ is $\bvert \, \gamma^{\beta} \vert^{\gamma} \beta^{\alpha} \vert \beta \vert^{\alpha} \delta \, \bvert$. It implies $\alpha\gamma\cdots$, a contradiction.

In $\AVC=\{  \alpha^2\beta, \beta\delta^2, \gamma^k \}$, the vertex $\beta\delta^2$ has an AAD $\bvert \, \delta^{\alpha} \vert^{\alpha} \beta^{\gamma} \vert^{\alpha} \delta \, \bvert$. This implies $\alpha\gamma\cdots$, a contradiction.

In $\AVC=\{ \alpha\delta^2, \beta^2\gamma^2, \alpha^m \}$, the vertex $\alpha\delta^2$ has an AAD $\bvert \, \delta^{\alpha} \vert^{\beta} \alpha^{\delta} \vert^{\alpha} \delta \, \bvert$. This implies $\alpha\beta\cdots$, a contradiction. 

In $\AVC=\{ \alpha\delta^2, \beta\gamma^2, \alpha^m\beta^m \}$, we have $\gamma\cdots=\beta\gamma^2$ and $\delta\cdots=\alpha\delta^2$. Meanwhile, $\alpha^{m}\beta^{m}$ has degree $\ge 4$, i.e, $m\ge2$. So $\alpha, \beta<\pi$. By $\alpha\delta^2, \beta\gamma^2$, we have $\gamma,\delta<\pi$. Then the tile is convex. The vertex angle sums $\alpha + 2\delta = 2\pi = \beta + 2\gamma$ imply $\gamma - \frac{1}{2}\alpha=\delta - \frac{1}{2}\beta$. Then \eqref{Coolsaet-Id} implies $\sin( \gamma - \frac{1}{2}\alpha )= 0$ or $\sin \frac{1}{2}\beta = \sin \frac{1}{2}\alpha$. By convexity and $\gamma - \frac{1}{2}\alpha =\delta - \frac{1}{2}\beta$, the former gives $\delta - \tfrac{1}{2}\beta = \gamma - \tfrac{1}{2}\alpha = 0$. Then $\alpha=2\gamma$ and $\beta=2\delta$. By $\alpha\gamma^2$ and $\beta\delta^2$, we get $4\pi = \alpha + 2\gamma + \beta+ 2\delta = 4(\gamma + \delta)$. This implies $\gamma + \delta = \pi$ and $\alpha + \beta = 2\pi$, contradicting $\alpha,\beta < \pi$. So $\sin \frac{1}{2}\beta = \sin \frac{1}{2}\alpha$. For $0<\alpha,\beta<\pi$, we have $\tfrac{1}{2}\beta = \tfrac{1}{2}\alpha$ or $\pi - \tfrac{1}{2}\alpha$. Since $\alpha \neq \beta$, we get $\frac{1}{2}\beta = \pi - \frac{1}{2}\alpha$ which also implies $\alpha + \beta = 2\pi$, the same contradiction.  

\newpart{$\AVC$s with tilings}

In $\AVC =\{ \alpha\gamma^2,  \alpha\beta\delta^2, \beta^n \}$, we shall see in Proposition \ref{Geom-AlGa2-AlBeDe2} that tilings are only geometrically realisable when $f=16$. In that case, the tilings are $S3, S^{\prime}3$ and every angle is rational, a contradiction. 

In $\AVC=\{ \alpha^3, \beta^2\delta^2, \gamma^4 \}$, the tiling is $QP_6$, given by simple quadrilateral subdivision of the cube in Figure \ref{Tiling-QP6}.

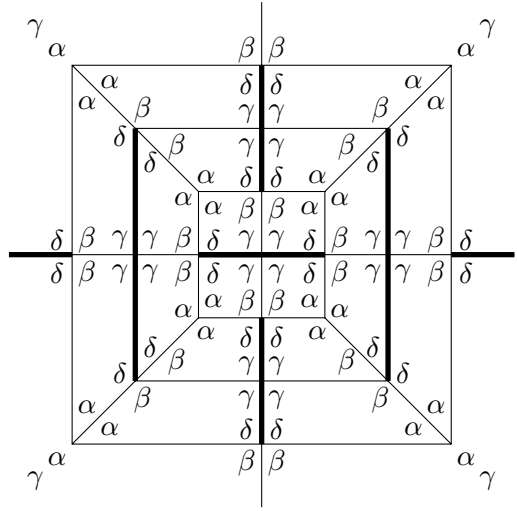
\begin{figure}[htp]
\centering
\begin{tikzpicture}[>=latex, scale=0.7]
\foreach \a in {0,...,3}
	\draw[rotate=90*\a]
	(0,0) -- (4.8,0)
	(3.6,0) -- (3.6,3.6) -- (0,3.6) 
	(1.2,0) -- (1.2,1.2) -- (0,1.2)
	(2.4,0) -- (2.4,2.4) -- (0,2.4)
	(1.2,1.2) -- (3.6,3.6);

\foreach \c in {1,3}
\draw[line width=2, rotate=90*\c]
	(0,1.2) -- (0,-1.2)
	(2.4,2.4) -- (-2.4,2.4)
	(1.2,0) -- (3.6,0)
	(0,3.6) -- (0,4.8);
			
			
\node[shift={(0.2,0.2)}] at (0,0) {\small $\gamma$};
\node[shift={(-0.2,0.2)}] at (0,0) {\small $\gamma$};
\node[shift={(-0.2,-0.25)}] at (0,0) {\small $\gamma$};
\node[shift={(0.2,-0.25)}] at (0,0) {\small $\gamma$};

			
			
\node[shift={(0.2,0.2)}] at (1.2,0) {\small $\beta$};
\node[shift={(-0.2,0.2)}] at (1.2,0) {\small $\delta$};
\node[shift={(-0.2,-0.25)}] at (1.2,0) {\small $\delta$};
\node[shift={(0.2,-0.25)}] at (1.2,0) {\small $\beta$};
			
\node[shift={(0.2,-0.1)}] at (1.2,1.2) {\small $\alpha$};
\node[shift={(-0.1,0.2)}] at (1.2,1.2) {\small $\alpha$};
\node[shift={(-0.2,-0.22)}] at (1.2,1.2) {\small $\alpha$};
			
\node[shift={(0.2,0.2)}] at (0,1.2) {\small $\delta$};
\node[shift={(-0.2,0.2)}] at (0,1.2) {\small $\delta$};
\node[shift={(-0.2,-0.25)}] at (0,1.2) {\small $\beta$};
\node[shift={(0.2,-0.25)}] at (0,1.2) {\small $\beta$};
			
\node[shift={(0.2,-0.22)}] at (-1.2,1.2) {\small $\alpha$};
\node[shift={(0.1,0.2)}] at (-1.2,1.2) {\small $\alpha$};
\node[shift={(-0.2,-0.1)}] at (-1.2,1.2) {\small $\alpha$};
			
\node[shift={(0.2,0.2)}] at (-1.2,0) {\small $\delta$};
\node[shift={(-0.2,0.2)}] at (-1.2,0) {\small $\beta$};
\node[shift={(-0.2,-0.25)}] at (-1.2,0) {\small $\beta$};
\node[shift={(0.2,-0.25)}] at (-1.2,0) {\small $\delta$};
			
\node[shift={(0.2,0.22)}] at (-1.2,-1.2) {\small $\alpha$};
\node[shift={(0.1,-0.2)}] at (-1.2,-1.2) {\small $\alpha$};
\node[shift={(-0.2,0.1)}] at (-1.2,-1.2) {\small $\alpha$};
			
\node[shift={(0.2,0.2)}] at (0,-1.2) {\small $\beta$};
\node[shift={(-0.2,0.2)}] at (0,-1.2) {\small $\beta$};
\node[shift={(-0.2,-0.25)}] at (0,-1.2) {\small $\delta$};
\node[shift={(0.2,-0.25)}] at (0,-1.2) {\small $\delta$};
			
\node[shift={(-0.2,0.22)}] at (1.2,-1.2) {\small $\alpha$};
\node[shift={(-0.1,-0.2)}] at (1.2,-1.2) {\small $\alpha$};
\node[shift={(0.2,0.1)}] at (1.2,-1.2) {\small $\alpha$};

			
\node[shift={(0.2,0.2)}] at (2.4,0) {\small $\gamma$};
\node[shift={(-0.2,0.2)}] at (2.4,0) {\small $\gamma$};
\node[shift={(-0.2,-0.25)}] at (2.4,0) {\small $\gamma$};
\node[shift={(0.2,-0.25)}] at (2.4,0) {\small $\gamma$};

\node[shift={(0.2,-0.1)}] at (2.4,2.4) {\small $\delta$};
\node[shift={(-0.1,0.25)}] at (2.4,2.4) {\small $\beta$};
\node[shift={(-0.55,-0.25)}] at (2.4,2.4) {\small $\beta$};	
\node[shift={(-0.2,-0.45)}] at (2.4,2.4) {\small $\delta$};	
			
\node[shift={(0.2,0.2)}] at (0,2.4) {\small $\gamma$};
\node[shift={(-0.2,0.2)}] at (0,2.4) {\small $\gamma$};
\node[shift={(-0.2,-0.25)}] at (0,2.4) {\small $\gamma$};
\node[shift={(0.2,-0.25)}] at (0,2.4) {\small $\gamma$};
			
\node[shift={(0.55,-0.25)}] at (-2.4,2.4) {\small $\beta$};	
\node[shift={(0.1,0.25)}] at (-2.4,2.4) {\small $\beta$};
\node[shift={(0.2,-0.45)}] at (-2.4,2.4) {\small $\delta$};
\node[shift={(-0.2,-0.1)}] at (-2.4,2.4) {\small $\delta$};
			
\node[shift={(0.2,0.2)}] at (-2.4,0) {\small $\gamma$};
\node[shift={(-0.2,0.2)}] at (-2.4,0) {\small $\gamma$};
\node[shift={(-0.2,-0.25)}] at (-2.4,0) {\small $\gamma$};
\node[shift={(0.2,-0.25)}] at (-2.4,0) {\small $\gamma$};
			
\node[shift={(0.55,0.25)}] at (-2.4,-2.4) {\small $\beta$};
\node[shift={(0.2,0.45)}] at (-2.4,-2.4) {\small $\delta$};
\node[shift={(-0.2,0.1)}] at (-2.4,-2.4) {\small $\delta$};	
\node[shift={(0.1,-0.25)}] at (-2.4,-2.4) {\small $\beta$};
			
\node[shift={(0.2,0.2)}] at (0,-2.4) {\small $\gamma$};
\node[shift={(-0.2,0.2)}] at (0,-2.4) {\small $\gamma$};
\node[shift={(-0.2,-0.25)}] at (0,-2.4) {\small $\gamma$};
\node[shift={(0.2,-0.25)}] at (0,-2.4) {\small $\gamma$};
			
\node[shift={(0.2,0.1)}] at (2.4,-2.4) {\small $\delta$};	
\node[shift={(-0.2,0.45)}] at (2.4,-2.4) {\small $\delta$};
\node[shift={(-0.55,0.25)}] at (2.4,-2.4) {\small $\beta$};
\node[shift={(-0.1,-0.25)}] at (2.4,-2.4) {\small $\beta$};
			
			
\node[shift={(0.2,0.2)}] at (3.6,0) {\small $\delta$};
\node[shift={(-0.2,0.2)}] at (3.6,0) {\small $\beta$};
\node[shift={(-0.2,-0.25)}] at (3.6,0) {\small $\beta$};
\node[shift={(0.2,-0.25)}] at (3.6,0) {\small $\delta$};
			
\node[shift={(0.2,0.2)}] at (3.6,3.6) {\small $\alpha$};
\node[shift={(-0.5,-0.22)}] at (3.6,3.6) {\small $\alpha$};
\node[shift={(-0.2,-0.5)}] at (3.6,3.6) {\small $\alpha$};
			
\node[shift={(0.2,0.2)}] at (0,3.6) {\small $\beta$};
\node[shift={(-0.2,0.2)}] at (0,3.6) {\small $\beta$};
\node[shift={(-0.2,-0.25)}] at (0,3.6) {\small $\delta$};
\node[shift={(0.2,-0.25)}] at (0,3.6) {\small $\delta$};
			
\node[shift={(-0.2,0.2)}] at (-3.6,3.6) {\small $\alpha$};
\node[shift={(0.5,-0.22)}] at (-3.6,3.6) {\small $\alpha$};
\node[shift={(0.2,-0.5)}] at (-3.6,3.6) {\small $\alpha$};
			
\node[shift={(0.2,0.2)}] at (-3.6,0) {\small $\beta$};
\node[shift={(-0.2,0.2)}] at (-3.6,0) {\small $\delta$};
\node[shift={(-0.2,-0.25)}] at (-3.6,0) {\small $\delta$};
\node[shift={(0.2,-0.25)}] at (-3.6,0) {\small $\beta$};
			
\node[shift={(-0.2,-0.2)}] at (-3.6,-3.6) {\small $\alpha$};
\node[shift={(0.5,0.2)}] at (-3.6,-3.6) {\small $\alpha$};
\node[shift={(0.2,0.5)}] at (-3.6,-3.6) {\small $\alpha$};
			
\node[shift={(0.2,0.2)}] at (0,-3.6) {\small $\delta$};
\node[shift={(-0.2,0.2)}] at (0,-3.6) {\small $\delta$};
\node[shift={(-0.2,-0.25)}] at (0,-3.6) {\small $\beta$};
\node[shift={(0.2,-0.25)}] at (0,-3.6) {\small $\beta$};
			
\node[shift={(0.2,-0.2)}] at (3.6,-3.6) {\small $\alpha$};
\node[shift={(-0.5,0.2)}] at (3.6,-3.6) {\small $\alpha$};
\node[shift={(-0.2,0.5)}] at (3.6,-3.6) {\small $\alpha$};
			
			
\node[shift={(45:4.25)}] at (0,0) {\small $\gamma$};
\node[shift={(135:4.25)}] at (0,0) {\small $\gamma$};
\node[shift={(-45:4.25)}] at (0,0) {\small $\gamma$};
\node[shift={(-135:4.25)}] at (0,0) {\small $\gamma$};

\end{tikzpicture}
\caption{Tiling $QP_6$}
\label{Tiling-QP6}
\end{figure}

In $\AVC = \{ \alpha\beta^2, \gamma^2\delta^2, \alpha^m, \alpha^m\beta, \alpha^m\gamma\delta \}$, we have $\beta^2\cdots = \alpha\beta^2$ and no $\beta\gamma\cdots$, $\beta\delta\cdots$. Since the $\AVC$ assumes $\alpha\beta^2$ as the unique degree $3$ vertex, the vertices $\alpha^m, \alpha^m\beta,\alpha^m\gamma\delta$ have degree $\ge4$. The argument in the second picture of Figure \ref{AAD-alal-bebe-gaga-bebe} implies no $\alpha^{\beta}\vert^{\beta}\alpha\cdots$. By no $\beta\delta\cdots, \alpha^{\beta}\vert^{\beta}\alpha\cdots$, the AAD of $\alpha\vert\alpha$ is $\alpha^{\delta}\vert^{\delta}\alpha$. This means no $\alpha\alpha\alpha$. So $m=2$ in $\alpha^m\gamma\delta$ and $\alpha^m, \alpha^m\beta$ are not vertices. The $\AVC$ is reduced to 
 \begin{align}\label{AVC-ab2-a2gd-g2d2}
f=16, \quad \AVC = \{ \alpha\beta^2, \alpha^2\gamma\delta, \gamma^2\delta^2  \}.
\end{align}
By $\AVC$ \eqref{AVC-ab2-a2gd-g2d2}, we construct $S4$ in the second picture of Figure \ref{Tiling-S2S4}. As $2\alpha=\gamma+\delta = \pi$, $S4$ is a subdivision of a non-edge-to-edge parallelogram tiling in Figure \ref{SubdivParallelogram}. The right angles are $\alpha$. The non-indicated parallelogram angles are $\beta=\frac{3}{4}\pi$.

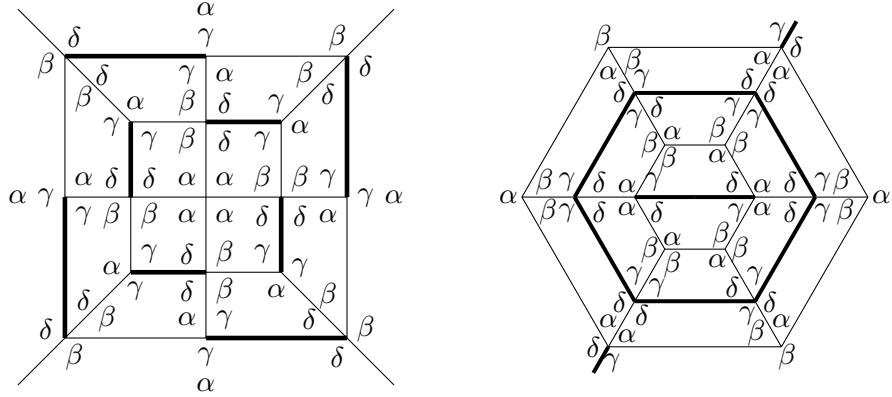
\begin{figure}[htp]
\centering
\begin{tikzpicture}

\begin{scope}[scale=1.25] 

\foreach \a in {0,1,2,3}
{
\begin{scope}[rotate=90*\a]

\draw
	(0,0) -- (1.5,0) -- (1.5,-1.5)
	(0.8,0) -- (0.8,0.8) -- (2,2);

\draw[line width=1.75]
	(0,0.8) -- (0.8,0.8)
	(1.5,0) -- (1.5,1.5);
	
\node at (0.2,0.2) {\small $\alpha$};
\node at (0.6,0.2) {\small $\beta$};
\node at (0.2,0.6) {\small $\delta$};
\node at (0.6,0.6) {\small $\gamma$};

\node at (1,0.75) {\small $\alpha$};
\node at (1,0.2) {\small $\beta$};
\node at (1.3,1.1) {\small $\delta$};
\node at (1.3,0.2) {\small $\gamma$};

\node at (1,-0.75) {\small $\gamma$};
\node at (1,-0.2) {\small $\delta$};
\node at (1.3,-1.05) {\small $\beta$};
\node at (1.3,-0.2) {\small $\alpha$};

\node at (2,0) {\small $\alpha$};
\node at (1.7,-1.4) {\small $\beta$};
\node at (1.7,1.4) {\small $\delta$};
\node at (1.7,0) {\small $\gamma$};

\end{scope}
}

\end{scope}

\begin{scope}[xshift=6.5cm]

\foreach \a in {0,...,5}
{
\begin{scope}[rotate=60*\a]

\draw 
	(0:0.8) -- (60:0.8)
	(0:2.3) -- (60:2.3)
	(0.8,0) -- (2.3,0);

\draw[line width=1.5]
	(0:1.6) -- (60:1.6);

\end{scope}
}

\foreach \a in {1,-1}
{
\begin{scope}[scale=\a]

\draw[line width=1.5]
	(0,0) -- (0.8,0)
	(60:2.3) -- (60:2.7);
	
\node at (0.3,0.55) {\small $\alpha$};
\node at (-0.35,0.5) {\small $\beta$};
\node at (0.5,0.2) {\small $\delta$};
\node at (-0.55,0.2) {\small $\gamma$};

\node at (-0.3,0.85) {\small $\alpha$};
\node at (0.3,0.9) {\small $\beta$};
\node at (-0.5,1.2) {\small $\delta$};
\node at (0.5,1.2) {\small $\gamma$};

\node at (0.9,0.15) {\small $\alpha$};
\node at (0.6,0.7) {\small $\beta$};
\node at (1.27,0.2) {\small $\delta$};
\node at (0.8,1.05) {\small $\gamma$};

\node at (0.9,-0.15) {\small $\alpha$};
\node at (0.6,-0.7) {\small $\beta$};
\node at (1.27,-0.2) {\small $\delta$};
\node at (0.8,-1.05) {\small $\gamma$};

\node at (0.9,1.85) {\small $\alpha$};
\node at (-0.82,1.8) {\small $\beta$};
\node at (0.7,1.57) {\small $\delta$};
\node at (-0.7,1.57) {\small $\gamma$};

\node at (1.15,1.65) {\small $\alpha$};
\node at (1.95,0.2) {\small $\beta$};
\node at (1,1.4) {\small $\delta$};
\node at (1.7,0.2) {\small $\gamma$};

\node at (1.15,-1.65) {\small $\alpha$};
\node at (2,-0.2) {\small $\beta$};
\node at (1,-1.4) {\small $\delta$};
\node at (1.7,-0.2) {\small $\gamma$};

\node at (2.48,0) {\small $\alpha$};
\node at (-60:2.45) {\small $\beta$};
\node at (1.35,2) {\small $\delta$};
\node at (1.1,2.2) {\small $\gamma$};

\end{scope}
}

\end{scope}

\end{tikzpicture}
\caption{Tilings $S2$ and $S4$}
\label{Tiling-S2S4}
\end{figure}

\begin{figure}[htp]
\centering
\begin{tikzpicture}[scale=1]

\draw[gray!50]
(-0.3,1.4) -- (-0.3,-0.5)
(1.1,-0.8) -- (0.5,-0.2) -- (-1.1,-0.8)
(0.5,1.1) -- (0.5,-0.2)
(-0.5,-1.4) -- ++(0.16,0.06) -- ++(-0.12,0.12) -- ++(-0.16,-0.06)
(0.5,-0.2) -- ++(-0.16,-0.06) -- ++(0.12,-0.12) -- ++(0.16,0.06);

\draw[gray!50, line width=1.2]
(0.3,-1.1) -- (-0.3,-0.5)
(1.2,0) -- (0.5,0.45) -- (-0.3,0.45) -- (-1.2,0);

\draw
(-1.3,0.8) -- (-0.3,1.4) -- (1.3,0.8) -- (0.3,0.2) -- cycle
(-1.1,-0.8) -- (-0.5,-1.4) -- (1.1,-0.8)
(0.3,0.2) -- (0.3,-1.1)
(-0.5,0.5) -- (-0.5,-1.4)
(-1.3,0.8) -- (-1.1,-0.8)
(1.3,0.8) -- (1.1,-0.8)
(0.3,0.2) -- ++(0.15,0.09) -- ++(-0.16,0.06) -- ++(-0.15,-0.09)
(-0.3,1.4) -- ++(-0.15,-0.09) -- ++(0.16,-0.06) -- ++(0.15,0.09)
(1.3,0.8) -- ++(-0.15,-0.09) -- ++(-0.03,-0.24) -- ++(0.15,0.09)
(-0.5,0.3) -- ++(0.16,-0.06) -- ++(0,0.2)
(-0.5,0.3) -- ++(-0.16,0.06) -- ++(0,0.2)
(0.3,-0.9) -- ++(0.16,0.06) -- ++(0,-0.2)
(0.3,-0.9) -- ++(-0.16,-0.06) -- ++(0,-0.2)
(-1.1,-0.8) -- ++(0.12,-0.12) -- ++(-0.03,0.24) -- ++(-0.12,0.12);

\draw[line width=1.2]
(0.5,1.1) -- (-0.5,0.5)
(1.2,0) -- (0.3,-0.45) -- (-0.5,-0.45) -- (-1.2,0);

\end{tikzpicture}
\caption{$S4$ as a subdivision of a non-edge-to-edge parallelogram tiling.}
\label{SubdivParallelogram}
\end{figure}
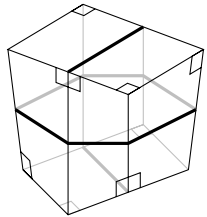

In $\AVC=\{ \alpha\gamma^2, \beta^2\delta^2, \alpha^m \}$, we shall see in Proposition \ref{Geom-AlG2-Be2De2} that tilings are only geometrically realisable when $f=16$ and $\alpha^m=\alpha^4$. We use the $\AVC$ to construct $S2$ in the first picture of Figure \ref{Tiling-S2S4}.  

In $\AVC=\{  \alpha\delta^2,  \alpha\beta\gamma^2, \beta^n \}$, we shall see in Proposition \ref{Geom-AlDe2-AlBeGa2} that tilings are only geometrically realisable when $f=12,16$. We use the $\AVC$ to construct $S_{12}1, S_{16}1$ in Figure \ref{Tilings-S1}.

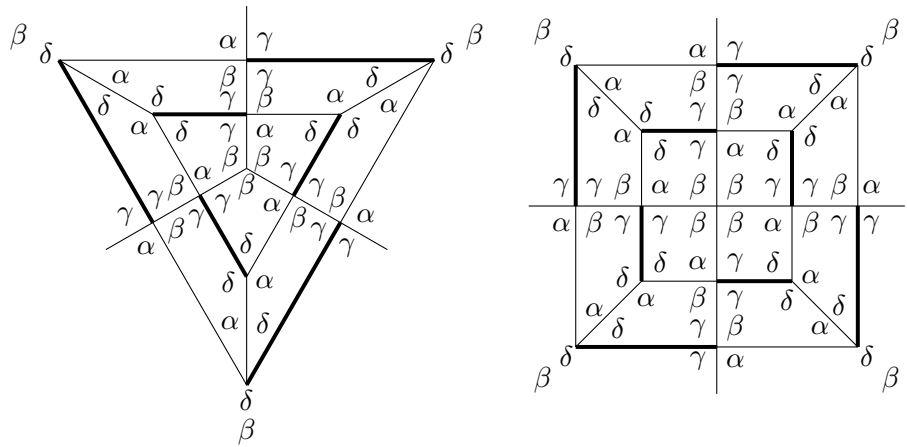
\begin{figure}[htp]
\centering
\begin{tikzpicture}[>=latex,scale=1]

\begin{scope}[yshift=0.5cm, scale=0.6]

\foreach \a in {0,...,2}
\draw[rotate=120*\a]
	(0,0)-- (-30:2.4) -- (30:4.8) -- (90:2.4)
	(-30:1.2) -- (30:2.4) -- (90:1.2)
	(-30:2.4) -- (-30:3.6)
	(30:2.4) -- (30:4.8);
			
\foreach \b in {0,...,2}
	\draw[rotate=120*\b, line width=1.5]
	(-30:1.2) -- (30:2.4)
	(90:2.4) -- (30:4.8);

			
			\node[shift={(30:0.25)}] at (0,0) {\small $\beta$};
			\node[shift={(150:0.25)}] at (0,0) {\small $\beta$};
			\node[shift={(270:0.25)}] at (0,0) {\small $\beta$};
			
			
			\node[shift={(105:0.25)}] at (30:2.4) {\small $\alpha$};
			\node[shift={(-45:0.25)}] at (30:2.4) {\small $\delta$};
			\node[shift={(210:0.45)}] at (30:2.4) {\small $\delta$};
			
			\node[shift={(45:0.35)}] at (90:1.1) {\small $\beta$};
			\node[shift={(135:0.35)}] at (90:1.1) {\small $\gamma$};
			\node[shift={(225:0.35)}] at (90:1.2) {\small $\gamma$};
			\node[shift={(315:0.35)}] at (90:1.25) {\small $\alpha$};

			\node[shift={(225:0.25)}] at (150:2.4) {\small $\alpha$};
			\node[shift={(75:0.25)}] at (150:2.4) {\small $\delta$};
			\node[shift={(330:0.45)}] at (150:2.4) {\small $\delta$};
			
			\node[shift={(165:0.35)}] at (210:1.15) {\small $\beta$};
			\node[shift={(255:0.35)}] at (210:1.1) {\small $\gamma$};
			\node[shift={(345:0.35)}] at (210:1.25) {\small $\gamma$};
			\node[shift={(75:0.35)}] at (210:1.3) {\small $\alpha$};
			
			\node[shift={(345:0.25)}] at (270:2.4) {\small $\alpha$};
			\node[shift={(195:0.25)}] at (270:2.4) {\small $\delta$};
			\node[shift={(90:0.45)}] at (270:2.4) {\small $\delta$};
			
			\node[shift={(285:0.35)}] at (330:1.15) {\small $\beta$};
			\node[shift={(15:0.35)}] at (330:1.15) {\small $\gamma$};
			\node[shift={(105:0.35)}] at (330:1.25) {\small $\gamma$};
			\node[shift={(195:0.35)}] at (330:1.3) {\small $\alpha$};
			
			
			\node[shift={(30:0.2)}] at (30:4.8) {\small $\delta$};
			\node[shift={(195:0.85)}] at (30:4.8) {\small $\delta$};
			\node[shift={(225:0.85)}] at (30:4.8) {\small $\alpha$};
			
			\node[shift={(45:0.35)}] at (90:2.4) {\small $\gamma$};
			\node[shift={(135:0.35)}] at (90:2.4) {\small $\alpha$};
			\node[shift={(225:0.35)}] at (90:2.4) {\small $\beta$};
			\node[shift={(315:0.35)}] at (90:2.4) {\small $\gamma$};
			
			\node[shift={(150:0.2)}] at (150:4.8) {\small $\delta$};
			\node[shift={(315:0.85)}] at (150:4.8) {\small $\delta$};
			\node[shift={(345:0.85)}] at (150:4.8) {\small $\alpha$};
			
			\node[shift={(165:0.35)}] at (210:2.45) {\small $\gamma$};
			\node[shift={(255:0.35)}] at (210:2.4) {\small $\alpha$};
			\node[shift={(345:0.35)}] at (210:2.45) {\small $\beta$};
			\node[shift={(75:0.35)}] at (210:2.5) {\small $\gamma$};
			
			\node[shift={(270:0.2)}] at (270:4.8) {\small $\delta$};
			\node[shift={(75:0.85)}] at (270:4.8) {\small $\delta$};
			\node[shift={(105:0.85)}] at (270:4.8) {\small $\alpha$};

			\node[shift={(285:0.35)}] at (330:2.4) {\small $\gamma$};
			\node[shift={(15:0.35)}] at (330:2.4) {\small $\alpha$};
			\node[shift={(105:0.35)}] at (330:2.5) {\small $\beta$};
			\node[shift={(195:0.35)}] at (330:2.5) {\small $\gamma$};

			
			\node[shift={(30:3.5)}] at (0,0) {\small $\beta$};
			\node[shift={(150:3.5)}] at (0,0) {\small $\beta$};
			\node[shift={(270:3.5)}] at (0,0) {\small $\beta$};

\end{scope}

\begin{scope}[xshift=6.25cm, scale=1.25]

\foreach \a in {0,1,2,3}
{
\begin{scope}[rotate=90*\a]

\draw
	(0,0) -- (1.5,0) -- (1.5,1.5)
	(0,0.8) -- (0.8,0.8) -- (1.5,1.5)
	(0,1.5) -- (0,2)
;

\draw[line width=1.5]
	(0,0.8) -- (-0.8,0.8)
	(-1.5,0) -- (-1.5,1.5);
	
\node at (0.2,0.2) {\small $\beta$};
\node at (0.6,0.2) {\small $\gamma$};
\node at (0.2,0.6) {\small $\alpha$};
\node at (0.6,0.6) {\small $\delta$};

\node at (1,0.75) {\small $\delta$};
\node at (1,0.2) {\small $\gamma$};
\node at (1.3,1.1) {\small $\alpha$};
\node at (1.3,0.2) {\small $\beta$};

\node at (1,-0.75) {\small $\alpha$};
\node at (1,-0.2) {\small $\beta$};
\node at (1.3,-1.05) {\small $\delta$};
\node at (1.3,-0.2) {\small $\gamma$};

\node at (1.6,1.6) {\small $\delta$};
\node at (0.2,1.65) {\small $\gamma$};
\node at (1.65,0.2) {\small $\alpha$};
\node at (1.85,1.85) {\small $\beta$};

\end{scope}
}

\end{scope}

\end{tikzpicture}
\caption{Tilings $S1 = S_{12}1, S_{16}1$}
\label{Tilings-S1}
\end{figure}

This completes the proof.
\end{proof}

\begin{prop} If one of $\alpha\gamma\delta, \beta\gamma\delta$ is a vertex and $f\ge8$, then the tilings by congruent almost equilateral quadrilaterals with non-rational angles are earth map tilings $E$ and their flip modifications $E^{\prime}, E^{\prime\prime}$. 
\end{prop}

\begin{proof} Up to symmetry, we may assume $\alpha\gamma\delta$ is a vertex. By $\alpha \neq \beta$, this implies $\beta\gamma\delta$ is not a vertex. The angle sum system gives
\begin{align*}
\alpha + \gamma + \delta = 2\pi, \quad
\beta = \tfrac{4}{f}\pi.
\end{align*}
By $\alpha\gamma\delta$, at least two of $\alpha, \gamma, \delta$ are non-rational. The key fact we use is the following: suppose $\varphi, \psi$ are non-rational angles and $\varphi + \psi$ is a rational angle, then $u\varphi+v\psi = q\pi$ for some rational numbers $u,v,q$ implies $u=v$.

If $\gamma < \delta$, then $\alpha\gamma\delta$ and Parity Lemma imply $\alpha\delta \cdots = \alpha\gamma\delta$. Similarly, $\gamma>\delta$ implies $\alpha\gamma\cdots = \alpha\gamma\delta$. We have either $\alpha\gamma\cdots = \alpha\gamma\delta$ or $\alpha\delta\cdots = \alpha\gamma\delta$. Meanwhile, Lemma \ref{AlGaDe-Al2} implies that $\alpha^2\cdots$ is a $\hat{b}$-vertex. At least two of $\alpha, \gamma, \delta$ are non-rational. So we divide the discussion into the following cases. 

\begin{case*}[$\gamma, \delta$ are non-rational, $\alpha$ is rational]
By $\alpha\gamma\delta$, we know that $\gamma + \delta$ is a rational angle. As $\gamma,\delta$ are non-rational, at each vertex, the number of $\gamma$ equals to the number of $\delta$. By Parity Lemma, the $b$-vertices are $\alpha\gamma\delta, \beta^n\gamma^k\delta^k, \gamma^k\delta^k$. As $\alpha,\beta$ are rational, the $\hat{b}$-vertices are $\alpha^m, \alpha^m\beta^n, \beta^n$. Therefore
\begin{align}\label{NonRat-algade-fullavc}
\AVC = \{ \alpha\gamma\delta, \alpha^m, \alpha^m\beta^n, \beta^n, \beta^n\gamma^k\delta^k, \gamma^k\delta^k \}.
\end{align}
\end{case*}

\begin{case*}[$\alpha, \gamma$ are non-rational, $\delta$ is rational] By $\alpha\gamma\delta$, we know that $\alpha + \gamma$ is a rational angle. As $\beta, \delta$ are rational and $\alpha, \gamma$ are non-rational, at each vertex, the number of $\alpha$ equals to the number of $\gamma$. Since $\alpha^2\cdots$ can only be $\hat{b}$-vertex, this implies that $\alpha^2\cdots$ is not a vertex. Then $\gamma\cdots=\alpha\gamma\cdots$ with no $\alpha, \gamma$ in the remainder. By $\alpha\gamma\delta$ and Parity Lemma, we have $\gamma\cdots=\alpha\gamma\delta$. Counting Lemma implies that the only other vertex is $\beta^n$ where $n=\frac{f}{2}$. Therefore
\begin{align}\label{AlGaDe-Ben-AVC}
\AVC = \{ \alpha\gamma\delta, \beta^{\frac{f}{2}} \}.
\end{align}
\end{case*}

\begin{case*}[$\alpha, \delta$ are non-rational, $\gamma$ is rational] The previous argument relies only on the parity of $\gamma, \delta$. Exchanging $\gamma \leftrightarrow \delta$ above, we get $\AVC$ \eqref{AlGaDe-Ben-AVC}.
\end{case*}

\begin{case*}[$\alpha, \gamma, \delta$ are non-rational] As $\alpha, \gamma, \delta$ are non-rational and $\beta$ is rational, $\alpha^m, \gamma^k, \delta^l, \alpha^m\beta^n, \beta^n\gamma^k, \beta^n\delta^l$ are not vertices. Since $\alpha^2\cdots$ is a $\hat{b}$-vertex, by no $\alpha^m, \alpha^m\beta^n$, this implies that $\alpha^2\cdots$ is not a vertex. 

Suppose $\gamma>\delta$. We have $\alpha\gamma\cdots = \alpha\gamma\delta$. Then $\alpha\cdots = \alpha\gamma\delta, \alpha\delta^l, \alpha\beta^n\delta^l$. Counting Lemma on $\alpha,\delta$ and Parity Lemma imply that $\alpha\delta^l, \alpha\beta^n\delta^l$ are not vertices. Then $\alpha\cdots = \alpha\gamma\delta$. Counting Lemma further implies that the only other vertex is $\beta^n$. We get $\AVC$ \eqref{AlGaDe-Ben-AVC}. 

Suppose $\gamma<\delta$. We have $\alpha\delta\cdots = \alpha\gamma\delta$, exchanging $\gamma \leftrightarrow \delta$ and $k \leftrightarrow l$ above gives $\AVC$ \eqref{AlGaDe-Ben-AVC}.
\end{case*}

We summarise the AVCs below,
\begin{enumerate}
\item $f\ge8, \AVC = \{ \alpha\gamma\delta, \beta^{\frac{f}{2}} \}$, 
\item $f\ge8, \AVC = \{ \alpha\gamma\delta, \alpha^m, \beta^n, \alpha^m\beta^n, \gamma^k\delta^k, \beta^n\gamma^k\delta^k \}$.
\end{enumerate}

By \cite[Proposition 48]{cly}, the earth map tilings $E$ and their flip modifications $E^{\prime},E^{\prime\prime}$ are obtained from the above $\AVC$s. The same argument of identifying the $\AVC$s in Table \ref{Rat-AlGaDe-AVCs} with tilings in Proposition \ref{RatAlGaDeProp} only uses the vertices, not the rationality nor non-rationality. Then the same discussion applies here. Hence we get 
\begin{align}
&E^{\prime}: \, \AVC \equiv \{ \alpha\gamma\delta, \alpha^m,  \beta^n\gamma\delta \}; \\
&E^{\prime}: \, \AVC \equiv \{ \alpha\gamma\delta, \alpha^m\beta^n, \beta^n\gamma\delta \}; \\
&E^{\prime\prime}: \AVC \equiv \{ \alpha\gamma\delta, \alpha\beta^n,  \gamma^k\delta^k \}; \\ 
&E^{\prime\prime}: \AVC \equiv \{ \alpha\gamma\delta, \alpha\beta^n,  \beta^n\gamma^k\delta^k \}. 
\end{align}

The tilings with their $\AVC$s are given in Table \ref{NonRat-AlGaDe-AVCs}. The construction is explained in \cite[Figures 75, 76]{cly}.  

\begin{table}[h]
\begin{center}
\bgroup
\def\arraystretch{1.5}
    \begin{tabular}[t]{| c | c | c |}
	\hline
	Tilings & $f$ & $\AVC$  \\ \hhline{|===|} 
	$E$ & \multirow{5}{*}{$\ge8$}  & $\{ \alpha\gamma\delta, \beta^{\frac{f}{2}} \}$ \\ 
	\cline{1-1}\cline{3-3}
	\multirow{2}{*}{$E^{\prime}$} & & $\{ \alpha\gamma\delta, \alpha^m,  \beta^n\gamma\delta \}$ \\ 
	\cline{3-3}
	 &  & $\{ \alpha\gamma\delta, \alpha^m\beta^n, \beta^n\gamma\delta \}$ \\ 
	\cline{1-1}\cline{3-3}
	\multirow{2}{*}{$E^{\prime\prime}$} &  & $\{ \alpha\gamma\delta, \alpha\beta^n, \gamma^k\delta^k \}$  \\
          \cline{3-3}
	 &  & $\{ \alpha\gamma\delta, \alpha\beta^n, \beta^n\gamma^k\delta^k \}$  \\
          \hline  
	\end{tabular}
\egroup
\end{center}
\caption{Non-rational angles: $\AVC$s with $\alpha\gamma\delta$}
\label{NonRat-AlGaDe-AVCs}
\end{table}

This completes the proof.
\end{proof}

\section{Geometric Realisation} \label{SecGeom}

Up to this point, we have obtained tilings using the conditions on angles. For the tilings to exist, we still need to show that there is actually a simple almost equilateral quadrilateral with the given angles. We also provide extrinsic formulae to all angles and edges. For the purpose of tilings, we always assume $\alpha, \beta, \gamma, \delta \in (0,2\pi)$ and $a\in(0,\pi)$ and $b\in(0,2\pi)$.

For tilings without $\alpha\gamma\delta$, we use spherical trigonometric tools to justify the existence. In Section \ref{Subsec-Geom-SphTrig}, we establish the tools. The same tools can also be applied to show the existence of tilings by congruent $a^2bc$ quadrilaterals. Sections \ref{Subsec-Geom-al3}, \ref{Subsec-Geom-albe2}, \ref{Subsec-Geom-alga2-alde2} are the applications of these tools. Lastly, we verify the existence of tilings with $\alpha\gamma\delta$ by direct construction in Section \ref{Subsec-Geom-algade}.

\subsection{Spherical Trigonometry} \label{Subsec-Geom-SphTrig}

To verify the existence of $a^2bc$ quadrilateral and the almost equilateral quadrilateral, we state the following facts from \cite{ch,cly}. The $a^2bc$ quadrilateral in Figure \ref{StdQuad} exists if and only if the following matrix equation holds.
\begin{align}\label{MatrixEq}
Y(c)Z(\pi - \beta)Y(a)Z(\pi - \alpha)Y(a)Z(\pi - \delta)Y(b)Z(\pi - \gamma) = I_3 
\end{align}
where, 
\begin{align*}
Y(t) = 
\begin{bmatrix}
\cos t & 0 & \sin t \\
0 & 1 & 0 \\
-\sin t & 0 & \cos t
\end{bmatrix}, 
Z(t) = 
\begin{bmatrix}
\cos t & -\sin t & 0 \\
\sin t & \cos t & 0 \\
0 & 0 & 1
\end{bmatrix}, 
I_3 = 
\begin{bmatrix}
1 & 0 & 0 \\
0& 1 & 0 \\
0 & 0 & 1
\end{bmatrix}.
\end{align*}
It is obvious that $Y(t), Z(t) \in SO(3)$.

\begin{lem} \label{a2bcExistLem} Suppose $\gamma \notin \mathbb{Z}\pi$ and $a \in (0,\pi)$. Then there exists an $a^2bc$ quadrilateral with angles $\alpha, \beta, \gamma, \delta$ and $a$ if and only if the following holds.
\begin{align} \label{TrigEqca}
&(\cos \alpha-1)\sin\beta\sin\delta\cos^2a + \sin \alpha \sin (\beta+\delta) \cos a \\ \notag
&+ \sin\beta\sin\delta - \cos \alpha \cos \beta \cos \delta + \cos\gamma = 0.
\end{align}
\end{lem}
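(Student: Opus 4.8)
The plan is to start from the existence criterion \eqref{MatrixEq} and squeeze out the two edge lengths $b$ and $c$, leaving a scalar identity in $\alpha,\beta,\gamma,\delta,a$ only. Writing $M := Z(\pi-\beta)Y(a)Z(\pi-\alpha)Y(a)Z(\pi-\delta)$, the criterion \eqref{MatrixEq} reads $Y(c)\,M\,Y(b)\,Z(\pi-\gamma)=I_3$. Since every factor lies in $SO(3)$, I can solve for $M$, obtaining
\[
M = Y(-c)\,Z(\gamma-\pi)\,Y(-b).
\]
Thus the existence of the quadrilateral is equivalent to the given matrix $M$ admitting a $Y$-$Z$-$Y$ (Euler) decomposition with prescribed middle angle $\gamma-\pi$.

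The first key observation is that the $(2,2)$ entry of any matrix of the form $Y(s)Z(\phi)Y(t)$ equals $\cos\phi$, independently of $s$ and $t$: the outer $y$-rotations fix the second coordinate axis, so $e_2^{T}Y(s)=e_2^{T}$ and $Y(t)e_2=e_2$, whence the entry is just $e_2^{T}Z(\phi)e_2=\cos\phi$. Applying this to $M = Y(-c)Z(\gamma-\pi)Y(-b)$ forces $M_{22} = \cos(\gamma-\pi) = -\cos\gamma$, which already gives the \emph{only if} direction. To identify this scalar condition with \eqref{TrigEqca}, I would compute $M_{22}=e_2^{T}Me_2$ directly from $M=Z(\pi-\beta)Y(a)Z(\pi-\alpha)Y(a)Z(\pi-\delta)$: peel off $e_2^{T}Z(\pi-\beta)=(\sin\beta,-\cos\beta,0)$ on the left and $Z(\pi-\delta)e_2=(-\sin\delta,-\cos\delta,0)^{T}$ on the right, then push the latter vector through $Y(a)Z(\pi-\alpha)Y(a)$. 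Using $\sin^{2}a=1-\cos^{2}a$ and $\sin\beta\cos\delta+\cos\beta\sin\delta=\sin(\beta+\delta)$, the outcome is exactly the left-hand side of \eqref{TrigEqca} with $\cos\gamma$ transposed; hence $M_{22}=-\cos\gamma$ is literally equation \eqref{TrigEqca}.

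The substance of the argument is the \emph{if} direction, where I must recover genuine edge lengths $b,c$ from the single entry condition. This is where the hypothesis $\gamma\notin\mathbb{Z}\pi$ enters: it guarantees $\cos\gamma\neq\pm1$, hence $|M_{22}|<1$, so the middle rotation $Z(\gamma-\pi)$ is non-degenerate ($\sin(\gamma-\pi)=-\sin\gamma\neq0$). Under this condition the $Y$-$Z$-$Y$ decomposition of the given $M\in SO(3)$ exists, and the two outer $y$-angles are pinned down by the off-diagonal entries of the second row and second column of $M$ (concretely $M_{23}/M_{21}$ and $M_{32}/M_{12}$, both well defined because $\sin(\gamma-\pi)\neq0$). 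Setting $c$ and $b$ to be the negatives of these angles yields $Y(c)MY(b)Z(\pi-\gamma)=Z(\gamma-\pi)Z(\pi-\gamma)=I_3$, so \eqref{MatrixEq} holds and the tile exists.

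I expect the main obstacle to be precisely this reconstruction step: one must track the sign/branch ambiguity intrinsic to Euler angles — two choices of outer angles reproduce the same $M_{22}$ — and confirm that a choice yielding admissible edge lengths is available. The degenerate case $\sin\gamma=0$ is genuinely excluded, since then $M_{22}=\pm1$ collapses the middle rotation and the outer angles $b,c$ cease to be separately recoverable; this is exactly why the lemma carries the standing assumption $\gamma\notin\mathbb{Z}\pi$.
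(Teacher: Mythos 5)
Your proposal is correct and follows essentially the same route as the paper: both reduce \eqref{MatrixEq} to the statement that $M=Z(\pi-\beta)Y(a)Z(\pi-\alpha)Y(a)Z(\pi-\delta)$ admits a $Y$-$Z$-$Y$ factorisation with middle factor $Z(\gamma-\pi)$, identify \eqref{TrigEqca} with the $(2,2)$-entry condition $M_{22}=-\cos\gamma$, and use $\sin\gamma\neq 0$ to recover $b,c$ from the second row and column. The only difference is that where you invoke the standard Euler-angle decomposition of $SO(3)$ (including the branch ambiguity you correctly flag), the paper proves the reconstruction by hand, checking $p^2+q^2=\sin^2\gamma$ and then showing $UV^{T}=Y(\theta)$ must be the identity because its fixed vector cannot be $\pm\vec e_2$ when $\gamma\notin\mathbb{Z}\pi$.
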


The proof of the lemma also shows that, given the angles $\alpha,\beta,\gamma,\delta$ where $\gamma \notin \mathbb{Z}\pi$ and an edge $a \in (0,\pi)$, we can uniquely determine $b, c \in (0, 2\pi]$ by the following formulae,
\begin{align}
\label{TrigEqcb} 
\sin \gamma \cos b = & \, (1-\cos\alpha)\sin\beta\cos\delta\cos^2a - \sin\alpha \cos(\beta+\delta)\cos a \\ \notag
&\, - \sin\beta\cos\delta - \cos \alpha \cos\beta\sin\delta; \\
\label{AEsb}
\sin \gamma \sin b = & \, ((1-\cos\alpha)\sin\beta\cos a - \sin \alpha \cos \beta) \sin a. \\
\label{TrigEqcc}
\sin \gamma \cos c = & \, (1-\cos\alpha)\cos\beta\sin\delta\cos^2a - \sin\alpha \cos(\beta+\delta)\cos a \\ \notag
&\, - \cos\beta\sin\delta - \cos \alpha \sin\beta\cos\delta; \\
\label{TrigEqsc}
\sin \gamma \sin c =& \, ((1-\cos\alpha)\sin\delta\cos a - \sin \alpha \cos \delta) \sin a.
\end{align}

\begin{proof} Let $s$ denote the left hand side of \eqref{TrigEqca} and $p_{\beta,\delta}, q_{\beta}, p_{\delta, \beta}, q_{\delta}$ denote the right hand sides of \eqref{TrigEqcb}, \eqref{AEsb}, \eqref{TrigEqcc}, \eqref{TrigEqsc} respectively. Let
\begin{align*}
U &:=Z(\pi-\beta)Y(a)Z(\pi-\alpha)Y(a)Z(\pi-\delta), \\
V &:=Y(c)^TZ(\pi-\gamma)^TY(b)^T.
\end{align*}
Then \eqref{MatrixEq} means $U=V$ and calculations show that $U_{22} = V_{22}, U_{21} = V_{21}, U_{23} = V_{23}, U_{12} = V_{12}, U_{32} = V_{32}$ are exactly \eqref{TrigEqca}, \eqref{TrigEqcb}, \eqref{AEsb}, \eqref{TrigEqcc}, \eqref{TrigEqsc} respectively. In particular, we get the necessary direction of the lemma. 

For the sufficient direction, we first calculate to verify the equalities,
\begin{align*}
s^2 + p_{\beta,\delta}^2 + q_{\beta}^2 = s^2 + p_{\delta,\beta}^2 + q_{\delta}^2 = \sin^2\gamma + 2s\cos\gamma.
\end{align*}
Then \eqref{TrigEqca} means $s=0$, or $U_{22}=V_{22}$, and the equalities become 
\begin{align*}
p_{\beta,\delta}^2 + q_{\beta}^2 = p_{\delta,\beta}^2 + q_{\delta}^2 = \sin^2\gamma.
\end{align*}
By $\gamma \notin \mathbb{Z}\pi$, we have $\sin\gamma \neq 0$. Then the above equalities imply unique $b,c \in (0, 2\pi]$ such that $\sin\gamma\cos b = p_{\beta,\delta}$ and $\sin\gamma\sin b= q_{\beta}$ and $\sin\gamma\cos c=p_{\delta,\beta}$ and $\sin\gamma\sin c = q_{\delta}$. This means $U_{21}=V_{21}$ and $U_{23}=V_{23}$ and $U_{12} = V_{12}$ and $U_{32} = V_{32}$. Then $U,V$ have the same second row and the same second column. Let 
\begin{align*}
\vec{w}=
\begin{bmatrix}
U_{12} \\
U_{22} \\
U_{32}
\end{bmatrix}
=\begin{bmatrix}
V_{12} \\
V_{22} \\
V_{32}
\end{bmatrix},
\end{align*}
be the second column of $U$ and $V$.

By $U, V \in SO(3)$ and $U_{21} = V_{21}, U_{22} = V_{22}, U_{23} = V_{23}$, we know that the $(2,2)$-entry of $UV^T$ is $1$. Since $UV^T \in SO(3)$, this implies
\begin{align*}
UV^T
=\begin{bmatrix}
\ast & 0 & \ast \\
0 & 1 & 0 \\
\ast & 0 & \ast \\
\end{bmatrix}
=Y(\theta),
\end{align*}
for some $\theta$. Then $U = Y(\theta)V$ implies $\vec{w}=Y(\theta)\vec{w}$. This implies $Y(\theta)=I_3$ or $\vec{w}=\pm \vec{e}_2$, where $\vec{e}_2$ is the second column of $Y(\theta)$. Suppose it is the latter. Since $V\vec{e}_2 = \vec{w}$, we then get $Y^T(c)Z(\pi-\gamma)^TY(b)^T\vec{e}_2=\pm \vec{e}_2$, which implies 
\begin{align*}
\begin{bmatrix}
\sin(\pi-\gamma) \\
\cos(\pi - \gamma) \\
0
\end{bmatrix}
= Z(\pi-\gamma)^T\vec{e}_2 
= \pm \vec{e}_2
=
\begin{bmatrix}
0 \\
\pm 1 \\
0
\end{bmatrix}.
\end{align*}
This implies $\gamma \in \mathbb{Z}\pi$, a contradiction to the hypothesis. So we have $Y(\theta)=I_3$, i.e., $U=V$. Hence the converse is true.
\end{proof}

The edge reduction $c=a$ implies that the almost equilateral quadrilateral in Figure \ref{StdQuad} exists if and only if
\begin{align}\label{AEMatrixEq}
Y(a)Z(\pi - \beta)Y(a)Z(\pi - \alpha)Y(a)Z(\pi - \delta)Y(b)Z(\pi - \gamma) = I_3.
\end{align}

\begin{lem}\label{AEQuadExists} If $\gamma, \delta \neq \pi$, then there exists an almost equilateral quadrilateral with angles $\alpha,\beta,\gamma,\delta \in (0,2\pi)$ and edge $a \in (0,\pi)$ if and only if \eqref{Coolsaet-Id} and the following identity hold,
\begin{align}
\label{ca3Ang} \cos a &= \tfrac{ \sin\beta \cos \gamma + \sin \delta  }{ (1-\cos \beta) \sin \gamma } = \tfrac{ \sin\alpha \cos \delta + \sin \gamma }{ (1-\cos \alpha) \sin \delta }.
\end{align}
Moreover, \eqref{Coolsaet-Id} implies the second equality above.
\end{lem}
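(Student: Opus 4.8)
The plan is to derive the existence criterion directly from the matrix equation \eqref{AEMatrixEq}, specialising the general $a^2bc$ analysis of Lemma \ref{a2bcExistLem} to the edge reduction $c=a$. First I would invoke Lemma \ref{a2bcExistLem} with the substitution $c=a$: since $\gamma\neq\pi$ gives $\gamma\notin\mathbb Z\pi$ when combined with the angle ranges, the lemma tells us the quadrilateral exists iff \eqref{TrigEqca} holds and the edges $b,c$ are recovered by \eqref{TrigEqcb}--\eqref{TrigEqsc}. The extra constraint here is that the recovered $c$ must equal the prescribed $a$. So the strategy is: (i) impose $c=a$, which via \eqref{TrigEqsc} and \eqref{TrigEqcc} forces two scalar conditions relating the angles and $a$; (ii) show these two conditions are equivalent to \eqref{Coolsaet-Id} together with \eqref{ca3Ang}.

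Concretely, from \eqref{TrigEqsc} the requirement $\sin\gamma\sin c=\sin\gamma\sin a$ reads
\begin{align*}
\sin\gamma\sin a=\bigl((1-\cos\alpha)\sin\delta\cos a-\sin\alpha\cos\delta\bigr)\sin a,
\end{align*}
and cancelling $\sin a\neq0$ (valid as $a\in(0,\pi)$) yields $\sin\gamma=(1-\cos\alpha)\sin\delta\cos a-\sin\alpha\cos\delta$, which rearranges to the second equality in \eqref{ca3Ang}, namely $\cos a=\tfrac{\sin\alpha\cos\delta+\sin\gamma}{(1-\cos\alpha)\sin\delta}$. By the $\alpha\leftrightarrow\beta$, $\gamma\leftrightarrow\delta$ symmetry of the quadrilateral (applied to the analogous recovery of the other $a$-edge, i.e. treating the $b$-equations \eqref{TrigEqcb}--\eqref{AEsb} under the reduction), I expect the first equality $\cos a=\tfrac{\sin\beta\cos\gamma+\sin\delta}{(1-\cos\beta)\sin\gamma}$ to emerge symmetrically; the denominators are nonzero since $\delta,\gamma\neq\pi$ force $\sin\delta,\sin\gamma\neq0$ and $1-\cos\alpha,1-\cos\beta\neq0$ under the nondegeneracy $\alpha,\beta\neq0$. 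The two expressions for $\cos a$ being equal is then a single trigonometric identity in the four angles, and the last sentence of the statement asserts this identity is exactly \eqref{Coolsaet-Id}.

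The main obstacle, and where the real work lies, is proving that the equality of the two fractions in \eqref{ca3Ang} is algebraically equivalent to Coolsaet's identity \eqref{Coolsaet-Id}. Clearing denominators turns the equality into
\begin{align*}
(\sin\beta\cos\gamma+\sin\delta)(1-\cos\alpha)\sin\delta=(\sin\alpha\cos\delta+\sin\gamma)(1-\cos\beta)\sin\gamma,
\end{align*}
and I would reduce this, using product-to-sum and half-angle identities, to the form $\sin\tfrac12\alpha\sin(\delta-\tfrac12\beta)=\sin\tfrac12\beta\sin(\gamma-\tfrac12\alpha)$. The half-angle substitution $1-\cos\alpha=2\sin^2\tfrac12\alpha$ and $\sin\alpha=2\sin\tfrac12\alpha\cos\tfrac12\alpha$ should be the key simplification, converting the factors $(1-\cos\alpha)$ and $\sin\alpha$ into common $\sin\tfrac12\alpha$ factors that can be cancelled after the right grouping. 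I anticipate this calculation is somewhat intricate but entirely mechanical; I would not grind through it in the proposal but would flag it as the computational heart.

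Finally I would assemble the equivalence: existence $\Longleftrightarrow$ \eqref{TrigEqca} holds and the recovered $c$ equals $a$. The condition $c=a$ gives \eqref{ca3Ang} (both equalities), and I would check that \eqref{TrigEqca} with $c=a$ is then automatically subsumed—or, more cleanly, that \eqref{Coolsaet-Id} plus one equality of \eqref{ca3Ang} already encodes the full matrix equation, since \eqref{Coolsaet-Id} supplies the second equality as the statement claims. The forward direction (existence $\Rightarrow$ the two identities) follows by reading off the entry equalities of \eqref{AEMatrixEq}; the converse (the two identities $\Rightarrow$ existence) follows because, as in Lemma \ref{a2bcExistLem}, once the relevant scalar relations hold and $\gamma\neq\pi$, the $SO(3)$ argument forces $U=V$, hence \eqref{AEMatrixEq}. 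The hypotheses $\gamma,\delta\neq\pi$ are exactly what keep all denominators and the $\sin\gamma$ normalisation nonzero, so I would state explicitly where each is used.
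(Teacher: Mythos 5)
Your route is genuinely different from the paper's: the paper simply cites \cite[Lemma 18]{cly} for the full ``if and only if'' and only proves the \emph{Moreover} clause itself (divide \eqref{Coolsaet-Id} by $\sin\tfrac12\alpha\sin\tfrac12\beta$ to get $\sin\delta\cot\tfrac12\beta+\cos\gamma=\sin\gamma\cot\tfrac12\alpha+\cos\delta$, then square to obtain the equality of the two fractions). You instead try to rebuild the criterion from Lemma \ref{a2bcExistLem} under the reduction $c=a$, which is a reasonable and more self-contained idea, and your extraction of the second equality of \eqref{ca3Ang} from \eqref{TrigEqsc} with $c=a$ is correct.

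However, there is a concrete error at what you call the computational heart. The equality of the two fractions in \eqref{ca3Ang} is \emph{not} algebraically equivalent to \eqref{Coolsaet-Id}; only the implication from \eqref{Coolsaet-Id} holds. Writing $L=\sin\delta\cot\tfrac12\beta+\cos\gamma$ and $R=\sin\gamma\cot\tfrac12\alpha+\cos\delta$, one checks that equality of the fractions is exactly $L^2=R^2$, i.e.\ $L=\pm R$. The branch $L=R$ is \eqref{Coolsaet-Id}, but the branch $L=-R$ gives the different relation $\sin\tfrac12\alpha\sin(\delta+\tfrac12\beta)=-\sin\tfrac12\beta\sin(\gamma+\tfrac12\alpha)$, so clearing denominators and ``reducing by half-angle identities'' cannot land you on \eqref{Coolsaet-Id} alone --- the sign is lost in the squaring and must be recovered from elsewhere (e.g.\ from \eqref{TrigEqca} or from the cosine half of the $c=a$ condition, $\sin\gamma\cos a=p_{\delta,\beta}$, which your sketch never uses). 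Relatedly, your converse direction is incomplete: to invoke the $SO(3)$ argument of Lemma \ref{a2bcExistLem} you must show that \eqref{Coolsaet-Id} together with \eqref{ca3Ang} implies \emph{both} that \eqref{TrigEqca} holds and that the recovered $c$ from \eqref{TrigEqcc}--\eqref{TrigEqsc} equals $a$ in both its sine and cosine components; only the sine component is addressed. Until those two computations are carried out (or the criterion is cited as the paper does), the equivalence is not established.
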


\begin{proof} The criterion for the existence of the quadrilateral is given by \cite[Lemma 18]{cly}. In particular, \eqref{Coolsaet-Id} is \cite[equation (3.8)]{cly} and \eqref{ca3Ang} is \cite[equations (3.9), (3.10)]{cly}. For this lemma, we only need to explain that the second equality in \eqref{ca3Ang} is a consequence of \eqref{Coolsaet-Id}.

By $\alpha, \beta \in (0,2\pi)$, we know $\sin \frac{1}{2}\alpha, \sin\frac{1}{2}\beta \neq 0$. If \eqref{Coolsaet-Id} is true, then dividing both sides by $\sin\frac{1}{2}\alpha\sin\tfrac{1}{2}\beta$, 
\begin{align*}
\sin \delta \cot \tfrac{1}{2}\beta + \cos \gamma = \sin \gamma \cot\tfrac{1}{2}\alpha + \cos \delta.
\end{align*}
Squaring both sides, it implies
\begin{align*}
\tfrac{1}{2} \csc^2 \tfrac{1}{2}\beta \sin \delta ( \sin \delta + \sin \beta \cos \gamma ) = \tfrac{1}{2} \csc^2 \tfrac{1}{2}\alpha \sin \gamma ( \sin \gamma + \sin \alpha \cos \delta ). 
\end{align*}
This gives \eqref{ca3Ang}.
\end{proof}

To justify the existence of an almost equilateral quadrilateral with given $\alpha, \beta, \gamma, \delta$, we need to first verify the angle values satisfy \eqref{Coolsaet-Id}. Then we need one of the expressions for $\cos a$ in \eqref{ca3Ang} to have absolute value $<1$. This determines $a \in (0,\pi)$.

Lastly, it remains to show that the quadrilateral is simple. One sufficient condition is given by Lemma \ref{SimpQuadLem}. To apply this lemma, we need $0<b<\pi$. The two lemmas after Lemma \ref{SimpQuadLem} provide sufficient conditions for $b<\pi$.

\begin{lem}[{\cite[Lemma 20]{cly}}]\label{SimpQuadLem} A spherical quadrilateral having all the edges and at least three angles $<\pi$ is simple. 
\end{lem}

\begin{lem}\label{b-criteria1} In the almost equilateral quadrilateral with $0 < a, \alpha, \beta, \gamma, \delta<\pi$ and $0 < b < 2\pi$, if $\beta > \delta$ or $\alpha > \gamma$, then $b<\pi$.
\end{lem}

\begin{proof} 
By \eqref{ca3Ang} and $0<\alpha, \beta, \gamma, \delta < \pi$ and $\beta>\delta$, we have
\begin{align*}
(1-\cos \alpha) \sin \beta \cos a - \sin \alpha \cos \beta = \tfrac{1}{\sin \delta} ( \sin \beta \sin \gamma + \sin \alpha \sin ( \beta - \delta ) ) > 0.
\end{align*}
By $0<a, \gamma <\pi$, we also have $\sin \gamma, \sin a > 0$. Then by \eqref{AEsb}, we get $\sin b>0$.

The proof for the case of $\alpha > \gamma$ is analogous.
\end{proof}

\begin{lem} \label{b-criteria2} In a spherical almost equilateral quadrilateral with $0 < \gamma, \delta < \pi$ and $0<a<\pi$ and $0 < b < 2\pi$,
\begin{enumerate}
\item $\cos(\beta - \delta) + \cos \gamma >0$ if and only if $b<\pi$;
\item if $\beta \le \pi$ and one of $\beta + \pi > \gamma + \delta$ and $\delta + \pi > \beta + \gamma$ holds, then $b<\pi$.
\end{enumerate}
The same is true by swapping $\alpha \leftrightarrow \beta$ and $\gamma \leftrightarrow \delta$.
\end{lem}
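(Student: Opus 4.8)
The plan is to convert $b<\pi$ into the sign of one explicit trigonometric quantity and then read that sign off from the closed-form tile data. Because $0<\gamma<\pi$ gives $\sin\gamma>0$ and $0<b<2\pi$ makes $b<\pi$ equivalent to $\sin b>0$, it is enough to control the sign of $\sin\gamma\sin b$. The sine formula \eqref{AEsb} (used with $c=a$, i.e.\ for the almost equilateral tile) gives
\[
\sin\gamma\,\sin b=\bigl((1-\cos\alpha)\sin\beta\cos a-\sin\alpha\cos\beta\bigr)\sin a,
\]
and since $0<a<\pi$ forces $\sin a>0$, the sign of $\sin\gamma\sin b$ is the sign of the bracket $B:=(1-\cos\alpha)\sin\beta\cos a-\sin\alpha\cos\beta$. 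This is the very expression handled in the proof of Lemma \ref{b-criteria1}, so that computation is the right entry point.

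For part~(1) I would first remove $a$ from $B$ by inserting the closed form of $\cos a$ from \eqref{ca3Ang}; as recorded inside the proof of Lemma \ref{b-criteria1}, this yields
\[
B=\tfrac{1}{\sin\delta}\bigl(\sin\beta\sin\gamma+\sin\alpha\sin(\beta-\delta)\bigr),
\]
so that $\sin\gamma\sin b$ has the sign of $N:=\sin\beta\sin\gamma+\sin\alpha\sin(\beta-\delta)$. The heart of the matter is to show that $N$ and $\cos(\beta-\delta)+\cos\gamma$ always carry the same sign. They are not proportional in general, so this cannot be a formal identity: it must use the one constraint relating $\alpha$ to the remaining angles, namely the Coolsaet identity \eqref{Coolsaet-Id}. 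Concretely, I would expand $\sin\alpha$ and $\sin(\beta-\delta)$ into half-angles and reduce $N$ modulo \eqref{Coolsaet-Id}, expecting $N$ to collapse to a positive multiple of $\cos(\beta-\delta)+\cos\gamma$ on the Coolsaet locus. The boundary behaviour is then automatic: $\cos(\beta-\delta)+\cos\gamma=0$ forces $\sin b=0$, hence $b=\pi$, which secures the equivalence as a genuine ``if and only if''.

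For part~(2), by part~(1) it suffices to prove $\cos(\beta-\delta)+\cos\gamma>0$ under the stated hypotheses. The organising identity is the factorisation
\[
\cos(\beta-\delta)+\cos\gamma=2\cos\tfrac{\beta+\gamma-\delta}{2}\,\cos\tfrac{\gamma+\delta-\beta}{2}.
\]
With $0<\gamma,\delta<\pi$ and $\beta\le\pi$ the two half-angle arguments stay above $-\pi$, so the first factor is positive exactly when $\delta+\pi>\beta+\gamma$ and the second exactly when $\beta+\pi>\gamma+\delta$; these are precisely the two lune inequalities in the statement (and in Lemma \ref{LunEstLem}, though there derived under simplicity, which I may not assume here). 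The decisive structural point is that the two factors cannot both be negative, since $\beta+\gamma-\delta\ge\pi$ together with $\gamma+\delta-\beta\ge\pi$ would force $\gamma\ge\pi$; thus at least one lune inequality always holds. The assumed inequality pins down one factor as positive, and it then remains to certify the complementary factor, i.e.\ to rule out that the other lune inequality fails; I would obtain this intrinsically from the Coolsaet relation together with $a<\pi$ (which constrains the admissible angles through \eqref{ca3Ang}). The swapped statement follows at once from the $\alpha\leftrightarrow\beta$, $\gamma\leftrightarrow\delta$ symmetry of the quadrilateral, which fixes the $b$-edge.

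The step I expect to be the main obstacle is the sign reduction in part~(1): establishing that $N$ and $\cos(\beta-\delta)+\cos\gamma$ share a sign is a true statement only modulo \eqref{Coolsaet-Id}, and the careful half-angle bookkeeping---keeping track of the manifestly non-negative factors $1-\cos\alpha$, $\sin\beta$, $\sin\gamma$, $\sin\delta$ and the admissible angle ranges---is where the real work lies. Part~(2) is then largely sign bookkeeping, its only subtlety being the upgrade from one lune inequality to both; here the constraint $\gamma<\pi$ (excluding the doubly-negative case) and the tile's defining relations are the decisive ingredients.
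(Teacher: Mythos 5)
Your plan for part (1) is in substance the paper's own argument: the paper also reduces $b<\pi$ to the sign of the bracket in \eqref{AEsb}, eliminates $\cos a$ via \eqref{ca3Ang} (using the $(\beta,\gamma,\delta)$-expression rather than the $(\alpha,\gamma,\delta)$-one you take from Lemma \ref{b-criteria1}), rewrites $\sin\alpha=\cot\tfrac12\alpha\,(1-\cos\alpha)$ and $\sin\beta=\cot\tfrac12\beta\,(1-\cos\beta)$, and invokes \eqref{Coolsaet-Id} in the form $\cos\gamma-\cot\tfrac12\alpha\sin\gamma=\cos\delta-\cot\tfrac12\beta\sin\delta$ to land on the exact identity
\[
\sin^2\gamma\,\sin b=\sin a\,(1-\cos\alpha)\bigl(\cos(\beta-\delta)+\cos\gamma\bigr),
\]
from which the equivalence is immediate. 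The collapse you ``expect'' of $N=\sin\beta\sin\gamma+\sin\alpha\sin(\beta-\delta)$ to a positive multiple of $\cos(\beta-\delta)+\cos\gamma$ is equivalent to this identity and is true on the Coolsaet locus, so your route would close; but since that reduction \emph{is} part (1), a proof must actually perform it rather than announce it as an expectation.

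Part (2) has a genuine gap, which your own factorisation makes visible. In the stated ranges, $\cos(\beta-\delta)+\cos\gamma>0$ holds precisely when \emph{both} factors $\cos\tfrac{\beta+\gamma-\delta}{2}$ and $\cos\tfrac{\gamma+\delta-\beta}{2}$ are positive (the doubly negative case is excluded by $\gamma<\pi$, as you note), i.e.\ precisely when \emph{both} lune inequalities hold. The hypothesis supplies only one, and it may supply the wrong one: if $\beta\ge\delta$ then $\beta+\pi>\gamma+\delta$ holds automatically because $\gamma<\pi$, so ``one of the two holds'' is vacuous there, while the inequality you actually need can fail for angles in the stated ranges (e.g.\ $\beta=\pi$, $\gamma=\tfrac{9}{10}\pi$, $\delta=\tfrac12\pi$ gives $\cos(\beta-\delta)+\cos\gamma<0$). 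Hence the complementary factor cannot be certified by sign bookkeeping alone; it must come from the realisability of the tile (the constraint $\lvert\cos a\rvert<1$ in \eqref{ca3Ang} together with \eqref{Coolsaet-Id}), and that is exactly the step you defer to ``the Coolsaet relation together with $a<\pi$'' without any argument. For comparison, the paper proves (2) by the monotonicity implication $\gamma<\pi+\beta-\delta\Rightarrow\cos\gamma>-\cos(\beta-\delta)$, which is valid only when $\pi+\beta-\delta\le\pi$, i.e.\ $\beta\le\delta$ (and symmetrically for the other hypothesis), so each assumed inequality is effectively used in the regime where the other is automatic. Until you either supply the argument for the second factor or restrict to that regime, your part (2) is not a proof.
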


\begin{proof} Multiplying $\sin \gamma$ on both sides of \eqref{AEsb} and applying $\sin \theta = \cot \frac{1}{2}\theta ( 1 - \cos \theta )$, we have
\begin{align*}
\sin^2 \gamma \sin b
& = \sin \gamma ( (1-\cos \alpha)\cos a \sin \beta - \sin \alpha \cos \beta  )  \sin a \\
&= (1-\cos \alpha) ( \sin \beta \sin \gamma \cos a - \cos \beta \cot \tfrac{1}{2}\alpha \sin \gamma  )  \sin a. 
\end{align*}
By \eqref{Coolsaet-Id}, we get $\cos \gamma - \cot \tfrac{1}{2}\alpha \sin \gamma  = \cos \delta - \cot \tfrac{1}{2}\beta \sin \delta$. Then combined with $\sin \theta = \cot \frac{1}{2}\theta ( 1 - \cos \theta )$ and $\sin \theta \cot \frac{1}{2}\theta = ( 1+\cos \theta )$ and \eqref{ca3Ang}, we have
\begin{align*}
&\sin \beta \sin \gamma \cos a - \cos \beta \sin \gamma \cot \tfrac{1}{2}\alpha  \\
&=(1-\cos \beta) \cot \tfrac{1}{2}\beta \sin \gamma \cos a - \cos \beta \sin \gamma \cot \tfrac{1}{2} \alpha \\
&= (\sin\beta\cos\gamma + \sin \delta) \cot \tfrac{1}{2}\beta - \cos \beta \sin \gamma \cot \tfrac{1}{2} \alpha \\
&= \cos \gamma + \sin \delta \cot \tfrac{1}{2}\beta + \cos \beta ( \cos \gamma - \sin \gamma \cot \tfrac{1}{2}\alpha ) \\
&= \cos \gamma + \sin \beta \sin \delta + \cos \beta \cos \delta \\
&= \cos(\beta-\delta) + \cos \gamma.
\end{align*}

If $\beta \in (0, \pi]$ and $\gamma, \delta \in (0, \pi)$, then $\pi + \beta > \gamma + \delta$ implies $\cos( \beta - \delta) + \cos \gamma > 0$. By $\sin \gamma \neq 0$, the formulae above imply $\sin b > 0$. This means $b < \pi$. The argument for the case of $\pi + \delta > \beta + \gamma$ is analogous. 
\end{proof}

For some discussion, it is convenient to know the range of $a$. The following lemma gives a necessary condition on $a$ when the $a^2bc$ quadrilateral is convex. 

\begin{lem} \label{a2bc-aEdge-Lem} In an simple $a^2bc$ quadrilateral $\square XYWZ$ where $XY=XZ=a$ and $\angle YXZ = \rho$, if $\rho$ is bigger than the area of $\square XYWZ$, and $\square XYWZ$ contains the isosceles triangle $\triangle XYZ$ in Figure \ref{a2bcQuad-a-Edge}, then $a < \frac{1}{2}\pi$.
\end{lem}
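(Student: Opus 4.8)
The plan is to pull the base angles of the isosceles triangle $\triangle XYZ$ out of the area hypothesis, and then convert that angular information into a bound on the equal side $a$ by a single right-triangle identity.

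First I would do the area bookkeeping. Since $\square XYWZ$ is simple and contains $\triangle XYZ$, the diagonal $YZ$ cuts it into $\triangle XYZ$ and the non-degenerate triangle $\triangle YWZ$, so
\begin{align*}
\mathrm{Area}(\triangle XYZ) < \mathrm{Area}(\triangle XYZ) + \mathrm{Area}(\triangle YWZ) = \mathrm{Area}(\square XYWZ).
\end{align*}
Write $\psi$ for the common base angle of $\triangle XYZ$ at $Y$ and at $Z$; these are equal by the reflective symmetry across the perpendicular bisector of $YZ$ through $X$, which exists because $XY=XZ=a$. Girard's theorem gives $\mathrm{Area}(\triangle XYZ)=\rho+2\psi-\pi$. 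Feeding this and the hypothesis $\rho>\mathrm{Area}(\square XYWZ)$ into the displayed inequality yields $\rho+2\psi-\pi<\rho$, hence $\psi<\tfrac12\pi$.

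Next I would relate $\psi$ to $a$. Dropping the geodesic from $X$ perpendicular to $YZ$, the isosceles symmetry places its foot $M$ at the midpoint of $YZ$ and makes $\angle XMY=\tfrac12\pi$. In the right spherical triangle $\triangle XMY$ the hypotenuse is $XY=a$ and the remaining angles are $\angle MXY=\tfrac12\rho$ and $\angle XYM=\psi$, so Napier's rule for the hypotenuse of a right spherical triangle gives
\begin{align*}
\cos a = \cot\tfrac12\rho\,\cot\psi.
\end{align*}
Since the quadrilateral is convex, $\rho=\alpha<\pi$, so $\tfrac12\rho\in(0,\tfrac12\pi)$ and $\cot\tfrac12\rho>0$; and $\psi\in(0,\tfrac12\pi)$ gives $\cot\psi>0$. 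Therefore $\cos a>0$, i.e.\ $a<\tfrac12\pi$.

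The two steps I expect to need the most care are the places where a sign or a range enters. The first is confirming $\rho<\pi$: this is exactly where convexity (rather than mere simplicity) is used, and without it $\cot\tfrac12\rho$ could change sign and the conclusion would fail. The second is the legitimacy of the right-triangle step, namely that $\triangle XYZ$ is a standard triangle sitting inside a hemisphere so that the perpendicular foot $M$ genuinely lands strictly between $Y$ and $Z$ and Napier's rule is applicable; this is precisely what the containment hypothesis and the configuration in Figure \ref{a2bcQuad-a-Edge} are there to supply, so I would lean on them explicitly rather than re-deriving non-degeneracy from scratch.
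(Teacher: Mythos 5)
Your argument is correct in substance but follows a genuinely different route from the paper's. The paper argues by containment of areas: it notes that the half-lune bounded by the two arcs of length $\tfrac{1}{2}\pi$ and the dashed equator has area exactly $\rho$, and that if $a\ge\tfrac{1}{2}\pi$ this half-lune sits inside $\triangle XYZ$ and hence inside the quadrilateral, forcing the quadrilateral's area to be at least $\rho$ and contradicting the hypothesis. You instead extract the base angle $\psi<\tfrac{1}{2}\pi$ from Girard's formula and the area bound, and then convert it into $\cos a=\cot\tfrac{1}{2}\rho\,\cot\psi>0$ via Napier's rule. Your version is more computational and yields the explicit identity for $\cos a$ as a by-product; the paper's version avoids any trigonometry and any discussion of the perpendicular foot, at the cost of the (also slightly informal) claim that the half-lune is contained in $\triangle XYZ$ when $a\ge\tfrac{1}{2}\pi$.

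One point needs repair. You justify $\rho<\pi$ by writing ``since the quadrilateral is convex,'' but convexity is not a hypothesis of the lemma --- only simplicity is assumed, and by Lemma \ref{AlConvexLem} a simple tile may well have one angle $\ge\pi$. The bound $\rho<\pi$ is genuinely needed (if $\rho>\pi$ then $\cot\tfrac{1}{2}\rho<0$ and your identity gives $\cos a<0$, and indeed one can build simple quadrilaterals satisfying all the stated hypotheses except $\rho<\pi$ for which $a>\tfrac{1}{2}\pi$). The correct source of $\rho<\pi$ is the configuration of Figure \ref{a2bcQuad-a-Edge} that the lemma explicitly references: there $\rho$ is the angle of a proper lune with $Y$ and $Z$ on its two boundary arcs, which presupposes $\rho<\pi$; the paper's own proof relies on the same implicit normalisation. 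Replace the appeal to convexity by an appeal to that configuration and your proof is complete.
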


\begin{proof} A lune on the unit sphere with interior antipodal angles $\rho$ in the picture of Figure \ref{a2bcQuad-a-Edge} has area $2\rho$. So an isosceles triangle given by a half of the lune (with two side edges of length $\frac{1}{2}\pi$ and base edge indicated by dashed line) has area $\rho$.

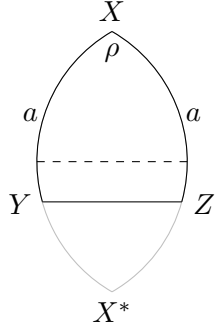
\begin{figure}[htp] 
\centering
\begin{tikzpicture}[scale=1]


\begin{scope}[xshift=0cm] 

\tikzmath{
\r=2; \R=sqrt(3);
\a=120; \aa=\a/2;
\xP=0; \yP=\R;
\aOP=120; 
\aPOne=\aOP;
\mPOne=tan(90+\aPOne);
\aPTwo=\aOP;
\mPTwo=tan(-90-\aPTwo);
\xCQ=-1.5; \yCQ=0.5;
\xCOne = 1; \yCOne=0;
\xCTwo = -1; \yCTwo=0;
}

\coordinate (O) at (0,0);
\coordinate (C1) at (1,0);
\coordinate (C2) at (-1,0);

	
	(C2) circle (\r)
	(C1) circle (\r);

\draw[gray!50]
	([shift={(-60:2)}]-1,0) arc (-60:60:2);

\draw[gray!50]
	([shift={(-60+180:2)}]1,0) arc (-60+180:60+180:2);

\pgfmathsetmacro{\xPOne}{ ( 2*((\mPOne)*(\yCOne)+(\xCOne)) - sqrt( ( 2*( (\mPOne)*(\yCOne)+\xCOne )  )^2 - 4*( (\mPOne)^2 + 1 )*( (\xCOne)^2 + (\yCOne)^2 - \r^2 ) ) )/( 2*( (\mPOne)^2+1 ) ) };
\pgfmathsetmacro{\yPOne}{ \mPOne*\xPOne };

\pgfmathsetmacro{\xPTwo}{ ( 2*((\mPTwo)*(\yCTwo)+(\xCTwo) ) + sqrt( ( 2*( (\mPTwo)*\yCTwo+\xCTwo )  )^2 - 4*( (\mPTwo)^2 + 1 )*( (\xCTwo)^2 + (\yCTwo)^2 - \r^2 ) ) )/( 2*( (\mPTwo)^2+1 ) ) };
\pgfmathsetmacro{\yPTwo}{ \mPTwo*\xPTwo };

\pgfmathsetmacro{\xPPOne}{ ( 2*((\mPOne)*(\yCOne)+(\xCOne)) + sqrt( ( 2*( (\mPOne)*(\yCOne)+\xCOne )  )^2 - 4*( (\mPOne)^2 + 1 )*( (\xCOne)^2 + (\yCOne)^2 - \r^2 ) ) )/( 2*( (\mPOne)^2+1 ) ) };
\pgfmathsetmacro{\yPPOne}{ \mPOne*\xPPOne };

\pgfmathsetmacro{\xPPTwo}{ ( 2*((\mPTwo)*(\yCTwo)+(\xCTwo) ) - sqrt( ( 2*( (\mPTwo)*\yCTwo+\xCTwo )  )^2 - 4*( (\mPTwo)^2 + 1 )*( (\xCTwo)^2 + (\yCTwo)^2 - \r^2 ) ) )/( 2*( (\mPTwo)^2+1 ) ) };
\pgfmathsetmacro{\yPPTwo}{ \mPTwo*\xPPTwo };

\pgfmathsetmacro{\dPOneP}{ sqrt( (\xPOne - \xP)^2 + (\yPOne - \yP)^2 ) };
\pgfmathsetmacro{\aCPOne}{ acos( (2*\r^2 - \dPOneP^2 )/(2*\r^2) ) };
\pgfmathsetmacro{\l}{ \aCPOne/\a };
\pgfmathsetmacro{\rQ}{ sqrt( \R^2 + (\xCQ)^2  )  };
\pgfmathsetmacro{\aPCQ}{ acos( (\xCQ)/(\rQ) ) }
\pgfmathsetmacro{\aQ}{  -( 360 - 2*\aPCQ )*(\l) };

\coordinate (CQ) at (\xCQ, \yCQ);

\coordinate (P) at (0,{sqrt(3)});

\coordinate (Q) at (-75:0.85);

\coordinate (PP) at (0,{-sqrt(3)});

\coordinate (P1) at (\xPOne, \yPOne);
\coordinate (P2) at (\xPTwo, \yPTwo);

\coordinate (PP1) at (\xPPOne,\yPPOne);

\coordinate (PP2) at (\xPPTwo,\yPPTwo);

\arcThroughThreePoints[]{P}{PP}{P1};
\arcThroughThreePoints[]{P2}{PP}{P};


\draw[dashed]
	(180:1) -- (0:1)
;

\draw[]
	(P1) -- (P2)
;

\node at (90: 2) {\small $X$};
\node at (270: 2) {\small $X^{\ast}$};

\node at (205: 1.35) {\small $Y$};
\node at (335: 1.35) {\small $Z$};



\node at (150:1.25) {\small $a$};
\node at (30:1.25) {\small $a$};
\node at (90:1.45) {\small $\rho$};

\end{scope}

\end{tikzpicture}
\caption{$a^2bc$ quadrilateral containing isosceles triangle $\triangle XYZ$}
\label{a2bcQuad-a-Edge}
\end{figure}

Assume $a\ge\frac{1}{2}\pi$. Since the $a^2bc$ quadrilateral contains $\triangle XYZ$, the isosceles triangle given by the upper half of the lune is contained in the $a^2bc$ quadrilateral. Then the area of the isosceles triangle is $\rho$, which is less than the area of the quadrilateral, a contradiction. 
\end{proof}

\subsection{Tiles \boldmath{$a^2bc, a^3b$} and Vertices \boldmath{$\alpha^3, \beta^2\delta^2, \gamma^4$}} \label{Subsec-Geom-al3}

The discussion includes $QP_6, QP^{\prime}_6$. The tiling $QP_6$ is given by the quadrilateral subdivision of the cube $P_6$. 

In the most general setting, the tiles of $QP_6$ are the $a^2bc$ quadrilaterals. The tiling is displayed in the left column of Figure \ref{Tiling-a2bc-f24-a3}. The edge reduction $c=a$ gives the tiling $QP_6$ by congruent almost equilateral quadrilaterals. On the other hand, for a particular combination of angle values, $QP_6$ has a flip modification $QP^{\prime}_6$ depicted in the right column of of Figure \ref{Tiling-a2bc-f24-a3}. As shown in the picture, $QP^{\prime}_6$ can also be viewed combinatorially as a \quotes{special quadrilateral subdivision} of the triangular prism.

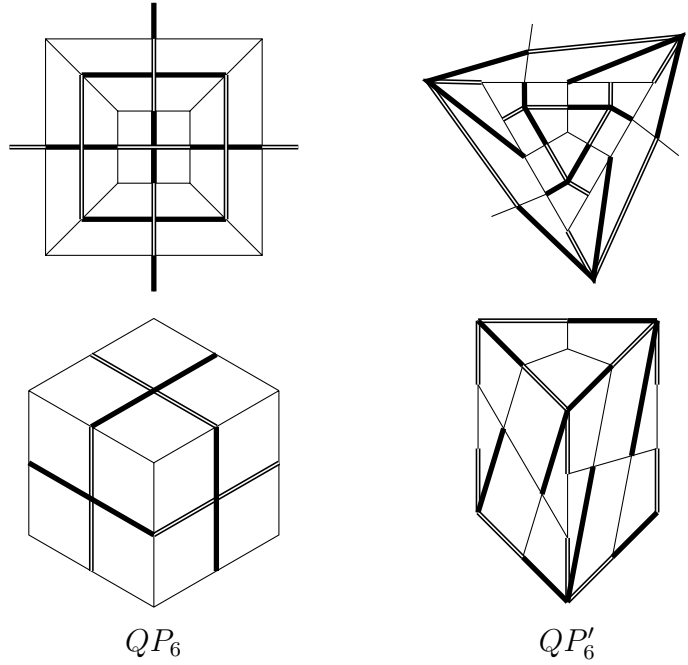
\begin{figure}[htp]
\centering
\begin{tikzpicture}[>=latex]

\begin{scope}

\begin{scope}[] 

\begin{scope}[scale=0.4]

	\foreach \a in {0,...,3}
	\draw[rotate=90*\a]
	(0,0) -- (4.8,0)
	(3.6,0) -- (3.6,3.6) -- (0,3.6) 
	(1.2,0) -- (1.2,1.2) -- (0,1.2)
	(2.4,0) -- (2.4,2.4) -- (0,2.4)
	(1.2,1.2) -- (3.6,3.6);
	
	\foreach \b in {0,1}
	\draw[line width=2,rotate=180*\b]
	(0,1.2) -- (0,-1.2)
	(2.4,2.4) -- (-2.4,2.4)
	(1.2,0) -- (3.6,0)
	(0,3.6) -- (0,4.8);
	
	\foreach \c in {1,3}
	\draw[double, line width=0.6,rotate=90*\c]
	(0,1.2) -- (0,-1.2)
	(2.4,2.4) -- (-2.4,2.4)
	(1.2,0) -- (3.6,0)
	(0,3.6) -- (0,4.8);
		
\end{scope}

\end{scope}

\begin{scope}[xshift=5.5cm, yshift=0.2cm] 

\begin{scope}[scale=0.275]

	\foreach \a in {0,1,2}
	\draw[rotate=120*\a]
	(0,0)-- (-30:2.4) -- (30:4.8) -- (90:2.4)
	(-30:1.2) -- (30:2.4) -- (90:1.2)
	(11:3.18) -- (30:2.4) -- (49:3.18)
	(131:3.18) -- (116:4.32) -- (40:7.2)
	(150:4.8) -- (160:7.2) -- (116:4.32)
	(90:2.4) -- (40:7.2)
	(116:4.32) -- (108:5.5);
	
	\foreach \c in {0,1,2}
	\draw[double, line width=0.6, rotate=120*\c]
	(-30:1.2) -- (30:2.4) -- (49:3.18)	
	(30:4.8) -- (40:7.2) -- (116:4.32);
	
	\foreach \b in {0,1,2}
	\draw[line width=2, rotate=120*\b]
	(90:1.2) -- (30:2.4) -- (11:3.18)
	(90:2.4) -- (40:7.2) -- (-4:4.32);

\end{scope}

\end{scope}

\end{scope}

\begin{scope}[yshift=-4.2 cm] 

\begin{scope}[scale=0.8] 

	
	\coordinate (O) at (0,0);
	\coordinate (A1) at (2.078,1.2);
	\coordinate (A2) at (-2.078,1.2);
	\coordinate (A3) at (0,-2.4);	
	
	\coordinate (A4) at (2.078,-1.2);
	\coordinate (A5) at (-2.078,-1.2);
	
	\coordinate (A6) at (0,2.4);
	
	\coordinate (OA1) at (1.039,0.6);
	\coordinate (OA2) at (-1.039,0.6);
	\coordinate (A1A6) at (1.039,1.8);
	\coordinate (A2A6) at (-1.039,1.8);
	
	\coordinate (OA3) at (0,-1.2);
	\coordinate (A3A4) at (1.039,-1.8);
	\coordinate (A1A4) at (2.078,0);
	
	\coordinate (A3A5) at (-1.039,-1.8);
	\coordinate (A2A5) at (-2.078,0);

	\draw 
	(O) -- (A1)
	(O) -- (A2)
	(O) -- (A3)
	(A1) -- (A6) -- (A2)
	(A3) -- (A4) -- (A1)
	(A3) -- (A5) -- (A2);
	
	\draw[double, line width=0.6] 
	(OA1) -- (A2A6)
	(OA3) -- (A1A4)
	(OA2) -- (A3A5);
	
	\draw[line width=2]
	(OA2) -- (A1A6)
	(OA1) -- (A3A4)	
	(OA3) -- (A2A5);
	
\end{scope}

\node at (0,-2.4) {$QP_6$};

\begin{scope}[xshift = 5.5 cm] 

\begin{scope}[yshift=0.7cm, scale=0.85]
		
	\draw 
	(0,0) -- (0,-3)
	(1.4,1.4) -- (1.4,-1.6)
	(-1.4,1.4) -- (-1.4,-1.6)
	(-1.4,1.4) -- (0,0) -- (1.4,1.4)--cycle
	(-1.4,-1.6) -- (0,-3) -- (1.4,-1.6);
	
	\draw
	(0,0.96) -- (0,1.4)
	(0,0.96) -- (-0.7,0.7)
	(0,0.96) -- (0.7,0.7);
	
	\draw
	(-0.7,0.7) -- (-1.4,-1.6)
	(0,0) -- (-0.7,-2.3)
	(0.7,0.7) -- (0,-3)
	(1.4,1.4) -- (0.7,-2.3);
	
	\draw 
	(-1.4,0.4) -- (0,-2)
	(0,-1) -- (1.4,-0.6);

	\draw[double, line width=0.6]
	(0,1.4) -- (-1.4,1.4)
	(-0.7,0.7) -- (0,0)
	(0.7,0.7) -- (1.4,1.4)
	(-1.4,1.4) -- (-1.4,0.4)
	(-1.4,-0.6) -- (-1.4,-1.6)
	(0,0) -- (0,-1)
	(0,-2) -- (0,-3)
	(1.4,1.4) -- (1.4,0.4)
	(1.4,-0.6) -- (1.4,-1.6)
	(-1.4,-1.6) -- (-0.7,-2.3)
	(0,-3) -- (0.7,-2.3);

	\draw[line width=2]
	(-1.4,1.4) -- (-0.7,0.7)
	(0,0) -- (0.7,0.7)
	(0,1.4) -- (1.4,1.4)
	(-0.7,-2.3) -- (0,-3)
	(0.7,-2.3) -- (1.4,-1.6)
	(-1.4,-1.6) -- (-1,-0.2857)
	(-0.4,-1.314) -- (0,0)
	(0.4,-0.8857) -- (0,-3)
	(1,-0.7143) -- (1.4,1.4);

\end{scope}

\node at (0,-2.4) {$QP^{\prime}_6$};

\end{scope}

\end{scope}

	\end{tikzpicture}
	\caption{Tilings by congruent $a^2bc$ quadrilaterals: $QP_6, QP^{\prime}_6$}
	\label{Tiling-a2bc-f24-a3}
\end{figure}

\begin{prop} Tilings exist for the following $\AVC$s

\begin{enumerate}
\item $QP_6, a^2bc, \AVC \equiv \{ \alpha^3, \beta^2\delta^2, \gamma^4 \}$,
\item $QP^{\prime}_6, a^2bc,  \AVC \equiv \{  \alpha^3, \alpha\beta^2, \alpha^2\delta^2, \beta^2\delta^2, \gamma^4 \}$,
\item $QP_6, a^3b, \AVC \equiv \{ \alpha^3, \beta^2\delta^2, \gamma^4 \}$.
\end{enumerate}

Their edges and angles are listed in Tables \ref{a2bc-a3b-f24-Al3-Edge-Angle}, \ref{a2bc-f24-Al3-AlBe2-Edge-Angle} respectively. There are three possible edge reductions as illustrated in Figure \ref{a2bcEdgeRed}: $c=a$ if and only if $\delta = \sin^{-1} (\frac{1}{6} (4+\sqrt{3}))^{\frac{1}{2}}$; $c=b$ if and only if $\delta=\frac{1}{2}\pi$; and $b=a$ if and only if $\delta=\pi - \sin^{-1} (\frac{1}{6} (4+\sqrt{3}))^{\frac{1}{2}}$.

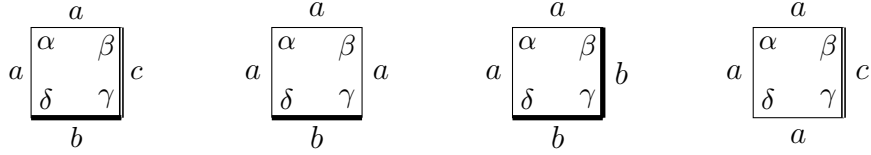
\begin{figure}[htp] 
\centering
\begin{tikzpicture}

\begin{scope}[] 

\draw
	(0,0) -- (1.2,0) -- (1.2,1.2) -- (0,1.2) -- cycle;

\draw[double, line width=0.6]
	(1.2,0) -- (1.2,1.2);

\draw[line width=2]
	(0,0) -- (1.2,0);

\node at (0.2,1) {\small $\alpha$};
\node at (1,0.925) {\small $\beta$};
\node at (1,0.25) {\small $\gamma$};
\node at (0.2,0.25) {\small $\delta$};

\node at (0.6,1.4) {\small $a$};
\node at (0.6,-0.25) {\small $b$};
\node at (-0.2,0.6) {\small $a$};
\node at (1.4,0.6) {\small $c$};

\end{scope} 

\begin{scope}[xshift = 3.2cm] 

\draw
	(0,0) -- (0,1.2) -- (1.2,1.2) -- (1.2, 0.0);

\draw[ line width=2]
	(0,0) -- (1.2,0);

\node at (0.2,1) {\small $\alpha$};
\node at (1,0.925) {\small $\beta$};
\node at (1,0.25) {\small $\gamma$};
\node at (0.2,0.25) {\small $\delta$};

\node at (0.6, -0.25) {$b$};
\node at (-0.25, 0.6) {$a$};
\node at (1.45, 0.6) {$a$};
\node at (0.6, 1.45) {$a$};

\end{scope}

\begin{scope}[xshift = 6.4cm] 

\draw
	(0,0) -- (0,1.2) -- (1.2,1.2) -- (1.2, 0.0) -- cycle;

\draw[ line width=2]
	(0,0) -- (1.2,0)
	(1.2,0) -- (1.2,1.2)
;

\node at (0.2,1) {\small $\alpha$};
\node at (1,0.925) {\small $\beta$};
\node at (1,0.25) {\small $\gamma$};
\node at (0.2,0.25) {\small $\delta$};

\node at (0.6, -0.25) {$b$};
\node at (-0.25, 0.6) {$a$};
\node at (1.45, 0.6) {$b$};
\node at (0.6, 1.45) {$a$};

\end{scope} 

\begin{scope}[xshift = 9.6cm] 

\draw
	(0,0) -- (0,1.2) -- (1.2,1.2) -- (1.2, 0.0) -- cycle;

\draw[double, line width=0.6]
	(1.2,0) -- (1.2,1.2);

\node at (0.2,1) {\small $\alpha$};
\node at (1,0.925) {\small $\beta$};
\node at (1,0.25) {\small $\gamma$};
\node at (0.2,0.25) {\small $\delta$};

\node at (0.6, -0.25) {$a$};
\node at (-0.25, 0.6) {$a$};
\node at (1.45, 0.6) {$c$};
\node at (0.6, 1.45) {$a$};

\end{scope} 

\end{tikzpicture}
\caption{Edge reductions of $a^2bc$: $c=a$, $c=b$, $b=a$}
\label{a2bcEdgeRed}
\end{figure}

\begin{table}[h]
\begin{center}
\scalebox{1}{
\bgroup
\def\arraystretch{1.75}
    \begin{tabular}[t]{ | c | c | c | c |}
	\hline
	\multicolumn{4}{|c|}{ $a^2bc, \AVC \equiv \{  \alpha^3, \beta^2\delta^2, \gamma^4 \}$ } \\ \hhline{|====|}
	 $f$ &  $24$ & $\alpha$ & $\frac{2}{3}\pi$ \\
	\hline 
	$a$ & $\sin^{-1}  \tfrac{1}{\sqrt{3}\sin \delta}$ & $\beta$ & $\pi - \delta$ \\
	\hline 
	$b$ & $\cos^{-1}  \frac{ \sqrt{3\sin^2 \delta - 1} - \cos \delta  }{2\sin \delta} $  & $\gamma$  & $\frac{1}{2}\pi$ \\
	\hline 
	$c$ & $\cos^{-1}  \frac{ \sqrt{3\sin^2 \delta-1} + \cos \delta }{2\sin \delta} $ & $\delta$  & $\frac{1}{4}\pi < \delta < \frac{3}{4}\pi$  \\
	\hline 
	\end{tabular}
\egroup
}
\end{center}
\caption{Angles and edges for $QP_6$, including edge reductions}
\label{a2bc-a3b-f24-Al3-Edge-Angle}
\end{table}

\begin{table}[h]
\begin{center}
\scalebox{1}{
\bgroup
\def\arraystretch{1.6}
    \begin{tabular}[t]{ | c | c | c | c | }
	\hline
	\multicolumn{4}{|c|}{ $a^2bc, \AVC \equiv \{\alpha^3, \alpha\beta^2, \alpha^2\delta^2, \beta^2\delta^2, \gamma^4 \}$ } \\ \hhline{|====|}
	 $f$ &  $24$ & $\alpha$ & $\frac{2}{3}\pi$ \\
	\hline 
	$a$ & $ \cos^{-1} \frac{\sqrt{5}}{3} $ & $\beta$ & $\frac{2}{3}\pi$ \\
	\hline 
	$b$ & $\cos^{-1}\frac{\sqrt{5}-1}{2\sqrt{3}} $  & $\gamma$ & $\frac{1}{2}\pi$ \\
	\hline 
	$c$ & $ \cos^{-1}  \frac{\sqrt{5}+1}{2\sqrt{3}} $ & $\delta$ & $\frac{1}{3}\pi$ \\
	\hline 
	\end{tabular}
\egroup
}
\end{center}
\caption{Angles and edges for $QP_6^{\prime}$}
\label{a2bc-f24-Al3-AlBe2-Edge-Angle}
\end{table}

\end{prop}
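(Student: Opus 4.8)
The plan is to realise each tile by means of the spherical-trigonometric existence criteria of Section \ref{Subsec-Geom-SphTrig}, then read off the edge data and the edge reductions; since the combinatorial tilings $QP_6, QP_6'$ are already pinned down by their $\AVC$s, the real work is producing a simple quadrilateral with the prescribed angles. First I would solve the vertex angle sums $3\alpha = 2\pi$, $4\gamma = 2\pi$, $2\beta + 2\delta = 2\pi$ to get $\alpha = \tfrac23\pi$, $\gamma = \tfrac12\pi$, $\beta = \pi - \delta$, and substitute into the quadrilateral angle sum \eqref{QuadSum}; this forces $f = 24$ and leaves $\delta$ as a free parameter for the general $a^2bc$ tiling $QP_6$. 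For $QP_6'$ the extra vertices $\alpha\beta^2, \alpha^2\delta^2$ additionally impose $\beta = \tfrac23\pi$, hence $\delta = \tfrac13\pi$.

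Next I would apply Lemma \ref{a2bcExistLem}. Substituting $\alpha = \tfrac23\pi$, $\beta = \pi - \delta$, $\gamma = \tfrac12\pi$ into \eqref{TrigEqca} and using $\cos\gamma = 0$ and $\sin(\beta+\delta) = 0$, the existence equation collapses to $\tfrac32\sin^2\delta\,\cos^2 a = \tfrac32\sin^2\delta - \tfrac12$, i.e. $\sin a = \tfrac{1}{\sqrt3\,\sin\delta}$, which is the $a$-entry of Table \ref{a2bc-a3b-f24-Al3-Edge-Angle}; the companion formulae \eqref{TrigEqcb}, \eqref{AEsb}, \eqref{TrigEqcc}, \eqref{TrigEqsc} then deliver the stated $\cos b$ and $\cos c$. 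The three edge reductions of Figure \ref{a2bcEdgeRed} drop out of these closed forms: $\cos b = \cos c$ gives $\cos\delta = 0$, i.e. $\delta = \tfrac12\pi$ (so $c = b$); solving $\cos c = \cos a$ (the reduction $c = a$ to an almost equilateral tile) reduces, after squaring and rationalising, to $\sin^2\delta = \tfrac16(4+\sqrt3)$, yielding the claimed $\delta = \sin^{-1}(\tfrac16(4+\sqrt3))^{\frac12}$; and the reduction $b = a$ is the $\gamma\leftrightarrow\delta$, $\alpha\leftrightarrow\beta$ mirror value $\delta = \pi - \sin^{-1}(\tfrac16(4+\sqrt3))^{\frac12}$. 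For the $a^3b$ realisation I would instead verify \eqref{Coolsaet-Id} and \eqref{ca3Ang} of Lemma \ref{AEQuadExists} at this special $\delta$, which by construction reproduces the same $a, b$.

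The heart of the argument, and the \emph{main obstacle}, is pinning down the range $\tfrac14\pi < \delta < \tfrac34\pi$ and establishing simplicity. The range is forced by requiring $b$ and $c$ to be genuine edges: evaluating the closed forms shows $\cos c = 1$ (so $c = 0$) exactly at $\delta = \tfrac14\pi$ and $\cos b = 1$ (so $b = 0$) exactly at $\delta = \tfrac34\pi$, while $\sin\delta \ge \tfrac{1}{\sqrt2} > \tfrac{1}{\sqrt3}$ throughout guarantees $a$ is well defined with $a < \tfrac12\pi$. Within the open range all four angles $\alpha = \tfrac23\pi$, $\gamma = \tfrac12\pi$, $\beta = \pi - \delta$, $\delta$ lie in $(0,\pi)$ and $a \in (0,\tfrac12\pi)$ automatically; it then remains to show $b, c < \pi$, for which I would run the sign analysis behind Lemma \ref{b-criteria1} (equivalently check $\sin b, \sin c > 0$ directly from \eqref{AEsb} and \eqref{TrigEqsc}) so that Lemma \ref{SimpQuadLem} applies and the quadrilateral is simple. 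The expected difficulty is precisely this uniform sign and range bookkeeping over the one-parameter family, since the closing-up of the tilings themselves follows from the cube and triangular-prism symmetries recorded in Figure \ref{Tiling-a2bc-f24-a3}. Specialising to $QP_6'$ at $\delta = \tfrac13\pi$ then gives $\sin a = \tfrac23$, hence $a = \cos^{-1}\tfrac{\sqrt5}{3}$, and the $\cos b, \cos c$ formulae evaluate to $\tfrac{\sqrt5-1}{2\sqrt3}$ and $\tfrac{\sqrt5+1}{2\sqrt3}$, matching Table \ref{a2bc-f24-Al3-AlBe2-Edge-Angle}, with all three edges distinct and $<\pi$, so $QP_6'$ exists.
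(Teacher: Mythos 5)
Your proposal is correct and follows essentially the same route as the paper: solve the angle sums to get $f=24$, $\alpha=\tfrac{2}{3}\pi$, $\gamma=\tfrac{1}{2}\pi$, $\beta=\pi-\delta$, extract $\cos a,\cos b,\cos c$ from Lemma \ref{a2bcExistLem} and its companion formulae, read the range $\tfrac{1}{4}\pi<\delta<\tfrac{3}{4}\pi$ off positivity of the resulting sines and cosines, verify the three reductions, and conclude simplicity via Lemma \ref{SimpQuadLem}. The one point you gloss over is the branch of $a$: $\sin a=\tfrac{1}{\sqrt{3}\sin\delta}$ has two solutions in $(0,\pi)$, and the paper pins down $\cos a>0$ (i.e.\ $a<\tfrac{1}{2}\pi$) by the area comparison of Lemma \ref{a2bc-aEdge-Lem} rather than it being \quotes{automatic}; for the existence direction this is harmless, since the linear term of \eqref{TrigEqca} vanishes here and one may simply choose the root with $\cos a>0$.
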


\begin{proof} Suppose the tilings exist and we calculate the angles and edge lengths. The angle sum system of $\{ \alpha^3, \beta^2\delta^2, \gamma^4 \}$ implies 
\begin{align*}
f=24, \quad \alpha=\tfrac{2}{3}\pi, 
\quad \beta + \delta=\pi, \quad  
\gamma=\tfrac{1}{2}\pi. 
\end{align*}
So every angle of the quadrilateral $\square ABCD$ is $<\pi$ and the tile is convex. Then $0<b,c<\pi$. The area of $\square ABCD$ is $\frac{1}{6}\pi < \alpha$. Then Lemma \ref{a2bc-aEdge-Lem} implies $0 < a < \frac{1}{2}\pi$ and $0 < \cos a, \sin a < 1$. By \eqref{TrigEqca}, \eqref{TrigEqcb}, \eqref{AEsb}, \eqref{TrigEqcc}, \eqref{TrigEqsc}, we get
\begin{align*} 
&\cos a = \tfrac{\sqrt{3\sin^2 \delta - 1}}{\sqrt{3}\sin \delta }, &
 &\cos b = \sin c= \tfrac{\sqrt{3\sin^2 \delta - 1}-\cos \delta}{2\sin \delta}, &\\
&\sin a = \tfrac{1}{\sqrt{3}\sin \delta},&
&\sin b= \cos c= \tfrac{\sqrt{3\sin^2 \delta-1}+\cos \delta }{2\sin \delta}.&
\end{align*}
These identities determine $a, b, c$ and imply $b+c=\frac{1}{2}\pi$, which means $0< b, c < \frac{1}{2}\pi$. 

Conversely, Lemma \ref{a2bcExistLem} assures the existence of an $a^2bc$ quadrilateral with the given angles and edges satisfying the corresponding values determined above. Furthermore, as $a,b,c \in (0, \frac{1}{2}\pi)$ and all angles $<\pi$, Lemma \ref{SimpQuadLem} implies that the quadrilateral is simple. Therefore the range of $\delta$ is determined by all the sine and cosine values above being positive. This is equivalent to $\frac{1}{4}\pi < \delta < \frac{3}{4}\pi$.

We now discuss the three edge reductions, $c=a$, $b=a$ and $c=b$. 

For $c = a$ and $\delta \in (\frac{1}{4}\pi, \frac{3}{4}\pi)$, the equalities $\cos a =\cos c$ and $\sin a = \sin c$ is equivalent to $\delta = \sin^{-1}  ( \tfrac{1}{6} (4+\sqrt{3}) )^{\frac{1}{2}}$.

For $b = a$, by similar argument, $\delta = \pi - \sin^{-1} (  \tfrac{1}{6} (4+\sqrt{3}) )^{\frac{1}{2}}$. 

For $c = b$, by $b+c=\frac{1}{2}\pi$, we get $b=c=\frac{1}{4}\pi$ and the quadrilateral is a kite such that $\beta=\delta=\frac{1}{2}\pi$. 

The three reductions correspond to three different values of $\delta$. So further reduction $a=b=c$ is impossible.

For $\AVC \equiv \{ \alpha^3, \alpha\beta^2, \alpha^2\delta^2, \beta^2\delta^2, \gamma^4 \}$, $\alpha = \beta = \frac{2}{3}\pi$, $\gamma = \frac{1}{2}\pi$, $\delta = \frac{1}{3}\pi$, we get
\begin{align*}
\cos a = \tfrac{1}{3}\sqrt{5}, \quad \cos b = \tfrac{1}{2\sqrt{3}}(\sqrt{5}-1), \quad \cos c =  \tfrac{1}{2\sqrt{3}}(\sqrt{5}+1).
\end{align*}
This completes the proof.
\end{proof}

We give an alternative justification for the existence of $QP_6$ by congruent almost equilateral quadrilaterals ($a^3b$) with $\AVC \equiv \{ \alpha^3, \beta^2\delta^2, \gamma^4 \}$. As every angle $<\pi$ and $\alpha>\gamma$, Lemma \ref{ATriLem} implies $\beta > \delta$. By $\beta + \delta =\pi$, we have $\beta > \frac{1}{2}\pi > \delta$. Simplifying \eqref{Coolsaet-Id}, we get $( 4\sqrt{3} \cos^2 \tfrac{1}{2}\delta - 3\sqrt{3} + 1 ) \cos \tfrac{1}{2}\delta = 0$. By $0<\delta < \frac{1}{2}\pi$, we have $\cos \delta = \frac{1}{2}(1-\frac{\sqrt{3}}{3})$. Then by \eqref{ca3Ang}, 
\begin{align*}
\cos a = \tfrac{\sin\alpha\cos\delta+\sin\gamma}{(1-\cos\alpha)\sin\delta} =( \tfrac{1}{13} (5 + 2\sqrt{3}) )^{\frac{1}{2}} \approx 0.8069,
\end{align*}
which uniquely determines $0 < a < \pi$. Then Lemma \ref{AEQuadExists} implies that the quadrilateral exists. By \eqref{TrigEqcb}, \eqref{AEsb}, we get
\begin{align*}
\cos b = ( \tfrac{2}{13} (4 - \sqrt{3} ) )^{\frac{1}{2}} = \sin a, \quad
\sin b = ( \tfrac{1}{13}(5+2\sqrt{3}) )^{\frac{1}{2}} = \cos a.
\end{align*} 
This implies $a + b = \frac{1}{2}\pi$. Lemma \ref{SimpQuadLem} implies that the quadrilateral is simple. So the tile exists and is convex. The angles and edges are listed in Table \ref{SpecialTilingData2}.

\subsection{Tiles \boldmath{$a^2bc, a^3b$} and Vertex \boldmath{$\alpha\beta^2$}} \label{Subsec-Geom-albe2}

The discussion includes tilings by congruent $a^2bc$ quadrilaterals with $\AVC = \{ \alpha\beta^2, \alpha^2\delta^2, \gamma^4, \alpha\delta^{\frac{f+8}{8}}, \beta^2\delta^{\frac{f-8}{8}}, \delta^{\frac{f}{4}} \}$ and tilings by congruent almost equilateral quadrilaterals with $\AVC = \{ \alpha\beta^2, \alpha^2\delta^2, \gamma\delta^3, \alpha\gamma^3\delta, \gamma^6 \}$. From \cite{cl, cly}, we know that the former constitutes a family of tilings. Figure \ref{E5EMT}, equivalent to \cite[$E_{\square}5$ of Figure 5]{cly}, illustrates three timezones of this earth map tiling.

\begin{figure}[htp]
	\centering


\begin{tikzpicture}[>=latex,scale=0.8]

\tikzmath{ 
\l=0.6;
}

\fill[gray!25]
	(0.0,0.0) -- (0.0,-2*\l) -- (\l,-2*\l) -- (\l,-4*\l) -- (2*\l,-4*\l) -- (2*\l,-6*\l) -- (6*\l,-6*\l) -- (6*\l,-4*\l) -- (5*\l,-4*\l) -- (5*\l,-2*\l) -- (4*\l,-2*\l) -- (4*\l,0) -- (2*\l,0) 
;

\foreach \b in {0,1,2,3,4,5,6}{

	\begin{scope}[xshift=1.2*\b cm] 
	\draw[]
	(0,0) -- (0,-2*\l)	
	(\l, -2*\l) -- (\l, -4*\l)
	(2*\l, -4*\l) -- (2*\l, -6*\l);

	\end{scope}
}

\foreach \b in {0,2,4,6}{

	\begin{scope}[xshift=1.2*\b cm] 
	\draw[]
	(0,-2*\l) -- (\l, -2*\l) 
	(\l, -4*\l) -- (2*\l,-4*\l);
	\end{scope}

}

\foreach \b in {0,2,4}{

	\begin{scope}[xshift=1.2*\b cm] 
	\draw[]
	(3*\l, -2*\l) -- (4*\l, -2*\l)
	(2*\l, -4*\l) -- (3*\l, -4*\l);
	\end{scope}

}


\foreach \b in {0, 2, 4}{

	\begin{scope}[xshift=1.2*\b cm] 
	
	\draw[double, line width=0.6]
	(\l, -2*\l) -- (2*\l, -2*\l) -- (3*\l, -2*\l)
	(3*\l, -4*\l) -- (4*\l, -4*\l) -- (5*\l, -4*\l);

	\end{scope}
}

\foreach \b in {0, 2, 4}{

	\begin{scope}[xshift=1.2*\b cm] 
	
	\draw[line width=2]
	(2*\l, 0) -- (2*\l, -2*\l)
	(2*\l, -2*\l) -- (2*\l, -4*\l)
	(4*\l, -2*\l) -- (4*\l, -4*\l)
	(4*\l, -4*\l) -- (4*\l, -6*\l);
	\end{scope}
}

\node at (0.0,-3*\l) {\small $\cdots$};

\node at (14*\l,-3*\l) {\small $\cdots$};

\end{tikzpicture}
\caption{Earth map tiling $E_{\square}5$ with $\AVC \equiv \{ \alpha\beta^2, \alpha^2\delta^2, \gamma^4, \delta^{\frac{f}{4}} \}$}
\label{E5EMT}
\end{figure}
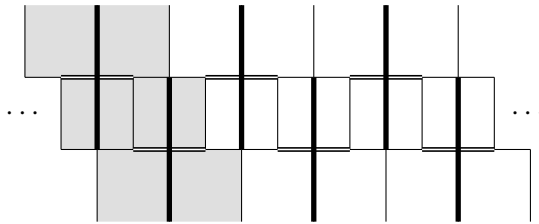

\begin{prop} Tilings for $a^2bc, \AVC = \{ \alpha\beta^2, \alpha^2\delta^2, \gamma^4, \alpha\delta^{\frac{f+8}{8}}, \beta^2\delta^{\frac{f-8}{8}},$ $\delta^{\frac{f}{4}} \}$ exist. The edges and angles are listed in Table \ref{AlBe2-Al2De2-AngEdge}.

\begin{table}[h]
\begin{center}
\scalebox{1}{
\bgroup
\def\arraystretch{1.75}
    \begin{tabular}[t]{ | c | c | c | c | }
	\hline
	\multicolumn{4}{|c|}{ $\AVC \equiv \{ \alpha\beta^2, \alpha^2\delta^2, \gamma^4, \alpha\delta^{\frac{f+8}{8}}, \beta^2\delta^{\frac{f-8}{8}}, \delta^{\frac{f}{4}} \}$ } \\ \hhline{|====|}
	 $f$ &  $\ge16$ & $\alpha$ & $( 1 - \frac{8}{f} ) \pi$ \\
	\hline 
	$a$ & $\cos^{-1} ( 1 - \tfrac{1}{4}(3-\sqrt{5})\sec^2 \tfrac{4}{f}\pi )$ & $\beta$ & $(\frac{1}{2}+\frac{4}{f})\pi$ \\
	\hline 
	$b$ & $ \cos^{-1}  \tfrac{\sqrt{5} -1}{4} \sec \frac{4}{f}\pi   $  & $\gamma$ & $ \frac{1}{2}\pi $ \\
	\hline 
	$c$ & $\cos^{-1} ( (3 - \sqrt{5} )\cos \frac{4}{f}\pi + ( \sqrt{5}-2 ) \sec \frac{4}{f}\pi )$ & $\delta$ & $\frac{8}{f}\pi$ \\
	\hline 
	\end{tabular}
\egroup
}
\end{center}
\caption{Angles and edges for earth map tilings $E_{\square}5$ and their flip modifications}
\label{AlBe2-Al2De2-AngEdge}
\end{table}
\end{prop}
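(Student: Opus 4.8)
The plan is to follow the now-standard spherical-trigonometry routine of Section \ref{Subsec-Geom-al3}: first pin down the angle values from the combinatorics, then produce the edge $a$ as a root of the existence equation \eqref{TrigEqca}, recover $b,c$ from \eqref{TrigEqcb}--\eqref{TrigEqsc}, and finally certify simplicity via Lemma \ref{SimpQuadLem}. First I would solve the angle sum system: the vertex $\gamma^4$ forces $\gamma=\tfrac12\pi$, and combining \eqref{QuadSum} with the angle sums of $\alpha\beta^2$ and $\alpha^2\delta^2$ (that is, $\alpha+2\beta=2\pi$ and $\alpha+\delta=\pi$) a short elimination yields $\alpha=(1-\tfrac8f)\pi$, $\beta=(\tfrac12+\tfrac4f)\pi$, $\delta=\tfrac8f\pi$, exactly as in Table \ref{AlBe2-Al2De2-AngEdge}. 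I would also record that the remaining vertices $\alpha\delta^{(f+8)/8}$, $\beta^2\delta^{(f-8)/8}$, $\delta^{f/4}$ have the correct angle sums, so the $\AVC$ is internally consistent. For every admissible $f\ge16$ all four angles lie in $(0,\pi)$, so the prospective tile is convex; in particular $\gamma=\tfrac12\pi\notin\mathbb Z\pi$, which is exactly the hypothesis required to invoke Lemma \ref{a2bcExistLem}.

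Next, to locate $a$, I would observe that the spherical excess gives $\mathrm{area}=\tfrac4f\pi$, and since $\alpha=\pi-2\cdot\tfrac4f\pi>\tfrac4f\pi$ for $f\ge16$, the apex angle $\alpha$ between the two $a$-edges exceeds the area. Convexity guarantees the quadrilateral contains the isosceles triangle on its two $a$-edges, so Lemma \ref{a2bc-aEdge-Lem} applies and yields $a<\tfrac12\pi$, i.e. $\cos a\in(0,1)$. Writing $t=\tfrac4f\pi$ and using $\cos\alpha=-\cos2t$, $\sin\alpha=\sin2t$, $\cos\beta=-\sin t$, $\sin\beta=\cos t$, $\cos\delta=\cos2t$, $\sin\delta=\sin2t$, $\cos\gamma=0$, equation \eqref{TrigEqca} collapses to a quadratic in $\cos a$; selecting the root in $(0,1)$ singled out by the area estimate gives $\cos a=1-\tfrac14(3-\sqrt5)\sec^2 t$. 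Substituting this into \eqref{TrigEqcb}, \eqref{AEsb}, \eqref{TrigEqcc}, \eqref{TrigEqsc} then determines $\cos b,\sin b,\cos c,\sin c$, which should simplify to the table entries $\cos b=\tfrac{\sqrt5-1}{4}\sec t$ and $\cos c=(3-\sqrt5)\cos t+(\sqrt5-2)\sec t$; one checks these place $b,c$ in $(0,\tfrac12\pi)$ across the whole range $t\in(0,\tfrac14\pi]$.

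With $a,b,c\in(0,\pi)$ and all four angles $<\pi$, Lemma \ref{SimpQuadLem} guarantees the quadrilateral is simple, so the $a^2bc$ tile with the stated data exists; the earth map tiling $E_\square5$ of Figure \ref{E5EMT} and its flip modifications are then assembled exactly as in \cite{cl, cly}. The main obstacle I anticipate is the quadratic step: verifying that \eqref{TrigEqca} really factors so that the in-range root matches the closed form $1-\tfrac14(3-\sqrt5)\sec^2 t$, and confirming that the companion expressions for $b$ and $c$ remain in $(0,\pi)$ for \emph{all} $f\ge16$ rather than merely at the endpoint $f=16$. The convexity-plus-area argument fixing $\cos a>0$ is precisely what makes the root selection unambiguous and keeps this step routine rather than delicate.
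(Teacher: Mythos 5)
Your proposal follows the paper's proof essentially step for step: the same angle sum system, the same convexity-plus-area argument via Lemma \ref{a2bc-aEdge-Lem} to force $\cos a\in(0,1)$, the same quadratic from \eqref{TrigEqca} with the positive root selected, recovery of $b,c$ from \eqref{TrigEqcb}--\eqref{TrigEqsc} with $\sin b,\sin c>0$, and existence and simplicity via Lemmas \ref{a2bcExistLem} and \ref{SimpQuadLem}. The only step you omit is the paper's closing check that $a,b,c$ are pairwise distinct for every $f\ge16$ (no edge reduction), which is needed to confirm the tile is genuinely of edge combination $a^2bc$; it is a quick verification from the closed forms of $\cos a,\cos b,\cos c$ (for instance $b=c$ would force $\beta=\delta$, i.e. $f=8$).
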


\begin{proof} The angle sum system of $\alpha\beta^2, \alpha^2\delta^2, \gamma^4$ gives
\begin{align*}
\alpha= ( 1 - \tfrac{8}{f} ) \pi, \quad
\beta= (\tfrac{1}{2}+\tfrac{4}{f} )\pi, \quad 
\gamma=\tfrac{1}{2}\pi, \quad
\delta=\tfrac{8}{f}\pi.
\end{align*}
So the tile is convex.

By \cite[Proposition 33]{cly}, the tilings have $f\ge16$. Then $\alpha > \frac{4}{f}\pi$. Lemma \ref{a2bc-aEdge-Lem} implies $0 < a < \frac{1}{2}\pi$. So $0 < \cos a < 1$. Substituting the angle formulae into \eqref{TrigEqca}, we obtain
\begin{align*} 
\sin \tfrac{1}{2}\delta(
&- ( \cos^2 \delta + 2\cos \delta  + 1  )\cos^2 a \\
&+ ( 2\cos^2\delta + \cos \delta - 1 ) \cos a \\ 
&+ ( -\cos^2\delta + \cos\delta + 1 )
)= 0.
\end{align*}
By $\cos a> 0$ and $\sin \tfrac{1}{2}\delta \neq 0$, the solution to the above quadratic equation is $\cos a=\tfrac{2\cos \delta + \sqrt{5} - 1 }{2\cos \delta+2}$, which uniquely determines $a \in (0, \frac{1}{2}\pi)$. Then by \eqref{TrigEqcb}, \eqref{TrigEqcc} and $\delta=\frac{8}{f}\pi$, we get $\cos a, \cos b$ and $\cos c$ in terms of $f$ below,
\begin{align*} 
&\cos a = 1 - \tfrac{1}{4}(3-\sqrt{5})\sec^2 \tfrac{4}{f}\pi, \\
&\cos b = \tfrac{1}{4}(\sqrt{5}-1)\sec \tfrac{4}{f}\pi, \\
&\cos c = (3-\sqrt{5})\cos \tfrac{4}{f}\pi + (\sqrt{5}-2) \sec \tfrac{4}{f}\pi.
\end{align*}
On the other hand, by \eqref{AEsb}, \eqref{TrigEqsc}, we get $\sin b = \frac{1}{2}(\sqrt{5}+1)\cos \frac{1}{2}\delta \sin a$ and $\sin c =  \frac{1}{2}(\sqrt{5}-1)\sin \delta \sin a$ respectively. The ranges of $\delta$ and $a$ imply $\sin b, \sin c>0$. Then we have $0<b, c<\pi$ and $\cos b, \cos c$ uniquely determine $b,c$ respectively. Lemma \ref{a2bcExistLem} implies that the $a^2bc$ quadrilateral with given angles exists. As every angle $<\pi$, Lemma \ref{SimpQuadLem} implies that the quadrilateral is simple. So the tile exists. 

Now we discuss the edge reductions, $c=a$ and $b=a$ and $b=c$.

By $0 < \cos \tfrac{4}{f}\pi < 1$, the equalities $\cos a= \cos b$ and $\cos a= \cos c$ have no integer solution for $f\ge16$. If $b=c$, then the quadrilateral is a kite such that $\beta=\delta$, which implies $f=8$. Then $b \neq c$ for $f>8$. So there is no edge reduction. 

This completes the proof.
\end{proof}

The same approach applies to $a^3b, \AVC = \{ \alpha\beta^2, \alpha^2\delta^2, \gamma\delta^3, \alpha\gamma^3\delta, \gamma^6 \}$. Nevertheless, a more efficient approach is presented in the next proposition. 

\begin{prop}\label{albe2-al2de2-ExistProp} The tiling for $\AVC = \{ \alpha\beta^2, \alpha^2\delta^2, \gamma\delta^3, \alpha\gamma^3\delta, \gamma^6 \}$ exists and it is $S5$. The values for edges and angles are listed in Table \ref{SpecialTilingData2}.
\end{prop}

\begin{proof} The angle sum system gives
\begin{align*}
f =36, \quad
\alpha=\tfrac{4}{9}\pi, \quad 
\beta=\tfrac{7}{9}\pi, \quad 
\gamma=\tfrac{1}{3}\pi, \quad
\delta=\tfrac{5}{9}\pi.
\end{align*}
By $a, \alpha, \beta, \gamma, \delta < \pi$ and $\beta>\delta$, Lemma \ref{b-criteria1} implies $b < \pi$. The angles are (Type II) solution to \eqref{Coolsaet-Id}. By \eqref{ca3Ang} and \eqref{TrigEqcb}, 
\begin{align*}
&\cos a =\tfrac{ \sin \frac{2}{9}\pi + 2 \cos \frac{1}{18}\pi } { \sqrt{3}  (1 + \cos \frac{2}{9}\pi ) },\\
&\cos b=\tfrac{1}{3} (4 \sin^2\tfrac{1}{9}\pi - \sqrt{3} \tan \tfrac{1}{18}\pi + 2\sqrt{3} \cos \tfrac{2}{9}\pi \tan \tfrac{1}{18}\pi + 4 \cos \tfrac{1}{18}\pi \tan\tfrac{1}{9}\pi ).
\end{align*}
Lemma \ref{AEQuadExists} implies that the quadrilateral exists. As $ b < \pi$, the equation for $\cos b > 0$ uniquely determines $b$. In fact, $\cos b$ is the biggest root of $9x^3 + 9x^2 - 9x - 1=0$. By $a,b<\pi$ and every angle $<\pi$, Lemma \ref{SimpQuadLem} implies that the quadrilateral is simple. 
\end{proof}

\begin{prop} The tiling for $\AVC = \{ \alpha\beta^2, \alpha^2\gamma\delta, \gamma^2\delta^2 \}$ exists and it is $S4$. The edges and angles are listed in Table \ref{SpecialTilingData2}.
\end{prop}

\begin{proof}The angle sum system gives
\begin{align*}
f=16, \quad
\alpha = \tfrac{1}{2}\pi, \quad
\beta = \tfrac{3}{4}\pi, \quad
\gamma + \delta = \pi.
\end{align*}
Then the tile is convex. So Lemma \ref{ExchLem} implies $\alpha < \beta$ and $\gamma < \delta$. By $\gamma+\delta =\pi$, we get $\delta > \frac{1}{2}\pi > \gamma$ and $\cos \delta < 0$ and $\alpha > \gamma$. Then Lemma \ref{b-criteria1} implies $b < \pi$. By \eqref{Coolsaet-Id},
\begin{align*}
\tan \delta = - \tfrac{ 2\sin \frac{3}{8} \pi }{ \sin \frac{3}{8} \pi - \cos \frac{ 3}{8}\pi } =  -(2+\sqrt{2}).
\end{align*} 
By \eqref{ca3Ang} and \eqref{TrigEqcb}, we get
\begin{align*}
\cos a &=  \tfrac{1}{2}\sqrt{2}, \\
\cos b &= \tfrac{1}{4} (2\sqrt{2} - 1).
\end{align*}
Lemma \ref{AEQuadExists} implies that the quadrilateral exists. By $b < \pi$, the equation for $\cos b$ uniquely determines $b$. By $a,b<\pi$ and every angle $<\pi$, Lemma \ref{SimpQuadLem} implies that the quadrilateral is simple.
\end{proof}

\subsection{Tile \boldmath{$a^3b$} and Vertex \boldmath{$\alpha\gamma^2$} or \boldmath{$\alpha\delta^2$}} \label{Subsec-Geom-alga2-alde2}

\begin{prop}\label{Geom-AlG2-Be2De2} The tiling for $\AVC \equiv \{ \alpha\gamma^2, \beta^2\delta^2, \alpha^{\frac{f}{4}} \}$ exists only for $f=16$ and it is $S2$. The values of angles and edges are given in Table \ref{SpecialTilingData1}.
\end{prop}

\begin{proof} The angle sum system gives
\begin{align*}
\alpha = \tfrac{8}{f}\pi, \quad
\beta + \delta = \pi, \quad
\gamma = ( 1 - \tfrac{4}{f} )\pi.
\end{align*}
By $\beta + \delta = \pi$, we get $\delta < \pi$. Then by Lemma \ref{TriQuadLem}, we get $\alpha \neq \gamma$. By $\alpha\gamma^2$, this implies $\frac{f}{4}\ge4$ in $\alpha^{\frac{f}{4}}$. So $f\ge16$ and every angle $<\pi$. 

By convexity and $\alpha < \gamma$, Lemma \ref{ATriLem} implies $\beta<\delta$. Then $\beta + \delta = \pi$ implies $\beta < \frac{1}{2}\pi < \delta$.

Assume $f \ge 20$. We get $\alpha \le \frac{2}{5}\pi$ and $\gamma \ge \frac{4}{5}\pi$. By convexity, Lemma \ref{LunEstLem} implies $\gamma + \delta < \pi + \beta$. Combined with $\beta + \delta = \pi$ and $\gamma \ge \frac{4}{5}\pi$, we get $\frac{4}{5}\pi + \pi - \beta < \pi + \beta$ which gives $\beta > \frac{2}{5}\pi \ge \alpha$ and $\delta < \frac{3}{5}\pi$. By $\alpha < \beta$, Lemma \ref{ExchLem} implies $\gamma < \delta$, which means $\frac{4}{5}\pi < \frac{3}{5}\pi$, a contradiction. Hence $f=16$ and
\begin{align*}
\alpha = \tfrac{1}{2}\pi, \quad
\beta + \delta = \pi, \quad
\gamma =\tfrac{3}{4}\pi.
\end{align*}
The solution to \eqref{Coolsaet-Id} for $\frac{1}{2}\pi < \delta < \pi$ is
\begin{align*} 
\cos \delta =  \tfrac{1}{2}( 1 - \sqrt{2} ), \quad \sin \delta = \tfrac{ 1 }{2} ( 1+2\sqrt{2} )^{\frac{1}{2}}.
\end{align*}
By \eqref{ca3Ang} and \eqref{TrigEqcb},
\begin{align*}
\cos a &= (  \tfrac{1}{7} (2\sqrt{2} - 1 ) )^{\frac{1}{2}}, \\
\cos b &= ( \tfrac{1}{7}( 22\sqrt{2} - 25) )^{\frac{1}{2}}.
\end{align*}
Lemma \ref{AEQuadExists} implies that the quadrilateral exists. We also have $\cos(\beta - \delta) + \cos \gamma > 0$. Then Lemma \ref{b-criteria2} implies $b<\pi$ and hence $\cos b$ uniquely determines $b$. By $a,b<\pi$ and every angle $<\pi$, Lemma \ref{SimpQuadLem} implies that the quadrilateral is simple. 
\end{proof}

\begin{prop}\label{Geom-AlGa2-AlBeDe2} The tilings for $\AVC \equiv \{ \alpha\gamma^2, \alpha\beta\delta^2, \beta^{\frac{f}{4}} \}$ exist only for $f=16$ and they are $S3, S'3$. The values of angles and edges are given in Table \ref{SpecialTilingData1}.
\end{prop}

\begin{proof} The degree $\ge3$ vertex $\beta^{\frac{f}{4}}$ implies $f \equiv 0 \mod 4$ and $f\ge12$. By $\alpha\gamma^2$, we have $\gamma < \pi$. The angle sum system gives 
\begin{align*}
\alpha = 2\pi - 2\gamma, \quad
\beta = \tfrac{8}{f}\pi, \quad
\gamma = \delta + \tfrac{4}{f}\pi.
\end{align*}
We have $\gamma > \delta$. Then Lemma \ref{ExchLem} implies $\alpha > \beta$. The angles for $f=16$ are Type I solution to \eqref{Coolsaet-Id}.

We first consider $f\neq16$. By \eqref{Coolsaet-Id} and $0 < \gamma < \pi$, we have $\sin \gamma > 0$ and 
\begin{align}\label{tanGa}
\tan \gamma = \tfrac{ 2 ( \cos \frac{1}{2}\beta - 1 )\sin \frac{1}{2}\beta }{ \cos \beta }.
\end{align}
For $f>16$, we get $\beta < \frac{1}{2}\pi$. Then \eqref{tanGa} implies $\tan \gamma < 0$. This means $\tfrac{1}{2}\pi < \gamma < \pi$. By $\gamma>\frac{1}{2}\pi$ and $\alpha\gamma^2$, we get $\alpha < \pi$. Hence the tile is convex. Then Lemma \ref{LunEstLem} implies $\beta + \pi > \gamma + \delta$. This implies $\gamma < ( \frac{1}{2} + \frac{6}{f} )\pi$. 

Assume $f\ge 36$. Then $\gamma < ( \frac{1}{2} + \frac{6}{f} )\pi \le \frac{2}{3}\pi$ and $\alpha\gamma^2$ imply $\alpha > \frac{2}{3}\pi > \gamma $. By $\pi>\gamma>\delta$, Lemma \ref{ATriLem} implies $\beta > \delta$ which means $ \gamma < \frac{12}{f}\pi$, which implies $f<24$, a contradiction. 

If $20 \le f \le 32$, we have $0 < \beta \le \frac{2}{5}\pi$. By $\alpha\gamma^2$, we have $\tan\gamma=-\tan \frac{1}{2}\alpha$. Then by \eqref{tanGa} and $0 < \beta \le \frac{2}{5}\pi$, we get $\tan \frac{1}{2}\alpha \le \tan \frac{1}{2}\beta$, which implies $\alpha \le \beta$, contradicting $\alpha > \beta$. 

If $f=12$, then $\beta = \frac{2}{3}\pi$ and $\delta = \gamma - \frac{1}{3}\pi$. Then \eqref{tanGa} gives $\gamma = \frac{1}{3}\pi$. This implies $\delta = 0$, a contradiction.

We conclude $f=16$ and $\beta^4$ is a vertex. The angle sum system gives
\begin{align*}
\alpha = \pi, \quad
\beta = \tfrac{1}{2}\pi, \quad
\gamma = \tfrac{1}{2}\pi, \quad
\delta = \tfrac{1}{4}\pi.
\end{align*}
As $\alpha = \pi$, the quadrilateral is in fact an isosceles triangle $\triangle BCD$ with the top angle $\delta=\frac{1}{4}\pi$ at the north pole and base angles $\beta = \gamma = \frac{1}{2} \pi$ at the equator. Hence
\begin{align*}
a = \tfrac{1}{4}\pi, \quad 
b = \tfrac{1}{2}\pi. 
\end{align*}
This means that $\triangle BCD$ is half of a face of the regular octahedron. The tilings are effectively non-edge-to-edge tilings by congruent isosceles triangles, which serve as examples to the isosceles triangle with angles $(\tfrac{4}{f}\pi, \tfrac{1}{2}\pi, \tfrac{1}{2}\pi)$ in \cite[Table 1]{da}.
\end{proof}

\begin{prop}\label{Geom-AlDe2-AlBeGa2} The tilings for $\AVC \equiv  \{ \alpha\delta^2, \alpha\beta\gamma^2, \beta^{\frac{f}{4}} \}$ exist only for $f=12, 16$ and they are $S_{12}1, S_{16}1$. The values of angles and edges are listed in Table \ref{SpecialTilingData1}.
\end{prop}

\begin{proof} The degree $\ge3$ vertex $\beta^{\frac{f}{4}}$ implies $f \equiv 0 \mod 4$ and $f\ge12$. By $\alpha\delta^2, \alpha\beta\gamma^2$, we have $\gamma, \delta < \pi$. The angle sum system gives 
\begin{align*}
\alpha = 2\pi - 2\delta, \quad
\beta = \tfrac{8}{f}\pi, \quad
\gamma = \delta - \tfrac{4}{f}\pi.
\end{align*}

By $\gamma < \delta$ and Lemma \ref{ExchLem}, we get $\alpha < \beta$ and $\delta > ( 1 - \frac{4}{f} )\pi$. Then we have $( 1 - \frac{4}{f} )\pi < \delta < \pi$. By $f\ge12$, we have every angle $<\pi$. By convexity, $C$ lies outside the isosceles triangle $\triangle ABD$ in Figure \ref{ConvexAEQuad}. Then we have $\alpha+2\beta>\alpha+2\theta>\pi$. By $\alpha < \beta$, this implies $\beta > \frac{1}{3}\pi$. So we get $f<24$. This means $f=12, 16, 20$.

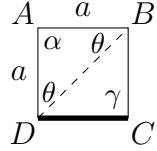
\begin{figure}[htp] 
\centering
\begin{tikzpicture}

\begin{scope}[] 

\draw
	(0,0) -- (0,1.2) -- (1.2,1.2) -- (1.2, 0.0);

\draw[line width=2]
	(0,0) -- (1.2,0);

\draw[dashed]
	(0,0) -- (1.2,1.2) 
;

\node at (-0.2, -0.2) {$D$};
\node at (-0.2, 1.4) {$A$};
\node at (1.4, 1.4) {$B$};
\node at (1.4, -0.2) {$C$};

\node at (0.2,1) {\small $\alpha$};
\node at (0.8,1) {\small $\theta$};
\node at (1,0.25) {\small $\gamma$};
\node at (0.15,0.35) {\small $\theta$};

\node at (-0.25, 0.6) {$a$};
\node at (0.6, 1.45) {$a$};

\end{scope}

\end{tikzpicture}
\caption{Convex almost equilateral quadrilateral and $\triangle ABD$}
\label{ConvexAEQuad}
\end{figure}

By \eqref{Coolsaet-Id}, we get
\begin{align} \label{AlDe2-Coolsaet}
\sin \delta \sin ( \delta - \tfrac{4}{f}\pi ) = - \sin \tfrac{4}{f}\pi \sin ( 2\delta - \tfrac{4}{f}\pi ).
\end{align}
Meanwhile, we have $\pi + \beta - \gamma - \delta = (1+\frac{12}{f})\pi - 2\delta$. 

For $f=20$ and $( 1 - \frac{4}{f} )\pi < \delta < \pi$, there is no solution to \eqref{AlDe2-Coolsaet}. 

For $f=12, 16$ and $( 1 - \frac{4}{f} )\pi < \delta < \pi$, the solutions to \eqref{AlDe2-Coolsaet} are 
\begin{align*}
&f=12,&
&\cos \delta =  -\tfrac{1}{4}\sqrt{10},&
&\delta \approx 0.7902\pi;& \\
&f=16,&
&\tan \delta = 2 - \sqrt{5} - (7 - 3 \sqrt{5})^{\frac{1}{2}},&
&\delta \approx 0.7898\pi.&
\end{align*}
By \eqref{ca3Ang} and \eqref{TrigEqcb},
\begin{align*}
&f=12,&
&\cos a = \tfrac{2}{3}\sqrt{5} - 1,& \\
&& &\cos b = 3 \sqrt{5} - 6;& \\
&f=16,& 
&\cos a = \tfrac{1}{2}(-3 - \sqrt{2} + \sqrt{5} + \sqrt{10} ),& \\
&&&\cos b = -9 - 6\sqrt{2} + 4\sqrt{5} + 3\sqrt{10}.&
\end{align*}
In both cases, Lemma \ref{AEQuadExists} implies that the quadrilateral exists. 

Meanwhile, for $f=12,16$, we have $\pi + \beta - \gamma - \delta = (1+\frac{12}{f})\pi - 2\delta > 0$. Lemma \ref{b-criteria2} implies $b<\pi$. So $\cos b$ uniquely determines $b$ and Lemma \ref{SimpQuadLem} implies that the quadrilateral is simple.
\end{proof}

\begin{prop} The tiling for $\AVC \equiv \{ \alpha\delta^2, \alpha\beta^3, \gamma^3\delta, \alpha^2\beta\gamma^2 \}$ exists and it is $S6$. The values of edges and angles are listed in Table \ref{SpecialTilingData2}. 
\end{prop}

\begin{proof} The angle sum system gives
\begin{align*}
f=36, \quad
\alpha=\tfrac{1}{3}\pi, \quad 
\beta=\tfrac{5}{9}\pi, \quad 
\gamma=\tfrac{7}{18}\pi, \quad 
\delta=\tfrac{5}{6}\pi.
\end{align*}
The angles are (Type II) solutions to \eqref{Coolsaet-Id}. By \eqref{ca3Ang} and \eqref{TrigEqcb}, 
\begin{align*}
&\cos a = 4 \cos \tfrac{1}{9}\pi - 3, \\
&\cos b = 6 \cos \tfrac{1}{9}\pi + 2 \sqrt{3} \sin \tfrac{1}{9}\pi - 3 \sqrt{3} \tan \tfrac{1}{9}\pi  - 4.
\end{align*}
Then Lemma \ref{AEQuadExists} implies that the quadrilateral exists. Since every angle $<\pi$ and $\pi + \beta > \gamma + \delta$, Lemma \ref{b-criteria2} implies $b<\pi$. So $\cos b$ uniquely determines $b$. 
In fact, $\cos b$ is the biggest root to $x^3 + 39 x^2 + 39 x - 71=0$. Lemma \ref{SimpQuadLem} implies that the quadrilateral is simple. 
\end{proof}

\subsection{Tiles \boldmath{$a^2bc, a^3b$} and Vertex $\alpha\gamma\delta$} \label{Subsec-Geom-algade}

It remains to discuss the tilings with $\alpha\gamma\delta$ as a vertex, i.e., the earth map tilings $E$ and their flip modifications $E^{\prime}, E^{\prime\prime}$ and rearrangement $E^{\prime\prime\prime}$. We also include the discussion of the earth map tilings by congruent $a^2bc$ and show the relation between these two types of earth map tilings via edge reduction.

As seen in \cite{cl, cly}, there is a family of earth map tilings by the $a^2bc$ quadrilateral in Figure \ref{StdQuad} and $\AVC \equiv \{ \beta\gamma\delta, \alpha^{\frac{f}{2}} \}$. If $c=a$, then we obtain $E$ in the first picture of Figure \ref{EMT}.

The existence of $a^2bc$ quadrilateral in Figure \ref{StdQuad} with area $\alpha = \frac{4}{f}\pi$ and distinct edges $a,b,c$ is guaranteed by the next proposition. Note that, by the quadrilateral angle sum, $\alpha = \frac{4}{f}\pi$ is equivalent to $\beta+ \gamma + \delta = 2\pi$. Similarly, $\beta = \frac{4}{f}\pi$ is equivalent to $\alpha + \gamma + \delta = 2\pi$ in $a^3b$.

The following proposition is a result of the proof of \cite[Proposition 29]{cly}. 

\begin{prop}[] \label{a2bcQuad-Area} If $\alpha = \frac{4}{f}\pi$, for $f\ge6$, there exists $a \in (0, \pi)$ and one-parameter family of simple $a^2bc$ quadrilaterals with distinct edges $a,b,c$ and area $\alpha$. Hence tilings exist for $a^2bc, \AVC \equiv \{ \beta\gamma\delta, \alpha^{\frac{f}{2}} \}$ for each $f\ge6$.  
\end{prop}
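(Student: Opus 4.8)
The plan is to split the statement into a combinatorial assembly step and an analytic tile-existence step, treating the latter through the explicit trigonometric criterion of Lemma \ref{a2bcExistLem}. First I would record that the spherical area of a quadrilateral equals its angle excess $\alpha+\beta+\gamma+\delta-2\pi$, so under the standing requirement $\alpha=\tfrac{4}{f}\pi$ the two conditions ``area $=\alpha$'' and ``$\beta+\gamma+\delta=2\pi$'' coincide (this is the content of \eqref{QuadSum} for a single tile). An edge-to-edge tiling realizing $\AVC\equiv\{\beta\gamma\delta,\alpha^{\frac{f}{2}}\}$ is built from $f$ congruent copies of one such tile grouped into $\tfrac{f}{2}$ timezones wrapped around two antipodal poles, as in the second picture of Figure \ref{EMT}: at each pole $\tfrac{f}{2}$ copies of $\alpha$ close up because $\tfrac{f}{2}\alpha=2\pi$, and at each non-polar vertex one $\beta$, one $\gamma$ and one $\delta$ meet, with the surrounding edges $a,c,b$ matching in the cyclic order forced by the tile. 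This matching needs no extra numerical relation, so the whole problem reduces to producing, for every $f\ge6$, a one-parameter family of simple $a^2bc$ quadrilaterals with $\alpha=\tfrac{4}{f}\pi$, $\beta+\gamma+\delta=2\pi$ and pairwise distinct edges.

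For the tile I would fix the apex angle $\alpha=\tfrac{4}{f}\pi$ (note $f\ge6$ gives $\alpha\le\tfrac{2}{3}\pi<\pi$) and anchor the construction at the edge reduction $c=a$, where the tile degenerates to an almost equilateral quadrilateral. Existence of such an anchor tile is self-contained: up to the $\alpha\leftrightarrow\beta,\gamma\leftrightarrow\delta$ symmetry it is governed by Lemma \ref{AEQuadExists}, so I choose a triple $(\beta,\gamma,\delta)$ with $\beta+\gamma+\delta=2\pi$ satisfying \eqref{Coolsaet-Id} and yielding $\lvert\cos a\rvert<1$ in \eqref{ca3Ang}, which fixes $a\in(0,\pi)$. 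I then open $c$ away from $a$ using Lemma \ref{a2bcExistLem} as the existence engine: regarding the left-hand side of \eqref{TrigEqca} as a quadratic in $\cos a$, I solve for a root with $\lvert\cos a\rvert<1$, and equations \eqref{TrigEqcb}--\eqref{TrigEqsc} return $b,c\in(0,2\pi]$, after which Lemma \ref{a2bcExistLem} guarantees that the quadrilateral closes up. Taking $a$ (or equivalently $\delta$) as the parameter and letting $(\beta,\gamma,\delta)$ move along the curve cut out by \eqref{TrigEqca} together with $\beta+\gamma+\delta=2\pi$ produces the family; openness of the condition $c\neq a$ keeps the edges distinct on a whole interval. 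Simplicity would then follow from Lemma \ref{SimpQuadLem} once I check $a,b,c<\pi$ and at least three angles $<\pi$: the bound $\alpha<\pi$ is automatic, I would arrange all of $\beta,\gamma,\delta<\pi$, $a<\pi$ is built in, and $b,c<\pi$ I would read off from the signs of \eqref{AEsb} and \eqref{TrigEqsc} on the chosen range.

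The main obstacle is the analytic control of the explicit trigonometric data uniformly in $f$: I must show that \eqref{TrigEqca} genuinely has a root $\cos a\in(-1,1)$ and that the resulting $b,c$ land in $(0,\pi)$ and stay distinct, throughout an open parameter interval and for every $f\ge6$ simultaneously. I expect to handle this by continuity seeded at the $c=a$ slice, where everything is already under control from Lemma \ref{AEQuadExists}: at that configuration the discriminant of \eqref{TrigEqca} is positive, $a,b,c\in(0,\pi)$, and the tile is simple, and all of these are open conditions that persist under a small deformation opening $c$ away from $a$. The one genuinely new point beyond \cite[Proposition 29]{cly} is checking that the deformation can be carried out while keeping $\alpha=\tfrac{4}{f}\pi$ fixed and $\beta+\gamma+\delta=2\pi$, i.e.\ that the constraint locus is non-degenerate (its tangent is nonzero) at the anchor; this is a rank computation on the differential of the map $(\beta,\gamma,\delta,a)\mapsto(\text{LHS of }\eqref{TrigEqca},\,\beta+\gamma+\delta)$, which I would verify by differentiating the explicit formulas rather than by any abstract argument.
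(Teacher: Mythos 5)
Your overall architecture (assemble the earth map tiling combinatorially, then produce the tile analytically via Lemma \ref{a2bcExistLem}) is coherent, and the assembly half is fine. But the analytic half, which is the actual content of the proposition, has two genuine gaps. First, the anchor tile is never produced: you say you would ``choose a triple $(\beta,\gamma,\delta)$ with $\beta+\gamma+\delta=2\pi$ satisfying \eqref{Coolsaet-Id} and yielding $\lvert\cos a\rvert<1$ in \eqref{ca3Ang}'', but for each $f$ this is a nontrivial solvability claim about a transcendental equation under a linear constraint, and nothing in your argument exhibits even one solution; Lemma \ref{AEQuadExists} only converts such a solution into a quadrilateral, it does not supply one. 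Second, your anchor lies exactly on the degenerate locus $c=a$, so ``openness of the condition $c\neq a$'' gives you nothing at the anchor: you must show that the solution set of $\{\eqref{TrigEqca},\ \beta+\gamma+\delta=2\pi\}$ (with $\alpha$ fixed) is a curve along which $c-a$ is not identically zero, i.e.\ that the curve is transverse to $\{c=a\}$ rather than contained in it. The rank computation you defer to the end is therefore not a finishing touch but the heart of the proof, and it is compounded by the side conditions you also need to propagate ($\gamma\notin\mathbb{Z}\pi$ for Lemma \ref{a2bcExistLem}, $b,c<\pi$ from the signs of \eqref{AEsb} and \eqref{TrigEqsc}, and three angles $<\pi$ for Lemma \ref{SimpQuadLem}).

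The paper avoids all of this by building the tile synthetically rather than by perturbing a trigonometric identity. One takes the lune with angle $\alpha$ at antipodal points $A,A^{\ast}$, places $B$ and $D$ at distance $a$ from $A$ on the two boundary arcs, and places $C$ at distance $a$ from $A^{\ast}$; the key observation is that the area condition is \emph{equivalent} to $AC=\pi-a$, so area $=\alpha$ holds automatically for every position of $C$ on that circle. The position of $C$ inside the lune is the one free parameter, simplicity and the range of $a$ are read off from the containment in the lune (Lemma \ref{a2bc-aEdge-Lem} and Figure \ref{ALunes}), and the three edge reductions $b=c$, $c=a$, $b=a$ each occur at a single explicit parameter value, so distinctness holds on the complement of three points. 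If you want to salvage your route, the minimal fix is to replace both the anchor and the deformation by this direct construction; otherwise you must actually carry out the existence argument for the anchor and the transversality computation, neither of which is shorter than the lune argument.
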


The quadrilateral in the proposition is exactly a simple $\square ABCD$, satisfying $AB=AD=a$ and $\angle A = \alpha$ and area $=\alpha$. This quadrilateral can take various shapes, illustrated in Figure \ref{ALunes}. We remark that the area $=\alpha$ is equivalent to $AC=\pi - a$, which is also equivalent to $A^{\ast}C=a$. For quadrilateral in the first, third and fourth picture, by Lemma \ref{a2bc-aEdge-Lem} the range of $a$ is $(0, \frac{1}{2}\pi)$ whereas for the quadrilateral in the second picture the range of $a$ is $(\frac{1}{2}\pi, \pi)$. For the first two pictures, $\angle C = \pi$ if and only if $a = \frac{1}{2}\pi$. We remark that the discussion of the shapes is independent of tilings and $\alpha$ can take any value in $(0,\pi)$.

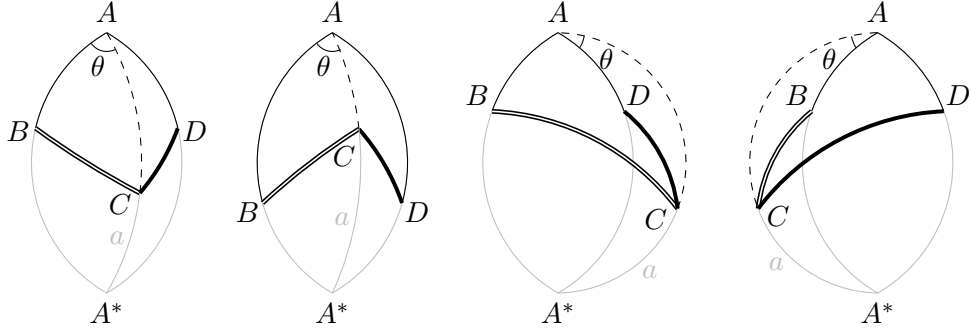
\begin{figure}[htp] 
\centering
\begin{tikzpicture}[scale=1]


\begin{scope}[] 

\tikzmath{
\r=2; \R=sqrt(3);
\a=120; \aa=\a/2;
\xP=0; \yP=\R;
\aOP=64; 
\aPOne=\aOP;
\mPOne=tan(90+\aPOne);
\aPTwo=\aOP;
\mPTwo=tan(-90-\aPTwo);
\xCQ=-3; \yCQ=0;
\xCOne = sqrt(\r^2-\R^2); \yCOne=0;
\xCTwo =-sqrt(\r^2-\R^2); \yCTwo=0;
}

\coordinate (O) at (0,0);
\coordinate (C1) at (\xCOne,0);
\coordinate (C2) at (\xCTwo,0);


	(O) circle (\R);
	
	(C2) circle (\r)
	(C1) circle (\r);

\draw[gray!50]
	([shift={(-\aa:\r)}]\xCTwo,0) arc (-\aa:\aa:\r);

\draw[gray!50]
	([shift={(-\aa+180:\r)}]\xCOne,0) arc (-\aa+180:\aa+180:\r);

\draw[] ([shift=(90:\R)]215:0.25) arc (215:296:0.25)
;

\pgfmathsetmacro{\xPOne}{ ( 2*((\mPOne)*(\yCOne)+(\xCOne)) - sqrt( ( 2*( (\mPOne)*(\yCOne)+\xCOne )  )^2 - 4*( (\mPOne)^2 + 1 )*( (\xCOne)^2 + (\yCOne)^2 - \r^2 ) ) )/( 2*( (\mPOne)^2+1 ) ) };
\pgfmathsetmacro{\yPOne}{ \mPOne*\xPOne };

\pgfmathsetmacro{\xPTwo}{ ( 2*((\mPTwo)*(\yCTwo)+(\xCTwo) ) + sqrt( ( 2*( (\mPTwo)*\yCTwo+\xCTwo )  )^2 - 4*( (\mPTwo)^2 + 1 )*( (\xCTwo)^2 + (\yCTwo)^2 - \r^2 ) ) )/( 2*( (\mPTwo)^2+1 ) ) };
\pgfmathsetmacro{\yPTwo}{ \mPTwo*\xPTwo };

\pgfmathsetmacro{\xPPOne}{ ( 2*((\mPOne)*(\yCOne)+(\xCOne)) + sqrt( ( 2*( (\mPOne)*(\yCOne)+\xCOne )  )^2 - 4*( (\mPOne)^2 + 1 )*( (\xCOne)^2 + (\yCOne)^2 - \r^2 ) ) )/( 2*( (\mPOne)^2+1 ) ) };
\pgfmathsetmacro{\yPPOne}{ \mPOne*\xPPOne };

\pgfmathsetmacro{\xPPTwo}{ ( 2*((\mPTwo)*(\yCTwo)+(\xCTwo) ) - sqrt( ( 2*( (\mPTwo)*\yCTwo+\xCTwo )  )^2 - 4*( (\mPTwo)^2 + 1 )*( (\xCTwo)^2 + (\yCTwo)^2 - \r^2 ) ) )/( 2*( (\mPTwo)^2+1 ) ) };
\pgfmathsetmacro{\yPPTwo}{ \mPTwo*\xPPTwo };

\pgfmathsetmacro{\dPOneP}{ sqrt( (\xPOne - \xP)^2 + (\yPOne - \yP)^2 ) };
\pgfmathsetmacro{\aCPOne}{ acos( (2*\r^2 - \dPOneP^2 )/(2*\r^2) ) };
\pgfmathsetmacro{\l}{ \aCPOne/\a };
\pgfmathsetmacro{\rQ}{ sqrt( \R^2 + (\xCQ)^2  )  };
\pgfmathsetmacro{\aPCQ}{ acos( (\xCQ)/(\rQ) ) }
\pgfmathsetmacro{\aQ}{  -( 360 - 2*\aPCQ )*(1-\l) };

\coordinate (CQ) at (\xCQ, \yCQ);


\coordinate (P) at (0,{sqrt(3)});

\coordinate[rotate around={\aQ:(CQ)}] (Q) at (P);


\coordinate (PP) at (0,{-sqrt(3)});

\coordinate (P1) at (\xPOne, \yPOne);
\coordinate (P2) at (\xPTwo, \yPTwo);

\coordinate (PP1) at (\xPPOne,\yPPOne);

\coordinate (PP2) at (\xPPTwo,\yPPTwo);



	(P1) -- (PP1)
	(P2) -- (PP2)
;

\arcThroughThreePoints[]{P}{PP}{P1};
\arcThroughThreePoints[]{P2}{PP}{P};

\arcThroughThreePoints[double, line width=0.6]{P1}{PP1}{Q};
\arcThroughThreePoints[line width=1.5]{Q}{PP2}{P2};

\arcThroughThreePoints[gray!50]{PP}{P}{Q}; 
\arcThroughThreePoints[dashed]{Q}{PP}{P};

\node at (90: 2.0) {\small $A$};
\node at (270: 2.0) {\small $A^{\ast}$};
\node at (160: 1.25) {\small $B$};
\node at (20: 1.25) {\small $D$};
\node at (288: 0.58) {\small $C$};

\node at (95:1.3) {\small $\theta$};
\node at (278:1.0) {\textcolor{gray!50}{\small $a$}};

\end{scope}


\begin{scope}[xshift=3.0cm] 

\tikzmath{
\r=2; \R=sqrt(3);
\a=120; \aa=\a/2;
\xP=0; \yP=\R;
\aOP=120; 
\aPOne=\aOP;
\mPOne=tan(90+\aPOne);
\aPTwo=\aOP;
\mPTwo=tan(-90-\aPTwo);
\xCQ=-4.5; \yCQ=-0.2;
\xCOne = sqrt(\r^2-\R^2); \yCOne=0;
\xCTwo =-sqrt(\r^2-\R^2); \yCTwo=0;
}

\coordinate (O) at (0,0);
\coordinate (C1) at (\xCOne,0);
\coordinate (C2) at (\xCTwo,0);


	(O) circle (\R);
	
	(C2) circle (\r)
	(C1) circle (\r);

\draw[gray!50]
	([shift={(-\aa:\r)}]\xCTwo,0) arc (-\aa:\aa:\r);

\draw[gray!50]
	([shift={(-\aa+180:\r)}]\xCOne,0) arc (-\aa+180:\aa+180:\r);

\draw[] ([shift=(90:\R)]215:0.25) arc (215:296:0.25)
;

\pgfmathsetmacro{\xPOne}{ ( 2*((\mPOne)*(\yCOne)+(\xCOne)) - sqrt( ( 2*( (\mPOne)*(\yCOne)+\xCOne )  )^2 - 4*( (\mPOne)^2 + 1 )*( (\xCOne)^2 + (\yCOne)^2 - \r^2 ) ) )/( 2*( (\mPOne)^2+1 ) ) };
\pgfmathsetmacro{\yPOne}{ \mPOne*\xPOne };

\pgfmathsetmacro{\xPTwo}{ ( 2*((\mPTwo)*(\yCTwo)+(\xCTwo) ) + sqrt( ( 2*( (\mPTwo)*\yCTwo+\xCTwo )  )^2 - 4*( (\mPTwo)^2 + 1 )*( (\xCTwo)^2 + (\yCTwo)^2 - \r^2 ) ) )/( 2*( (\mPTwo)^2+1 ) ) };
\pgfmathsetmacro{\yPTwo}{ \mPTwo*\xPTwo };

\pgfmathsetmacro{\xPPOne}{ ( 2*((\mPOne)*(\yCOne)+(\xCOne)) + sqrt( ( 2*( (\mPOne)*(\yCOne)+\xCOne )  )^2 - 4*( (\mPOne)^2 + 1 )*( (\xCOne)^2 + (\yCOne)^2 - \r^2 ) ) )/( 2*( (\mPOne)^2+1 ) ) };
\pgfmathsetmacro{\yPPOne}{ \mPOne*\xPPOne };

\pgfmathsetmacro{\xPPTwo}{ ( 2*((\mPTwo)*(\yCTwo)+(\xCTwo) ) - sqrt( ( 2*( (\mPTwo)*\yCTwo+\xCTwo )  )^2 - 4*( (\mPTwo)^2 + 1 )*( (\xCTwo)^2 + (\yCTwo)^2 - \r^2 ) ) )/( 2*( (\mPTwo)^2+1 ) ) };
\pgfmathsetmacro{\yPPTwo}{ \mPTwo*\xPPTwo };

\pgfmathsetmacro{\xPQRef}{\xPTwo};
\pgfmathsetmacro{\yPQRef}{\yPTwo};

\pgfmathsetmacro{\dPRefP}{ sqrt( (\xPQRef - \xP)^2 + (\yPQRef - \yP)^2 ) };
\pgfmathsetmacro{\aCPRef}{ acos( (2*\r^2 - (\dPRefP)^2 )/(2*\r^2) ) };
\pgfmathsetmacro{\l}{ \aCPRef/\a };
\pgfmathsetmacro{\rQ}{ sqrt( \R^2 + (\xCQ)^2  )  };
\pgfmathsetmacro{\aPCQ}{ acos( (\xCQ)/(\rQ) ) }
\pgfmathsetmacro{\aQ}{ -( 360 -  2*\aPCQ )*(1-\l) };

\coordinate (CQ) at (\xCQ, \yCQ);


\coordinate (P) at (0,\R);

\coordinate[rotate around={\aQ:(CQ)}] (Q) at (P);


\coordinate (PP) at (0,-\R);

\coordinate (P1) at (\xPOne, \yPOne);
\coordinate (P2) at (\xPTwo, \yPTwo);

\coordinate (PP1) at (\xPPOne,\yPPOne);

\coordinate (PP2) at (\xPPTwo,\yPPTwo);



	(P1) -- (PP1)
	(P2) -- (PP2)
;

\arcThroughThreePoints[]{P}{PP}{P1};
\arcThroughThreePoints[]{P2}{PP}{P};

\arcThroughThreePoints[double, line width=0.6]{Q}{PP1}{P1};
\arcThroughThreePoints[line width=1.5]{P2}{PP2}{Q};

\arcThroughThreePoints[gray!50]{PP}{P}{Q}; 
\arcThroughThreePoints[dashed]{Q}{PP}{P}; 

\node at (90: 2.0) {\small $A$};
\node at (270: 2.0) {\small $A^{\ast}$};
\node at (210: 1.3) {\small $B$};
\node at (-30: 1.3) {\small $D$};
\node at (30: 0.18) {\small $C$};

\node at (95:1.3) {\small $\theta$};
\node at (280:0.75) {\textcolor{gray!50}{\small $a$}};

\end{scope}


\begin{scope}[xshift=6cm]

\tikzmath{
\r=2; \R=sqrt(3);
\a=120; \aa=\a/2;
\xP=0; \yP=\R;
\aOP=52; 
\aPOne=\aOP;
\mPOne=tan(90+\aPOne);
\aPTwo=\aOP;
\mPTwo=tan(-90-\aPTwo);
\xCQ=1; \yCQ=0.7;
\xCOne = sqrt(\r^2-\R^2); \yCOne=0;
\xCTwo =-sqrt(\r^2-\R^2); \yCTwo=0;
}

\coordinate (O) at (0,0);
\coordinate (C1) at (\xCOne ,0);
\coordinate (C2) at (\xCTwo,0);


	(O) circle (\R);
	
	(C2) circle (\r)
	(C1) circle (\r);

\draw[gray!50]
	([shift={(-\aa:2)}]\xCTwo,0) arc (-\aa:\aa:2);

\draw[gray!50]
	([shift={(-\aa+180:\r)}]\xCOne,0) arc (-\aa+180:\aa+180:\r);

\draw[] ([shift=(90:\R)]325:0.35) arc (325:353:0.35)
;

\pgfmathsetmacro{\xPOne}{ ( 2*((\mPOne)*(\yCOne)+(\xCOne)) - sqrt( ( 2*( (\mPOne)*(\yCOne)+\xCOne )  )^2 - 4*( (\mPOne)^2 + 1 )*( (\xCOne)^2 + (\yCOne)^2 - \r^2 ) ) )/( 2*( (\mPOne)^2+1 ) ) };
\pgfmathsetmacro{\yPOne}{ \mPOne*\xPOne };

\pgfmathsetmacro{\xPTwo}{ ( 2*((\mPTwo)*(\yCTwo)+(\xCTwo) ) + sqrt( ( 2*( (\mPTwo)*\yCTwo+\xCTwo )  )^2 - 4*( (\mPTwo)^2 + 1 )*( (\xCTwo)^2 + (\yCTwo)^2 - \r^2 ) ) )/( 2*( (\mPTwo)^2+1 ) ) };
\pgfmathsetmacro{\yPTwo}{ \mPTwo*\xPTwo };

\pgfmathsetmacro{\xPPOne}{ ( 2*((\mPOne)*(\yCOne)+(\xCOne)) + sqrt( ( 2*( (\mPOne)*(\yCOne)+\xCOne )  )^2 - 4*( (\mPOne)^2 + 1 )*( (\xCOne)^2 + (\yCOne)^2 - \r^2 ) ) )/( 2*( (\mPOne)^2+1 ) ) };
\pgfmathsetmacro{\yPPOne}{ \mPOne*\xPPOne };

\pgfmathsetmacro{\xPPTwo}{ ( 2*((\mPTwo)*(\yCTwo)+(\xCTwo) ) - sqrt( ( 2*( (\mPTwo)*\yCTwo+\xCTwo )  )^2 - 4*( (\mPTwo)^2 + 1 )*( (\xCTwo)^2 + (\yCTwo)^2 - \r^2 ) ) )/( 2*( (\mPTwo)^2+1 ) ) };
\pgfmathsetmacro{\yPPTwo}{ \mPTwo*\xPPTwo };

\pgfmathsetmacro{\xPQRef}{\xPTwo};
\pgfmathsetmacro{\yPQRef}{\yPTwo};

\pgfmathsetmacro{\dPRefP}{ sqrt( (\xPQRef - \xP)^2 + (\yPQRef - \yP)^2 ) };
\pgfmathsetmacro{\aCPRef}{ acos( (2*\r^2 - (\dPRefP)^2 )/(2*\r^2) ) };
\pgfmathsetmacro{\l}{ \aCPRef/\a };
\pgfmathsetmacro{\rQ}{ sqrt( \R^2 + (\xCQ)^2  )  };
\pgfmathsetmacro{\aPCQ}{ acos( (\xCQ)/(\rQ) ) }
\pgfmathsetmacro{\aQ}{ ( 360 -  2*\aPCQ )*(1-\l) };

\coordinate (CQ) at (\xCQ, \yCQ);


\coordinate (P) at (0,\R);

\coordinate[rotate around={\aQ:(CQ)}] (Q) at (P);


\coordinate (PP) at (0,-\R);

\coordinate (P1) at (\xPOne, \yPOne);
\coordinate (P2) at (\xPTwo, \yPTwo);

\coordinate (PP1) at (\xPPOne,\yPPOne);

\coordinate (PP2) at (\xPPTwo,\yPPTwo);



	(P1) -- (PP1)
	(P2) -- (PP2)
;

\arcThroughThreePoints[]{P}{PP}{P1};
\arcThroughThreePoints[]{P2}{PP}{P};

\arcThroughThreePoints[double, line width=0.6]{Q}{PP1}{P1};
\arcThroughThreePoints[line width=1.5]{Q}{PP2}{P2};

\arcThroughThreePoints[dashed]{Q}{PP}{P}; 
\arcThroughThreePoints[gray!50]{PP}{P}{Q}; 

\node at (90: 2.0) {\small $A$};
\node at (270: 2.0) {\small $A^{\ast}$};
\node at (140: 1.4) {\small $B$};
\node at (40: 1.4) {\small $D$};
\node at (-30: 1.5) {\small $C$};



\node at (65:1.525) {\small $\theta$};
\node at (310:1.9) {\textcolor{gray!50}{\small $a$}};

\end{scope}


\begin{scope}[xshift=10.25 cm]

\tikzmath{
\r=2; \R=sqrt(3);
\a=120; \aa=\a/2;
\xP=0; \yP=\R;
\aOP=52; 
\aPOne=\aOP;
\mPOne=tan(90+\aPOne);
\aPTwo=\aOP;
\mPTwo=tan(-90-\aPTwo);
\xCQ=-1; \yCQ=0.7;
\xCOne = sqrt(\r^2-\R^2); \yCOne=0;
\xCTwo =-sqrt(\r^2-\R^2); \yCTwo=0;
}

\coordinate (O) at (0,0);
\coordinate (C1) at (\xCOne,0);
\coordinate (C2) at (\xCTwo,0);


	(O) circle (\R);
	
	(C2) circle (\r)
	(C1) circle (\r);

\draw[gray!50]
	([shift={(-\aa:\r)}]\xCTwo,0) arc (-\aa:\aa:\r);

\draw[gray!50]
	([shift={(-\aa+180:\r)}]\xCOne,0) arc (-\aa+180:\aa+180:\r);

\draw[] ([shift=(90:\R)]215:0.35) arc (215:188:0.35)
;

\pgfmathsetmacro{\xPOne}{ ( 2*((\mPOne)*(\yCOne)+(\xCOne)) - sqrt( ( 2*( (\mPOne)*(\yCOne)+\xCOne )  )^2 - 4*( (\mPOne)^2 + 1 )*( (\xCOne)^2 + (\yCOne)^2 - \r^2 ) ) )/( 2*( (\mPOne)^2+1 ) ) };
\pgfmathsetmacro{\yPOne}{ \mPOne*\xPOne };

\pgfmathsetmacro{\xPTwo}{ ( 2*((\mPTwo)*(\yCTwo)+(\xCTwo) ) + sqrt( ( 2*( (\mPTwo)*\yCTwo+\xCTwo )  )^2 - 4*( (\mPTwo)^2 + 1 )*( (\xCTwo)^2 + (\yCTwo)^2 - \r^2 ) ) )/( 2*( (\mPTwo)^2+1 ) ) };
\pgfmathsetmacro{\yPTwo}{ \mPTwo*\xPTwo };

\pgfmathsetmacro{\xPPOne}{ ( 2*((\mPOne)*(\yCOne)+(\xCOne)) + sqrt( ( 2*( (\mPOne)*(\yCOne)+\xCOne )  )^2 - 4*( (\mPOne)^2 + 1 )*( (\xCOne)^2 + (\yCOne)^2 - \r^2 ) ) )/( 2*( (\mPOne)^2+1 ) ) };
\pgfmathsetmacro{\yPPOne}{ \mPOne*\xPPOne };

\pgfmathsetmacro{\xPPTwo}{ ( 2*((\mPTwo)*(\yCTwo)+(\xCTwo) ) - sqrt( ( 2*( (\mPTwo)*\yCTwo+\xCTwo )  )^2 - 4*( (\mPTwo)^2 + 1 )*( (\xCTwo)^2 + (\yCTwo)^2 - \r^2 ) ) )/( 2*( (\mPTwo)^2+1 ) ) };
\pgfmathsetmacro{\yPPTwo}{ \mPTwo*\xPPTwo };

\pgfmathsetmacro{\xPQRef}{\xPTwo};
\pgfmathsetmacro{\yPQRef}{\yPTwo};

\pgfmathsetmacro{\dPRefP}{ sqrt( (\xPQRef - \xP)^2 + (\yPQRef - \yP)^2 ) };
\pgfmathsetmacro{\aCPRef}{ acos( (2*\r^2 - (\dPRefP)^2 )/(2*\r^2) ) };
\pgfmathsetmacro{\l}{ \aCPRef/\a };
\pgfmathsetmacro{\rQ}{ sqrt( \R^2 + (\xCQ)^2  )  };
\pgfmathsetmacro{\aPCQ}{ acos( (\xCQ)/(\rQ) ) }
\pgfmathsetmacro{\aQ}{ -(  2*\aPCQ )*(1-\l) };

\coordinate (CQ) at (\xCQ, \yCQ);


\coordinate (P) at (0,\R);

\coordinate[rotate around={\aQ:(CQ)}] (Q) at (P);


\coordinate (PP) at (0,-\R);

\coordinate (P1) at (\xPOne, \yPOne);
\coordinate (P2) at (\xPTwo, \yPTwo);

\coordinate (PP1) at (\xPPOne,\yPPOne);

\coordinate (PP2) at (\xPPTwo,\yPPTwo);



	(P1) -- (PP1)
	(P2) -- (PP2)
;

\arcThroughThreePoints[]{P}{PP}{P1};
\arcThroughThreePoints[]{P2}{PP}{P};

\arcThroughThreePoints[double, line width=0.6]{P1}{PP1}{Q};
\arcThroughThreePoints[line width=1.5]{P2}{PP2}{Q};

\arcThroughThreePoints[dashed]{P}{PP}{Q}; 
\arcThroughThreePoints[gray!50]{Q}{P}{PP}; 

\node at (90: 2.0) {\small $A$};
\node at (270: 2.0) {\small $A^{\ast}$};
\node at (140: 1.4) {\small $B$};
\node at (40: 1.4) {\small $D$};
\node at (208: 1.5) {\small $C$};



\node at (115:1.525) {\small $\theta$};
\node at (225:1.9) {\textcolor{gray!50}{\small $a$}};

\end{scope}

\end{tikzpicture}
\caption{Shapes of the $a^2bc$ quadrilateral}
\label{ALunes}
\end{figure}

For the quadrilaterals in the first two pictures of Figure \ref{ALunes}, edge reduction $b=c$ happens when $\theta = \frac{1}{2}\alpha$, and $c=a$ when $\cos \theta = \frac{\cos a}{1-\cos a}$ and $b=a$ when $\cos ( \alpha - \theta ) = \frac{\cos a}{1-\cos a}$. For the third tile, $b=c$ when $\theta =\pi - \frac{1}{2}\alpha$, and $c=a$ when $\cos(\alpha+\theta) = \frac{\cos a}{1-\cos a}$ and $b=a$ when $\cos \theta = \frac{\cos a}{1-\cos a}$. Discussion for the fourth picture is analogous.

\begin{prop}If $\beta = \frac{4}{f}\pi$, for $f\ge6$, there exists $a\in (0, \pi)$ and a simple almost equilateral quadrilateral with the distinct edges $a,b$ and area $\beta$. Moreover, tilings exist for the following from Tables \ref{Rat-AlGaDe-AVCs}, \ref{NonRat-AlGaDe-AVCs},
\begin{enumerate}
\item $E, \AVC \equiv \{ \alpha\gamma\delta, \beta^{\frac{f}{2}} \}$,
\item $E^{\prime}, \AVC \equiv \{ \alpha\gamma\delta,\alpha^m\beta^{n_1},\beta^{n}\gamma\delta \}, \{ \alpha\gamma\delta,\alpha^m,\beta^{n}\gamma\delta \}$,
\item $E^{\prime\prime}, \AVC \equiv \{ \alpha\gamma\delta,\alpha\beta^{n_1}, \gamma^k\delta^k \},  \{ \alpha\gamma\delta,\alpha\beta^{n_1}, \beta^{n_2}\gamma^k\delta^k \}$,
\item $E^{\prime\prime\prime}, \AVC \equiv \{ \alpha\gamma\delta, \gamma^3\delta, \alpha\beta^{\frac{f+2}{6}}, \alpha\beta^{\frac{f-4}{6}}\delta^2 \}$.
\end{enumerate}
Their corresponding edge and angle values are summarised in Tables \ref{EMTData1}, \ref{EMTData2}.
\end{prop}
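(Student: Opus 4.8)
The plan is to separate the two assertions of the proposition: first the \emph{existence of the tile} (a simple almost equilateral quadrilateral with $\beta=\tfrac{4}{f}\pi$ and $a\neq b$), and then the \emph{realisation of the four tiling families} $E,E^{\prime},E^{\prime\prime},E^{\prime\prime\prime}$. Essentially all of the genuine geometric work sits in the first assertion; once the tile is available, the second assertion is combinatorial bookkeeping resting on the $\AVC$ analysis already carried out in Propositions \ref{RatAlGaDeProp} and its non-rational counterpart, together with the layouts imported from \cite{cly}.

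For the tile I would not argue from scratch but reduce to the $a^2bc$ case settled in Proposition \ref{a2bcQuad-Area}. An almost equilateral quadrilateral is exactly an $a^2bc$ quadrilateral subject to the edge reduction $c=a$ (equivalently $b=a$), as recorded in the caption of Figure \ref{EMT}: writing the tile as $ABCD$ with $AB=BC=DA=a$ and $CD=b$, the vertex $B$ is the apex between the legs $BA,BC$, and the hypothesis $\beta=\tfrac{4}{f}\pi$ is precisely the area condition $\alpha+\gamma+\delta=2\pi$. Proposition \ref{a2bcQuad-Area}, applied with apex angle $\tfrac{4}{f}\pi$ (and using the $\alpha\leftrightarrow\beta$, $\gamma\leftrightarrow\delta$ symmetry to place the apex at $B$), yields for every $f\ge6$ an $a\in(0,\pi)$ and a one-parameter family of simple $a^2bc$ tiles of area $\tfrac{4}{f}\pi$ with distinct $a,b,c$, whose shapes are catalogued in Figure \ref{ALunes}. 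The reduction $c=a$ occurs inside this family at the parameter value $\cos\theta=\tfrac{\cos a}{1-\cos a}$ noted after Figure \ref{ALunes}, producing an almost equilateral tile; it is simple because it is a member of a simple $a^2bc$ family (so I avoid needing $\gamma<\pi$ and need not invoke Lemma \ref{SimpQuadLem} in the non-convex cases), and $a\neq b$ is inherited from $c=a\neq b$. As an independent check the tile can be produced directly: $\alpha+\gamma+\delta=2\pi$ together with Coolsaet's identity \eqref{Coolsaet-Id} cut out a one-parameter family of admissible angle data, for which \eqref{ca3Ang} determines $a$ with $|\cos a|<1$, whereupon Lemma \ref{AEQuadExists} guarantees existence.

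With the tile in hand I would treat the four families in turn. For $E$ the $\AVC$ $\{\alpha\gamma\delta,\beta^{f/2}\}$ imposes only $\beta=\tfrac{4}{f}\pi$ and $\alpha+\gamma+\delta=2\pi$, so every tile of the above family assembles into a sphere by the timezone construction of Figure \ref{EMT} (equivalently \cite[Figure~74]{cly}), giving $E$ for all $f\ge6$. For $E^{\prime}$ and $E^{\prime\prime}$ I would use that these arise from $E$ by reflecting $s$ consecutive timezones $\mathcal{T}_s$ across the geodesics $L^{\prime},L^{\prime\prime}$ of Figure \ref{EMT-Flips}; a reflection is an isometry sending tiles to congruent tiles, so the flipped configuration is automatically a metric tiling once its boundary vertices close up, and this closing-up is exactly the relation $\alpha=s\beta$ (for $E^{\prime}$) and $\gamma+\delta=s\beta$ (for $E^{\prime\prime}$) recorded in Tables \ref{Rat-AlGaDe-AVCs}, \ref{NonRat-AlGaDe-AVCs}. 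Such a relation pins the free parameter of the family to the angle values of Tables \ref{EMTData1}, \ref{EMTData2}, with edges read off from \eqref{ca3Ang}, \eqref{TrigEqcb}. The rearrangement $E^{\prime\prime\prime}$, valid when $q=\tfrac{f-4}{6}\in\mathbb{Z}_{\ge1}$, is handled identically through the layout \cite[Figures~75,~76]{cly}, its $\AVC$ fixing $\alpha=(\tfrac43-\tfrac{4}{3f})\pi$, $\gamma=(\tfrac23-\tfrac{2}{3f})\pi$, $\delta=\tfrac{2}{f}\pi$.

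The main obstacle I anticipate is confirming the geometric existence and simplicity of the tile for the \emph{non-convex} angle data occurring in the $E^{\prime},E^{\prime\prime}$ rows with $\gamma>\pi$, where Lemma \ref{SimpQuadLem} does not apply: one must place the required angle values in the correct shape class of Figure \ref{ALunes} and check that the $c=a$ (or $b=a$) reduction stays inside the simple $a^2bc$ family. I expect the reduction argument to absorb this cleanly, since both simplicity and the range $a\in(0,\pi)$ descend from Proposition \ref{a2bcQuad-Area}, leaving only the routine verification that the tabulated $\cos a,\cos b$ have modulus $<1$ and reproduce the listed edges. The flip and rearrangement steps carry no further geometric difficulty once isometry-invariance and the boundary angle relations are in place.
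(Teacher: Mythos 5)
Your reduction of the tile's existence to Proposition \ref{a2bcQuad-Area} is close in spirit to the remark the paper itself makes (that $E$ arises from the $a^2bc$ earth map tiling by the reduction $c=a$ after relabelling), but as a proof it has a concrete hole: that proposition produces a family with \emph{distinct} edges $a,b,c$, and the member with $c=a$ sits at the parameter value $\cos\vec{\theta}=\tfrac{\cos a}{1-\cos a}$, which is only attainable when $\left|\tfrac{\cos a}{1-\cos a}\right|\le 1$, i.e.\ $\cos a\le\tfrac12$, i.e.\ $a\ge\tfrac13\pi$. You never verify that the free choice of $a$ in Proposition \ref{a2bcQuad-Area} can be made in this range, nor that the corresponding $\vec{\theta}$ lies in the interval swept by the family for that $a$. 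This is precisely why the paper constructs the $a^3b$ tile directly in Figure \ref{BLunes}, derives $\alpha=\pi-\vec{\theta}$ and $\cos a=\tfrac{\cos\alpha}{\cos\alpha-1}$ from the lune-area and cosine-law arguments, and concludes $a\in[\tfrac13\pi,\tfrac12\pi)$ and $\alpha\in(\tfrac12\pi,\tfrac32\pi)$; without that computation your claim that ``the reduction occurs inside this family'' is unsupported.

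The larger gap is in the second half. The proposition asserts the specific parameter ranges and angle values of Tables \ref{EMTData1}, \ref{EMTData2}, and deriving them is the bulk of the paper's proof: the trichotomy of shapes in Figure \ref{BLunes} ($0\le\vec{\theta}\le\beta\iff\pi-\beta\le\alpha\le\pi$; $\vec{\theta}<0\iff\alpha>\pi$; $\vec{\theta}>\beta\iff\gamma>\pi$) combined with $\alpha=n\beta$ (for $E^{\prime}$) or $n\beta=\gamma+\delta$ and $k\le3$ (for $E^{\prime\prime}$) is what yields, e.g., $m\le3$, $n_1=\tfrac f2-mn$, and ranges such as $n\in(\tfrac f8,\tfrac f6-\tfrac13]$. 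Saying the flip relation ``pins the free parameter'' does not produce these constraints, and it also skips the degenerate configurations the paper must handle explicitly: $\alpha=\pi$ or $\gamma=\pi$ (the quadrilateral collapses to a triangle and the tiling becomes a non-edge-to-edge triangle tiling) and $\vec{\theta}=\tfrac12\beta$ (the rhombus $b=a$), all of which occur at admissible parameter values and affect whether a ``simple almost equilateral quadrilateral with distinct edges $a,b$'' actually exists for that $\AVC$. Your overall architecture (tile first, then assemble $E$ and modify by flips) matches the paper's, but these two omissions are where the real work lies.
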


Tilings with $\AVC \equiv \{ \alpha\gamma\delta, \beta^{\frac{f}{2}} \}$ in the proposition can be obtained by edge reduction $c=a$ of the tilings with $\AVC \equiv \{ \beta\gamma\delta, \alpha^{\frac{f}{2}} \}$ in Proposition \ref{a2bcQuad-Area} via the relabelling $\alpha \leftrightarrow \beta$ and $\gamma \leftrightarrow \delta$ and $(A,B,C,D) \rightarrow (B,A,D,C)$. We remark $a \in (0,\pi)$ for $f=6$ and $a \in [\frac{1}{3}\pi, \frac{1}{2}\pi)$ for $f\ge8$. In each case, choosing $a$ is equivalent to choosing $\alpha$, depending on the shape. 

\begin{proof} If $f=6$, then the existence of the tile in Proposition \ref{MinProp} is induced by projection of the cube onto the sphere. For $f\ge8$, recall that $\delta < \pi$ (Lemma \ref{AlGaDeLem}). Then after relabelling as mentioned before, we can see that the second picture in Figure \ref{ALunes} cannot happen and there are three shapes of quadrilateral $\square ABCD$ in Figure \ref{BLunes}. This means the tiles exist and $0 < a < \frac{1}{2}\pi$.

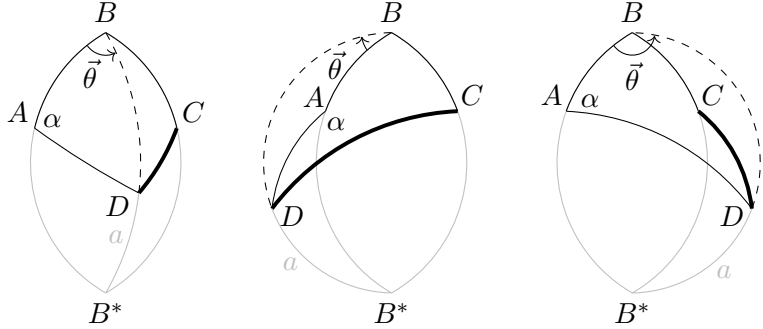
\begin{figure}[htp] 
\centering
\begin{tikzpicture}[scale=1]

\begin{scope}[] 

\tikzmath{
\r=2; \R=sqrt(3);
\a=120; \aa=\a/2;
\xP=0; \yP=\R;
\aOP=64; 
\aPOne=\aOP;
\mPOne=tan(90+\aPOne);
\aPTwo=\aOP;
\mPTwo=tan(-90-\aPTwo);
\xCQ=-3; \yCQ=0;
\xCOne = sqrt(\r^2-\R^2); \yCOne=0;
\xCTwo =-sqrt(\r^2-\R^2); \yCTwo=0;
\xCThree = sqrt(\r^2-\R^2); \yCThree=0;
}

\coordinate (O) at (0,0);
\coordinate (C1) at (\xCOne ,0);
\coordinate (C2) at (\xCTwo,0);


	(O) circle (\R);
	
	(C2) circle (\r)
	(C1) circle (\r);

\draw[gray!50]
	([shift={(-\aa:2)}]\xCTwo,0) arc (-\aa:\aa:\r);

\draw[gray!50]
	([shift={(-\aa+180:\r)}]\xCOne,0) arc (-\aa+180:\aa+180:\r);

\pgfmathsetmacro{\xPOne}{ ( 2*((\mPOne)*(\yCOne)+(\xCOne)) - sqrt( ( 2*( (\mPOne)*(\yCOne)+\xCOne )  )^2 - 4*( (\mPOne)^2 + 1 )*( (\xCOne)^2 + (\yCOne)^2 - \r^2 ) ) )/( 2*( (\mPOne)^2+1 ) ) };
\pgfmathsetmacro{\yPOne}{ \mPOne*\xPOne };

\pgfmathsetmacro{\xPTwo}{ ( 2*((\mPTwo)*(\yCTwo)+(\xCTwo) ) + sqrt( ( 2*( (\mPTwo)*\yCTwo+\xCTwo )  )^2 - 4*( (\mPTwo)^2 + 1 )*( (\xCTwo)^2 + (\yCTwo)^2 - \r^2 ) ) )/( 2*( (\mPTwo)^2+1 ) ) };
\pgfmathsetmacro{\yPTwo}{ \mPTwo*\xPTwo };

\pgfmathsetmacro{\xPPOne}{ ( 2*((\mPOne)*(\yCOne)+(\xCOne)) + sqrt( ( 2*( (\mPOne)*(\yCOne)+\xCOne )  )^2 - 4*( (\mPOne)^2 + 1 )*( (\xCOne)^2 + (\yCOne)^2 - \r^2 ) ) )/( 2*( (\mPOne)^2+1 ) ) };
\pgfmathsetmacro{\yPPOne}{ \mPOne*\xPPOne };

\pgfmathsetmacro{\xPPTwo}{ ( 2*((\mPTwo)*(\yCTwo)+(\xCTwo) ) - sqrt( ( 2*( (\mPTwo)*\yCTwo+\xCTwo )  )^2 - 4*( (\mPTwo)^2 + 1 )*( (\xCTwo)^2 + (\yCTwo)^2 - \r^2 ) ) )/( 2*( (\mPTwo)^2+1 ) ) };
\pgfmathsetmacro{\yPPTwo}{ \mPTwo*\xPPTwo };

\pgfmathsetmacro{\dPOneP}{ sqrt( (\xPOne - \xP)^2 + (\yPOne - \yP)^2 ) };
\pgfmathsetmacro{\aCPOne}{ acos( (2*\r^2 - \dPOneP^2 )/(2*\r^2) ) };
\pgfmathsetmacro{\l}{ \aCPOne/\a };
\pgfmathsetmacro{\rQ}{ sqrt( \R^2 + (\xCQ)^2  )  };
\pgfmathsetmacro{\aPCQ}{ acos( (\xCQ)/(\rQ) ) }
\pgfmathsetmacro{\aQ}{  -( 360 - 2*\aPCQ )*(1-\l) };

\coordinate (CQ) at (\xCQ, \yCQ);


\coordinate (P) at (0,\R);

\coordinate[rotate around={\aQ:(CQ)}] (Q) at (P);


\coordinate (PP) at (0,-\R);

\coordinate (P1) at (\xPOne, \yPOne);
\coordinate (P2) at (\xPTwo, \yPTwo);

\coordinate (PP1) at (\xPPOne,\yPPOne);

\coordinate (PP2) at (\xPPTwo,\yPPTwo);



	(P1) -- (PP1)
	(P2) -- (PP2)
;

\draw[->] ([shift=(90:\R)]215:0.3) arc (215:296:0.3)
;

\arcThroughThreePoints[]{P}{PP}{P1};
\arcThroughThreePoints[]{P2}{PP}{P};

\arcThroughThreePoints[]{P1}{PP1}{Q};
\arcThroughThreePoints[line width=1.5]{Q}{PP2}{P2};

\arcThroughThreePoints[gray!50]{PP}{P}{Q}; 
\arcThroughThreePoints[dashed]{Q}{PP}{P}; 

\node at (90: 2.0) {\small $B$};
\node at (270: 2.0) {\small $B^{\ast}$};
\node at (150: 1.35) {\small $A$};
\node at (30: 1.35) {\small $C$};
\node at (285: 0.6) {\small $D$};



\node at (278:1.0) {\textcolor{gray!50}{\small $a$}};

\node at (142:0.9) {\small $\alpha$};

\node at (100:1.2) {\small $\vec{\theta}$};

\end{scope} 


\begin{scope}[xshift=3.8cm] 

\tikzmath{
\r=2; \R=sqrt(3);
\a=120; \aa=\a/2;
\xP=0; \yP=\R;
\aOP=52; 
\aPOne=\aOP;
\mPOne=tan(90+\aPOne);
\aPTwo=\aOP;
\mPTwo=tan(-90-\aPTwo);
\xCQ=-1; \yCQ=0.7;
\xCOne = sqrt(\r^2-\R^2); \yCOne=0;
\xCTwo =-sqrt(\r^2-\R^2); \yCTwo=0;
\xCThree = sqrt(\r^2-\R^2); \yCThree=0;
}

\coordinate (O) at (0,0);
\coordinate (C1) at (\xCOne ,0);
\coordinate (C2) at (\xCTwo,0);


	(O) circle (\R);
	
	(C2) circle (\r)
	(C1) circle (\r);

\draw[gray!50]
	([shift={(-\aa:2)}]\xCTwo,0) arc (-\aa:\aa:\r);

\draw[gray!50]
	([shift={(-\aa+180:\r)}]\xCOne,0) arc (-\aa+180:\aa+180:\r);

\pgfmathsetmacro{\xPOne}{ ( 2*((\mPOne)*(\yCOne)+(\xCOne)) - sqrt( ( 2*( (\mPOne)*(\yCOne)+\xCOne )  )^2 - 4*( (\mPOne)^2 + 1 )*( (\xCOne)^2 + (\yCOne)^2 - \r^2 ) ) )/( 2*( (\mPOne)^2+1 ) ) };
\pgfmathsetmacro{\yPOne}{ \mPOne*\xPOne };

\pgfmathsetmacro{\xPTwo}{ ( 2*((\mPTwo)*(\yCTwo)+(\xCTwo) ) + sqrt( ( 2*( (\mPTwo)*\yCTwo+\xCTwo )  )^2 - 4*( (\mPTwo)^2 + 1 )*( (\xCTwo)^2 + (\yCTwo)^2 - \r^2 ) ) )/( 2*( (\mPTwo)^2+1 ) ) };
\pgfmathsetmacro{\yPTwo}{ \mPTwo*\xPTwo };

\pgfmathsetmacro{\xPPOne}{ ( 2*((\mPOne)*(\yCOne)+(\xCOne)) + sqrt( ( 2*( (\mPOne)*(\yCOne)+\xCOne )  )^2 - 4*( (\mPOne)^2 + 1 )*( (\xCOne)^2 + (\yCOne)^2 - \r^2 ) ) )/( 2*( (\mPOne)^2+1 ) ) };
\pgfmathsetmacro{\yPPOne}{ \mPOne*\xPPOne };

\pgfmathsetmacro{\xPPTwo}{ ( 2*((\mPTwo)*(\yCTwo)+(\xCTwo) ) - sqrt( ( 2*( (\mPTwo)*\yCTwo+\xCTwo )  )^2 - 4*( (\mPTwo)^2 + 1 )*( (\xCTwo)^2 + (\yCTwo)^2 - \r^2 ) ) )/( 2*( (\mPTwo)^2+1 ) ) };
\pgfmathsetmacro{\yPPTwo}{ \mPTwo*\xPPTwo };

\pgfmathsetmacro{\xPQRef}{\xPOne};
\pgfmathsetmacro{\yPQRef}{\yPOne};

\pgfmathsetmacro{\dPRefP}{ sqrt( (\xPQRef - \xP)^2 + (\yPQRef - \yP)^2 ) };
\pgfmathsetmacro{\aCPRef}{ acos( (2*\r^2 - (\dPRefP)^2 )/(2*\r^2) ) };
\pgfmathsetmacro{\l}{ \aCPRef/\a };
\pgfmathsetmacro{\rQ}{ sqrt( \R^2 + (\xCQ)^2  )  };
\pgfmathsetmacro{\aPCQ}{ acos( (\xCQ)/(\rQ) ) }
\pgfmathsetmacro{\aQ}{ -(  2*\aPCQ )*(1-\l) };

\coordinate (CQ) at (\xCQ, \yCQ);


\coordinate (P) at (0,\R);

\coordinate[rotate around={\aQ:(CQ)}] (Q) at (P);


\coordinate (PP) at (0,-\R);

\coordinate (P1) at (\xPOne, \yPOne);
\coordinate (P2) at (\xPTwo, \yPTwo);

\coordinate (PP1) at (\xPPOne,\yPPOne);

\coordinate (PP2) at (\xPPTwo,\yPPTwo);



	(P1) -- (PP1)
	(P2) -- (PP2)
;

\draw[->] ([shift=(90:\R)]215:0.4) arc (215:188:0.4)
;

\arcThroughThreePoints[]{P}{PP}{P1};
\arcThroughThreePoints[]{P2}{PP}{P};

\arcThroughThreePoints[]{P1}{PP1}{Q};
\arcThroughThreePoints[line width=1.5]{P2}{PP2}{Q};

\arcThroughThreePoints[dashed]{P}{PP}{Q}; 
\arcThroughThreePoints[gray!50]{Q}{P}{PP}; 

\node at (90: 2.0) {\small $B$};
\node at (270: 2.0) {\small $B^{\ast}$};
\node at (140: 1.35) {\small $A$};
\node at (40: 1.4) {\small $C$};
\node at (208: 1.5) {\small $D$};



\node at (225:1.9) {\textcolor{gray!50}{\small $a$}};

\node at (145:0.9) {\small $\alpha$};

\node at (120:1.5) {\small $\vec{\theta}$};

\end{scope} 


\begin{scope}[xshift=7cm]

\tikzmath{
\r=2; \R=sqrt(3);
\a=120; \aa=\a/2;
\xP=0; \yP=\R;
\aOP=52; 
\aPOne=\aOP;
\mPOne=tan(90+\aPOne);
\aPTwo=\aOP;
\mPTwo=tan(-90-\aPTwo);
\xCQ=1; \yCQ=0.7;
\xCOne = sqrt(\r^2-\R^2); \yCOne=0;
\xCTwo = -sqrt(\r^2-\R^2); \yCTwo=0;
}

\coordinate (O) at (0,0);
\coordinate (C1) at (\xCOne,0);
\coordinate (C2) at (\xCTwo,0);


	(O) circle (\R);
	
	(C2) circle (\r)
	(C1) circle (\r);

\draw[gray!50]
	([shift={(-\aa:\r)}]\xCTwo,0) arc (-\aa:\aa:\r);

\draw[gray!50]
	([shift={(-\aa+180:\r)}]\xCOne,0) arc (-\aa+180:\aa+180:\r);

\draw[->] ([shift=(90:\R)]215:0.3) arc (215:352:0.3)
;

\pgfmathsetmacro{\xPOne}{ ( 2*((\mPOne)*(\yCOne)+(\xCOne)) - sqrt( ( 2*( (\mPOne)*(\yCOne)+\xCOne )  )^2 - 4*( (\mPOne)^2 + 1 )*( (\xCOne)^2 + (\yCOne)^2 - \r^2 ) ) )/( 2*( (\mPOne)^2+1 ) ) };
\pgfmathsetmacro{\yPOne}{ \mPOne*\xPOne };

\pgfmathsetmacro{\xPTwo}{ ( 2*((\mPTwo)*(\yCTwo)+(\xCTwo) ) + sqrt( ( 2*( (\mPTwo)*\yCTwo+\xCTwo )  )^2 - 4*( (\mPTwo)^2 + 1 )*( (\xCTwo)^2 + (\yCTwo)^2 - \r^2 ) ) )/( 2*( (\mPTwo)^2+1 ) ) };
\pgfmathsetmacro{\yPTwo}{ \mPTwo*\xPTwo };

\pgfmathsetmacro{\xPPOne}{ ( 2*((\mPOne)*(\yCOne)+(\xCOne)) + sqrt( ( 2*( (\mPOne)*(\yCOne)+\xCOne )  )^2 - 4*( (\mPOne)^2 + 1 )*( (\xCOne)^2 + (\yCOne)^2 - \r^2 ) ) )/( 2*( (\mPOne)^2+1 ) ) };
\pgfmathsetmacro{\yPPOne}{ \mPOne*\xPPOne };

\pgfmathsetmacro{\xPPTwo}{ ( 2*((\mPTwo)*(\yCTwo)+(\xCTwo) ) - sqrt( ( 2*( (\mPTwo)*\yCTwo+\xCTwo )  )^2 - 4*( (\mPTwo)^2 + 1 )*( (\xCTwo)^2 + (\yCTwo)^2 - \r^2 ) ) )/( 2*( (\mPTwo)^2+1 ) ) };
\pgfmathsetmacro{\yPPTwo}{ \mPTwo*\xPPTwo };

\pgfmathsetmacro{\xPQRef}{\xPTwo};
\pgfmathsetmacro{\yPQRef}{\yPTwo};

\pgfmathsetmacro{\dPRefP}{ sqrt( (\xPQRef - \xP)^2 + (\yPQRef - \yP)^2 ) };
\pgfmathsetmacro{\aCPRef}{ acos( (2*\r^2 - (\dPRefP)^2 )/(2*\r^2) ) };
\pgfmathsetmacro{\l}{ \aCPRef/\a };
\pgfmathsetmacro{\rQ}{ sqrt( \R^2 + (\xCQ)^2  )  };
\pgfmathsetmacro{\aPCQ}{ acos( (\xCQ)/(\rQ) ) }
\pgfmathsetmacro{\aQ}{ ( 360 -  2*\aPCQ )*(1-\l) };

\coordinate (CQ) at (\xCQ, \yCQ);


\coordinate (P) at (0,\R);

\coordinate[rotate around={\aQ:(CQ)}] (Q) at (P);


\coordinate (PP) at (0,-\R);

\coordinate (P1) at (\xPOne, \yPOne);
\coordinate (P2) at (\xPTwo, \yPTwo);

\coordinate (PP1) at (\xPPOne,\yPPOne);

\coordinate (PP2) at (\xPPTwo,\yPPTwo);



	(P1) -- (PP1)
	(P2) -- (PP2)
;

\arcThroughThreePoints[]{P}{PP}{P1};
\arcThroughThreePoints[]{P2}{PP}{P};

\arcThroughThreePoints[]{Q}{PP1}{P1};
\arcThroughThreePoints[line width=1.5]{Q}{PP2}{P2};

\arcThroughThreePoints[dashed]{Q}{PP}{P}; 
\arcThroughThreePoints[gray!50]{PP}{P}{Q}; 

\node at (90: 2.0) {\small $B$};
\node at (270: 2.0) {\small $B^{\ast}$};
\node at (140: 1.4) {\small $A$};
\node at (40: 1.4) {\small $C$};
\node at (-28: 1.5) {\small $D$};



\node at (124:1) {\small $\alpha$};
\node at (90:1.15) {\small $\vec{\theta}$};
\node at (310:1.9) {\textcolor{gray!50}{\small $a$}};

\end{scope}

\end{tikzpicture}
\caption{Shapes of the $a^3b$ quadrilateral}
\label{BLunes}
\end{figure}

Let $\vec{\theta}$ denote oriented $\angle ABD$ (which is positive if measured counter-clockwise from $AB$) in all three shapes and $\theta$ denote the angle size (i.e., absolute value) of $\vec{\theta}$. This means $\vec{\theta} \ge 0$ in the first and the third picture of Figure \ref{BLunes} and $\vec{\theta} \le 0$ in the second. The value of $\vec{\theta}$ and $B^{\ast}D=a$ determine the location of $D$.

The isosceles triangle $\triangle ABD$ in all three pictures is half of the lune defined by $A, B, D, B^{\ast},  \vec{\theta}$ which has area $2\theta$. Then the area of $\triangle ABD$ is $\theta$. By triangle angle sum of $\triangle ABD$, the first and third picture imply $\alpha+2\theta - \pi = \theta$ and the second picture implies $(2\pi-\alpha)+2\theta - \pi = \theta$. Combining both, we get  
\begin{align}\label{algade-alth-eq}
\alpha = \pi - \vec{\theta}.
\end{align}

In each shape, $\triangle ABC$ is a standard triangle contained in $\square ABCD$. By $AB=a$ and $BD=\pi - a$, spherical cosine law on $\triangle ABD$ implies $\cos \vec{\theta} = \frac{\cos a}{1-\cos a}$. By $a \in (0, \frac{1}{2}\pi)$, we have $\cos \vec{\theta} > 0$, which implies $\vec{\theta} \in (-\frac{1}{2}\pi, \frac{1}{2}\pi)$. Moreover, by \eqref{algade-alth-eq}, we get $\alpha \in (\frac{1}{2}\pi, \frac{3}{2}\pi)$ and
\begin{align} \label{algade-cosa-eq} 
\cos a = \tfrac{ \cos \vec{\theta} }{ 1 + \cos \vec{\theta} }  = \tfrac{ \cos \alpha }{ \cos \alpha - 1 }.
\end{align}
Then the first equality above implies $\cos a \le \frac{1}{2}$. So $a \ge \frac{1}{3}\pi$ and hence $a \in [\frac{1}{3}\pi, \frac{1}{2}\pi)$.

In the first shape, $\angle CBD$ of $\triangle BCD$ is given by $\beta-\theta=\beta-\vec{\theta}=\alpha+\beta-\pi$. In the second shape, $\angle CBD = \beta+\theta = \beta - \vec{\theta}=\alpha+\beta-\pi$. In the third shape, $\angle CBD = \theta - \beta = - (\beta - \vec{\theta}) = -(\alpha+\beta-\pi)$. Spherical cosine law on $\triangle BCD$ gives $\cos b= \cos BD \cos BC + \sin BD \cos BC \cos \angle CBD$. Then by $BC=a$ and $BD=\pi-a$ and \eqref{algade-alth-eq} and \eqref{algade-cosa-eq}, we have 
\begin{align} \label{algade-cosb-eq} 
\cos b = \tfrac{ (2\cos \alpha - 1)\cos (\alpha + \beta) - \cos^2 \alpha  }{ ( 1 - \cos \alpha )^2 }.
\end{align}

For $\alpha \in (\frac{1}{2}\pi, \frac{3}{2}\pi)$, the quadrilateral constructed in Figure \ref{BLunes} is simple. The edges $a,b$ are determined by \eqref{algade-cosa-eq}, \eqref{algade-cosb-eq} with a choice of $\alpha$.

We summarise the relations between the shapes in Figure \ref{BLunes} and $\vec{\theta}, \alpha, \beta$ as follows,
\begin{enumerate}[\textit{Shape} 1.]
\item $0 \le \vec{\theta} \le \beta \iff \pi - \beta \le \alpha \le \pi$;
\item $\vec{\theta}<0 \iff \alpha > \pi$;
\item $\vec{\theta} > \beta  \iff \gamma > \pi \iff \alpha + \beta < \pi$.
\end{enumerate}
Then $\alpha+\beta=\pi$ if and only if $\gamma=\pi$.

There are three degenerate situations, $\alpha=\pi$ ($\vec{\theta}=0$) and $\gamma=\pi$ ($\vec{\theta}=\beta$) and $b=a$ ($\vec{\theta} = \frac{1}{2}\beta$). In the first two situations, the quadrilateral degenerates into triangles. In the last situation, the almost equilateral quadrilateral becomes a rhombus.  

By $\alpha > \frac{1}{2}\pi$, we have $m \le 3$ at $\alpha^m \cdots$ in each $\AVC$, notably $\alpha^m=\alpha^3$. By $\alpha < \frac{3}{2}\pi$, we have $\gamma+\delta > \frac{1}{2}\pi$, which implies $k\le3$ in $\gamma^k\delta^k\cdots$. 

\begin{figure}[htp] 
\centering
\begin{tikzpicture}[scale=1]

\begin{scope}[] 

\tikzmath{
\r=2; \R=sqrt(3);
\a=120; \aa=\a/2;
\xP=0; \yP=\R;
\aOP=54; 
\aPOne=\aOP;
\mPOne=tan(90+\aPOne);
\aPTwo=\aOP;
\mPTwo=tan(-90-\aPTwo);
\aPThree=2.5*\aOP;
\mPThree=tan(90+\aPThree);
\xCQ=1; \yCQ=0;
\xCOne = sqrt(\r^2-\R^2); \yCOne=0;
\xCTwo =-sqrt(\r^2-\R^2); \yCTwo=0;
\xCThree = sqrt(\r^2-\R^2); \yCThree=0;
\A=acos(\xCOne/\r);
\AA=2*\A;
}

\coordinate (O) at (0,0);
\coordinate (C1) at (\xCOne,0);
\coordinate (C2) at (\xCTwo,0);


	(O) circle (\R);
	
	(C2) circle (\r)
	(C1) circle (\r);

\draw[gray!50]
	([shift={(-\aa:\r)}]\xCTwo,0) arc (-\aa:\aa:\r);

\draw[gray!50]
	([shift={(-\aa+180:\r)}]\xCOne,0) arc (-\aa+180:\aa+180:\r);

\pgfmathsetmacro{\xPOne}{ ( 2*((\mPOne)*(\yCOne)+(\xCOne)) - sqrt( ( 2*( (\mPOne)*(\yCOne)+\xCOne )  )^2 - 4*( (\mPOne)^2 + 1 )*( (\xCOne)^2 + (\yCOne)^2 - \r^2 ) ) )/( 2*( (\mPOne)^2+1 ) ) };
\pgfmathsetmacro{\yPOne}{ \mPOne*\xPOne };

\pgfmathsetmacro{\xPTwo}{ ( 2*((\mPTwo)*(\yCTwo)+(\xCTwo) ) + sqrt( ( 2*( (\mPTwo)*\yCTwo+\xCTwo )  )^2 - 4*( (\mPTwo)^2 + 1 )*( (\xCTwo)^2 + (\yCTwo)^2 - \r^2 ) ) )/( 2*( (\mPTwo)^2+1 ) ) };
\pgfmathsetmacro{\yPTwo}{ \mPTwo*\xPTwo };

\pgfmathsetmacro{\xPThree}{ ( 2*((\mPThree)*(\yCThree)+(\xCThree)) - sqrt( ( 2*( (\mPThree)*(\yCThree)+\xCThree )  )^2 - 4*( (\mPThree)^2 + 1 )*( (\xCThree)^2 + (\yCThree)^2 - \r^2 ) ) )/( 2*( (\mPThree)^2+1 ) ) };
\pgfmathsetmacro{\yPThree}{ \mPThree*\xPThree };

\pgfmathsetmacro{\xPPOne}{ ( 2*((\mPOne)*(\yCOne)+(\xCOne)) + sqrt( ( 2*( (\mPOne)*(\yCOne)+\xCOne )  )^2 - 4*( (\mPOne)^2 + 1 )*( (\xCOne)^2 + (\yCOne)^2 - \r^2 ) ) )/( 2*( (\mPOne)^2+1 ) ) };
\pgfmathsetmacro{\yPPOne}{ \mPOne*\xPPOne };

\pgfmathsetmacro{\xPPTwo}{ ( 2*((\mPTwo)*(\yCTwo)+(\xCTwo) ) - sqrt( ( 2*( (\mPTwo)*\yCTwo+\xCTwo )  )^2 - 4*( (\mPTwo)^2 + 1 )*( (\xCTwo)^2 + (\yCTwo)^2 - \r^2 ) ) )/( 2*( (\mPTwo)^2+1 ) ) };
\pgfmathsetmacro{\yPPTwo}{ \mPTwo*\xPPTwo };

\pgfmathsetmacro{\dPOneP}{ sqrt( (\xPOne - \xP)^2 + (\yPOne - \yP)^2 ) };
\pgfmathsetmacro{\aCPOne}{ acos( (2*\r^2 - \dPOneP^2 )/(2*\r^2) ) };
\pgfmathsetmacro{\l}{ \aCPOne/\a };
\pgfmathsetmacro{\rQ}{ sqrt( \R^2 + (\xCQ)^2  )  };
\pgfmathsetmacro{\aPCQ}{ acos( (\xCQ)/(\rQ) ) }
\pgfmathsetmacro{\aQ}{  ( 180 - 1*\aPCQ )*(1-\l) };

\pgfmathsetmacro{\rPOne}{ sqrt( (\xPOne)^2 + (\yPOne)^2 ) }; 
\pgfmathsetmacro{\aPOneCOne}{ acos( ( (\xCOne)^2 + (\r)^2 - (\rPOne)^2 )/( 2*(\xCOne)*(\r)  ) ) }; 
\pgfmathsetmacro{\aPOneCOneP}{ \A - acos( ( (\xCOne)^2 + (\r)^2 - (\rPOne)^2 )/( 2*(\xCOne)*(\r)  ) ) }; 

\pgfmathsetmacro{\angRatioPOne}{\aPOneCOneP/\AA}; 
\pgfmathsetmacro{\aPP}{\R-2*\R*\angRatioPOne}; 

\coordinate (CQ) at (\xCQ, \yCQ);




\coordinate (P) at (0,\R);

\coordinate[rotate around={\aQ:(CQ)}] (Q) at (P);


\node at (Q) {\textcolor{red}{$\cdot$}};

\coordinate (PP) at (0,-\R);

\coordinate (P1) at (\xPOne, \yPOne);
\coordinate (P2) at (\xPTwo, \yPTwo);

\coordinate (PP1) at (\xPPOne,\yPPOne);

\coordinate (PP2) at (\xPPTwo,\yPPTwo);




	(P1) -- (PP1)
	(P2) -- (PP2)
;

\draw[->] ([shift=(90:\R)]215:0.3) arc (215:325:0.3)
;

\draw[line width=1.5]
	(Q) -- (P2)
;

\arcThroughThreePoints[]{P2}{PP}{P}; 
\arcThroughThreePoints[]{P}{PP}{Q}; 


\node at (90: 2.0) {\small $B$};
\node at (270: 2.0) {\small $B^{\ast}$};
\node at (150: 1.35) {\small $A$};
\node at (35: 1.35) {\small $C$};
\node at (215: 1.35) {\small $D$};



\node at (245:1.54) {\textcolor{gray!50}{\small $a$}};

\node at (140:0.95) {\small $\alpha$};

\node at (90:1.2) {\small $\vec{\theta}$};

\end{scope}

\begin{scope}[xshift=3cm] 

\tikzmath{
\r=2.8; \R=sqrt(3);
\a=120; \aa=\a/2;
\xP=0; \yP=\R;
\aOP=56; 
\aPOne=\aOP;
\mPOne=tan(90+\aPOne);
\aPTwo=\aOP;
\mPTwo=tan(-90-\aPTwo);
\aPThree=100;
\mPThree=tan(90+\aPThree);
\xCQ=-sqrt(\r^2-\R^2); \yCQ=0;
\xCOne = sqrt(\r^2-\R^2); \yCOne=0;
\xCTwo =-sqrt(\r^2-\R^2); \yCTwo=0;
\xCThree =-sqrt(\r^2-\R^2); \yCThree=0;
}

\coordinate (O) at (0,0);
\coordinate (C1) at (\xCOne ,0);
\coordinate (C2) at (\xCTwo,0);


	(O) circle ({\R});
	
	(C2) circle (\r)
	(C1) circle (\r);

\draw[gray!50]
	([shift={(-38.25:\r)}]-\xCOne,0) arc (-38.25:38.25:\r);

\draw[gray!50]
	([shift={(-38.25+180:\r)}]\xCOne,0) arc (-38.25+180:38.25+180:\r);

\pgfmathsetmacro{\xPOne}{ ( 2*((\mPOne)*(\yCOne)+(\xCOne)) - sqrt( ( 2*( (\mPOne)*(\yCOne)+\xCOne )  )^2 - 4*( (\mPOne)^2 + 1 )*( (\xCOne)^2 + (\yCOne)^2 - \r^2 ) ) )/( 2*( (\mPOne)^2+1 ) ) };
\pgfmathsetmacro{\yPOne}{ \mPOne*\xPOne };

\pgfmathsetmacro{\xPTwo}{ ( 2*((\mPTwo)*(\yCTwo)+(\xCTwo) ) + sqrt( ( 2*( (\mPTwo)*(\yCTwo)+(\xCTwo) )  )^2 - 4*( (\mPTwo)^2 + 1 )*( (\xCTwo)^2 + (\yCTwo)^2 - (\r)^2 ) ) )/( 2*( (\mPTwo)^2+1 ) ) };
\pgfmathsetmacro{\yPTwo}{ \mPTwo*\xPTwo };

\pgfmathsetmacro{\xPThree}{ ( 2*((\mPThree)*(\yCThree)+(\xCThree)) + sqrt( ( 2*( (\mPThree)*(\yCThree)+\xCThree )  )^2 - 4*( (\mPThree)^2 + 1 )*( (\xCThree)^2 + (\yCThree)^2 - \r^2 ) ) )/( 2*( (\mPThree)^2+1 ) ) };
\pgfmathsetmacro{\yPThree}{ \mPThree*\xPThree };

\pgfmathsetmacro{\xPPOne}{ ( 2*((\mPOne)*(\yCOne)+(\xCOne)) + sqrt( ( 2*( (\mPOne)*(\yCOne)+\xCOne )  )^2 - 4*( (\mPOne)^2 + 1 )*( (\xCOne)^2 + (\yCOne)^2 - \r^2 ) ) )/( 2*( (\mPOne)^2+1 ) ) };
\pgfmathsetmacro{\yPPOne}{ \mPOne*\xPPOne };

\pgfmathsetmacro{\xPPTwo}{ ( 2*((\mPTwo)*(\yCTwo)+(\xCTwo) ) - sqrt( ( 2*( (\mPTwo)*\yCTwo+\xCTwo )  )^2 - 4*( (\mPTwo)^2 + 1 )*( (\xCTwo)^2 + (\yCTwo)^2 - \r^2 ) ) )/( 2*( (\mPTwo)^2+1 ) ) };
\pgfmathsetmacro{\yPPTwo}{ \mPTwo*\xPPTwo };

\pgfmathsetmacro{\xPPThree}{ ( 2*((\mPThree)*(\yCThree)+(\xCThree)) - sqrt( ( 2*( (\mPThree)*(\yCThree)+\xCThree )  )^2 - 4*( (\mPThree)^2 + 1 )*( (\xCThree)^2 + (\yCThree)^2 - \r^2 ) ) )/( 2*( (\mPThree)^2+1 ) ) };
\pgfmathsetmacro{\yPPThree}{ \mPThree*\xPPThree };

\pgfmathsetmacro{\dPOneP}{ sqrt( (\xPOne - \xP)^2 + (\yPOne - \yP)^2 ) };
\pgfmathsetmacro{\aCPOne}{ acos( (2*\r^2 - \dPOneP^2 )/(2*\r^2) ) };
\pgfmathsetmacro{\l}{ \aCPOne/\a };
\pgfmathsetmacro{\rQ}{ sqrt( \R^2 + (\xCQ)^2  )  };
\pgfmathsetmacro{\aPCQ}{ acos( (\xCQ)/(\rQ) ) }
\pgfmathsetmacro{\aQ}{-( 360 - 2*\aPCQ )*(1-\l) };

\coordinate (CQ) at (\xCQ, \yCQ);


\coordinate (P) at (0,{\R});

\coordinate (PP) at (0,{-\R});

\coordinate (P1) at (\xPOne, \yPOne);
\coordinate (P2) at (\xPTwo, \yPTwo);
\coordinate (P3) at (\xPThree, \yPThree);

\coordinate (PP1) at (\xPPOne,\yPPOne);
\coordinate (PP2) at (\xPPTwo,\yPPTwo);
\coordinate (PP3) at (\xPPThree,\yPPThree);

\coordinate[rotate around={180:(O)}] (Q) at (P1);





	(P1) -- (PP1)
	(P2) -- (PP2)
;

\draw[->] ([shift=(90:\R)]234:0.3) arc (234:306:0.3)
;

\draw[]
	(Q) -- (0,0) -- (P1)
;

\arcThroughThreePoints[]{P}{PP}{P1};
\arcThroughThreePoints[]{P2}{PP}{P};

\arcThroughThreePoints[line width=1.5]{Q}{P}{P2};


\node at (90: 2.0) {\small $B$};
\node at (270: 2.0) {\small $B^{\ast}$};
\node at (140: 1.0) {\small $A$};
\node at (35: 1.0) {\small $C$};
\node at (325: 0.95) {\small $D$};



\node at (292.5:1.32) {\textcolor{gray!50}{\small $a$}};

\node at (122:0.6) {\small $\alpha$};

\node at (90:1.15) {\small $\vec{\theta}$};

\end{scope}

\begin{scope}[xshift=6cm] 


\tikzmath{
\r=2; \R=sqrt(3);
\a=120; \aa=\a/2;
\xP=0; \yP=\R;
\aOP=54; 
\aPOne=\aOP;
\mPOne=tan(90+\aPOne);
\aPTwo=\aOP;
\mPTwo=tan(-90-\aPTwo);
\aPThree=72;
\mPThree=tan(90+\aPThree);
\xCQ=-1; \yCQ=0;
\xCOne = sqrt(\r^2-\R^2); \yCOne=0;
\xCTwo =-sqrt(\r^2-\R^2); \yCTwo=0;
\xCThree = sqrt(\r^2-\R^2); \yCThree=0;
\A=acos(\xCOne/\r);
\AA=2*\A;
}

\coordinate (O) at (0,0);
\coordinate (C1) at (\xCOne ,0);
\coordinate (C2) at (\xCTwo,0);




	(O) circle (\R);
	
	(C2) circle (\r)
	(C1) circle (\r);

\draw[gray!50]
	([shift={(-\aa:\r)}]\xCTwo,0) arc (-\aa:\aa:\r);

\draw[gray!50]
	([shift={(-\aa+180:\r)}]\xCOne,0) arc (-\aa+180:\aa+180:\r);

\pgfmathsetmacro{\xPOne}{ ( 2*((\mPOne)*(\yCOne)+(\xCOne)) - sqrt( ( 2*( (\mPOne)*(\yCOne)+\xCOne )  )^2 - 4*( (\mPOne)^2 + 1 )*( (\xCOne)^2 + (\yCOne)^2 - \r^2 ) ) )/( 2*( (\mPOne)^2+1 ) ) };
\pgfmathsetmacro{\yPOne}{ \mPOne*\xPOne };

\pgfmathsetmacro{\xPTwo}{ ( 2*((\mPTwo)*(\yCTwo)+(\xCTwo) ) + sqrt( ( 2*( (\mPTwo)*\yCTwo+\xCTwo )  )^2 - 4*( (\mPTwo)^2 + 1 )*( (\xCTwo)^2 + (\yCTwo)^2 - \r^2 ) ) )/( 2*( (\mPTwo)^2+1 ) ) };
\pgfmathsetmacro{\yPTwo}{ \mPTwo*\xPTwo };

\pgfmathsetmacro{\xPPOne}{ ( 2*((\mPOne)*(\yCOne)+(\xCOne)) + sqrt( ( 2*( (\mPOne)*(\yCOne)+\xCOne )  )^2 - 4*( (\mPOne)^2 + 1 )*( (\xCOne)^2 + (\yCOne)^2 - \r^2 ) ) )/( 2*( (\mPOne)^2+1 ) ) };
\pgfmathsetmacro{\yPPOne}{ \mPOne*\xPPOne };

\pgfmathsetmacro{\xPPTwo}{ ( 2*((\mPTwo)*(\yCTwo)+(\xCTwo) ) - sqrt( ( 2*( (\mPTwo)*\yCTwo+\xCTwo )  )^2 - 4*( (\mPTwo)^2 + 1 )*( (\xCTwo)^2 + (\yCTwo)^2 - \r^2 ) ) )/( 2*( (\mPTwo)^2+1 ) ) };
\pgfmathsetmacro{\yPPTwo}{ \mPTwo*\xPPTwo };

\pgfmathsetmacro{\dPOneP}{ sqrt( (\xPOne - \xP)^2 + (\yPOne - \yP)^2 ) };
\pgfmathsetmacro{\aCPOne}{ acos( (2*\r^2 - \dPOneP^2 )/(2*\r^2) ) };
\pgfmathsetmacro{\l}{ \aCPOne/\a };
\pgfmathsetmacro{\rQ}{ sqrt( \R^2 + (\xCQ)^2  )  };
\pgfmathsetmacro{\aPCQ}{ acos( (\xCQ)/(\rQ) ) }
\pgfmathsetmacro{\aQ}{  -( 360 - 2*\aPCQ )*(1-\l) };

\pgfmathsetmacro{\rPOne}{ sqrt( (\xPOne)^2 + (\yPOne)^2 ) }; 
\pgfmathsetmacro{\aPOneCOne}{ acos( ( (\xCOne)^2 + (\r)^2 - (\rPOne)^2 )/( 2*(\xCOne)*(\r)  ) ) }; 
\pgfmathsetmacro{\aPOneCOneP}{ \A - acos( ( (\xCOne)^2 + (\r)^2 - (\rPOne)^2 )/( 2*(\xCOne)*(\r)  ) ) }; 

\pgfmathsetmacro{\angRatioPOne}{\aPOneCOneP/\AA}; 
\pgfmathsetmacro{\aPP}{\R-2*\R*\angRatioPOne}; 



\coordinate (V) at (270:\aPP);

\coordinate (CQ) at (\xCQ, \yCQ);


\coordinate (P) at (0,{sqrt(3)});

\coordinate[rotate around={\aQ:(CQ)}] (Q) at (P);


\coordinate (PP) at (0,-\R);

\coordinate (P1) at (\xPOne, \yPOne);
\coordinate (P2) at (\xPTwo, \yPTwo);

\coordinate (PP1) at (\xPPOne,\yPPOne);

\coordinate (PP2) at (\xPPTwo,\yPPTwo);



	(P1) -- (PP1)
	(P2) -- (PP2)
;

\draw[->] ([shift=(90:\R)]215:0.3) arc (215:270:0.3)
;

\draw[dashed]
	(P) -- (V)
;

\draw[gray!50]
	(PP) -- (V)
;

\arcThroughThreePoints[]{P}{PP}{P1};
\arcThroughThreePoints[]{P2}{PP}{P};

\arcThroughThreePoints[]{P1}{PP1}{V};
\arcThroughThreePoints[line width=1.5]{V}{PP2}{P2};


\node at (90: 2.0) {\small $B$};
\node at (270: 2.0) {\small $B^{\ast}$};
\node at (150: 1.35) {\small $A$};
\node at (30: 1.35) {\small $C$};
\node at (285: 0.75) {\small $D$};



\node at (262.5:1.1) {\textcolor{gray!50}{\small $a$}};

\node at (135:0.95) {\small $\alpha$};

\node at (105:1.2) {\small $\vec{\theta}$};

\end{scope}

\end{tikzpicture}
\caption{Degenerated $a^3b$ quadrilaterals: triangles and rhombus}
\label{a3bDegenShapes}
\end{figure}
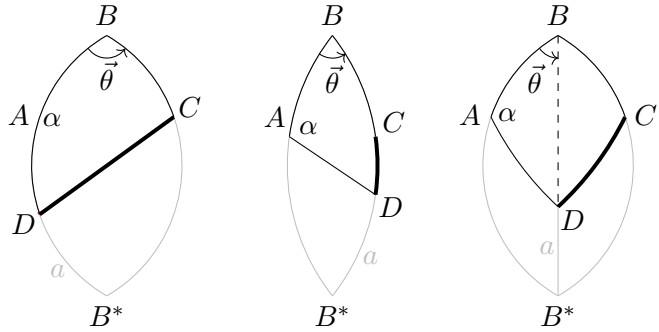

\subsubsection*{\bm{$E^{\prime}: \AVC \equiv \{ \alpha\gamma\delta,\alpha^m\beta^{n_1},\beta^{n}\gamma\delta \}, \{ \alpha\gamma\delta,\alpha^3,\beta^{n}\gamma\delta \}$}}

By $\alpha\gamma\delta$ and $\beta^{n}\gamma\delta$, we have $\alpha = n\beta$ in both $\AVC$s. We further have $n_1 = \tfrac{f}{2} - mn$ where $m = 1,2,3$.

\begin{case*}[Convex]

If the quadrilateral is convex, then it is equivalent to $n\beta=\alpha \in [\pi-\beta, \pi]$. 

If $m=3$, then by $\alpha+\beta\ge\pi$ and $\alpha>\beta$, we know that $\alpha^3\beta^{n_1}$ is not a vertex and $\alpha^3$ is a vertex. So $\pi - \beta \le n\beta = \alpha = \frac{2}{3}\pi$ implies $\beta=\frac{1}{3}\pi$ and $f=12$ and $n=2$. We get
\begin{align*}
f=12, \quad \AVC \equiv \{ \alpha\gamma\delta,\alpha^3,\beta^{2}\gamma\delta \}.
\end{align*}
We have $\alpha+\beta = \pi$ and $\gamma=\pi$. The quadrilateral is in fact the triangle $\triangle ABD$  in the second picture of Figure \ref{a3bDegenShapes} and the tiling is effectively a non-edge-to-edge tiling by congruent triangles which is an example of tilings with angles $\frac{2}{3}\pi, \frac{1}{3}\pi, \frac{1}{3}\pi$ in \cite[Table 1]{da}. We illustrate the non-edge-to-edge triangular tiling (or the earth map tiling $E$) given by $\AVC\equiv\{ \alpha\gamma\delta, \beta^6 \}$ in the first picture of Figure \ref{Tiling-NE2E-Isos-algade} and the non-edge-to-edge triangular tiling (or $E^{\prime}$) given by $\AVC \equiv \{ \alpha\gamma\delta,\alpha^3,\beta^{2}\gamma\delta \}$ in the second. One may fill the angles by first locating $\gamma=\pi$. The spherical ones are illustrated in Figure \ref{Tilings-NE2E-E-EF1}.

\begin{figure}[htp] 
\centering
\begin{tikzpicture}[scale=1]

\begin{scope} 

\tikzmath{
\r=0.8; 
}

\foreach \a in {0,...,5}{

\draw[rotate=60*\a]
	(0:0) -- (0:3*\r)
	(60:\r) -- (0:2*\r)
;

\draw[rotate=60*\a, line width=1.8]
	(0:\r) -- (0:2*\r)
;

}

\end{scope} 

\begin{scope}[xshift=5.5cm] 

\tikzmath{
\r=1; 
}

\foreach \a in {0,...,2}{

\draw[rotate=120*\a]
	(0:0) -- (90:\r)
;

\coordinate (A) at (90+120*\a:\r);
\coordinate (B) at (210+120*\a:\r);
\coordinate (C) at (30+120*\a:1.5*\r);
\arcThroughThreePoints[]{A}{B}{C};

\coordinate (D) at (45+120*\a:0.83*\r);
\coordinate (E) at (75+120*\a:1.8*\r);
\coordinate (F) at (90+120*\a:1.8*\r);
\arcThroughThreePoints[]{D}{F}{E};

\draw[rotate=120*\a]
	(30+120*\a:1.5*\r) -- (30+120*\a:2.5*\r)
;

\coordinate (D2) at (45+120+120*\a:0.83*\r);

\arcThroughThreePoints[line width=1.5]{D2}{C}{B};

\coordinate (C2) at (30+120+120*\a:1.5*\r);
\coordinate (C3) at (30+2*120+120*\a:1.5*\r);

\coordinate (E2) at (75+120+120*\a:1.8*\r);

\arcThroughThreePoints[line width=1.5]{C2}{C3}{E2};

}

\end{scope} 

\end{tikzpicture}
\caption{Non-edge-to-edge tilings by congruent isosceles triangles given by $\AVC \equiv \{ \alpha\gamma\delta, \beta^6\}$ and $\AVC \equiv \{ \alpha\gamma\delta, \alpha^3, \beta^2\gamma\delta \}$ where $\gamma=\pi$}
\label{Tiling-NE2E-Isos-algade}
\end{figure}

\begin{figure}
	\centering
		\adjustbox{trim=\dimexpr.5\Width-15mm\relax{} \dimexpr.5\Height-15mm\relax{}  \dimexpr.5\Width-15mm\relax{} \dimexpr.5\Height-15mm\relax{} ,clip}{\includegraphics[height=6cm]{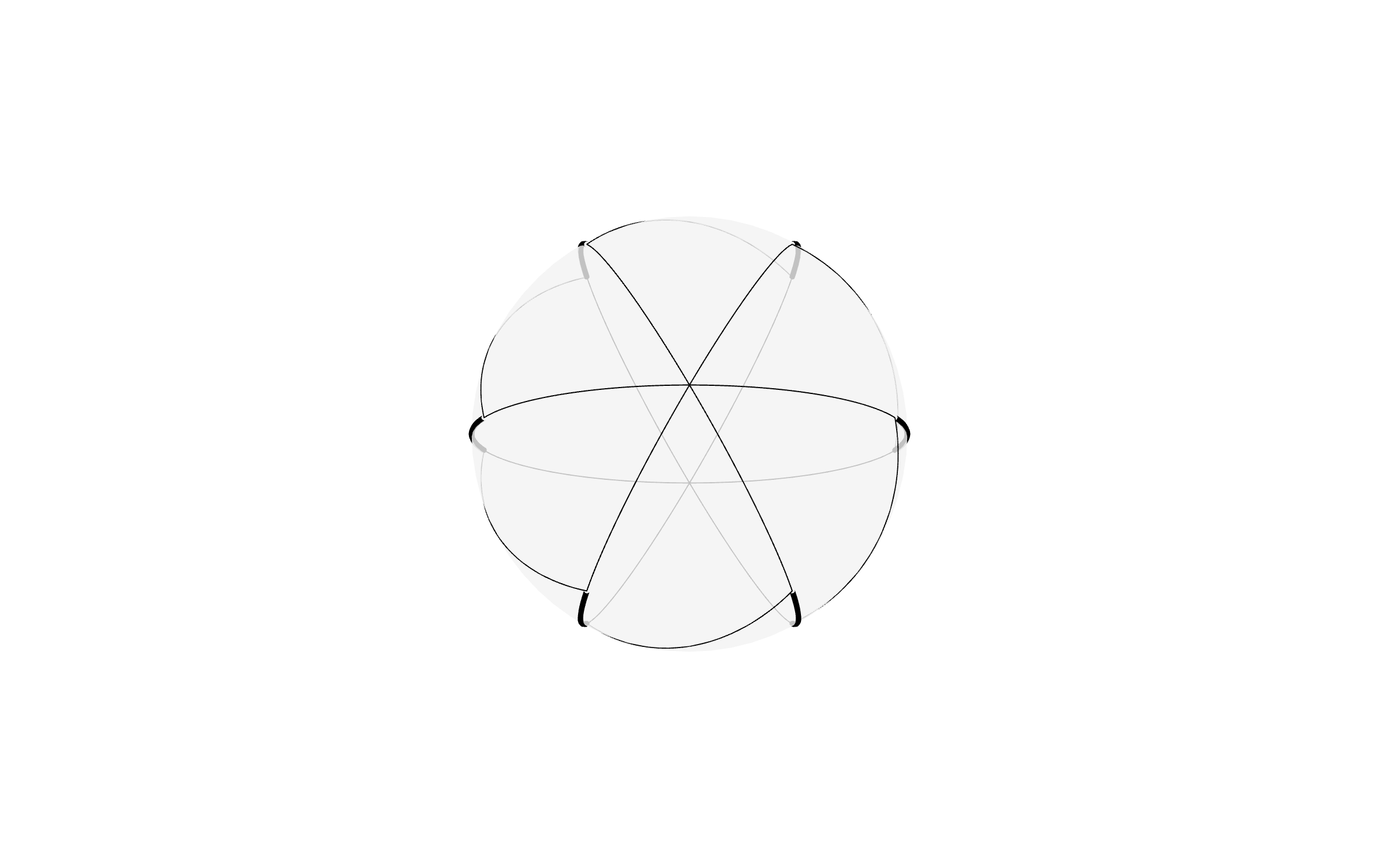}} \quad \quad \quad
		\adjustbox{trim=\dimexpr.5\Width-15mm\relax{} \dimexpr.5\Height-15mm\relax{}  \dimexpr.5\Width-15mm\relax{} \dimexpr.5\Height-15mm\relax{},clip}{\includegraphics[height=6cm]{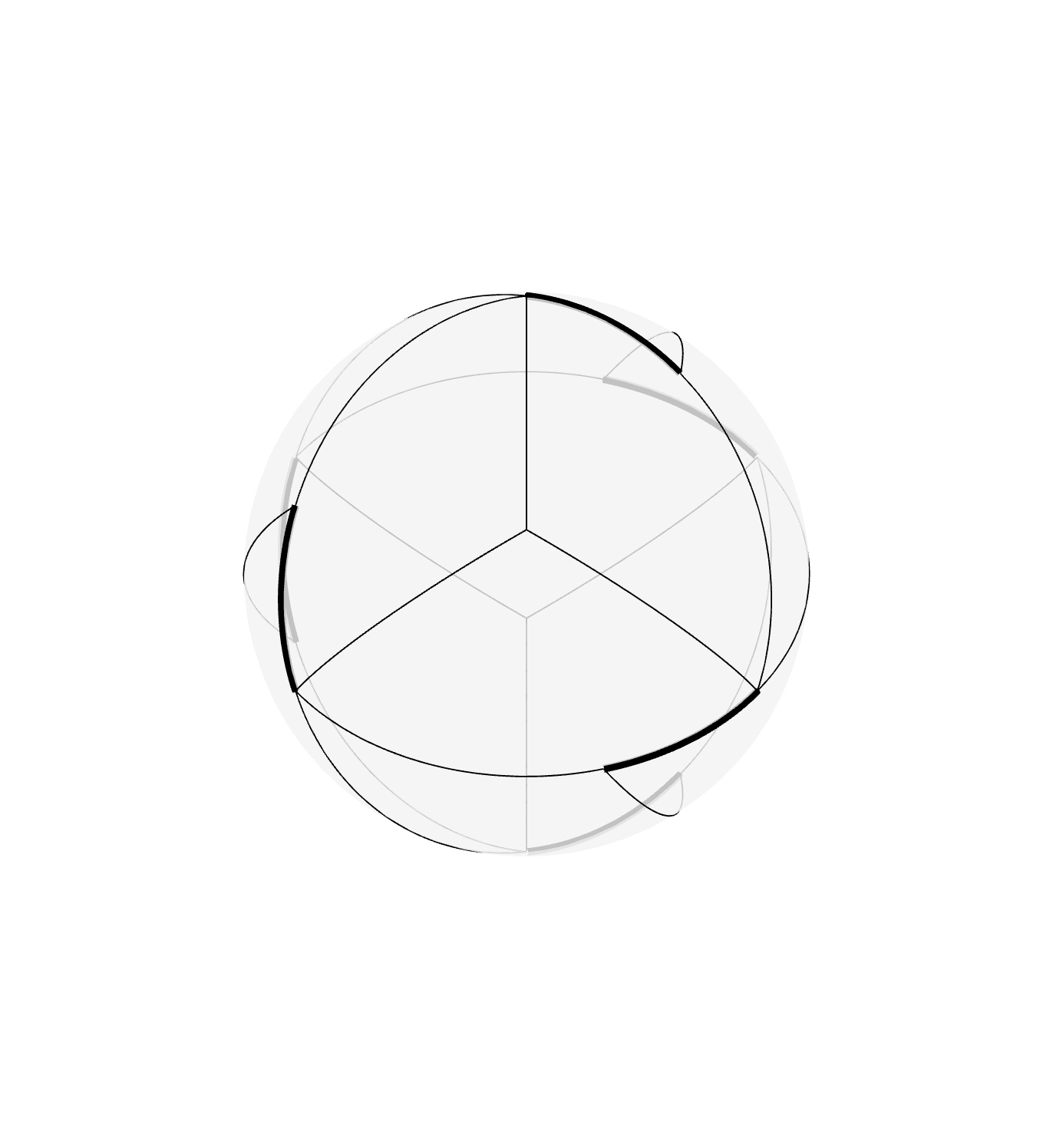}}  \quad
	\caption{Non-edge-to-edge tilings (degenerated $E,E^{\prime}$) by congruent isosceles triangles}
	\label{Tilings-NE2E-E-EF1}
\end{figure}

If $m=1,2$, then one of $\alpha\beta^{n_1}, \alpha^2\beta^{n_1}$ is a vertex. The length of $[\pi-\beta, \pi]$ is $\beta$. Then there are two possibilities of $n\beta \in [\pi-\beta, \pi]$. 

The first possibility is that both $\pi-\beta, \pi$ are multiples of $\beta$, which means $(p-1)\beta=\pi-\beta$ and $p\beta=\pi$ where $p\in \mathbb{N}$. This implies $f=4p$ and $p\ge2$. Correspondingly we further get $n=p-1, p$. If $n=p-1$, then 
\begin{align*}
\AVC &\equiv \{ \alpha\gamma\delta,\alpha\beta^{p+1},\beta^{p-1}\gamma\delta \}, \quad p=\tfrac{f}{4}; \\
\AVC &\equiv \{ \alpha\gamma\delta,\alpha^2\beta^{2},\beta^{p-1}\gamma\delta \}, \quad p=\tfrac{f}{4}.
\end{align*}
As $\alpha=(p-1)\beta$, we have $\alpha+\beta=\pi$ and $\gamma=\pi$. The quadrilateral is in fact the triangle in the second picture of Figure \ref{a3bDegenShapes}. These families of tilings are another examples of non-edge-to-edge triangular tilings with angles $(1-\frac{4}{f})\pi, \frac{4}{f}\pi, \frac{4}{f}\pi$ in \cite[Table 1]{da}. These examples are illustrated in Figures \ref{Tiling-NE2E-Isos-algade-albe4-be2gade}, \ref{Tiling-NE2E-Isos-algade-al2be2-be2gade} for $f=12$.

\begin{figure}[htp] 

\minipage{0.5\textwidth}
\raggedleft
\begin{tikzpicture}[scale=1]

\begin{scope}

\tikzmath{
\r=0.8; 
}

\foreach \a in {0,3,4}{

\draw[rotate=72*\a]
	(0:0) -- (90:3*\r)
	(72+90:\r) -- (90:2*\r)
;

\draw[rotate=72*\a, line width=1.6]
	(90:\r) -- (90:2*\r)
;

}

\draw[]
	(0:0) -- (90+72:3*\r)
	(0:0) -- (90+2*72:1*\r)
;

\coordinate (A) at (90+1*72:2*\r);
\coordinate (B) at (90+2*72:\r);
\coordinate (C) at (90+3*72:\r);

\arcThroughThreePoints[]{A}{B}{C};

\coordinate (D) at (210:2*\r);
\coordinate (E) at (270:0.9*\r);

\arcThroughThreePoints[]{A}{D}{E};

\coordinate (F) at (90+2*72:2*\r);

\arcThroughThreePoints[]{D}{F}{C};

\draw[]
	(90+2*72:2*\r) -- (90+2*72:3*\r)
;

\draw[line width=1.6]
	(90+72:\r) -- (90+72:2*\r)
;

\arcThroughThreePoints[line width=1.6]{B}{C}{E};

\arcThroughThreePoints[line width=1.6]{D}{C}{F};

\end{scope} 

\end{tikzpicture}
\endminipage \quad \quad \quad \quad
\minipage{0.5\textwidth}
\raggedright
		\adjustbox{trim=\dimexpr.5\Width-15mm\relax{} \dimexpr.5\Height-15mm\relax{}  \dimexpr.5\Width-15mm\relax{} \dimexpr.5\Height-15mm\relax{} ,clip}{\includegraphics[height=6cm]{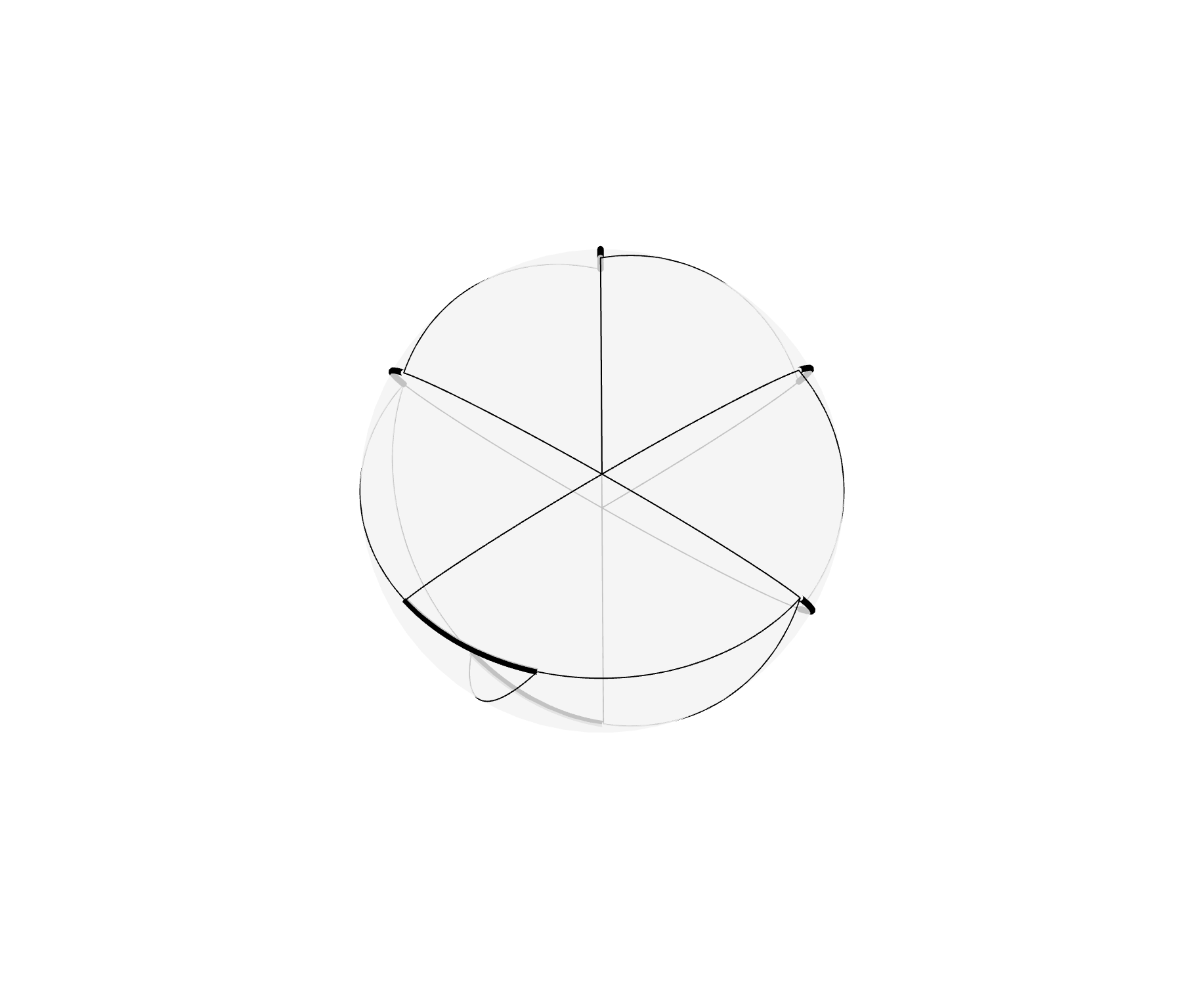}} 
\endminipage

\caption{Non-edge-to-edge tiling by congruent isosceles triangles given by $\AVC  \equiv \{ \alpha\gamma\delta, \alpha\beta^4, \beta^2\gamma\delta \}$ where $\gamma=\pi$}
\label{Tiling-NE2E-Isos-algade-albe4-be2gade}

\end{figure}

\begin{figure}[htp] 
\centering
\begin{tikzpicture}[scale=1]

\begin{scope}

\tikzmath{
\r=0.65; 
\rr=\r/2; 
}

\coordinate (O) at (0:0);
\coordinate (O1) at (180:1.5*\r);
\coordinate (O2) at ([shift={(180:1.5*\r)}]90:3.75*\r);

\coordinate (A) at (228:2*\r);
\coordinate (B) at (0:2*\r);
\coordinate (C) at (90:2*\r);

\coordinate (A1) at (180:3*\r);

\coordinate (A2) at ([shift={(180:2*\r)}]135:1*\r);

\coordinate (A3) at ([shift={(O1)}]140:1.5*\r);

\coordinate (A4) at ([shift={(O1)}]-45:1.5*\r);

\coordinate (A5) at ([shift={(180:3*\r/2)}]240:1.5*\r);

\coordinate (B1) at (-45:2*\r);

\coordinate (C1) at (-60:3.5*\r);

\coordinate (C2) at (90:4*\r);

\coordinate (D) at (135:\r);

\coordinate (D1) at (180:3*\r/2);

\coordinate (D2) at ([shift={(180:3*\r/2)}]90:1.5*\r);

\coordinate (D3) at ([shift={(180:1.5*\r)}]90:2.5*\r);

\coordinate (D4) at (176:2.2*\r);

\coordinate (B2) at ([shift={(D2)}]31.5:2.5*\r);

\coordinate (B3) at ([shift={(O2)}]0:2.325*\r);

\coordinate (B4) at (270:2.5*\r);

	(O1) circle (1.5*\r);

	(D2) circle (2.5*\r);

\arcThroughThreePoints[]{A}{B}{C};

\arcThroughThreePoints[]{D2}{A}{O};

\arcThroughThreePoints[]{D2}{C}{B3};

\arcThroughThreePoints[]{B2}{D3}{A3};

\arcThroughThreePoints[]{A}{B4}{C1};

\arcThroughThreePoints[]{A4}{D1}{A1};

\draw[]
	(90:0) -- (90:2*\r) 
	(0:2*\r) -- (0:3*\r)
	(180:3*\r) -- ([shift={(180:3*\r)}]270:2*\r)
	(0:0) -- (-45:2*\r)
	([shift={(180:3*\r/2)}]90:1.5*\r) -- (176:2.2*\r)
	([shift={(180:3*\r/2)}]240:1.5*\r) -- (180:1.5*\r)
	([shift={(180:3*\r/2)}]90:1.5*\r) -- ([shift={(180:1.5*\r)}]-45:1.5*\r)
;

\arcThroughThreePoints[line width=1.6]{A5}{A4}{A};

\arcThroughThreePoints[line width=1.6]{O1}{A1}{D4};

\arcThroughThreePoints[line width=1.6]{A3}{A5}{A1};

\arcThroughThreePoints[line width=1.6]{C}{B3}{B2};

\arcThroughThreePoints[line width=1.6]{B1}{C}{B};

\arcThroughThreePoints[line width=1.6]{A4}{D2}{O};

\end{scope}

\begin{scope}[xshift=5.5cm] 

\tikzmath{
\r=0.8; 
\rr=\r/2;
\A = atan(\r/\rr);
\h = sqrt(\r^2+\rr^2);
\AA = atan(2*\r/\rr);
\hh = sqrt((2*\r)^2+\rr^2);
}

\coordinate (O1) at (0:\r/2);
\coordinate (O2) at (0:-\r/2);

\foreach \a in {0,1}{

\draw[rotate=180*\a]
	(0:0) -- (0:3*\r)
	(0:0) -- (30:1.54*\r)
;

\coordinate (A) at (0+180*\a:2*\r); 
\coordinate (B) at (\A+180*\a:\h); 
\coordinate (C) at (180+180*\a:1*\r); 

\arcThroughThreePoints[]{A}{B}{C};

\coordinate (B2) at (\AA+180*\a:\hh); 

\arcThroughThreePoints[]{A}{B2}{C};

\draw[]
	(B) -- ([shift={(\A+180*\a:\h)}]30+180*\a:1.2*\r)
;

\coordinate (B3) at (\AA+180*\a:\hh); 

\draw[]
	(B3) -- ([shift={(\AA+180*\a:\hh)}]90+180*\a:\r)
;
	
\draw[line width=1.6, rotate=180*\a]
	(0:1*\r) -- (0:2*\r)
;

\coordinate (D) at (30+180*\a:1.54*\r);
	\arcThroughThreePoints[line width=1.6]{D}{C}{B};

\coordinate (F) at ([shift={(\A+180*\a:\h)}]30+180*\a:1.2*\r);
	\arcThroughThreePoints[line width=1.6]{F}{C}{B3};

}

\end{scope}

\end{tikzpicture}
\caption{Non-edge-to-edge tilings by congruent isosceles triangles given by $\AVC  \equiv \{ \alpha\gamma\delta, \alpha^2\beta^2, \beta^2\gamma\delta \}$ where $\gamma=\pi$}
\label{Tiling-NE2E-Isos-algade-al2be2-be2gade}
\end{figure}

\begin{figure}
	\centering
		\adjustbox{trim=\dimexpr.5\Width-15mm\relax{} \dimexpr.5\Height-15mm\relax{}  \dimexpr.5\Width-15mm\relax{} \dimexpr.5\Height-15mm\relax{} ,clip}{\includegraphics[height=6cm]{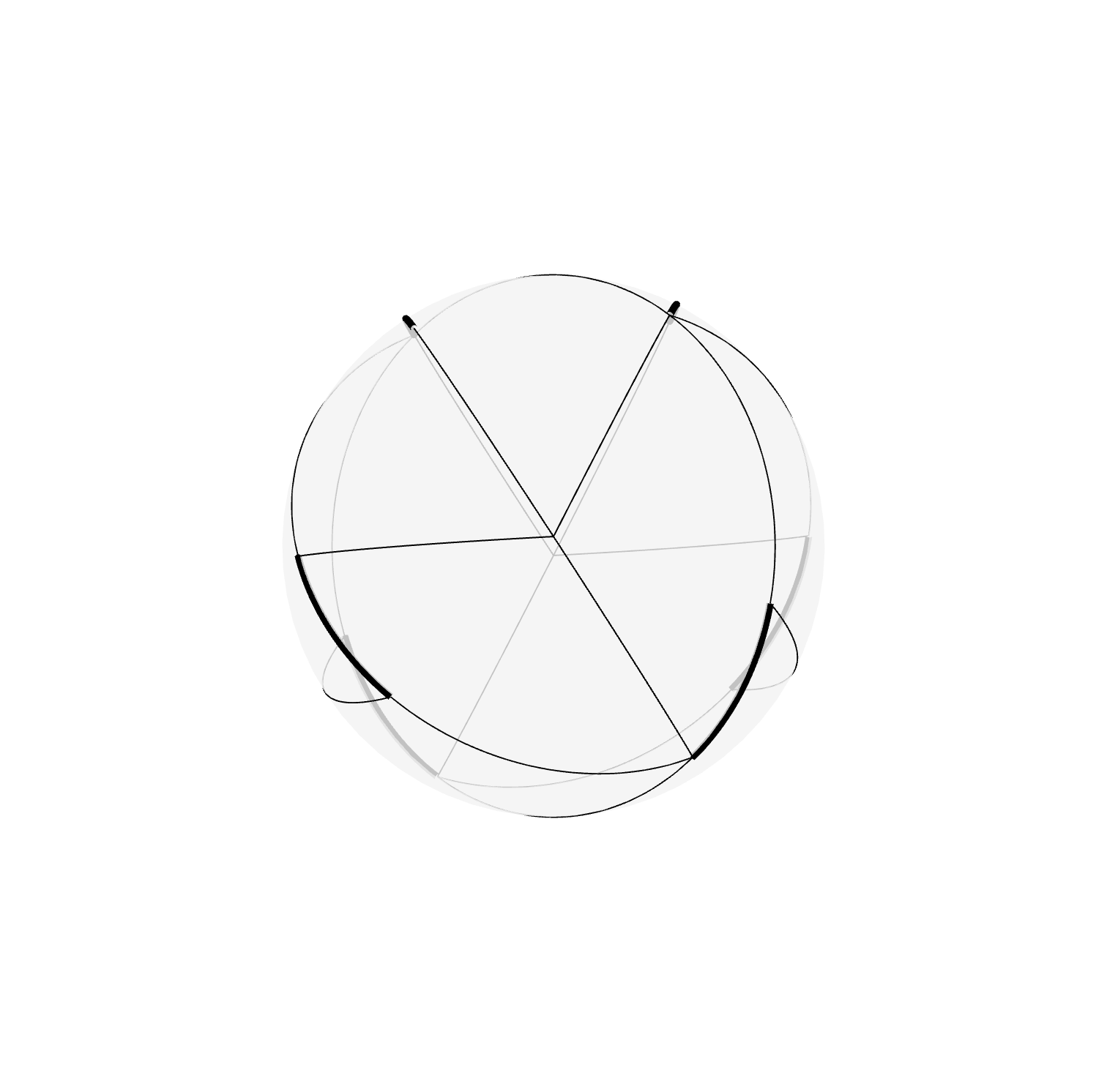}} \quad \quad \quad \quad
		\adjustbox{trim=\dimexpr.5\Width-15mm\relax{} \dimexpr.5\Height-15mm\relax{}  \dimexpr.5\Width-15mm\relax{} \dimexpr.5\Height-15mm\relax{},clip}{\includegraphics[height=6cm]{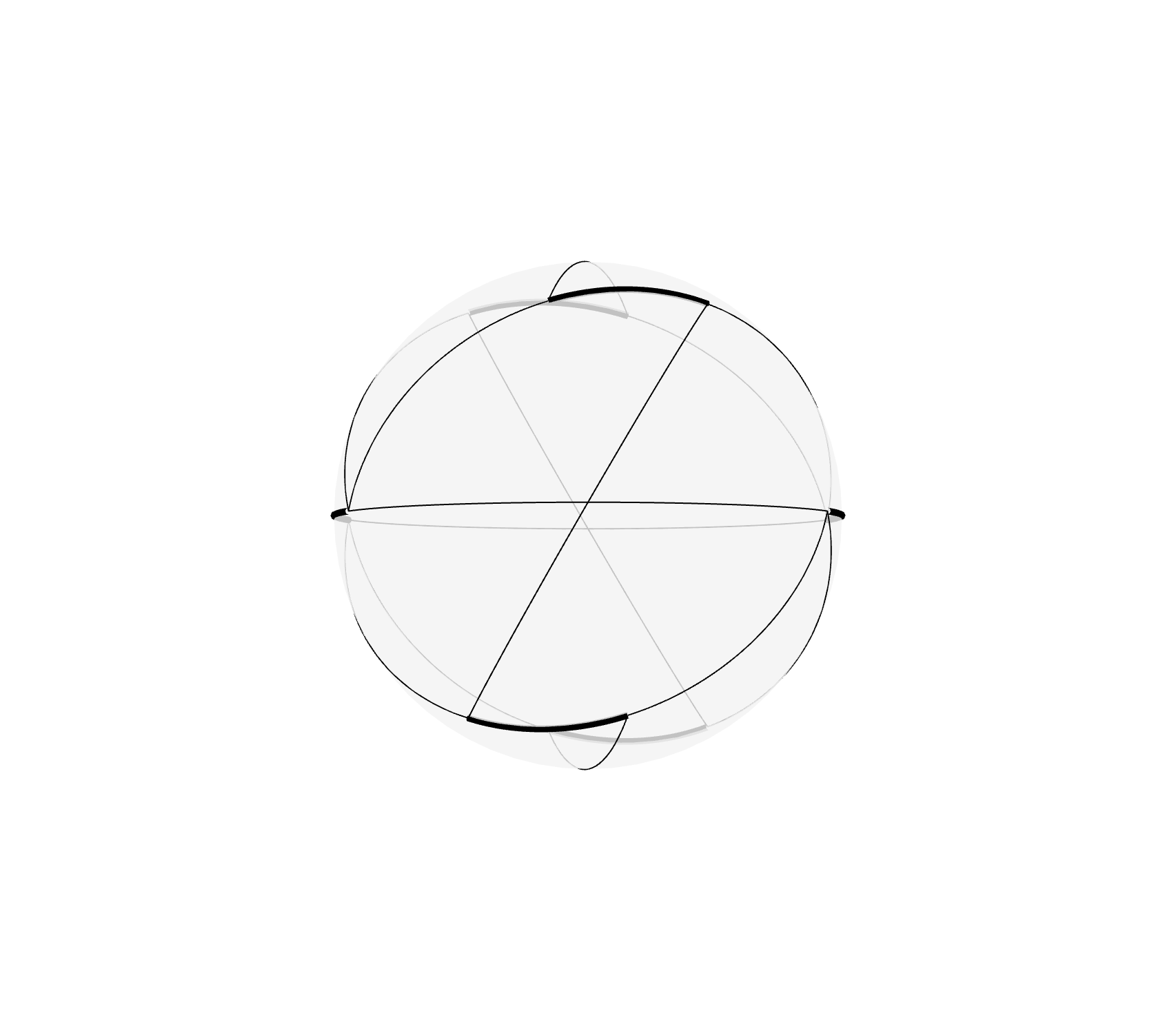}}  \quad
	\caption{Non-edge-to-edge tilings (two tilings of degenerated $E^{\prime}$) by congruent isosceles triangles}
	\label{Tilings-NE2E-EF2}
\end{figure}

On the other hand, if $n=p$, then 
\begin{align*}
\AVC &\equiv \{ \alpha\gamma\delta,\alpha\beta^{p},\beta^{p}\gamma\delta \}, \quad p=\tfrac{f}{4}.
\end{align*}
As $\alpha=p\beta$, we have $\alpha=\pi$.  The quadrilateral is in fact the triangle $\triangle BCD$ in the first picture of Figure \ref{a3bDegenShapes}. This family of tilings are examples of non-edge-to-edge tilings by congruent triangles. An example is illustrated in Figure \ref{Tiling-NE2E-algade-albe3-be3gade} for $f=12$.

\begin{figure}[htp]

\minipage{0.5\textwidth}
\raggedleft
\begin{tikzpicture}[scale=1]

\tikzmath{
\r=0.8; 
\rr=\r/2; 
}

\foreach \a in {0,1,2}{

\draw[rotate=60*\a]
	(0:0) -- (0:3*\r)
	(60:\r) -- (0:2*\r)
;

\draw[rotate=60*\a, line width=1.6]
	(60:\r) -- (0:2*\r)
;

}

\draw[]
	(0:0) -- (180:1*\r)
;

\coordinate (A) at (180:1*\r);
\coordinate (B) at (\rr,-\r);
\coordinate (C) at (0:2*\r);

\arcThroughThreePoints[]{A}{B}{C};

\coordinate (B2) at (\rr,-2*\r);

\coordinate (B3) at (210:1.5*\r);

\arcThroughThreePoints[]{A}{B2}{C};

\coordinate (C2) at (180+60:0.825*\r);

\coordinate (C3) at (180+150:2.21*\r);

\coordinate (C4) at (180+150:3*\r);

\arcThroughThreePoints[]{A}{B3}{C4};

\draw[line width=1.6]
	(180+60:0.825*\r) -- (0:\r)
	(\rr,-\r)-- (\rr,-2*\r)
	(180+150:2.21*\r) -- (180+150:3*\r)
;

\draw[]
	(180+150:2.21*\r) -- (180+150:4*\r)
;
\end{tikzpicture}
\endminipage \quad  \quad \quad
\minipage{0.5\textwidth}
\raggedright
		\adjustbox{trim=\dimexpr.5\Width-15mm\relax{} \dimexpr.5\Height-15mm\relax{}  \dimexpr.5\Width-15mm\relax{} \dimexpr.5\Height-15mm\relax{} ,clip}{\includegraphics[height=6cm]{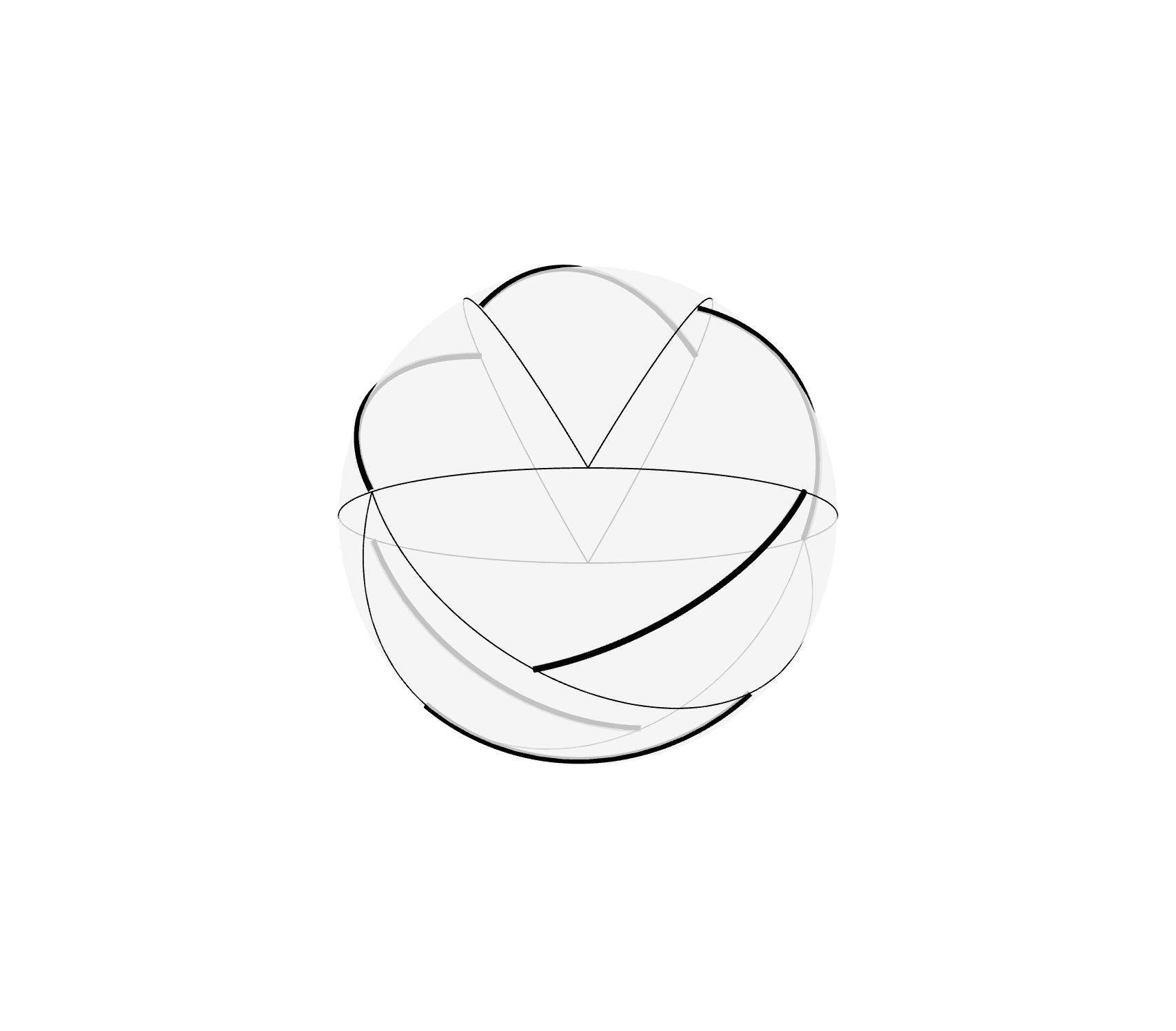}} 
\endminipage
\caption{Non-edge-to-edge tiling by congruent triangles given by $\AVC  \equiv \{ \alpha\gamma\delta, \alpha\beta^3, \beta^3\gamma\delta \}$ where $\alpha=\pi$}
\label{Tiling-NE2E-algade-albe3-be3gade}
\end{figure}

The second possibility is that $n\beta \neq  \pi - \beta, \pi$. Then even $f$ implies $f=4p+2$. By $(p-\frac{1}{2})\beta = \pi - \beta < \alpha = n\beta < (p+\frac{1}{2})\beta$, we get $p-\frac{1}{2} < n < p +\frac{1}{2}$, which implies $n=p$. So we have $\alpha=\frac{f-2}{4}\beta=\pi-\frac{1}{2}\beta$. By $m=1,2$ and $n_1 = \tfrac{f}{2} - mn$, the $\AVC$ becomes
\begin{align*}
\AVC &\equiv \{ \alpha\gamma\delta,\alpha\beta^{p+1},\beta^{p}\gamma\delta \}, \quad p=\tfrac{f-2}{4}; \\
\AVC &\equiv \{ \alpha\gamma\delta,\alpha^2\beta,\beta^{p}\gamma\delta \}, \quad p=\tfrac{f-2}{4}.
\end{align*}
By \eqref{algade-alth-eq}, we get $\vec{\theta}=\frac{1}{2}\beta$. The quadrilateral is in fact the rhombus in the third picture of Figure \ref{a3bDegenShapes}. These families of tilings are the ones given by \cite[$\AVC$ (4.1)]{cly}.
\end{case*}

\begin{case*}[$\alpha>\pi$] If $\alpha>\pi$, then $\alpha^m$ is not a vertex and $\alpha^m\beta^n=\alpha\beta^n$. Then by $n\beta=\alpha<\frac{3}{2}\pi$, we have $n\beta\in (\pi, \frac{3}{2}\pi)$, which implies $n \in (\frac{f}{4}, \frac{3f}{8})$. By $m=1$ and $n_1 = \tfrac{f}{2} - mn$, we get
\begin{align*}
\AVC \equiv \{ \alpha\gamma\delta,\alpha\beta^{\frac{f}{2} - n}, \beta^{n}\gamma\delta \}, \quad n \in ( \tfrac{f}{4}, \tfrac{3f}{8} ).
\end{align*}
\end{case*}

\begin{case*}[$\gamma>\pi$] If $\gamma>\pi$, then $\alpha+\beta<\pi$. 

If $\alpha^3$ is a vertex, then $\alpha=\frac{2}{3}\pi$ and $\gamma+\delta = \frac{4}{3}\pi$. By $\alpha+\beta<\pi$ and $\beta^n\gamma\delta$ and $\beta=\frac{4}{f}\pi$, we then get $n=\frac{f}{6}$ and $f>12$. We have
\begin{align*}
\AVC \equiv \{ \alpha\gamma\delta, \alpha^3, \beta^{\frac{f}{6}}\gamma\delta \}.
\end{align*}

If $\alpha^m\beta^{n_1}$ is a vertex, then by $\frac{1}{2}\pi<\alpha=n\beta$ and $\alpha+\beta<\pi$ and $0<n_1 = \frac{f}{2} - mn$ and $m=1,2,3$, we get
\begin{align*}
\AVC &\equiv \{ \alpha\gamma\delta,\alpha\beta^{\frac{f}{2} - n}, \beta^{n}\gamma\delta \}, \quad n \in ( \tfrac{f}{8},  \tfrac{f}{4} - 1 ); \\
\AVC &\equiv \{ \alpha\gamma\delta,\alpha^2\beta^{\frac{f}{2} - 2n}, \beta^{n}\gamma\delta \}, \quad n \in ( \tfrac{f}{8}, \tfrac{f}{4} - 1 );\\
\AVC &\equiv \{ \alpha\gamma\delta,\alpha^3\beta^{\frac{f}{2} - 3n}, \beta^{n}\gamma\delta \}, \quad n \in ( \tfrac{f}{8}, \tfrac{f}{6} - \tfrac{1}{3} ].
\end{align*} 
\end{case*}

\subsubsection*{\bm{$E^{\prime\prime}: \AVC \equiv \{ \alpha\gamma\delta,\alpha\beta^{n}, \gamma^k\delta^k \},  \{ \alpha\gamma\delta,\alpha\beta^{n}, \beta^{n_1}\gamma^k\delta^k \}$}}

Since $\beta^{n_1}\gamma\delta$ has been previously discussed, we may assume $k \ge 2$. This implies $\gamma + \delta \le \pi$. Recall $k \le 3$. Then we have $k=2,3$.

By $\alpha\gamma\delta,\alpha\beta^{n}$, the vertex angle sums imply $n\beta=\gamma+\delta$. In the $\AVC$s, we have $kn \le \frac{f}{2}$. If $\gamma^k\delta^k$ is a vertex, then $kn=\frac{f}{2}$ and if $\beta^{n_1}\gamma^k\delta^k$ is a vertex, then $kn < \frac{f}{2}$ and $n_1 = \frac{f}{2} - kn$.

\begin{case*}[Convex] Convexity means $\alpha \in [\pi-\beta, \pi]$. By $n\beta=\gamma+\delta$ and $\alpha\gamma\delta$, it is equivalent to $n\beta = \gamma+\delta \in [\pi, \pi+\beta]$. Combined with $\gamma+\delta \le \pi$, we get $\gamma+\delta=\pi$ and hence $k=2$ and $n  = \frac{f}{4}$ and $\alpha=\pi$. Hence
\begin{align*}
\AVC \equiv \{ \alpha\gamma\delta,\alpha\beta^{\frac{f}{4}}, \gamma^2\delta^2 \}.
\end{align*}
As $\alpha=\pi$, the quadrilateral is in fact the triangle $\triangle BCD$ and the tiling is non-edge-to-edge tiling by congruent triangles. An example is illustrated in Figure \ref{Tiling-NE2E-algade-albe3-ga2de2} for $f=12$.

\begin{figure}[htp] 

\minipage{0.5\textwidth}
\raggedleft
\begin{tikzpicture}[scale=1]

\tikzmath{
\r=0.8; 
\rr=\r/2;
\A = atan(\r/\rr);
\h = sqrt(\r^2+\rr^2);
\AA = atan(2*\r/\rr);
\hh = sqrt((2*\r)^2+\rr^2);
}

\coordinate (O1) at (0:\r/2);
\coordinate (O2) at (0:-\r/2);

\foreach \a in {0,1}{

\draw[rotate=180*\a]
	(0:0) -- (0:3*\r)
;

\draw[line width=1.6, rotate=180*\a]
	(0:0) -- (30:1.54*\r)
;

\coordinate (A) at (0+180*\a:2*\r); 
\coordinate (B) at (\A+180*\a:\h); 
\coordinate (C) at (180+180*\a:1*\r); 

\arcThroughThreePoints[]{A}{B}{C};

\coordinate (B2) at (\AA+180*\a:\hh); 

\arcThroughThreePoints[]{A}{B2}{C};

\draw[line width=1.6]
	(B) -- ([shift={(B)}]30+180*\a:1.2*\r)
;

\coordinate (B3) at (\AA+180*\a:\hh); 

\draw[line width=1.6]
	(B3) -- ([shift={(B3)}]90+180*\a:\r)
;
	
\draw[]
	(0:1*\r) -- (0:2*\r)
;

\coordinate (D) at (30+180*\a:1.54*\r);

\arcThroughThreePoints[]{D}{C}{B};

\coordinate (F) at ([shift={(\A+180*\a:\h)}]30+180*\a:1.2*\r);
	
\arcThroughThreePoints[]{F}{C}{B3};

}
\end{tikzpicture}
\endminipage \quad  \quad \quad
\minipage{0.5\textwidth}
\raggedright
		\adjustbox{trim=\dimexpr.5\Width-15mm\relax{} \dimexpr.5\Height-15mm\relax{}  \dimexpr.5\Width-15mm\relax{} \dimexpr.5\Height-15mm\relax{} ,clip}{\includegraphics[height=6cm]{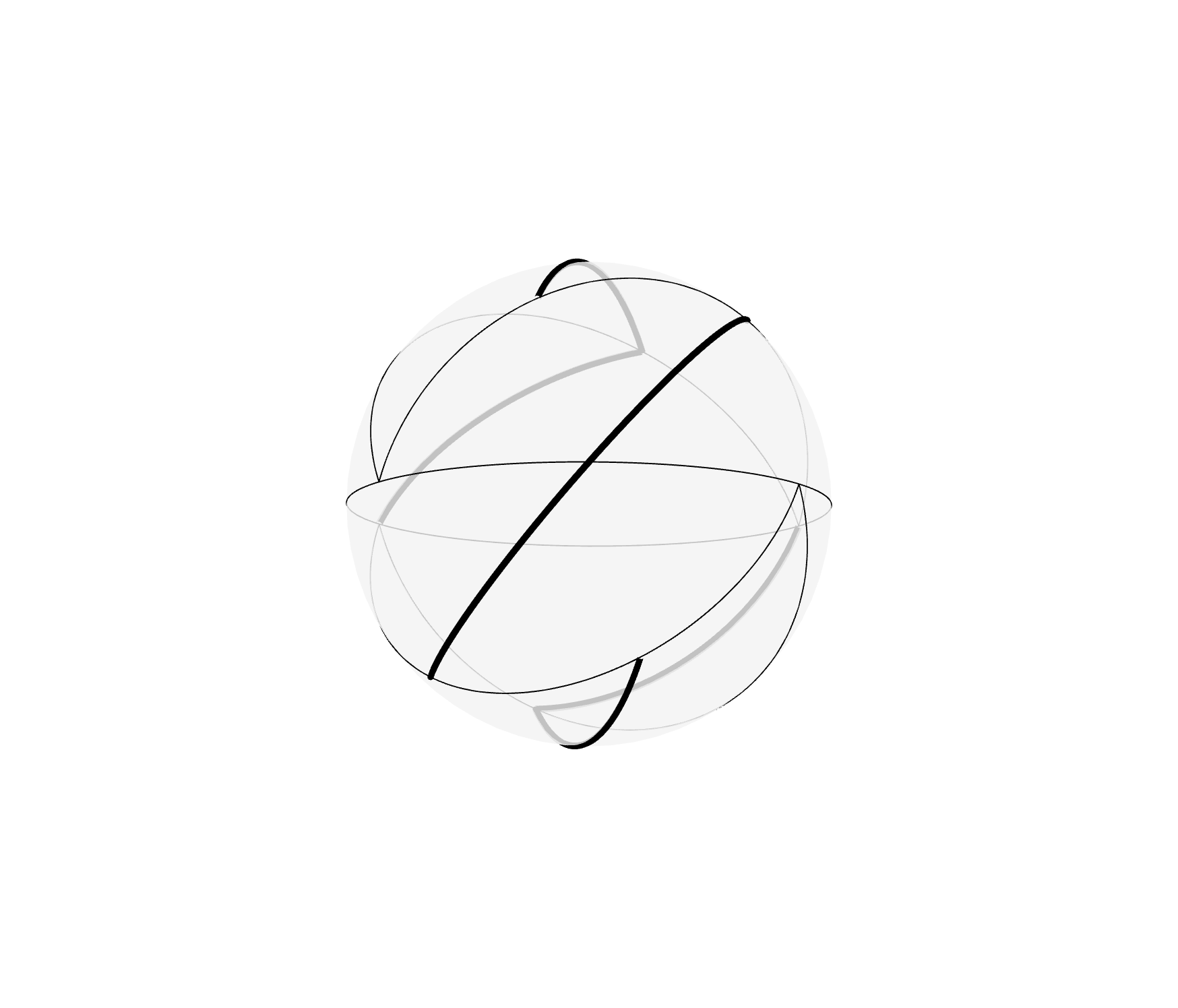}} 
\endminipage
\caption{Non-edge-to-edge tiling by congruent triangles given by $\AVC  \equiv \{ \alpha\gamma\delta, \alpha\beta^3, \gamma^2\delta^2 \}$ where $\alpha=\pi$}
\label{Tiling-NE2E-algade-albe3-ga2de2}
\end{figure}

\end{case*}

\begin{case*}[$\alpha>\pi$] For $\alpha>\pi$, we already know $\alpha \in (\pi, \frac{3}{2}\pi)$. By $\alpha\gamma\delta$ and $n\beta=\gamma+\delta$, we have $\alpha \in (\pi, \frac{3}{2}\pi)$ if and only if $n \beta = \gamma+\delta \in (\frac{1}{2}\pi, \pi)$. This is equivalent to $\frac{f}{8} < n  <\frac{f}{4}$. By $k=2,3$ and $\frac{f}{2}\ge kn$, we further have $n \le \frac{f}{4}, \frac{f}{6}$ respectively. Combined with $n \in (\frac{f}{8}, \frac{f}{4})$, we get
\begin{align*}
\AVC &\equiv \{ \alpha\gamma\delta,\alpha\beta^{n}, \beta^{\frac{f}{2} - 2n}\gamma^2\delta^2 \}, \quad n \in (\tfrac{f}{8}, \tfrac{f}{4}); \\
\AVC &\equiv \{ \alpha\gamma\delta,\alpha\beta^{n}, \beta^{\frac{f}{2} - 3n}\gamma^3\delta^3 \}, \quad n \in (\tfrac{f}{8}, \tfrac{f}{6}); \\
\AVC &\equiv \{ \alpha\gamma\delta,\alpha\beta^{\frac{f}{6}}, \gamma^3\delta^3 \}.
\end{align*}
\end{case*}

\subsubsection*{\bm{$E^{\prime\prime\prime}: \AVC \equiv \{ \alpha\gamma\delta, \gamma^3\delta, \alpha\beta^{\frac{f+2}{6}}, \alpha\beta^{\frac{f-4}{6}}\delta^2 \}$}}

From Proposition \ref{RatAlGaDeProp}, we already know $f\ge8$ and
\begin{align*}
\alpha =( \tfrac{4}{3} - \tfrac{4}{3f} )\pi, \quad
\beta = \tfrac{4}{f}\pi, \quad
\gamma = ( \tfrac{2}{3} - \tfrac{2}{3f} )\pi. \quad
\delta = \tfrac{2}{f}\pi.
\end{align*}
For each fixed $f\ge 8$, choosing this $\alpha$ guarantees a tile with required $\gamma, \delta$. The existence can also be shown by Lemma \ref{AEQuadExists} and Lemma \ref{b-criteria2}.

This completes the proof.
\end{proof}

We hereby conclude our study with the following two theorems.

\begin{theorem} Tilings of the sphere by congruent almost equilateral quadrilaterals, where every angle is rational, are earth map tilings $E$ and their flip modifications, $E^{\prime}, E^{\prime\prime}$, and rearrangement $E^{\prime\prime\prime}$, and isolated earth map tilings, $S3, S^{\prime}3, S5$, and special tiling $S6$. 
\end{theorem}

\begin{theorem} Tilings of the sphere by congruent almost equilateral quadrilaterals with some non-rational angles are earth map tilings $E$ and their flip modifications, $E^{\prime}, E^{\prime\prime}$, and isolated earth map tilings, $S1, S2$, and special tilings $QP_6,S4$.
\end{theorem}

\end{document}